\numberwithin{equation}{section}
\newlength{\perspective}
\newcommand{\newabstract}[1]{%
  \par\bigskip
  \csname otherlanguage*\endcsname{#1}%
  \csname captions#1\endcsname
  \item[\hskip\labelsep\scshape\abstractname.]
}
\title{Intégrale orbitale pondérée via l'induite de Lusztig-Spaltenstein généralisée}
\author{Yan-Der LU}
\address{Université Paris-Cité, IMJ-PRG}
\date{\today}
\begin{document}

{\selectlanguage{english}\begin{abstract}
In this article, we present two novel approaches to constructing weighted orbital integrals of an inner form of a general linear group. Our method utilizes generalized Lustig-Spaltenstein induction. Furthermore, we will prove that a weighted orbital integral on the Lie algebra constitutes a tempered distribution. We also demonstrate that our new definitions and Arthur's original definition are consistent.
\end{abstract}}

\selectlanguage{french}

\maketitle

\tableofcontents

\section{Introduction}

\subsection{Préface}
La formule des traces non-invariante d'Arthur est un outil puissant en théorie des formes automorphes et en fonctorialité de Langlands. On trouve dans le développement fin du côté géométrique des intégrales orbitales pondérées $J_M^G(x,f)$, qui sont définies, sur un corps local par exemple, comme suit : soient $G$ un groupe réductif sur un corps local $F$, $M$ un sous-groupe de Levi semi-standard relativement à un sous-groupe de Levi minimal fixé, $x\in M(F)$, et $f$ une fonction test sur $G(F)$. Supposons d'abord que le centralisateur de $x$ dans $G$ est inclus dans $M$, i.e. $M_x=G_x$, alors on pose 
\[J_M^G(x,f)=|D^G(x)|_{F}^{1/2}\int_{M_x^\circ(F)\backslash G(F)}f\left(\Ad (g^{-1})x\right)v_M^G(g)\,dg,\]
avec $M_x^\circ$ la composante neutre de $M_x$, $|D^G(x)|_{F}^{1/2}$ un facteur de normalisation, et $v_M^G(g)$ est un « poids » obtenu par la théorie des $(G,M)$-familles d'Arthur. Le cas général invoque la limite d'une somme d'intégrales orbitales pondérées  décrites ci-dessus : on sait que $M_{ax}=G_{ax}$ pour $a$ un élément en position générale dans le groupe des $F$-points du sous-tore central $F$-déployé maximal de $M$, donc l'intégrale $J_L^G(ax,f)$ est définie pour tout $L$ sous-groupe de Levi contenant $M$. On pose maintenant
\[J_M^G(x,f)=\lim_{a\to 1}\sum_{L\in\L^G(M)}r_M^L(x,a)J_L^G(ax,f),\]
avec $r_M^L(x,a)
$ des fonctions obtenues également par la théorie des $(G,M)$-familles. 

Pourtant avec cette formulation que met en vedette Arthur, le nom « intégrale orbitale pondérée » semble moins pertinent, puisque $J_M^G(x,f)$ n'a plus l'air d'être une intégrale sur une orbite, contre une mesure pondérée. Le problème ne s'arrête pas là, il est opportun de prendre une fonction test qui n'est ni lisse ni à support compact dans la formule des traces pour des certaines finalités, la convergence et l'existence de la limite ci-dessus ne semblent pas évidentes dans cette situation. C'est ce qui nous pousse à la recherche d'une nouvelle définition d'une intégrale orbitale pondérée.

Une nouvelle construction d'une intégrale orbitale pondérée unipotente, pour un groupe général linéaire, est déjà découverte par Chaudouard \cite{Ch17}. Généralisant son approche, nous proposons une définition des intégrales orbitales pondérées, au niveau de l'algèbre de Lie, des groupes généraux linéaires ainsi que leurs formes intérieures, pour les fonctions de classe Schwartz-Bruhat. 

\subsection{Résultats principaux}

On commence par généraliser la notion de l'orbite induite.

\begin{proposition}[{{proposition \ref{chap3prop:indprop}}}]
Soit $G$ un groupe réductif défini sur un corps $F$. Soient $M$ un sous-groupe de Levi de $G$ et $X\in \m(F)$. On note $\o= (\Ad M)X$.
\begin{enumerate}
    \item Il existe une unique orbite $\Ind_M^G(\o)=\Ind_M^G(X)$ dans $\g$ pour l’action adjointe de $G$ telle que l’intersection
    \begin{equation}\label{chap3eq:introindorbdef}
    \Ind_M^G(\o)\cap \left(\o+\mathfrak{n}_{P}\right)    
    \end{equation}
    soit un ouvert Zariski dense dans $\o+\mathfrak{n}_{P}$ pour tout $P$ sous-groupe parabolique de $G$ ayant $M$ comme facteur de Levi ;
    \item si $F$ est un corps parfait, alors l'orbite induite commute à la décomposition de Jordan, i.e.
    \begin{equation}\label{chap3eq:introindJordan}
    \Ind_M^G(X)=\Ad(G)(\Ind_{M_{X_{\ss}}^\circ}^{G_{X_{\ss}}^\circ}(X))=\Ad(G)(X_{\ss}+\Ind_{M_{X_{\ss}}^\circ}^{G_{X_{\ss}}^\circ}(X_{\nilp})) ;   
    \end{equation}
    \item $\text{codim}_{\mathfrak{m}}(\o)=\text{codim}_{\g}(\Ind_M^G(\o))$ ;
    \item pour toute partie $S\subseteq \g$, on note par $S_{G-\reg}$ le lieu régulier pour l'action adjointe de $G$, autrement dit
    \[
    S_{G-\reg}\eqdef\{X\in S\mid \dim (\Ad G)X=\max_{Y\in S}\left( \dim (\Ad G)Y\right)\}.\]  
    Alors l'intersection \eqref{chap3eq:introindorbdef} est également le lieu régulier de $\o+\mathfrak{n}_{P}$ ;
    \item soit $P$ un sous-groupe parabolique de $G$ ayant $M$ comme facteur de Levi, alors $\Ind_M^G(\o)\cap \left(\o+\mathfrak{n}_{P}\right)$ est une orbite dans $\p$ pour l'action adjointe de $P$ ;
    \item soit $P$ un sous-groupe parabolique de $G$ ayant $M$ comme facteur de Levi, soit $Y\in \Ind_M^G(\o)\cap (\o+\mathfrak{n}_{P})$, alors $G_{Y}^\circ\subseteq P$ ;
    \item soit $L$ un sous-group de Levi de $H$ contenant $M$, alors
    \[\Ind_{L}^G(\Ind_{M}^L(\o))=\Ind_{M}^G(\o).\]
\end{enumerate}
\end{proposition}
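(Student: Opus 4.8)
Pour démontrer la transitivité $\Ind_L^G(\Ind_M^L(\o))=\Ind_M^G(\o)$, je procéderais comme suit. \emph{Choix de paraboliques compatibles.} Fixons un sous-groupe parabolique $Q$ de $G$ de facteur de Levi $L$ (il en existe, $L$ étant un sous-groupe de Levi) et un sous-groupe parabolique $P_L$ de $L$ de facteur de Levi $M$ ; soit $P$ l'image réciproque de $P_L$ par la projection $Q\to L$. Alors $P$ est un sous-groupe parabolique de $G$ de facteur de Levi $M$, contenu dans $Q$, et l'on dispose des décompositions compatibles
\[\mathfrak{p}=\mathfrak{p}_L\oplus\mathfrak{n}_Q,\qquad \mathfrak{n}_P=\mathfrak{n}_{P_L}\oplus\mathfrak{n}_Q,\]
où $\mathfrak{n}_Q$ est stable sous $\Ad(L)$. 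Posons $\o'\eqdef\Ind_M^L(\o)$ et $\o''\eqdef\Ind_L^G(\o')$. Le cœur de la preuve sera de vérifier que $\o''\cap(\o+\mathfrak{n}_P)$ est dense dans $\o+\mathfrak{n}_P$.

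\emph{Relèvement fibre à fibre.} Notons $U'\eqdef\o'\cap(\o+\mathfrak{n}_{P_L})$ et $W\eqdef\o''\cap(\o'+\mathfrak{n}_Q)$ ; par définition de l'induite (point (1) pour $M\subseteq L$, resp.\ pour $L\subseteq G$), $U'$ est un ouvert dense de $\o+\mathfrak{n}_{P_L}$ — en particulier irréductible, et contenu dans $\o'$ — et $W$ est un ouvert dense de $\o'+\mathfrak{n}_Q$. Comme $\o'\subseteq\mathfrak{l}$ et $\mathfrak{q}=\mathfrak{l}\oplus\mathfrak{n}_Q$, l'addition identifie $\o'\times\mathfrak{n}_Q$ à $\o'+\mathfrak{n}_Q$ ; notons $\pi\colon\o'+\mathfrak{n}_Q\to\o'$ la projection ainsi obtenue, qui est $L$-équivariante. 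Le point clé est que $W$ est stable sous $\Ad(L)$ (puisque $\o''$ est une $G$-orbite et $\o'+\mathfrak{n}_Q$ est $L$-stable), donc $\pi(W)$ est une partie non vide et $L$-stable de la $L$-orbite $\o'$ : ainsi $\pi(W)=\o'$, et $W$ rencontre toutes les fibres de $\pi$. Étant ouvert dans $\o'+\mathfrak{n}_Q$, l'ensemble $W\cap\pi^{-1}(B)$ est alors un ouvert non vide — donc dense — de l'irréductible $\pi^{-1}(B)=B+\mathfrak{n}_Q\cong B\times\mathfrak{n}_Q$, pour toute partie irréductible $B\subseteq\o'$.

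\emph{Conclusion.} J'appliquerais ceci à $B=U'$ : $W\cap(U'+\mathfrak{n}_Q)$ est un ouvert dense de $U'+\mathfrak{n}_Q$. Or, via $\mathfrak{p}=\mathfrak{p}_L\oplus\mathfrak{n}_Q$, on a $\o+\mathfrak{n}_P\cong(\o+\mathfrak{n}_{P_L})\times\mathfrak{n}_Q$ et $U'+\mathfrak{n}_Q\cong U'\times\mathfrak{n}_Q$, si bien que $U'+\mathfrak{n}_Q$ est ouvert dense dans $\o+\mathfrak{n}_P$ puisque $U'$ l'est dans $\o+\mathfrak{n}_{P_L}$. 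Comme $W\cap(U'+\mathfrak{n}_Q)\subseteq\o''$ et $W\cap(U'+\mathfrak{n}_Q)\subseteq U'+\mathfrak{n}_Q\subseteq\o+\mathfrak{n}_P$, l'ensemble $\o''\cap(\o+\mathfrak{n}_P)$ contient l'ouvert dense $W\cap(U'+\mathfrak{n}_Q)$ de $\o+\mathfrak{n}_P$, et y est donc dense. Comme $\o+\mathfrak{n}_P$ est irréductible et que $\Ind_M^G(\o)\cap(\o+\mathfrak{n}_P)$ en est un ouvert dense (point (1)), cet ensemble dense le rencontre ; un point commun appartenant à la fois aux $G$-orbites $\o''$ et $\Ind_M^G(\o)$, on conclut $\o''=\Ind_M^G(\o)$. (Variante : on peut aussi terminer avec les points (3) et (4), les égalités de codimensions donnant $\dim\o''=\dim\Ind_M^G(\o)$, ce qui force $\o''\cap(\o+\mathfrak{n}_P)$ à être contenu dans le lieu $G$-régulier de $\o+\mathfrak{n}_P$.)

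\emph{Principal obstacle.} La difficulté essentielle est l'égalité $\pi(W)=\o'$ du deuxième alinéa, qui exige de combiner la $\Ad(L)$-stabilité de $W$ avec le fait que $\o'$ est réduite à une seule $L$-orbite, une fois mise en place la compatibilité des décompositions de $\mathfrak{n}_P$. Le reste se ramène à des vérifications de routine sur les ouverts denses dans des produits par des espaces affines.
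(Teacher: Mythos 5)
Votre proposition ne couvre que le point 7 (la transitivité) : les points 1 à 6, qui constituent l'essentiel de la proposition, ne sont pas abordés, alors même que votre argument les utilise comme acquis (le point 1 pour la densité de $\o'\cap(\o+\n_{P_L})$ et de $\o''\cap(\o'+\n_Q)$, les points 3 et 4 dans votre variante finale). Le vrai contenu de l'énoncé est l'existence de l'orbite induite pour $\o$ non nilpotente et sa compatibilité à la décomposition de Jordan (points 1 et 2) : dans le texte cela passe par le lemme \ref{YDLioplem:Y+VtoJordan} (conjuguer $Y+U$ en $Y+V$ avec $V\in\n_{P_{Y_\ss}}$ par un élément de $N_P$), une réduction au cas où le groupe dérivé est simplement connexe via une z-extension, et le cas nilpotent dû à Lusztig--Spaltenstein ; rien de tout cela ne figure dans votre texte. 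Les points 3 à 6 (codimension, lieu régulier, orbite sous $P$, inclusion $G_Y^\circ\subseteq P$) demandent aussi leurs propres comptages de dimensions. En l'état, la proposition n'est donc pas démontrée.

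Pour le point 7 lui-même, votre argument est correct et nettement plus détaillé que celui du texte, qui se borne à invoquer « l'unicité de l'orbite induite ». Votre relèvement fibre à fibre --- $W=\o''\cap(\o'+\n_Q)$ est $\Ad(L)$-stable, donc se surjecte par $\pi$ sur la $L$-orbite $\o'$, donc rencontre chaque fibre, donc $W\cap(U'+\n_Q)$ est un ouvert dense de $U'+\n_Q$, lui-même ouvert dense de $\o+\n_P$ --- est exactement la vérification que le texte laisse implicite, et la conclusion par intersection de deux parties denses (dont une ouverte) de l'irréductible $\o+\n_P$ est valide : deux $G$-orbites ayant un point commun coïncident, ce qui dispense de vérifier la propriété de densité pour tout $P\in\P^G(M)$. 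Ce fragment est donc recevable ; il reste à fournir les six autres points.
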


On peut également induire une orbite d'un sous-groupe parabolique.

\begin{definition}[{{définition \ref{YDLiopdef:IndPG}}}]
Soient $P$ un sous-groupe de Levi de $G$ et $X \in\mathfrak{p}(F)$. On définit
\[\Ind_P^G(X)\eqdef \Ind_M^G(\pi_{\p,\mathfrak{m}}(X)).\]
Ici $M$ est un facteur de Levi de $P$ et $\pi_{\p,\mathfrak{m}}:\mathfrak{p}\rightarrow \mathfrak{m}$ est la projection.

La définition de $\Ind_P^G(X)$ est indépendante du choix de $M$ car la sous-variété $(\Ad M)\pi_{\p,\mathfrak{m}}(X)+\n_P$ de $\g$ l'est. 
\end{definition}

Expliquons maintenant nos démarches pour définir une intégrale orbitale pondérée. Soit $G$ une forme intérieure d'un groupe groupe général linéaire sur un corps local $F$. Notre nouvelle définition d'une intégrale orbitale pondérée repose sur la manipulation des « sous-groupes paraboliques de Richardson généralisés ». 

Soit $X\in \g(F)$. On note $X_{\ss}$ sa partie semi-simple et $X_{\nilp}$ sa partie nilpotente. Définissons (équation \eqref{eq:cRG'X})
\[\cR^{G}(X)'=\{P\subseteq G\text{ sous-}\text{groupe parabolique} \mid X_{\nilp}\in \n_{P}, X\in \p, X\in\Ind_{P}^{G}(X_{\ss})\},\]
puis (équation \eqref{eq:cRGX})
\[\cR^{G}(X)=\{P\in \cR^{G}(X)' \mid P_v\text{ un élément minimal (pour l'inclusion) dans }\cR^{G}(X)'\}.\]
Les éléments de $\cR^{G}(X)$ sont appelés les sous-groupes paraboliques de Richardson généralisés de $X$.

On fixe maintenant $M_{0}$ un sous-groupe de Levi minimal de $G$, et $K$ un sous-groupe compact de $G(F)$ en bonne position par rapport à $M_{0}$. On note $\uW^{G,0}\subseteq K$ le sous-groupe des matrices de permutations (numéro \ref{YDLiopsubsec:elementwP}).
Soient $L$ un sous-groupe de Levi de $G$ contenant $M_{0}$, $Q$ un sous-groupe parabolique de $G$ contenant $L$ et $\o$ une $L(F)$-orbite dans l'algèbre de Lie $\mathfrak{l}(F)$.  Notre objectif est de définir l'intégrale orbitale pondérée locale $J_{L}^{Q}(\o,-)$.

Soit $X\in \Ind_{L}^{G}(\o)(F)$ le « représentant standard » (numéro \ref{subsec:ssssschoix}). On sait que tout sous-groupe parabolique de Richardson généralisé de $X$ contient $M_{0}$ (lemme \ref{lem:LSsstand}). Fixons $\MR^{X}$ le facteur de Levi contenant $M_0$ d'un élement de $\cR^G(X)$. On prouve, pour tout sous-groupe parabolique $P$ ayant $\MR^{X}$ comme facteur de Levi, qu'il existe un élément $w_P\in \uW^{G,0}$, unique à translation à gauche près par $\uW^{G,0}\cap \MR^{X}(F)$, tel que $(\Ad w_P^{-1})P\in \cR^G(X)$ (lemme \ref{lem:wP}). Pour $H$ un groupe algébrique défini sur $F$, on note $X^\ast(H)$ le groupe des caractères de $H$ définis sur $F$ et $a_H$ (resp. $a_H^\ast$) l’espace vectoriel reél $\Hom(X^\ast(H),\R)$ (resp.$X^\ast(H)\otimes_\Z\R$). Aussi $ia_{\MR^{X}}^\ast$ est le sous-espace  reél évident de $a_{\MR^{X}}^\ast\otimes_\R\C$. En outre, on introduit la $(G,\MR^{X})$-famille $v_{P,X}(\lambda,g)\eqdef \exp(\langle\lambda,-H_P(w_Pg)\rangle)$ (équation \ref{YDLiopeq:defnewGMfamilyinIOP}), où $H_P:G(F)\to a_{\MR^{X}}$ est l'application de Harish-Chandra et $\lambda\in ia_{\MR^{X}}^\ast$. De plus, on prouve qu'il existe $w\in\uW^{G,0}$ tel que $(\Ad w^{-1})\MR^{X}\subseteq L$ et $\o=\Ind_{(\Ad w^{-1})\MR^{X}}^{L}((\Ad w^{-1})X_{\ss})$ (corollaire \ref{coro:choixtow}). On peut de ce fait regarder le « poids » $v_{(\Ad w)L_v,X_v}^{(\Ad w)Q}$ associé à cette $(G,\MR^{X})$-famille. Pour finir, on prouve que ce poids est invariant à gauche par le groupes des $F$-points du centralisateur de $X$ dans $G$ (lemme \ref{lem:poidsinvnorm}). Ceci nous fournit les ingrédients appropriés pour définir une intégrale orbitale pondérée.

Soient $C_c^\infty(\g(F))$ l'espace des fonctions lisses à support compact de $\g(F)$, et $\S(\g(F))$ l'espace de Schwartz-Bruhat de $\g(F)$, tous les deux munis de leurs topologies usuelles. Une fonctionnelle sur $\g(F)$ est dite une distribution si elle est continue sur $C_c^\infty(\g(F))$, et elle est dite une distribution tempérée si elle est continue sur $\S(\g(F))$. 

\begin{theorem}[définition \ref{def:NIOPl}, théorème \ref{thm:locIOPtemperee}]
Soient $f\in \S(\g(F))$, $L$ un sous-groupe de Levi de $G$ contenant $M_0$, $Q$ un sous-groupe parabolique de $G$ contenant $L$, et $\o$ une $L(F)$-orbite dans l'algèbre de Lie $\mathfrak{l}(F)$. Soit $X\in \Ind_L^G(\o)(F)$ le représentant standard (numéro \ref{subsec:ssssschoix}). Soit $\MR^
{X}$ le facteur de Levi contenant $M_0$ d'un élement de $\cR^G(X)$. Il existe $w\in \uW^{G,0}$ tel que $(\Ad w^{-1})\MR^{X}\subseteq L$ et $\o=\Ind_{(\Ad w^{-1})\MR^X}^L((\Ad w^{-1})X_\ss)$ (corollaire \ref{coro:choixtow}). On pose,
\[
J_L^Q(\o,f)\eqdef |D^{\g}(X)|_{F}^{1/2} \int_{G_{X}(F)\backslash G(F)} f((\Ad g^{-1})X)v_{(\Ad w)L,X}^{(\Ad w)Q}(g)\,dg,
\]
avec $D^{\g}(X)$ le discriminant de Weyl de $X$ (cf. sous-section \ref{YDLiopsubsec:normalizationmeasure}) et $|\cdot|_F$ la valeur absolue usuelle sur $F$. Alors l'intégrale $J_L^Q(\o,-)$ définit une distribution tempérée.
\end{theorem}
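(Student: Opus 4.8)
The plan is to establish two separate facts: first, that the integral defining $J_L^Q(\o,f)$ converges absolutely for every $f\in\S(\g(F))$, and second, that the resulting functional is continuous for the Schwartz--Bruhat topology. Both reduce to controlling the integrand $|D^{\g}(X)|_F^{1/2}\,|f((\Ad g^{-1})X)|\,|v_{(\Ad w)L,X}^{(\Ad w)Q}(g)|$ on $G_X(F)\backslash G(F)$. The key structural input is that the weight is a polynomial in the Harish-Chandra map: since $v_{P,X}(\lambda,g)=\exp(\langle\lambda,-H_P(w_Pg)\rangle)$ defines a $(G,\MR^X)$-family, the associated weight $v_{(\Ad w)L,X}^{(\Ad w)Q}(g)$ is, by Arthur's theory of $(G,M)$-families, a finite sum of products of the linear forms $\langle\varpi,H_P(w_Pg)\rangle$; in particular it is bounded in absolute value by a polynomial in $\log\|g\|$ (for a suitable height $\|\cdot\|$ on $G(F)$) — I would record this as a preliminary lemma, mimicking the standard estimate $|v_M^G(g)|\ll (1+\log\|g\|)^{N}$.

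**Next I would** handle the orbital part. Writing $\orb=(\Ad G)X$ and using that $X$ is the standard representative of $\Ind_L^G(\o)$, the map $g\mapsto (\Ad g^{-1})X$ identifies $G_X(F)\backslash G(F)$ with the orbit $\orb(F)\subseteq\g(F)$, carrying the quotient measure to a measure on $\orb(F)$. The classical fact — going back to Harish-Chandra, Deligne--Ranga Rao in the $p$-adic case and to standard estimates in the archimedean case — is that $|D^{\g}(X)|_F^{1/2}$ times the orbital measure on $(\Ad G)X$ extends to a tempered measure on $\g(F)$, i.e. $\int_{(\Ad G)X}|\phi|\,d\mu<\infty$ for $\phi\in\S(\g(F))$ with a continuous seminorm bound. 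So the plan is: first treat the weightless case $v\equiv 1$ by invoking (or reproving, via the known structure of $G$ as an inner form of a general linear group, where orbits are particularly explicit — products of nilpotent orbits in factors twisted by semisimple parameters) the temperedness of the weighted orbital measure $|D^{\g}(X)|_F^{1/2}\,d\mu_{\orb}$; then absorb the logarithmic weight. Concretely, I would fix a Schwartz seminorm of the form $\nu_N(f)=\sup_{Y\in\g(F)}\|Y\|^{N}|f(Y)|$ (suitably adapted, with derivatives, in the archimedean case), bound $|f((\Ad g^{-1})X)|\le \nu_N(f)\,\|(\Ad g^{-1})X\|^{-N}$, and check that for $N$ large enough the integral $\int_{G_X(F)\backslash G(F)}\|(\Ad g^{-1})X\|^{-N}\,(1+\log\|g\|)^{M}\,dg$ converges; this is where the reduction to the norm on the orbit and the comparison $\log\|g\|\ll \log\|(\Ad g^{-1})X\|^{-1}+\log\|(\Ad g)X\|$ on the orbit does the work, so the logarithmic factor costs nothing once one has a little room in $N$.

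**The main obstacle** I anticipate is \emph{not} the convergence per se but getting the estimate uniformly enough to yield continuity, and in particular making the archimedean case work: there one must control not just $f$ but finitely many invariant-differential-operator derivatives of $f$ along the orbit, and one needs that the weighted orbital measure pairs continuously against $\S(\g(F))$ with respect to the full family of Schwartz seminorms, which requires a quantitative version of the statement that $|D^{\g}|^{1/2}$ kills the singularity of the orbital measure transversally to $\orb$. I would deal with this by slicing: use a Luna-type / transverse slice at a point of $\orb$ to write the orbital measure locally as (smooth density on the slice) $\times$ (orbital measure on the orbit in the slice representation), reducing to the nilpotent case in $\g_{X_{\ss}}$, where $|D^{\g}(X)|^{1/2}$ matches exactly the discriminant factor that tempers nilpotent orbital integrals; the inner-form-of-$GL_n$ hypothesis keeps the slice representation under control (it is again a product of $\mathfrak{gl}$-type Lie algebras over division algebras).

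**Finally I would** note that independence of the choice of $\MR^X$ and of $w$ is not needed for temperedness — the formula produces \emph{a} tempered distribution for each choice, and the compatibility of the weights (left-invariance under $G_X(F)$, lemma \ref{lem:poidsinvnorm}, and the transformation law under $\uW^{G,0}\cap\MR^X(F)$, lemma \ref{lem:wP}) guarantees the integrand is well-defined on the quotient, so the displayed integral makes sense exactly as written; the continuity estimate assembled above then finishes the proof.
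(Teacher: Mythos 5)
There is a genuine gap at the heart of your convergence argument. Your plan is to push the integral forward to the orbit $(\Ad G(F))X$, invoke temperedness of the unweighted measure $|D^{\g}(X)|^{1/2}d\mu$, and then absorb the logarithmic weight via a comparison of the shape $\log\|g\|_{G_X\backslash G}\ll 1+|\log\|(\Ad g^{-1})X\|_{\g}|$. That comparison is false as soon as the orbit of $X$ is not closed --- which is exactly the generic situation the theorem addresses. The map $G_X(F)\backslash G(F)\to(\Ad G(F))X$ is a homeomorphism, so $\|g\|_{G_X\backslash G}\to\infty$ precisely when $(\Ad g^{-1})X$ leaves every compact subset of the orbit; but for $X$ non semi-simple the closure of the orbit contains nonzero smaller orbits, so $(\Ad g^{-1})X$ can converge to a boundary point $Y_0\neq 0$ with $\|(\Ad g^{-1})X\|_{\g}$ bounded above \emph{and} below while $\|g\|_{G_X\backslash G}\to\infty$ and the weight genuinely blows up (already for $X$ regular nilpotent in $\gl_3$ degenerating to a subregular element). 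Near such boundary points $f$ provides no decay whatsoever, so neither the Schwartz seminorms nor the homogeneity of the orbital measure near $0$ can save you; the convergence there is a local integrability statement that your proposal never addresses.

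This is precisely the point the paper's proof is organized around. After descending to $H=\envL(G_{X_{\ss}};G)$ and splitting the weight by the product formula, the paper bounds $|v^{Q_1}_{{}^wL,X}(g)|$ by a sum of $\|R_{P_1}(g)-R_{P_2}(g)\|^m$ over adjacent pairs, computes this difference \emph{exactly} as $\frac{1}{\deg D}\log|\Nrd(U_{1,3})|$ for an explicit block $U_{1,3}$ of $(\Ad g^{-1})X$ (lemme \ref{lem:diffadjacentR_P}) --- a function whose vanishing locus is exactly the relevant piece of the boundary of the orbit --- and then writes the quotient measure in coordinates where it acquires the density $|\Nrd(y)|^{\underline{n}_p(r_1+r_2)\deg D}\,dy$ on $I_{1,3}(F)$ (lemme \ref{lem:centralizerOIformula}). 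Convergence then reduces to a one-dimensional zeta-type integral $\int|f(t)|\,|\log|\Nrd(t)||^{m}\,|\Nrd(t)|^{k\deg D}\,dt$ over the division algebra, where the positive power of the reduced norm beats the logarithm. Your appeal to Deligne--Rao and Harish-Chandra temperedness is legitimate but only covers the semi-simple directions (it is lemme \ref{YDLioplem:OInormCVarchi} in the paper, applied to the factor $(M_{P^{-}})_{X_{\ss}}(F)\backslash M_{P^{-}}(F)$); it cannot see the nilpotent boundary where the weight is unbounded. To repair your argument you would need a quantitative lower bound for $\|g\|_{G_X\backslash G}$ in terms of the boundary-defining functions of the orbit --- the analogue, for induced orbits, of Harish-Chandra's $|D(\gamma)|^{-N}$ estimate for regular semi-simple elements --- and that is essentially what the two explicit lemmas above supply. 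Your remarks on well-definedness of the integrand (via lemme \ref{lem:poidsinvnorm}) and on the polynomial-in-$\log$ bound for the twisted weight are correct and consistent with the paper.
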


Dans son article \cite{Art86}, Arthur établit le développement fin du côté géométrique de la formule des traces, qui stipule essentiellement que ce dernier est entièrement déterminé par les intégrales orbitales pondérées et les coefficients $a^G(S,\o)$. Cependant, sa méthode ne fournit que peu d'éléments concernant concernant ces coefficients, à l'exception des cas où $\o$ serait une orbite semi-simple elliptique. Ce n'est que récemment que des formules intégrales, conjecturées par Hoffmann (cf. \cite{Hoff16}), ont été établies (cf. \cite{Ch17,Ch18} sur l'algèbre de Lie ; et \cite{HoffWaka18,FiHoffWaka18} sur le groupe), permettant ainsi de calculer ces coefficients dans certains cas particuliers. Les propriétés de ces coefficients ont plusieurs applications en théorie des nombres ; par exemple, elles peuvent être utilisées pour étudier le comportement asymptotique des valeurs propres des opérateurs de Hecke (cf. \cite{Ma17,MaTe15,KimWakaYama20}). Il convient de noter que c'est par le biais de la définition plus haute au cas nilpotent que Chaudouard a pu réaliser à travers ses textes les calculs de ces coefficients. Nous disposons d'une description purement combinatoire des éléments $w, w_P \in \uW^{G,0}$ mentionnés précédemment dans la définition du poids (cf. remarque \ref{rem:w_Psimple} et proposition \ref{prop:paradesccomp}), ce qui constitue un avantage de notre définition de l'intégrale orbitale pondérée. Il serait par exemple envisageable d'étendre les calculs dans la  pour déterminer explicitement les coefficients $a^G(S,\o)$ pour une orbite $\o$ non-nilpotente ou non-unipotente, rendant ainsi la formule de descente au centralisateur semi-simple de $a^G(S, \o)$ (\cite[équation (8.1)]{Art86}) plus directe, sans avoir recours aux méthodes techniques de \cite{Art86}.

En outre, la compréhension du fait qu'une intégrale orbitale pondérée se présente comme une distribution tempérée sur l'algèbre de Lie ouvre la voie à l'étude de sa transformée de Fourier. Cette perspective nous permettrait de tenter de généraliser les résultats classiques de l'analyse harmonique de Waldspurger en \cite{Walds95,Walds97}, qui ont été démontrés dans le contexte d'un corps $p$-adique, à tout corps local de caractéristique 0.

La raison de prendre l'espace de Schwartz-Bruhat en tant que l'espace des fonctions tests est naturellement issue de l'analyse harmonique. Toujours est-il que des calculs explicites sont accomplis (cf. les lemmes \ref{lem:diffadjacentR_P} et \ref{lem:centralizerOIformula} et la discussion après ce dernier) pour que l'on puisse s'interroger sur la convergence de $J_{L}^Q(\o,f)$ pour $f$ une fonction test vivant dans un espace plus large.

On souhaite dans un second temps comparer cette définition d'une intégrale orbitale pondérée avec celle d'Arthur. Dans \cite{Art88}, Arthur définit une intégrale orbitale pondérée en utilisant des certaines $(G,M)$-familles $w_P$ et $r_P$. Il commence par définir ces objets pour les orbites nilpotentes, puis les généralise au cas général en utilisant une descente au centralisateur semi-simple. Cependant, grâce aux travaux de Finis-Lapid sur le groupe (\cite
[section 7]{FiLa16}), ainsi qu'à ceux de Chaudouard sur l'algèbre de Lie (\cite[section 3]{Ch18}), le côté géométrique de la formule des traces est désormais partitionné selon les classes de conjugaison usuelles, via le processus d'induction de l'orbite. Il semble donc plus naturel de définir ces $(G,M)$-familles $w_P$ et $r_P$ directement à partir du processus d'induction de l'orbite, sans recourir à la descente au centralisateur semi-simple. Nous généralisons donc l'approche d'Arthur aux orbites quelconques, obtenant ainsi une autre nouvelle définition $\widetilde{J}_L^Q(\o,-)$ (théorème \ref{YDLiopthm:ArtdefdirectIOP}), qui est également une distribution tempérée. Finalement, nous notons $\widetilde{J}_L^Q[\o,-]$ (théorème \ref{YDLiopthm:ArtdefIOPCVCinftyc}) la définition d'Arthur, qui est une distribution.   

On peut comparer ces trois définitions.
\begin{theorem}[{{théorèmes \ref{thm:IOPcomparaisonI-II}, \ref{thm:IOPcomparaisonII-III}}}]
Soient $L$ un sous-groupe de Levi de $G$ contenant $M_0$, $Q$ un sous-groupe parabolique de $G$ contenant $L$, et $\o$ une $L(F)$-orbite dans l'algèbre de Lie $\mathfrak{l}(F)$. Alors
\[J_L^Q(\o,-)=\widetilde{J}_L^Q(\o,-)=\widetilde{J}_L^Q[\o,-]\]
sur $C_c^\infty(\g(F))$. 
\end{theorem}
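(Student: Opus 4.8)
\emph{Plan de démonstration.} Le plan est de traiter successivement les deux égalités, conformément aux théorèmes \ref{thm:IOPcomparaisonI-II} et \ref{thm:IOPcomparaisonII-III} : d'abord $J_L^Q(\o,-)=\widetilde{J}_L^Q(\o,-)$, puis $\widetilde{J}_L^Q(\o,-)=\widetilde{J}_L^Q[\o,-]$. Dans chaque cas, on observera que sur $C_c^\infty(\g(F))$ les deux membres s'écrivent sous la forme $|D^{\g}(X)|_F^{1/2}\int_{G_X(F)\backslash G(F)}f((\Ad g^{-1})X)\,v(g)\,dg$, avec le même représentant standard $X\in\Ind_L^G(\o)(F)$, la même mesure invariante et le même facteur de normalisation ; tout se ramènera alors à une identité entre les fonctions-poids $v$, après avoir constaté que celles-ci sont invariantes à gauche par $G_X(F)$ (lemme \ref{lem:poidsinvnorm} pour $J_L^Q$, son analogue pour $\widetilde{J}_L^Q$, et \cite{Art86} pour $\widetilde{J}_L^Q[\o,-]$).

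Pour la première comparaison, le poids de $J_L^Q(\o,-)$ est le terme $v_{(\Ad w)L,X}^{(\Ad w)Q}(g)$ associé à la $(G,\MR^{X})$-famille $v_{P,X}(\lambda,g)=\exp(\langle\lambda,-H_P(w_Pg)\rangle)$, avec $w$ comme dans le corollaire \ref{coro:choixtow}, tandis que le poids de $\widetilde{J}_L^Q(\o,-)$ provient du produit des $(G,\MR^{X})$-familles de type $w_P$ et $r_P$ introduites par Arthur dans \cite{Art88}. On montrera que ces deux données de $(G,M)$-familles coïncident, ou du moins fournissent le même poids via l'opération $c\mapsto\lim_{\lambda\to0}\sum_P c_P(\lambda)\,\theta_P(\lambda)^{-1}$ : la famille $w_P$ encode exactement la translation par l'élément $w_P\in\uW^{G,0}$ du lemme \ref{lem:wP}, et le terme $r_P$ s'identifie grâce à la description purement combinatoire des éléments $w$ et $w_P$ (remarque \ref{rem:w_Psimple} et proposition \ref{prop:paradesccomp}). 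C'est là, selon nous, que réside l'obstacle principal : il faudra contrôler que les choix implicites des deux constructions — les éléments de Weyl $w_P$, le facteur de Levi $\MR^{X}$ d'un sous-groupe parabolique de Richardson généralisé, l'élément $w$ du corollaire \ref{coro:choixtow} — sont mutuellement compatibles, et que les normalisateurs $r_P$ sont correctement pris en compte ; c'est la réduction de ces données à de la combinatoire dans $\uW^{G,0}$ qui rendra l'argument praticable.

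Pour la seconde comparaison, on part du fait que $\widetilde{J}_L^Q[\o,-]$ est, par construction, obtenue à partir du cas nilpotent par descente au centralisateur semi-simple, suivie du passage à la limite (cf. \cite[section 8]{Art86} et le théorème \ref{YDLiopthm:ArtdefIOPCVCinftyc}), alors que $\widetilde{J}_L^Q(\o,-)$ est définie directement à partir de l'induction de l'orbite. L'idée sera de montrer que $\widetilde{J}_L^Q(\o,-)$ satisfait la même formule de descente ; on conclura alors, puisque les deux définitions coïncident sur les orbites nilpotentes, cas dans lequel $X_\ss=0$ et la descente est triviale. Les ingrédients seront : la compatibilité de l'induction d'orbite avec la décomposition de Jordan et sa transitivité (points (2) et (7) de la proposition \ref{chap3prop:indprop}), qui remplacent $\Ind_L^G(\o)$ par $\Ad(G)\bigl(X_\ss+\Ind_{M_{X_\ss}^\circ}^{G_{X_\ss}^\circ}(X_\nilp)\bigr)$ ; la formule de descente usuelle des $(G,M)$-familles (restriction aux sous-espaces associés à un facteur de Levi contenant $G_{X_\ss}^\circ$) appliquée aux familles $w_P$ et $r_P$ ; la décomposition de l'intégrale sur $G_X(F)\backslash G(F)$ en une intégrale itérée le long de $G_{X_\ss}(F)\backslash G(F)$ ; et le comportement du facteur $|D^{\g}(X)|_F^{1/2}$ vis-à-vis de cette descente (sous-section \ref{YDLiopsubsec:normalizationmeasure}). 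La restriction à $C_c^\infty(\g(F))$ jouera ici un rôle essentiel, puisque c'est dans ce cadre que le formalisme et la formule de descente d'Arthur \cite{Art88,Art86} sont disponibles, alors que $J_L^Q$ et $\widetilde{J}_L^Q$ sont définies sur tout $\S(\g(F))$ d'après les théorèmes \ref{thm:locIOPtemperee} et \ref{YDLiopthm:ArtdefdirectIOP}. Une difficulté secondaire, mais bien réelle, sera de suivre précisément les facteurs $r_P$ et la décomposition de la mesure à travers cette descente.
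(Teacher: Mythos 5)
Votre plan a la bonne ossature (traiter séparément les deux égalités, utiliser l'invariance des poids et les formules de descente), mais il passe à côté du cœur de la première comparaison et complique inutilement la seconde. Pour $J_L^Q(\o,-)=\widetilde{J}_L^Q(\o,-)$ : votre point de départ, à savoir que « les deux membres s'écrivent sous la forme $|D^{\g}(X)|_F^{1/2}\int_{G_X(F)\backslash G(F)}f((\Ad g^{-1})X)v(g)\,dg$ avec le même $X$ », est précisément ce qu'il faut démontrer pour $\widetilde{J}_L^Q$, qui n'est pas définie comme une intégrale sur l'orbite de $X$ mais comme une limite en $A\to 0$ d'une somme $\sum_{L'}r_L^{L'}(A,Y)\widetilde{J}_{L'}^Q(A+Y,f)$ portant sur des orbites voisines. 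Le point dur n'est donc pas une « coïncidence combinatoire » des deux $(G,\MR^X)$-familles : c'est le calcul de la limite des familles d'Arthur, c'est-à-dire l'identité $w_{P|P_\square}(\lambda,A{=}0,0,V)=\exp(\langle\lambda,R_{P_\square,X}(g)-R_{P,X}(g)\rangle)$ (après réduction, via la formule de descente de l'induction puis la descente semi-simple, au cas $Y=X_\ss=0$ et $M=\MR^X$). Dans le texte, cette identité est un lemme analytique explicite (normes réduites $|\Nrd(U_{1,3})|$, calcul de $n_{\widetilde{P_1}}=\Id-\widetilde{\alpha}(\widetilde{A})^{-1}\widetilde{V}$, construction d'un $k\in K$ par orthonormalisation ou lemme de Hensel, évaluation sur un vecteur extrémal d'une puissance extérieure), qui corrige d'ailleurs une erreur de \cite[proposition 6.4.1]{Ch17} ; il ne découle pas de la description combinatoire des éléments $w_P\in\uW^{G,0}$ (remarque \ref{rem:w_Psimple}), et vous semblez du reste confondre la famille $w_P(\lambda,A,Y,V)$ d'Arthur avec l'élément de Weyl $w_P$. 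Il manque aussi l'argument de domination (majoration uniforme de $w_M^Q(A,0,V)$ par $c|v_{M,X}^Q|+1$ sur un compact en $A$) permettant d'échanger limite et intégrale.

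Pour $\widetilde{J}_L^Q(\o,-)=\widetilde{J}_L^Q[\o,-]$, votre stratégie (descente semi-simple pour $\widetilde{J}$, puis coïncidence sur les orbites nilpotentes) est une route différente mais, telle quelle, lacunaire : la formule de descente semi-simple n'est établie dans l'article que pour $J_L^Q$ (proposition \ref{prop:propertiesIOP}) et, pour $\widetilde{J}[\cdot]$, empruntée à Arthur ; la prouver pour $\widetilde{J}$ reviendrait essentiellement à comparer $r_M^L(A,Y)$ à sa version relative à $G_{Y_\ss}$, c'est-à-dire à démontrer ce que vous cherchez. La preuve du texte est bien plus courte : les deux définitions sont des limites des mêmes intégrales $\widetilde{J}_{L'}^Q(A+Y,f)$ pondérées par $r_M^{L'}(A,Y)$ et $r_M^{L'}[A,Y]$ respectivement, et il suffit de vérifier $r_M^{L'}(A,Y)=r_M^{L'}[A,Y]$ : après descente des $(G,M)$-familles on suppose $Y_\ss$ elliptique, la restriction $\Sigma(\mathfrak{p}_{Y_\ss};A_{M_{Y_\ss}})\to\Sigma(\mathfrak{p};A_M)$ est injective, $r_\alpha=r_{\mathrm{res}(\alpha)}$, et pour toute racine hors de l'image on a $r_\alpha=1$ car $n_\square(A,Y,V)$ admet une limite en $A=0$. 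Je vous conseille donc de reprendre la seconde comparaison par cette voie directe, et de consacrer l'essentiel de votre effort au lemme de limite des poids de la première comparaison, qui est le véritable contenu du théorème.
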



\subsection{Plan de l'article}
Le contenu du présent texte est organisé dans cet ordre : en section \ref{sec:1preliminaire} on introduit des notations fondamentales et on généralise la notion de l'orbite induite de Lusztig-Spaltenstein. En section \ref{sec:2nouvdef} on propose une nouvelle définition d'une intégrale orbitale pondérée sur l'algèbre de Lie. En section \ref{sec:3cv} on aborde la question de convergence de notre définition. En section \ref{sec:4comparedef}, on généralise l'approche d'Arthur d'une intégrale orbitale pondérée, on discute d'autres possibles définitions et on procède à la comparaison des définitions données. 

\subsection*{Remerciements}
L'auteur tient particulièrement à remercier Pierre-Henri Chaudouard, son directeur de thèse. Sans ses connaissances, conseils, temps et patience, cet article n'aurait pas été possible. L'auteur tient également à remercier l'école doctorale 386 Sciences Mathématiques de Paris Centre ainsi que l'université Paris-Cité pour avoir financé ce projet de thèse.

\section{
Préliminaires}\label{sec:1preliminaire}
\subsection{Notations}

Soit $F$ un corps. Soit $G$ un groupe algébrique défini sur $F$. On note par la même lettre en minuscule gothique son algèbre de Lie, en l'occurrence $\g=\Lie(G)$. On note $Z(G)$ le centre de $G$. L'action adjointe de $G$ sur $\g$ est notée $\text{Ad}$. On écrit $G^\circ$ la composante neutre de $G$. On note $A_G$ le sous-tore central $F$-deployé maximal de $G$. Pour toute partie $S\subseteq G$ ou $S\subseteq \g$, $\Cent(S,G)$ est son centralisateur dans $G$. Sauf mention contraire, un sous-groupe de $G$ signifie un sous-groupe algébrique de $G$ défini sur $F$. 

Supposons désormais que $G$ est réductif et connexe. 

Pour toute partie $S\subseteq \g$, on note par $S_
{G-\reg}$ le lieu régulier pour l'action adjointe de $G$, autrement dit
\begin{equation}\label{eq:deflieuG-reg}
S_{G-\reg}
\eqdef\{X\in S\mid \dim (\Ad G)X=\max_{Y\in S}\left( \dim (\Ad G)Y\right)\}.    
\end{equation}
En général $S_{G-\reg}\not= \g_{G-\reg}\cap S$. En revanche $\mathfrak{c}_{G-\reg}= \g_{G-\reg}\cap \mathfrak{c}$ pour $\mathfrak{c}$ une sous-algèbre de Cartan, et $\mathcal{N}_{\g,G-\reg}= \g_{G-\reg}\cap \mathcal{N}_{\g}$ où $\mathcal{N}_{\g}$ est le cône nilpotent de $\g$.

On appelle sous-groupe de Levi de $G$ un groupe qui est une composante de Levi d'un sous-groupe parabolique de $G$. Soient $M$ un sous-groupe de Levi de $G$ et $H$ un sous-groupe de Levi ou parabolique de $G$ qui contient $M$. On note $\L^H(M)$ l’ensemble des sous-groupes de Levi de $G$ inclus dans $H$ et contenant $M$ ; on note $\P^H(M)$ l’ensemble des sous-groupes paraboliques de $G$ inclus dans $H$, dont $M$ est un facteur de Levi ; on note $\F^H(M)$ l’ensemble des sous-groupes paraboliques de $G$ inclus dans $H$ et contenant $M$. Pour $P\in \F^G(M)$, on désignera souvent par $M_P$ l'unique facteur de Levi de $P$ contenant $M$. Soit $P$ un sous-groupe parabolique de $G$, on note $N_P$ son radical unipotent. On note 
\[W^{(G,M)}=\text{Norm}_{G(F)}(M)/M(F)\]
le groupe de Weyl relatif associé à $(G,M)$, avec $\text{Norm}_{G(F)}(M)$ le normalisateur de $M$ dans $G(F)$, ou bien le groupe des $F$-points du normalisateur de $M$ dans $G$.   

Soit $M$ un sous-groupe de Levi de $G$. La description dynamique des sous-groupes de Levi d'un group réductif (\cite[corollaires 6.10 (i)$\Leftrightarrow$(ii), 6.11]{SGA3}) dicte que $A_M$ est la partie déployée du tore $\Cent(M,G)^\circ=Z(M)^\circ$ et $M=\Cent(A_M,G)$. Tout sous-groupe de Levi est le centralisateur dans $G$ d'un sous-tore déployé. Une autre formulation (\cite[propositions 2.2.9, 2.2.11]{CGPbook}) est $M=\{g\in G\mid \lambda(t)g\lambda(t)^{-1}=g\}$ pour un $F$-morphisme $\lambda:\mathbb{G}_m\rightarrow A_{M}$ dont l'image rencontre $A_{M,G-\reg}$. De même si $P\in\P^G(M)$, alors $P= \{g\in G\mid \lim_{t\to 0}\lambda(t)g\lambda(t)^{-1}\,\,\text{existe}\}$ pour un $F$-morphisme $\lambda:\mathbb{G}_m\rightarrow A_{M}$ dont l'image rencontre $A_{M,G-\reg}$. Tout élément de $\P^G(M)$ est représenté ainsi. 

Soit $F$ un corps parfait. On note $\g_\ss$ (resp. $\g_\nilp$) l'ensemble des éléments semi-simples (resp. nilpotents) de $\g$. Pour tout $X \in \g$, on dispose de la décomposition de Jordan $X = X_{\ss} + X_{\nilp}$ où $X_{\ss}$ et $X_{\nilp}$ sont respectivement des éléments de $\g_\ss$ et de $\g_\nilp$, qui commutent. Le groupe $G_X\eqdef\Cent(X,G)$ est le centralisateur de $X$ dans $G$. Un élément $X$ de $\g(F)$ est dit $F$-elliptique s'il est semi-simple et $A_G = A_{G_X}$.

\subsection{Orbites induites}\label{subsec:orbitind} 
Soient $F$ un corps et $G$ un groupe réductif connexe sur $F$. 

La notion de l'orbite induite unipotente pour les groupes réductifs est introduite dans  \cite{LuszSpal79} par Lusztig et Spaltenstein, comme une généralisation d'une orbite de Richardson. Les propriétés fondamentales sont résumées dans la proposition suivante.

\begin{proposition}\label{chap3prop:indprop}
Soient $M$ un sous-groupe de Levi de $G$ et $X\in \m(F)$. On note $\o= (\Ad M)X$.
\begin{enumerate}
    \item Il existe une unique orbite $\Ind_M^G(\o)=\Ind_M^G(X)$ dans $\g$ pour l’action adjointe de $G$ telle que l’intersection
    \begin{equation}\label{chap3eq:indorbdef}
    \Ind_M^G(\o)\cap \left(\o+\mathfrak{n}_{P}\right)    
    \end{equation}
    soit un ouvert Zariski dense dans $\o+\mathfrak{n}_{P}$ pour tout $P$ sous-groupe parabolique de $G$ ayant $M$ comme facteur de Levi ;
    \item si $F$ est un corps parfait, alors l'orbite induite commute à la décomposition de Jordan, i.e.
    \begin{equation}\label{chap3eq:indJordan}
    \Ind_M^G(X)=\Ad(G)(\Ind_{M_{X_{\ss}}^\circ}^{G_{X_{\ss}}^\circ}(X))=\Ad(G)(X_{\ss}+\Ind_{M_{X_{\ss}}^\circ}^{G_{X_{\ss}}^\circ}(X_{\nilp})) ;   
    \end{equation}
    \item $\text{codim}_{\mathfrak{m}}(\o)=\text{codim}_{\g}(\Ind_M^G(\o))$ ;
    \item l'intersection \eqref{chap3eq:indorbdef} est également le lieu régulier de $\o+\mathfrak{n}_{P}$ ;
    \item soit $P$ un sous-groupe parabolique de $G$ ayant $M$ comme facteur de Levi, alors $\Ind_M^G(\o)\cap \left(\o+\mathfrak{n}_{P}\right)$ est une orbite dans $\p$ pour l'action adjointe de $P$ ;
    \item soit $P$ un sous-groupe parabolique de $G$ ayant $M$ comme facteur de Levi, soit $Y\in \Ind_M^G(\o)\cap (\o+\mathfrak{n}_{P})$, alors $G_{Y}^\circ\subseteq P$ ;
    \item soit $L$ un sous-group de Levi de $G$ contenant $M$, alors
    \[\Ind_{L}^G(\Ind_{M}^L(\o))=\Ind_{M}^G(\o).\]
\end{enumerate}
\end{proposition}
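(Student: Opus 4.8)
\emph{Plan.} The geometric assertions (1), (4), (5), (6) I would prove by transporting to the present setting the collapsing-map argument of Lusztig--Spaltenstein \cite{LuszSpal79}, the nilpotent orbit being replaced by the arbitrary $M$-orbit $\o\subseteq\m$; assertion (3) then drops out of a dimension count; the Jordan-decomposition formula (2), which is the one genuinely new point, I would prove by descent to the centraliser $G_{X_{\ss}}^\circ$; and transitivity (7) follows formally from the uniqueness in (1). I argue geometrically, suppressing routine rationality matters; the perfectness of $F$ is used only in (2), to have the Jordan decomposition at hand.

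\emph{Parts (1), (3), (4), (5), (6).} Fix $P\in\P^G(M)$ and set $V\eqdef\o+\n_P\subseteq\p$. It is irreducible (the image of $\o\times\n_P$ under addition) and stable under the adjoint action of $P$, since $M$ preserves $\o$ and $[\n_P,\m]\subseteq\n_P$; note this uses only that $\o$ is an irreducible $M$-stable subvariety of $\m$. Consider the $G$-equivariant collapsing map
\[
\mu\colon\ G\times^{P}V\ \longrightarrow\ \g,\qquad (g,Y)\longmapsto(\Ad g)Y,
\]
whose source is irreducible of dimension $\dim G-\dim P+\dim V=\dim\g-\dim\m+\dim\o=\dim\g-\dim\m_X$, where $\m_X\eqdef\Cent(X,\m)$. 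The crucial input is the analogue for $V$ of Richardson's density theorem — proved as in \cite{LuszSpal79}, or by reduction to the nilpotent case via the descent of part (2) — namely that $P$ has a dense orbit on $V$ and that $G_Y^\circ\subseteq P$ for $Y$ in that orbit. Combined with the identity $\dim(G_Y\cap P)=\dim P-\dim(\Ad P)Y=\dim P-\dim V=\dim\m_X$, and with $G_Y^\circ\subseteq P$ (so $G_Y^\circ=(G_Y\cap P)^\circ$), this gives $\dim G_Y=\dim\m_X$, whence $\dim(\Ad G)Y=\dim\g-\dim\m_X$ equals the dimension of the source, so $(\Ad G)Y$ is dense in the image of $\mu$. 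Thus the closure of that image is the closure of the single $G$-orbit $\Ind_M^G(\o)\eqdef(\Ad G)Y$, and $\Ind_M^G(\o)\cap V$, which is locally closed and contains the dense open $P$-orbit of $V$, is dense open in $V$. Uniqueness of an orbit with the property of (1) is then immediate (two orbits each meeting $V$ in a dense open subset must intersect, hence coincide), as is independence of the chosen representative $X$ of $\o$. Assertion (6) is the statement $G_Y^\circ\subseteq P$ on $\Ind_M^G(\o)\cap V$; assertion (3) is the equality $\operatorname{codim}_\g\Ind_M^G(\o)=\dim\m_X=\operatorname{codim}_\m\o$ just obtained. For (4): $Y\mapsto\dim(\Ad G)Y$ is lower semicontinuous on $V$, hence attains its maximum on a dense open set, which must be $\Ind_M^G(\o)\cap V$; so the latter is $V_{G-\reg}$ in the sense of \eqref{eq:deflieuG-reg}. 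For (5): if $Y\in\Ind_M^G(\o)\cap V$ then $(G_Y\cap P)^\circ=G_Y^\circ$ by (6), so $\dim(\Ad P)Y=\dim P-\dim G_Y=\dim\n_P+\dim\o=\dim V$ by (3); thus every $P$-orbit inside the irreducible variety $\Ind_M^G(\o)\cap V$ is open, and there is only one.

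\emph{Part (2).} Set $S\eqdef X_{\ss}$. As $A_M$ is central in $M$, it centralises $S\in\m$, so $A_M\subseteq G_S^\circ$ and $M_S^\circ\eqdef\Cent(A_M,G_S^\circ)$ is a Levi subgroup of the connected reductive group $G_S^\circ$, with $X_{\nilp}$ nilpotent in $\m_S=\Lie(M_S^\circ)$. If $P\in\P^G(M)$ corresponds to a cocharacter $\lambda\colon\mathbb{G}_m\to A_M$, the same $\lambda$ defines inside $G_S^\circ$ the parabolic $P_S^\circ\eqdef(P\cap G_S)^\circ$, with Levi $M_S^\circ$ and $\n_{P_S^\circ}=\n_P\cap\g_S$. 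The key point is a Jordan compatibility between $V=\o+\n_P$ and the Richardson-type variety $X_{\nilp}+\n_{P_S^\circ}$ of $G_S^\circ$. For $Y\in X+\n_P$ one has $\pi_{\p,\m}(Y_{\ss})=\pi_{\p,\m}(Y)_{\ss}=X_{\ss}=S$ — the projection $\p\to\m$ being a Lie algebra homomorphism and $\p$ being closed under Jordan decomposition — so $Y_{\ss}$ is a semisimple element of $S+\n_P$, hence $\Ad(N_P)$-conjugate to $S$ ("Jordan decomposition inside a parabolic"). Twisting $Y$ by such an element of $N_P$ replaces it, without leaving $X+\n_P$ or changing its $G$-orbit, by $S+N'$ with $N'\in(X_{\nilp}+\n_P)\cap\g_S=X_{\nilp}+\n_{P_S^\circ}$ nilpotent; after a choice this straightening is a morphism $\beta\colon X+\n_P\to X_{\nilp}+\n_{P_S^\circ}$. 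Since $\Ind_{M_S^\circ}^{G_S^\circ}(X_{\nilp})\cap(X_{\nilp}+\n_{P_S^\circ})$ is dense open — by the nilpotent case of the proposition, i.e.\ \cite{LuszSpal79} — its preimage under $\beta$ is a nonempty, hence dense, open subset of $X+\n_P$, and it is contained in $\Ad(G)\bigl(S+\Ind_{M_S^\circ}^{G_S^\circ}(X_{\nilp})\bigr)$. Transporting this over $\o$ by the $M$-equivariance of $\pi_{\p,\m}$, we obtain that the single $G$-orbit $\Ad(G)\bigl(S+\Ind_{M_S^\circ}^{G_S^\circ}(X_{\nilp})\bigr)$ meets $\o+\n_P$ in a dense open set, for every $P\in\P^G(M)$; by the uniqueness in (1) this orbit is $\Ind_M^G(X)$, giving the first equality of \eqref{chap3eq:indJordan}. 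The second equality merely records that induction inside $G_S^\circ$ commutes with translation by the central semisimple element $S\in Z(\g_S)$, i.e.\ $\Ind_{M_S^\circ}^{G_S^\circ}(S+X_{\nilp})=S+\Ind_{M_S^\circ}^{G_S^\circ}(X_{\nilp})$.

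\emph{Part (7) and the main obstacle.} For (7), let $L\in\L^G(M)$; by the uniqueness in (1) it suffices to show $\Ind_L^G(\Ind_M^L(\o))$ has the density property of (1) for $\Ind_M^G(\o)$. Pick $Q\in\P^G(L)$ and $P\in\P^G(M)$ with $P\subseteq Q$ and $R\eqdef P\cap L\in\P^L(M)$; then $\n_P=\n_R\oplus\n_Q$ and $V=\o+\n_P=(\o+\n_R)+\n_Q$. By (1) in $L$, $\Ind_M^L(\o)\cap(\o+\n_R)$ is dense open in $\o+\n_R$, so $\Ind_M^L(\o)+\n_Q$ is dense in $V$; applying (1) in $G$ to the orbit $\Ind_M^L(\o)$ of $\mathfrak{l}$ and to $Q$, its dense $G$-orbit $\Ind_L^G(\Ind_M^L(\o))$ meets $\Ind_M^L(\o)+\n_Q$, hence $V$, in a dense open set — so it equals $\Ind_M^G(\o)$. (Alternatively, (7) follows from (2) and the classical transitivity of nilpotent induction along $M_S^\circ\subseteq L_S^\circ\subseteq G_S^\circ$.) I expect the main obstacle to be (2): making the straightening $\beta$ and the descent to $G_{X_{\ss}}^\circ$ precise — in particular checking that the dense open locus is genuinely transported — rests on the "Jordan decomposition inside a parabolic" lemma and on the compatibility of the $\lambda$-grading of $\g$ with passage to the centraliser of $X_{\ss}$. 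The ingredient borrowed from \cite{LuszSpal79} for (1) — the existence of a dense $P$-orbit on $\o+\n_P$ together with $G_Y^\circ\subseteq P$ generically — also requires care, not least to keep the order of the deductions above non-circular, but it is not essentially new (and for non-nilpotent $\o$ it too can be reduced to the nilpotent case by the very descent used for (2)).
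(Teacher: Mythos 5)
Your proposal is correct and follows essentially the same route as the paper: the induced orbit is constructed by descending to $G_{X_{\ss}}^\circ$ via the $N_P$-straightening lemma (your ``Jordan decomposition inside a parabolic'', which is the paper's citation of \cite[lemme 2.3]{Ch02a}) and invoking the nilpotent Lusztig--Spaltenstein theory, after which (3)--(6) are the same dimension counts and (7) is uniqueness. The only substantive points you suppress are the rationality matters the paper treats explicitly, namely Galois descent from a separably closed base and the z-extension used to make centralizers of semisimple elements connected (so that $\o_\sigma$ is a single orbit of $L_\sigma$).
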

\begin{remark}
Des énoncés pour les groupes peuvent être trouvés dans \cite[section 1]{Hoff12}.    
\end{remark}
    
\begin{proof}
Dans la suite, nous supposerons que $F$ est séparablement clos, ce qui implique en particulier que $F$ est parfait. Nous déduirons le cas général à l'aide d'un argument simple de descente galoisienne.
\begin{enumerate}
    \item Le cas où $X$ serait nilpotent est déjà connu (\cite[section 7.1]{CM93}). L'unicité de l'orbite vérifiant que l'intersection \eqref{chap3eq:indorbdef} est un ouvert Zariski dense dans $\o+\n_P$ pour tout $P\in\P^G(M)$ est claire, on traitera uniquement l'existence. Nous allons prouver que la classe de $G$-conjugaison  
    \[\Ind_M^G(X)\eqdef\Ad(G)(\Ind_{M_{X_{\ss}}^\circ}^{G_{X_{\ss}}^\circ}(X))=\Ad(G)(X_{\ss}+\Ind_{M_{X_{\ss}}^\circ}^{G_{X_{\ss}}^\circ}(X_{\nilp}))\]
    vérifie que l'intersection \eqref{chap3eq:indorbdef} est un ouvert Zariski dense dans $\o+\n_P$ pour tout $P\in\P^G(M)$. Soit $P\in\P^G(M)$. On suppose en premier lieu que le sous-groupe dérivé de $G$ est simplement connexe. Remarquons qu'il en est de même pour tous les sous-groupes dérivés des sous-groupes de Levi de $G$. Si $\sigma$ est la partie semi-simple d'un élément d'une orbite $\o$ pour l'action adjointe d'un sous-groupe de Levi $L$ de $G$ dans $\mathfrak{l}$, on note
    \[\o_\sigma=\{\sigma+U\mid U\text{ nilpotent dans }\mathfrak{l}_\sigma, \sigma+U\in \o\}.\]
    Alors $\o_\sigma$ est une orbite dans $\mathfrak{l}_\sigma$ pour l'action adjointe de $L_\sigma$. En effet si $\o=(\Ad L)(\sigma+V)$ avec $V$ nilpotent dans $\mathfrak{l}_\sigma$ (autrement dit $\sigma+V$ est la décomposition de Jordan), on aura $\o_\sigma=(\Ad L_\sigma)(\sigma+V)$.

    Revenons sur la preuve de la proposition. Abrégeons $X_\ss$ en $\sigma$. Alors par la définition
    \[\Ind_M^G(X)_\sigma=\Ind_{M_\sigma}^{G_\sigma}(X')\]
    pour $X'\in \o_\sigma$. Considérons 
    \[I_\sigma=(\o_\sigma+\mathfrak{n}_{P_\sigma})\cap \Ind_M^G(X).\] 
    L'ensemble $I_\sigma$ est non-vide par la définition de l'induite d'une orbite nilpotente, et est un ouvert Zariski dense dans $\o_\sigma+\mathfrak{n}_{P_\sigma}$. \'{E}voquons à présent un lemme connu :
    \begin{lemma}\label{YDLioplem:Y+VtoJordan}
    Soit $Y\in \mathfrak{m}(F)$. Pour tout $U\in \mathfrak{n}_P(F)$, il existe $V\in \mathfrak{n}_{P_{Y_\ss}}(F)$ et $\delta\in N_P(F)$ tel que $(\Ad \delta)(Y+U)=Y+V$.
    \end{lemma}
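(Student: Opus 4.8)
Le plan est de ramener l'énoncé à la décomposition de Jordan et à sa compatibilité avec la projection $\pi=\pi_{\p,\m}:\p\to\m$. Posons $\sigma\eqdef Y_\ss$ ; comme $Y_\nilp$ commute à $\sigma$, on a $Y\in\p\cap\g_\sigma$, et le sous-groupe $P_\sigma\eqdef P\cap G_\sigma$ (c'est-à-dire $P_{Y_\ss}$) est parabolique dans $G_\sigma$, de radical unipotent $N_P\cap G_\sigma$, de sorte que $\mathfrak{n}_{P_\sigma}=\mathfrak{n}_P\cap\g_\sigma$. Puisque $\p=\Lie(P)$, les deux composantes de la décomposition de Jordan de $Y+U$ dans $\g$ sont dans $\p$ ; écrivons $Y+U=s+n$ avec $s$ semi-simple, $n$ nilpotent et $[s,n]=0$. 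L'application $\pi$ étant la différentielle du morphisme de groupes $P\twoheadrightarrow M$, elle respecte la décomposition de Jordan ; comme $\pi(Y+U)=Y$, dont la partie semi-simple est $\sigma$, on obtient $\pi(s)=\sigma$.

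J'utiliserais ensuite le fait classique que $N_P$ opère transitivement sur l'ensemble des éléments semi-simples de $\p$ de projection donnée dans $\m$ — conséquence de la conjugaison sous $N_P$ des sous-groupes de Levi de $P$, la rationalité sur $F$ venant de ce que l'orbite en jeu est un torseur sous un groupe unipotent déployé. Il existe donc $\delta\in N_P(F)$ tel que $\Ad(\delta)s=\sigma$. Alors $n'\eqdef\Ad(\delta)n$ est nilpotent et commute à $\Ad(\delta)s=\sigma$, donc $n'\in\g_\sigma$, et $\Ad(\delta)(Y+U)=\sigma+n'$. Comme $\Ad(N_P)$ stabilise $Y+\mathfrak{n}_P$ (car $N_P$ agit trivialement sur $\p/\mathfrak{n}_P\simeq\m$), on a $\sigma+n'\in Y+\mathfrak{n}_P=\sigma+Y_\nilp+\mathfrak{n}_P$, d'où $n'-Y_\nilp\in\mathfrak{n}_P$ ; joint à $n'-Y_\nilp\in\g_\sigma$, cela donne $V\eqdef n'-Y_\nilp\in\mathfrak{n}_P\cap\g_\sigma=\mathfrak{n}_{P_\sigma}$, et $\Ad(\delta)(Y+U)=\sigma+Y_\nilp+V=Y+V$, comme voulu.

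L'unique point véritablement non formel est la transitivité invoquée ci-dessus ; je m'attends à ce que ce soit l'obstacle principal, d'autant que l'argument doit tenir compte de l'interaction entre $\sigma$ et $Y_\nilp$, un raisonnement n'utilisant que $\sigma$ ne suffisant pas. On peut d'ailleurs s'en passer et prouver l'énoncé directement : le morphisme $a:N_P\times(Y+\mathfrak{n}_{P_\sigma})\to Y+\mathfrak{n}_P$, $(\delta,Z)\mapsto\Ad(\delta)Z$, a en $(e,Y)$ une différentielle surjective, car l'image de $\operatorname{ad}(\sigma+Y_\nilp)$ restreint à $\mathfrak{n}_P$ contient un supplémentaire de $\mathfrak{n}_{P_\sigma}$ — l'opérateur $\operatorname{ad}\sigma$ est semi-simple sur $\mathfrak{n}_P$, de noyau $\mathfrak{n}_{P_\sigma}$, et $\operatorname{ad}(Y_\nilp)$ en est une perturbation nilpotente qui lui commute, donc $\operatorname{ad}(\sigma+Y_\nilp)$ est inversible sur l'image de $(\operatorname{ad}\sigma)|_{\mathfrak{n}_P}$. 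L'image de $a$ est alors un voisinage de $Y$ ; comme elle est stable sous l'action de $\mathbb{G}_m$ par un cocaractère $\lambda:\mathbb{G}_m\to A_M$ définissant $P$, action qui contracte $Y+\mathfrak{n}_P$ sur $\{Y\}$, cette image est $Y+\mathfrak{n}_P$ tout entier, et l'on conclut sur les $F$-points, $F$ étant séparablement clos.
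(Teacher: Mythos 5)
Votre démonstration est correcte, mais elle suit une voie réellement différente de celle du texte : le texte ne redémontre pas le lemme, il renvoie à \cite[lemme 2.3]{Ch02a} en remarquant seulement que l'argument y reste valable sur un corps parfait, tandis que vous en donnez une preuve autonome. Votre seconde variante est la plus solide : la surjectivité de la différentielle de $a:(\delta,Z)\mapsto\Ad(\delta)Z$ en $(e,Y)$ — via l'inversibilité de $\ad(\sigma+Y_\nilp)$ sur l'image de $\ad\sigma|_{\n_P}$, perturbation nilpotente commutant avec un opérateur semi-simple inversible — jointe à la contraction par un cocaractère de $A_M$ définissant $P$, donne bien la surjectivité de $a$ sur les points géométriques, donc l'énoncé dans le cadre où le lemme est utilisé dans la preuve de la proposition \ref{chap3prop:indprop} ($F$ séparablement clos). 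Votre première variante est elle aussi correcte une fois acquise la conjugaison de la partie semi-simple $s$ sur $\sigma$ sous $N_P$ ; contrairement à votre crainte, ce « raisonnement n'utilisant que $\sigma$ » suffit, comme votre propre calcul ($V=n'-Y_\nilp\in\n_P\cap\g_\sigma$) le montre. Seule réserve : l'énoncé affirme la rationalité de $\delta$ et $V$ sur un corps parfait quelconque, ce que la variante directe ne fournit pas telle quelle ; on l'obtient en recollant vos deux arguments — la surjectivité géométrique montre que $s=(Y+U)_\ss$ appartient à l'orbite géométrique $\Ad(N_P)\sigma$, puis le transporteur $\{\delta\in N_P\mid \Ad(\delta)s=\sigma\}$ est un torseur sous le groupe unipotent déployé $N_P\cap G_s$, dont le $H^1$ galoisien s'annule — ou par l'argument d'approximations successives le long de la série centrale de $N_P$ qui est celui de la référence citée. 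En somme, l'approche du texte économise la preuve par une citation ; la vôtre a l'avantage d'être auto-contenue et d'expliciter le mécanisme, au prix d'un mot supplémentaire pour la descente de $\overline{F}$ à $F$.
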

    \begin{proof}
    Ceci est \cite[lemme 2.3]{Ch02a}. À noter que le lemme y est prouvé pour $F$ de caractéristique 0. Cependant, la démonstration est également valable pour tout corps parfait, car la décomposition de Jordan existe sur un corps parfait, et tout radical unipotent d'un groupe parabolique d'un groupe réductif sur un corps quelconque est lisse et déployé (\cite[propositions 2.1.8 (3), 2.1.10]{CGPbook}).     
    \end{proof}
    On déduit du lemme que $(\Ad N_P)I_\sigma= (\o_\sigma+\mathfrak{n}_{P})\cap \Ind_M^G(X)$, puis
    \[(\Ad P)I_\sigma= (\o+\mathfrak{n}_{P})\cap \Ind_M^G(X).\]
    Ce dernier est un ouvert Zariski dense dans $(\Ad P)(\o_\sigma+\mathfrak{n}_{P_\sigma})=\o+\mathfrak{n}_{P}$, ce qu'il fallait.

    Abandonnons maintenant l'hypothèse que le sous-groupe dérivés de $G$ est simplement connexe. \`{A} la place de quoi, on prend une z-extension de $G$ (\cite[section 1]{Kott82}) : 
    \[1\rightarrow \widetilde{T}\rightarrow\widetilde{G}\xrightarrow{\alpha} G\rightarrow 1\]
    avec $\widetilde{G}$ un groupe réductif dont le sous-groupe dérivé est simplement connexe et $\widetilde{T}$ un sous-tore central induit. La projection $\alpha$ identifie le système de racines de $\widetilde{G}$ à celui de $G$. On note $\widetilde{M}\eqdef \alpha^{-1}(M)$, qui est un sous-groupe de Levi de $\widetilde{G}$. Prenons $\widetilde{X}\in\widetilde{\g}$ tel que $\alpha(\widetilde{X})=X$. On a $\alpha(\widetilde{X}_\ss)=X_\ss$ et $\alpha(\widetilde{X}_\nilp)=X_\nilp$. Il va sans dire que 
    \begin{align*}    \alpha(\Ind_{\widetilde{M}}^{\widetilde{G}}(\widetilde{X}))&=\alpha\left(\Ad(\widetilde{G})(\widetilde{X}_{\ss}+\Ind_{\widetilde{M}_{\widetilde{X}_{\ss}}}^{\widetilde{G}_{\widetilde{X}_{\ss}}}(\widetilde{X}_{\nilp}))\right)\\
    &=\Ad(G)(X_{\ss}+\Ind_{M_{X_{\ss}}^\circ}^{G_{X_{\ss}}^\circ}(X_{\nilp}))    
    \end{align*}
    (la dernière égalité vient de \cite[lemme 3.1 (i)]{Kott82}) est une orbite vérifiant la propriété voulue, la preuve est terminée.

    \item Déjà établi dans la démonstration du point précédent.
    
    \item Prouvons d'abord l'énoncé de la codimension. Posons $\sigma=X_\ss$ et $\nu=X_\nilp$. Soit $\nu'\in \Ind_{M_{\sigma}^\circ}^{G_{\sigma}^\circ}(\nu)$, on est amené à prouver $\dim M_{\sigma+\nu}=\dim G_{\sigma+\nu'}$ par le point 1. Or $M_{\sigma+\nu}=(M_{\sigma})_{\nu}$ et $G_{\sigma+\nu'}=(G_{\sigma})_{\nu'}$, on est amené au cas où $\o$ serait nilpotent. On conclut alors par \cite[théorème 1.3 (a)]{LuszSpal79}.

    \item On sait d'ores et déjà que $\dim \Ind_M^G(\o)=\dim \o+2\dim \n_P$. En parallèle, en notant par $\overline{N_P}$ l'opposé de $N_P$ par rapport à $M$, on voit que si $Y\in \o+\n_P$ alors $\dim (\Ad G)Y=\dim (\Ad \overline{N_P}MN_P)Y\leq \dim (\Ad \overline{N_P})(\o+\n_P)\leq \dim \overline{N_P}+\dim (\o+\n_P)=\dim\o+2\dim \n_P$. Il en ressort $(\o+\n_P)\cap\Ind_M^G(\o)\subseteq (\o+\n_P)_\reg$. Or $\o+\n_P$ est inclus dans l'adhérence de $\Ind_M^G(\o)$, ce dernier est ainsi l'unique orbite de sa dimension qui rencontre $\o+\n_P$.
    
    \item La preuve de Lusztig-Spaltenstein \cite[théorème 1.3 (c)]{LuszSpal79} pour $\o$ nilpotent se généralise directement au cas général : soit $Y\in \Ind_P^G(X)\cap (X+\mathfrak{n}_P)$. On a
    \begin{align*}
    \dim ((\Ad P)Y)&= \dim P -\dim P_{Y}\\
    &
\geq \dim P -\dim G_{Y}=\dim M+\dim N_P-\dim M_{X}  \\
    &=\dim \o+\dim \mathfrak{n}_P =\dim (\o+\mathfrak{n}_P),
    \end{align*}
    ce qui prouve le résultat voulu puisque $Y$ est un élément arbitraire de $\Ind_P^G(X)\cap (X+\mathfrak{n}_P)$.
    \item Partant de la preuve du point précédent on a $\dim P_Y=\dim G_Y$ donc $G_Y^\circ \subseteq P$.
    \item Cela résulte de l'unicité de l'orbite induite.\qedhere
\end{enumerate}
\end{proof}

L'orbite  $\Ind_P^G(X)$ contient un $F$-point car $\mathfrak{n}_P(F)$ est Zariski dense dans $\mathfrak{n}_P$.

\begin{proposition}\label{YDLiopprop:whenXinIndMGX}Soient $M$ un sous-groupe de Levi de $G$ et $X \in \m(F)$. Les conditions suivantes sont équivalentes.
\begin{enumerate}
    \item $X\in \Ind_M^G(X)$.
    \item Pour tous $P\in\P^G(M)$ et $N\in N_P$, on a $X+N\in \Ind_M^G(X)$.
    \item Pour tous $P\in\P^G(M)$ et $N\in N_P$, il existe $n\in N_P$ tel que $(\Ad n)(X+N)=X$.
    \item $M_{X_\ss}=G_{X_\ss}$.
    \item $M_X=G_X$.
    \item Pour tout $P\in\P^G(M)$, le morphisme de schémas 
    \begin{align*}
    \n_P&\to \n_P\\
    N&\mapsto [N,X]
    \end{align*}
    est un isomorphisme, ici $[-,-]$ est le crochet de Lie sur $\g$.
    \item Pour tout $P\in\P^G(M)$, le morphisme de schémas 
    \begin{align*}
    N_P&\to \n_P\\
    n&\mapsto (\Ad n)X-X
    \end{align*}
    est un isomorphisme.
\end{enumerate}    
\end{proposition}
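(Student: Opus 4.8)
The plan is to show that each of $(1)$, $(4)$, $(5)$, $(6)$, $(7)$ is equivalent to the single identity $\g_X=\m_X$ of Lie subalgebras of $\g$ --- equivalently $\dim G_X=\dim M_X$, equivalently $G_X^\circ=M_X^\circ$, these being interchangeable since $M_X\subseteq G_X$ and $\m_X=\m\cap\g_X$ --- and then to derive $(2)$ and $(3)$ from this by a soft density argument. Two structural inputs will be used throughout. First, for $P\in\P^G(M)$ with opposite parabolic $\bar P\in\P^G(M)$, the decomposition $\g=\m\oplus\n_P\oplus\bar\n_P$ is stable under $\mathrm{ad}(X)$ (each summand is preserved because $X\in\m$ normalizes $\n_P$ and $\bar\n_P$), so that $\g_X=\m_X\oplus(\g_X\cap\n_P)\oplus(\g_X\cap\bar\n_P)$. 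Second, the two dimension identities $\dim\o+2\dim\n_P=\dim G-\dim M_X$ and, from Proposition~\ref{chap3prop:indprop}(3), $\dim\Ind_M^G(\o)=\dim\o+2\dim\n_P$.

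First I would handle $(1)$ and $(6)$. By Proposition~\ref{chap3prop:indprop}(4) the set $\Ind_M^G(\o)\cap(\o+\n_P)$ is exactly the $G$-regular locus of $\o+\n_P$, on which $\dim(\Ad G)Y$ attains its maximal value $\dim\Ind_M^G(\o)$; since $X\in\o\subseteq\o+\n_P$ this gives $X\in\Ind_M^G(\o)\iff\dim(\Ad G)X=\dim\Ind_M^G(\o)\iff\dim G_X=\dim M_X$. For $(6)$: the morphism $N\mapsto[N,X]$ on $\n_P$ is the linear endomorphism $-\mathrm{ad}(X)$, an isomorphism iff it is injective iff $\g_X\cap\n_P=0$; running this for $P$ and for $\bar P$ and invoking the $\mathrm{ad}(X)$-stable decomposition, condition $(6)$ (for all $P$) is equivalent to $\g_X\cap\n_P=\g_X\cap\bar\n_P=0$, i.e.\ to $\g_X=\m_X$. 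For $(6)\Leftrightarrow(7)$: differentiating $\phi_P\colon N_P\to\n_P$, $n\mapsto(\Ad n)X-X$, at the identity yields $-\mathrm{ad}(X)$ on $\n_P$, whence $(7)\Rightarrow(6)$; conversely $\phi_P$ carries each term $N^{(i)}$ of the lower central series of $N_P$ (these are $M$-stable, hence $\mathrm{ad}(X)$-stable) into $\n^{(i)}$, with $\phi_P(\exp Y)\equiv[Y,X]\pmod{\n^{(i+1)}}$ for $Y\in\n^{(i)}$, so the induced map on $\n^{(i)}/\n^{(i+1)}$ is $\bar Y\mapsto-\overline{\mathrm{ad}(X)}\,\bar Y$, which is bijective because $\ker(\mathrm{ad}(X)|_{\n_P})=0$ forces $\ker(\mathrm{ad}(X)|_{\n^{(i)}})=0$; as a filtered morphism of affine spaces inducing isomorphisms on all graded pieces is itself an isomorphism, this gives $(6)\Rightarrow(7)$. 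Finally $(2)\Rightarrow(1)$ by taking $N=0$; conversely, given $(1)$ and hence $(7)$, surjectivity of $\phi_P$ shows every $X+N$ equals $(\Ad n)X\in(\Ad G)X=\Ind_M^G(X)$, which is $(2)$, while $(\Ad n^{-1})(X+N)=X$ with $n^{-1}\in N_P$ is $(3)$; and $(3)$ asserts $X+\n_P\subseteq(\Ad N_P)X$, so $\phi_P$ is surjective for $P$ and $\bar P$, hence dominant and (in characteristic zero) generically smooth, forcing $-\mathrm{ad}(X)$ to be surjective --- hence an isomorphism --- on $\n_P$ and on $\bar\n_P$, which is $(6)$.

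It remains to connect $\g_X=\m_X$ with $(4)$ and $(5)$. From $M_{X_\ss}=G_{X_\ss}$ one gets $M_X=(M_{X_\ss})_{X_\nilp}=(G_{X_\ss})_{X_\nilp}=G_X$, so $(4)\Rightarrow(5)$, and $(5)$ trivially implies $\dim M_X=\dim G_X$. For the implication $\g_X=\m_X\Rightarrow(4)$ I would use Proposition~\ref{chap3prop:indprop}(2): the hypothesis gives $(1)$, i.e.\ $X_\ss+X_\nilp\in\Ad(G)\bigl(X_\ss+\Ind_{M_{X_\ss}^\circ}^{G_{X_\ss}^\circ}(X_\nilp)\bigr)$, and uniqueness of the Jordan decomposition puts the conjugating element in $G_{X_\ss}$, whence $\dim(\Ad G_{X_\ss}^\circ)X_\nilp=\dim\Ind_{M_{X_\ss}^\circ}^{G_{X_\ss}^\circ}(X_\nilp)$, i.e.\ $X_\nilp\in\Ind_{M_{X_\ss}^\circ}^{G_{X_\ss}^\circ}(X_\nilp)$. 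Applying the equivalence $(1)\Leftrightarrow(6)$ inside the connected reductive group $G_{X_\ss}^\circ$, with Levi $M_{X_\ss}^\circ=\Cent(A_M,G_{X_\ss}^\circ)$ and the nilpotent element $X_\nilp\in\Lie(M_{X_\ss}^\circ)$, we would obtain that $\mathrm{ad}(X_\nilp)$ is an automorphism of $\n_Q$ for $Q\in\P^{G_{X_\ss}^\circ}(M_{X_\ss}^\circ)$; but $\mathrm{ad}(X_\nilp)$ is a nilpotent operator, so necessarily $\n_Q=0$, i.e.\ $M_{X_\ss}^\circ=G_{X_\ss}^\circ$. Since $\Cent(X_\ss,G)$ is connected --- automatic for an inner form of a general linear group, where it is the unit group of the centralizer algebra of $X_\ss$, and true for an arbitrary connected reductive group in characteristic zero --- this yields $G_{X_\ss}=M_{X_\ss}^\circ\subseteq M$, hence $G_{X_\ss}=M\cap G_{X_\ss}=M_{X_\ss}$, which is $(4)$.

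The step I expect to be the main obstacle is precisely this last one: the reduction to the nilpotent situation via Proposition~\ref{chap3prop:indprop}(2) is clean, and the nilpotency of $\mathrm{ad}(X_\nilp)$ at once yields $M_{X_\ss}^\circ=G_{X_\ss}^\circ$, but passing from there to the equality of the \emph{full} centralizers $M_{X_\ss}=G_{X_\ss}$ genuinely rests on the connectedness of $\Cent(X_\ss,G)$ --- harmless in the present setting, but the one point that goes beyond elementary manipulation and appeals to Proposition~\ref{chap3prop:indprop}. The only other mildly technical point, the passage from étale to isomorphism in $(6)\Rightarrow(7)$, is taken care of by the triangularity of $\phi_P$ along the lower central series of $N_P$.
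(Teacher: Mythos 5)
Your proof is correct, and its skeleton is the same as the paper's: every condition is funnelled through the single equality $\g_X=\m_X$ (equivalently $M_X^\circ=G_X^\circ$), condition (1) is matched to it through the codimension/regular-locus statements of Proposition \ref{chap3prop:indprop}, condition (6) through the $\ad(X)$-stable decomposition $\g_X=(\overline{\n_P})_X\oplus\m_X\oplus(\n_P)_X$, and the passage to (4) through the observation that $\ad(X_\nilp)$ is nilpotent on $\n_Q$ for $Q\in\P^{G_{X_\ss}^\circ}(M_{X_\ss}^\circ)$, so that $X_\nilp\in\Ind_{M_{X_\ss}^\circ}^{G_{X_\ss}^\circ}(X_\nilp)$ forces $\n_Q=0$. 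The genuine divergences are local. For $(1)\Rightarrow(2)$ and $(2)\Rightarrow(3)$ the paper conjugates $X+N$ into $X+\n_{P_{X_\ss}}$ via Lemma \ref{YDLioplem:Y+VtoJordan} and then kills $\n_{P_{X_\ss}}$ using $(1)\Leftrightarrow(4)$; you read both off from the surjectivity of $n\mapsto(\Ad n)X-X$, i.e.\ from (7), which is cleaner and avoids that auxiliary lemma. Conversely, for $(6)\Rightarrow(7)$ the paper only remarks that $\exp:\n_P\to N_P$ is an isomorphism, which does not by itself explain why the non-linear map $n\mapsto(\Ad n)X-X$ is invertible; your triangularity argument along the lower central series supplies the missing detail and is the more rigorous of the two. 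One small imprecision on your side: in $(3)\Rightarrow(6)$, generic smoothness gives surjectivity of the differential at a generic point $n_0$, and that differential is $Y\mapsto[Y,(\Ad n_0)X]$ rather than $-\ad(X)$; this is harmless because $(\n_P)_{(\Ad n_0)X}=(\Ad n_0)\bigl((\n_P)_X\bigr)$, or, more directly, because the fibres of $n\mapsto(\Ad n)X-X$ are torsors under $N_P\cap G_X$, so surjectivity already forces that group to be trivial. Finally, your appeal to the connectedness of $G_{X_\ss}$ to upgrade $M_{X_\ss}^\circ=G_{X_\ss}^\circ$ to $M_{X_\ss}=G_{X_\ss}$ plays exactly the role of the paper's opening remark that a Levi subgroup $L$ of a reductive group $H$ satisfies $L=H$ if and only if $L^\circ=H^\circ$; both devices are legitimate in the setting at hand.
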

\begin{proof}Remarquons que pour tout groupe réductif $H$ et tout sous-groupe de Levi $L$, l'égalité $L=H$ a lieu si et seulement si $L^\circ=H^\circ$. 

(4) $\Leftrightarrow$ (5) : l'implication directe est triviale, quant à la réciproque on prend $P\in \P^{G_{X_\ss}^\circ}(M_{X_\ss}^\circ)$, l'adjonction par $X_\nilp\in \mathfrak{m}_{X_\ss}^\circ$ sur $\n_P$ admet 0 comme unique valeur propre, on a donc $M_X^\circ\not=G_X^\circ$ si $\n_Q\not=\{0\}$, soit $M_{X_\ss}^\circ\not=G_{X_\ss}^\circ$. 

(1) $\Leftrightarrow$ (5) : supposons que $X\in\Ind_M^G(X)$, alors le point 3 de la proposition \ref{chap3prop:indprop} nous assure que $\dim M_X=\dim G_X$, il s'ensuit $M_X^\circ=G_X^\circ$. Réciproquement, supposons que $M_X^\circ=G_X^\circ$, alors $M_{X_\ss}^\circ=G_{X_\ss}^\circ$ et l'équation \eqref{chap3eq:indJordan} nous assure donc que 
\[X\in (\Ad G)(X_\ss+\Ind_{M_{X_\ss}^\circ}^{G_{X_\ss}^\circ}(X_\nilp))=\Ind_M^G(X).\]

(1) $\Leftrightarrow$ (2) : la réciproque est triviale. Pour l'implication directe : soit $N\in N_P$, on cherche à établir que $X+N$ est conjugué à $X$ par un élément de $G$. Or d'après le lemme \ref{YDLioplem:Y+VtoJordan} on sait que $X+N$ et conjugué à un élément de la forme $X+V$, où $V\in \n_{P_{X_\ss}}$. Mais l'équivalence (1) $\Leftrightarrow$ (4) nous garantit que $M_{X_\ss}=G_{X_\ss}$. Ainsi $\n_{P_{X_\ss}}=0$. Ce qu'il fallait.

(2) $\Leftrightarrow$ (3) : la réciproque vient du fait que $\Ind_M^G(X)=(\Ad G)(\Ind_M^G(X)\cap (X+\n_P))$ d'après le point 5 de la proposition \ref{chap3prop:indprop}. Pour l'implication directe : soit $N\in \N_P$, on cherche à établir que $X+N$ et conjugué à $X$ par un élément de $N_P$. Or d'après le lemme \ref{YDLioplem:Y+VtoJordan} on sait que $X+N$ et conjugué par un élément de $N_P$ à un élément de la forme $X+V$, où $V\in \n_{P_{X_\ss}}$. Mais l'équivalence (1) $\Leftrightarrow$ (4) nous garantit que $M_{X_\ss}=G_{X_\ss}$. Ainsi $\n_{P_{X_\ss}}=0$. Ce qu'il fallait.

(5) $\Leftrightarrow$ (6) : prouvons d'abord la réciproque. Supposons que le morphisme de schémas $\n_P\to \n_P$ en question est un isomorphisme. Il est a fortiori injectif, ainsi $(\n_P)_X=\{0\}$. De même $(\overline{\n_P})_X=\{0\}$ avec $\overline{N_P}$ l'opposé de $N_P$. 
Puisque $X\in \m$, on en déduit $\g_X=(\overline{\n_P})_X\oplus \m_X\oplus (\n_P)_X=\m_X$, ainsi $M_X=G_X$. Prouvons ensuite l'implication directe. D'après l'équivalence (1) $\Leftrightarrow$ (5) on sait que $\g_X\subseteq\m$. L'endomorphisme $N\in\n_P\mapsto [N,X]\in\n_P$ est donc un automorphisme de $\n_P$.

(6) $\Leftrightarrow$ (7) : l'implication directe vient du fait que $N_P$ est unipotent et donc l'exponentielle $\exp :\n_P\to N_P$ est un isomorphisme. La réciproque est triviale.\qedhere 

\end{proof}

En particulier, l'équivalence (1) $\Leftrightarrow$ (4) dit que si $X$ est nilpotent, alors $X\in \Ind_M^G(X)$ si et seulement si $M=G$.

\begin{definition}\label{YDLiopdef:IndPG}
Soient $P$ un sous-groupe de Levi de $G$ et $X \in\mathfrak{p}(F)$. On définit
\[\Ind_P^G(X)\eqdef \Ind_M^G(\pi_{\p,\mathfrak{m}}(X)).\]
Ici $M$ est un facteur de Levi de $P$ et $\pi_{\p,\mathfrak{m}}:\mathfrak{p}\rightarrow \mathfrak{m}$ est la projection.

La définition de $\Ind_P^G(X)$ est indépendante du choix de $M$ car la sous-variété $(\Ad M)\pi_{\p,\mathfrak{m}}(X)+\n_P$ de $\g$ l'est. 
\end{definition}

\subsection{Pseudo-sous-groupe de Levi et son enveloppe de Levi}\label{subsec:pseudo-levi}
Le contenu de ce numéro devrait être connu, cependant, en l'absence d'une référence adéquate, nous présentons les détails. 

Soient $F$ un corps parfait et $G$ un groupe réductif connexe sur $F$. 

\begin{definition}
Nous appellerons pseudo-sous-groupe de Levi de $G$ tout sous-groupe de la forme $G_\sigma^\circ$ pour $\sigma\in\g_\ss(F)$.    
\end{definition}

Tout sous-groupe de Levi d'un pseudo-sous-groupe de Levi de $G$ est un pseudo-sous-groupe de Levi de $G$. En effet, si $\sigma\in\g_\ss(F)$, et $L'$ un sous-groupe de Levi de $G_\sigma^\circ$, alors $L'=(G_{\sigma}^\circ)_A$ pour $A$ un élément $G_\sigma^\circ$-régulier de $\mathfrak{a}_{L'}$, d'où $L'=G_{\sigma+A}^\circ$ un pseudo-sous-groupe de Levi.

On observe que pour tout groupe algébrique $H$, il y a $A_{H}=A_{H^\circ}$.

\begin{lemma}Pour tout pseudo-sous-groupe de Levi $L'$ de $G$, le plus petit élément (pour l'inclusion) dans l'ensemble de Levi de $G$ contenant $L'$ existe. On l'appelle l'enveloppe de Levi de $L'$ dans $G$, et le notera par $\envL(L';G)$. Plus précisément, $L\eqdef\Cent(A_{L'},G)$ est l'enveloppe de Levi de $L'$ dans $G$. On a $A_L=A_{L'}$, et $L$ est le seul sous-groupe de Levi de $G$ vérifiant cette égalité.
\end{lemma}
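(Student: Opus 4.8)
The plan is to prove the two assertions of the lemma in turn: first that $L\eqdef\Cent(A_{L'},G)$ is a Levi subgroup of $G$ containing $L'$ which is minimal among such, and second that it is the unique Levi subgroup $L$ with $A_L=A_{L'}$; the "enveloppe de Levi" statement will then follow formally. First I would recall from the preliminaries that every Levi subgroup of $G$ is the centralizer in $G$ of a split subtorus, and conversely that the centralizer of \emph{any} split subtorus of $G$ is a Levi subgroup. Applying this to the split torus $A_{L'}$ (which is split since $L'$ is a reductive group over a perfect field, being $G_\sigma^\circ$, and $A_{L'}=A_{L'}$ makes sense as its maximal central split torus) shows immediately that $L=\Cent(A_{L'},G)$ is a Levi subgroup of $G$. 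To see $L'\subseteq L$: since $A_{L'}$ is central in $L'$, the group $L'$ centralizes $A_{L'}$, hence $L'\subseteq\Cent(A_{L'},G)=L$.

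Next I would establish minimality. Let $L''$ be any Levi subgroup of $G$ with $L'\subseteq L''$; I want $L\subseteq L''$. Write $L''=\Cent(A_{L''},G)$. Since $L'\subseteq L''$, the split torus $A_{L''}$ is central in $L''$ hence centralizes $L'$, so $A_{L''}\subseteq\Cent(L',G)^\circ$; but $A_{L''}$ is split and central in $\Cent(L',G)$, so $A_{L''}\subseteq A_{L'}$ (here $A_{L'}$ is the maximal split central torus of $L'$, equivalently the split part of $Z(L')^\circ=\Cent(L',G)^\circ$ as used earlier in the text for Levi subgroups — I should check this characterization extends to the pseudo-Levi $L'$, which it does since $L'$ is connected reductive). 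Therefore $L=\Cent(A_{L'},G)\subseteq\Cent(A_{L''},G)=L''$, giving minimality, and in particular $L$ is the \emph{smallest} Levi subgroup containing $L'$, i.e.\ $\envL(L';G)=L$.

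For the equality $A_L=A_{L'}$: the inclusion $A_{L'}\subseteq A_L$ is clear since $A_{L'}$ is a split torus central in $L$ (it is central because $L=\Cent(A_{L'},G)$ centralizes $A_{L'}$, and split by construction), hence contained in the maximal such, $A_L$. Conversely $A_L$ is a split central torus of $L$, and since $L'\subseteq L$ it centralizes $L'$ and is split, so $A_L\subseteq A_{L'}$ by the same maximality argument as above. Hence $A_L=A_{L'}$. Finally, for uniqueness of the Levi subgroup with this property: if $\widetilde L$ is a Levi subgroup of $G$ with $A_{\widetilde L}=A_{L'}$, then by the dynamical description recalled in the preliminaries $\widetilde L=\Cent(A_{\widetilde L},G)=\Cent(A_{L'},G)=L$.

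The routine bookkeeping aside, the one point that genuinely requires care — and which I expect to be the main (minor) obstacle — is justifying that the characterization "$A_{L'}$ is the split part of $Z(L')^\circ$, and conversely any split torus centralizing $L'$ lies in $A_{L'}$" applies verbatim to the pseudo-Levi $L'=G_\sigma^\circ$ rather than to an honest Levi subgroup; this is where perfectness of $F$ (so that $G_\sigma^\circ$ is again connected reductive, and $X_\ss$, $A_{L'}$ behave well) is used, together with the observation already recorded in the text that $A_H=A_{H^\circ}$ for any algebraic group $H$. Once that is in hand, every step above is a direct application of the dynamical/centralizer description of Levi subgroups quoted from \cite{SGA3,CGPbook} in the preliminaries.
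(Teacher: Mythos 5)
Your proof is correct and follows essentially the same route as the paper's: define $L=\Cent(A_{L'},G)$, use the centralizer/dynamical description of Levi subgroups to see it is a Levi containing $L'$, and obtain minimality by showing $A_{L''}\subseteq A_{L'}$ for any Levi $L''\supseteq L'$. The one point you flag — that any split torus centralizing $L'=G_\sigma^\circ$ lands in $A_{L'}$ — does hold, since $\Cent(L',G)\subseteq G_\sigma$ forces $\Cent(L',G)^\circ\subseteq L'$ and hence $\Cent(L',G)^\circ=Z(L')^\circ$; this is the same comparison the paper makes via $Z(L')\supseteq Z(M)$.
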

\begin{proof}
Considérons $L=\Cent(A_{L'},G)$, c'est un sous-groupe de Levi de $G$ puisque $A_{L'}$ est un sous-tore déployé. On a bien sûr $A_L=A_{L'}$. 

\'{E}crivons $L'=G_\sigma^\circ$ pour $\sigma\in\g_\ss(F)$, on voit que $L'=\Cent(A_{L'},G_{\sigma}^\circ)\subseteq L$. Supposons d'autre part que $M$ est un sous-groupe de Levi de $G$ contenant $L'$. Alors de $Z(L')\supseteq Z(M)$ on déduit $A_L=A_{L'}\supseteq A_M$, par conséquent $L=\Cent(A_L,G)\subseteq \Cent(A_M,G)=M$.
\end{proof}

Soit $\sigma\in \g_\ss(F)$. Si $M$ est un sous-groupe de Lie de $G$ dont l'algèbre de Levi contient $\sigma$, alors $M_\sigma^\circ$ est également un pseudo-sous-groupe de Levi de $G$ vu que $M_\sigma^\circ$ est un sous-groupe de Levi du pseudo-sous-groupe de Levi $G_\sigma^\circ$. Soit $L$ l'enveloppe de Levi de $M_\sigma^\circ$ dans $G$. On a par définition $M_\sigma^\circ \subseteq L\subseteq M$. Le lemme ci-dessus nous assure que $L$ est aussi l'enveloppe de Levi de $M_\sigma^\circ$ dans $M$. D'ailleurs les inclusions $M_\sigma^\circ \subseteq L_\sigma^\circ\subseteq L$ nous assure de plus que $L$ est l'enveloppe de Levi de $L_\sigma^\circ$ dans $G$.

Au regard de sa définition, on obtient la caractérisation suivante de l'enveloppe de Levi en termes de l'ellipticité.

\begin{lemma}\label{lem:envLell}Soient $\sigma\in \g_\ss(F)$ et $L'=G_\sigma^\circ$. Alors l'enveloppe de Levi $L$ de $G_\sigma^\circ$ dans $G$ est aussi le plus grand sous-groupe de Levi de $G$ dont l'algèbre de Lie contient $\sigma$ comme élément $F$-elliptique. 
\end{lemma}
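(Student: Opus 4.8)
The plan is to check separately the two assertions hidden in the statement: that $\sigma$ is an $F$-elliptic element of $\Lie(L)$, and that every Levi subgroup of $G$ sharing this property is contained in $L$.

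First I would treat the ellipticity. Write $L'=G_\sigma^\circ$; by the preceding lemma $L=\Cent(A_{L'},G)$ and $A_L=A_{L'}$. Since $L'\subseteq L$, certainly $\sigma\in\Lie(L')\subseteq\Lie(L)$, so it remains to see that $A_L=A_{L_\sigma}$. From $G_\sigma^\circ=L'\subseteq L$ I get $G_\sigma^\circ\subseteq L_\sigma^\circ$, while the reverse inclusion $L_\sigma^\circ\subseteq G_\sigma^\circ$ is automatic because $L_\sigma\subseteq G_\sigma$; hence $L_\sigma^\circ=L'$, and therefore $A_{L_\sigma}=A_{L_\sigma^\circ}=A_{L'}=A_L$, using the identity $A_H=A_{H^\circ}$ recorded earlier. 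As $\sigma$ is semisimple by hypothesis, this is precisely the statement that $\sigma$ is $F$-elliptic in $L$.

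Next I would prove the maximality. Let $M$ be a Levi subgroup of $G$ with $\sigma\in\mathfrak{m}(F)$ and $\sigma$ $F$-elliptic in $M$, i.e. $A_M=A_{M_\sigma}$. By the paragraph just before the statement, $M_\sigma^\circ$ is a Levi subgroup of the pseudo-sous-groupe de Levi $G_\sigma^\circ=L'$. Now a Levi subgroup of a connected reductive group contains a maximal torus of that group, hence contains its center; so $Z(L')\subseteq Z(M_\sigma^\circ)$, whence $A_{L'}\subseteq A_{M_\sigma^\circ}=A_{M_\sigma}=A_M$. Applying $\Cent(-,G)$, which reverses inclusions of split subtori, yields $M=\Cent(A_M,G)\subseteq\Cent(A_{L'},G)=L$, as claimed.

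I expect the only genuine subtlety to be the containment $A_{L'}\subseteq A_M$: it relies on the fact that $M_\sigma^\circ$ is a \emph{Levi} subgroup of $L'$ — not merely a connected reductive subgroup — which is what lets one propagate the central split torus in the required direction, and this is exactly the content of the discussion preceding the lemma. Everything else is formal bookkeeping with split central tori, together with the identities $A_H=A_{H^\circ}$ and $A_L=A_{L'}$ already available.
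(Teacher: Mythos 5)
Your proof is correct and follows essentially the same route as the paper: ellipticity via $L_\sigma^\circ=G_\sigma^\circ=L'$, hence $A_{L_\sigma}=A_{L'}=A_L$, and maximality via $Z(L')\subseteq Z(M_\sigma)$, hence $A_{L'}\subseteq A_{M_\sigma}=A_M$, and then applying $\Cent(-,G)$. The only difference is that you make explicit why $Z(L')\subseteq Z(M_\sigma^\circ)$ (using that $M_\sigma^\circ$ is a Levi subgroup of $L'$, hence contains a maximal torus and therefore the center), a step the paper leaves implicit.
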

\begin{proof}
On a d'abord $\sigma\in \mathfrak{l}'(F)\subseteq \mathfrak{l}(F)$. Puis $L'\subseteq L$ entraîne $G_\sigma^\circ= L_\sigma^\circ$, ainsi
$A_L=A_{L'}=A_{L_{\sigma}}$. En d'autres mots $L$ est un sous-groupe de Levi de $G$ dont l'algèbre de Lie contient $\sigma$ comme élément $F$-elliptique. Réciproquement si $M$ est un sous-groupe de Levi de $G$ dont l'algèbre de Lie contient $\sigma$ comme élément $F$-elliptique, on aura $Z(M_\sigma)\supseteq Z(L')$, d'où $A_{M_\sigma}\supseteq A_{L'}$, il vient de ce fait $M=\Cent(A_M,G)=\Cent(A_{M_\sigma},G)\subseteq\Cent(A_{L'},G)=\Cent(A_L,G)=L$. 
\end{proof}


\begin{lemma}
Soient $M$ un sous-groupe de Levi de $G$ et $\sigma\in\mathfrak{m}_\ss(F)$. 
\begin{enumerate}
    \item L'application naturelle
    \begin{align*}
        \F^G(M)&\longrightarrow \F^{G_\sigma^\circ}(M_\sigma^\circ)
        \\
        P& \longmapsto P_\sigma^\circ
    \end{align*}
    est surjective.
    \item  L'application naturelle
    \begin{align*}
        \{L\in\L^G(M)\mid A_L=A_{L_\sigma}\}&\longrightarrow \L^{G_\sigma^\circ}(M_\sigma^\circ)
        \\
        L& \longmapsto L_\sigma^\circ
    \end{align*}
    est bijective. Sa réciproque est l'application qui à un pseudo-sous-groupe de Levi associe son enveloppe de Levi.
\end{enumerate}
\end{lemma}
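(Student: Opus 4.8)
Le plan est d'établir d'abord (2), puis d'en déduire (1) ; l'application réciproque de celle de (2) sera, comme annoncé, $L'\mapsto\envL(L';G)=\Cent(A_{L'},G)$.

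Pour (2), je commencerais par constater que $L\mapsto L_\sigma^\circ$ est bien définie à valeurs dans $\L^{G_\sigma^\circ}(M_\sigma^\circ)$ : pour $L\in\L^G(M)$, comme $\sigma\in\m\subseteq\mathfrak{l}$, on a $L\cap G_\sigma^\circ=\Cent(A_L,G)\cap G_\sigma^\circ=\Cent(A_L,G_\sigma^\circ)$, qui est connexe (centralisateur d'un tore déployé dans un groupe réductif connexe), donc coïncide avec $L_\sigma^\circ$ ; ainsi $L_\sigma^\circ=\Cent(A_L,G_\sigma^\circ)$ est un sous-groupe de Levi de $G_\sigma^\circ$, contenant $M_\sigma^\circ$ puisque $M\subseteq L$. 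L'injectivité sur $\{L\mid A_L=A_{L_\sigma}\}$ résulte alors du lemme précédent : si $A_L=A_{L_\sigma}=A_{L_\sigma^\circ}$, alors $L=\Cent(A_{L_\sigma^\circ},G)=\envL(L_\sigma^\circ;G)$ est déterminé par $L_\sigma^\circ$. Pour la surjectivité, soit $L'\in\L^{G_\sigma^\circ}(M_\sigma^\circ)$ et $L=\envL(L';G)=\Cent(A_{L'},G)$, un sous-groupe de Levi de $G$ vérifiant $A_L=A_{L'}$ ; puisque $L'\subseteq G_\sigma^\circ$ on a $\sigma\in\mathfrak{l}'\subseteq\mathfrak{l}$, d'où, par la même formule, $L_\sigma^\circ=\Cent(A_L,G_\sigma^\circ)=\Cent(A_{L'},G_\sigma^\circ)=L'$ (la dernière égalité car $L'$ est un sous-groupe de Levi de $G_\sigma^\circ$), et $A_L=A_{L'}=A_{L_\sigma^\circ}=A_{L_\sigma}$. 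Il resterait à voir que $M\subseteq L$, soit $A_{L'}\subseteq\Cent(M,G)=Z(M)$, soit encore $A_{L'}\subseteq A_M$ : on dispose de $A_{L'}\subseteq A_{M_\sigma^\circ}$ (car $M_\sigma^\circ\subseteq L'$ sont deux sous-groupes de Levi de $G_\sigma^\circ$), ce qui ramène à comparer $A_{M_\sigma^\circ}$ et $A_M$. Ceci montre que $L'\mapsto\envL(L';G)$ est une section de $L\mapsto L_\sigma^\circ$, donc que cette dernière est bijective de réciproque annoncée.

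Pour (1) : soit $\bar P\in\F^{G_\sigma^\circ}(M_\sigma^\circ)$, de facteur de Levi $\bar L\supseteq M_\sigma^\circ$. Par (2), il existe $L\in\L^G(M)$ avec $L_\sigma^\circ=\bar L$ et $A_L=A_{L_\sigma}=A_{\bar L}$. Il suffit alors de montrer que $Q\mapsto Q\cap G_\sigma^\circ$ induit une surjection $\P^G(L)\to\P^{G_\sigma^\circ}(L_\sigma^\circ)$ : un antécédent $Q$ de $\bar P$ vérifie $Q\supseteq L\supseteq M$, donc $Q\in\F^G(M)$, et $Q_\sigma^\circ=Q\cap G_\sigma^\circ=\bar P$. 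Or, par la description dynamique des sous-groupes paraboliques, $\P^G(L)$ (resp. $\P^{G_\sigma^\circ}(L_\sigma^\circ)$) est en bijection avec les chambres ouvertes du système de racines relatif $\Phi(G,A_L)$ (resp. $\Phi(G_\sigma^\circ,A_L)$) dans $a_L=a_{L_\sigma^\circ}$ (on a $A_L=A_{L_\sigma^\circ}$), la chambre d'un cocaractère $\lambda:\mathbb{G}_m\to A_L$ correspondant au parabolique $\{g\mid\lim_{t\to0}\lambda(t)g\lambda(t)^{-1}\ \text{existe}\}$. Comme $\mathfrak{g}_\sigma\subseteq\g$ en tant que $A_L$-modules, on a $\Phi(G_\sigma^\circ,A_L)\subseteq\Phi(G,A_L)$, de sorte que le découpage en chambres relatif à $G$ raffine celui relatif à $G_\sigma^\circ$ ; il suffit donc de choisir, dans la chambre (relative à $G_\sigma^\circ$) associée à $\bar P$, un cocaractère $\lambda$ qui soit de plus $G$-régulier — c'est possible car cette chambre est ouverte et le lieu $G$-régulier est le complémentaire d'une réunion finie d'hyperplans — ; il fournit un $Q\in\P^G(L)$ avec $Q\cap G_\sigma^\circ=\bar P$, ce qui établit (1).

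Le passage le plus délicat me semble être, en (1), la comparaison des structures de chambres : identifier proprement $a_L$ et $a_{L_\sigma^\circ}$, justifier l'inclusion $\Phi(G_\sigma^\circ,A_L)\subseteq\Phi(G,A_L)$, et vérifier que le relèvement d'un parabolique quelconque (et pas seulement d'un parabolique minimal) se fait bien face par face. Il faut aussi prêter attention, en (2), à l'étape $M\subseteq\envL(L';G)$, c'est-à-dire à la comparaison entre $A_M$ et $A_{M_\sigma^\circ}$ (le cas favorable étant celui où $\sigma$ est $F$-elliptique dans $\m$, de sorte que $A_{M_\sigma^\circ}=A_M$).
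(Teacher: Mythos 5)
Votre schéma global est proche de celui du texte (description dynamique des sous-groupes paraboliques, enveloppe de Levi comme réciproque en (2)), mais il reste un trou réel, que vous signalez d'ailleurs vous-même sans le combler : l'étape $M\subseteq \envL(L';G)$ dans la surjectivité de (2), c'est-à-dire l'inclusion $A_{L'}\subseteq A_M$, que vous ramenez à la comparaison de $A_{M_\sigma^\circ}$ avec $A_M$. Ce n'est pas un simple point de vigilance : sans hypothèse d'ellipticité, cette comparaison échoue. Par exemple pour $G=\GL_3$, $M=\GL_2\times\GL_1$ et $\sigma=\mathrm{diag}(a,b,c)$ à valeurs propres distinctes, on a $M_\sigma^\circ=G_\sigma^\circ=T$ (tore diagonal), $A_{M_\sigma^\circ}=T\supsetneq A_M$, et l'enveloppe de Levi de $L'=T$ est $T$, qui ne contient pas $M$ ; de fait l'ensemble $\{L\in\L^G(M)\mid A_L=A_{L_\sigma}\}$ est vide alors que $\L^{G_\sigma^\circ}(M_\sigma^\circ)=\{T\}$. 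Le cœur du lemme est donc précisément la réduction au cas où $\sigma$ est $F$-elliptique dans $\m$ : c'est ce que fait le texte, qui traite d'abord ce cas (alors $A_{M_\sigma}=A_M$, donc $\envL(M_\sigma^\circ;G)=M$, et $L=\envL(L';G)$ contient automatiquement $M$ puisque $L\supseteq L'\supseteq M_\sigma^\circ$), puis ramène le cas général à celui-ci en remplaçant $M$ par $M_1$, le plus grand sous-groupe de Levi de $M$ dont l'algèbre de Lie contient $\sigma$ comme élément elliptique, en observant que $M_{1\sigma}^\circ=M_\sigma^\circ$ (le but des applications ne change donc pas). Sans cette réduction votre preuve de (2) est incomplète, et comme votre preuve de (1) s'appuie sur (2) pour produire le facteur de Levi $L$, le trou se propage à (1).

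Pour le reste, vos arguments sont corrects et voisins de ceux du texte : la bonne définition et l'injectivité en (2), via $L_\sigma^\circ=\Cent(A_L,G_\sigma^\circ)$ et le lemme d'enveloppe de Levi, correspondent exactement à ce que le texte déclare « claire ». Pour (1), le texte applique directement la description dynamique avec des cocaractères de $A_M=A_{M_\sigma}$ (cas elliptique, d'où $A_{M,G-\reg}=A_{M_\sigma,G_\sigma^\circ-\reg}$), tandis que vous passez par la surjectivité de $\P^G(L)\to\P^{G_\sigma^\circ}(L_\sigma^\circ)$ et le raffinement des chambres de $\Phi(G_\sigma^\circ,A_L)$ par celles de $\Phi(G,A_L)$ ; ce passage-là est sain une fois $A_L=A_{L_\sigma}$ acquis (le choix d'un cocaractère $G$-régulier dans la chambre ouverte associée à $\bar P$ fournit bien l'antécédent voulu), et les scrupules que vous exprimez sur le « relèvement face par face » ne sont pas le vrai problème : le vrai problème est la réduction à l'ellipticité.
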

\begin{proof}
Supposons pour le moment que $\sigma$ est de plus un élément $F$-elliptique de $\m(F)$. Vis à vis de celle concernant les sous-groupes paraboliques, il suffit d'appliquer la description dynamique des sous-groupes paraboliques, en observant que $A_{M,G-\reg}=A_{M_\sigma,G_\sigma^\circ-\reg}$. Quant à celle concernant les sous-groupes de Levi, nous voyons que l'injectivité est claire. Pour la surjectivité soit $L'\in \L^{G_\sigma^\circ}(M_\sigma^\circ)$. On prend $L$ son enveloppe de Levi dans $G$, son algèbre de Lie contient $\sigma$. Alors $L_\sigma^\circ=\Cent(A_L,G)_\sigma^\circ=\Cent(A_L,G_\sigma^\circ)=\Cent(A_{L'},G_\sigma^\circ)=L'$. Mais on sait déjà que $L$ est l'enveloppe de Levi de $L_\sigma^\circ$ donc $A_{L}=A_{L_\sigma}$. Enfin $L\in \L^G(M)$ car il contient l'enveloppe de Levi de $M_\sigma^\circ$, qui vaut $M$. Le cas général se déduit en observant que si $M_1\subseteq M$ est le plus grand sous-groupe de Levi de $M$ dont l'algèbre de Lie contient $\sigma$ comme élément elliptique alors $M_{1\sigma}^\circ=M_\sigma^\circ$.  
\end{proof}


\begin{lemma}
L'application de l'ensemble des sous-groupes de Levi de $G_\sigma^\circ$ dans l'ensemble des sous-groupes de Levi de $G$ dont l'algèbre de Lie contient $\sigma$ comme élément $F$-elliptique, qui à un pseudo-sous-groupe de Levi associe son enveloppe de Levi, est une bijection. Sa réciproque est l'application qui à un sous-groupe de Levi associe la composante connexe du centralisateur de $\sigma$ dans ce sous-groupe de Levi.    
\end{lemma}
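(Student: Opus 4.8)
Le plan est d'exhiber explicitement l'application réciproque et de contrôler les deux composées. Notons $\Phi$ l'application qui à un sous-groupe de Levi $L'$ de $G_\sigma^\circ$ associe son enveloppe de Levi $\envL(L';G)=\Cent(A_{L'},G)$, et $\Psi$ l'application qui à un sous-groupe de Levi $L$ de $G$, dont l'algèbre de Lie contient $\sigma$ comme élément $F$-elliptique, associe $L_\sigma^\circ=\Cent(\sigma,L)^\circ$. Il s'agira de vérifier quatre choses : que $\Phi$ prend ses valeurs dans l'ensemble annoncé, que $\Psi$ prend ses valeurs dans l'ensemble des sous-groupes de Levi de $G_\sigma^\circ$, puis les deux identités $\Psi\circ\Phi=\mathrm{id}$ et $\Phi\circ\Psi=\mathrm{id}$.

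Pour la bonne définition de $\Psi$, j'invoquerai la discussion précédant le lemme \ref{lem:envLell}, où il est observé que $L_\sigma^\circ$ est un sous-groupe de Levi de $G_\sigma^\circ$ dès que $\sigma\in\mathfrak{l}(F)$. Pour celle de $\Phi$, je partirai d'un sous-groupe de Levi $L'$ de $G_\sigma^\circ$, écrit sous la forme $L'=\Cent(A_{L'},G_\sigma^\circ)$ ; comme $A_{L'}\subseteq L'\subseteq G_\sigma^\circ$ fixe $\sigma$, on a $\sigma\in\mathfrak{l}'(F)$. En posant $L=\Cent(A_{L'},G)$, le lemme sur l'enveloppe de Levi fournit $A_L=A_{L'}$ ; et comme $L'\subseteq L$ centralise $\sigma$, on a $L'\subseteq L_\sigma^\circ$, d'où la chaîne $A_L=A_{L'}\supseteq A_{L_\sigma^\circ}=A_{L_\sigma}\supseteq A_L$, qui force $A_{L_\sigma}=A_L$, c'est-à-dire que $\sigma$ est $F$-elliptique dans $\mathfrak{l}$.

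Pour $\Psi\circ\Phi=\mathrm{id}$, je calculerai, avec $L=\Cent(A_{L'},G)$, que $L_\sigma=\Cent(A_{L'},G)\cap\Cent(\sigma,G)=\Cent(A_{L'},G_\sigma)$ ; comme le centralisateur du tore déployé $A_{L'}$ dans le groupe réductif connexe $G_\sigma^\circ$ est connexe, il vient $L_\sigma^\circ=\Cent(A_{L'},G_\sigma^\circ)$, et ce dernier vaut $L'$ puisqu'un sous-groupe de Levi est le centralisateur de son tore central déployé maximal. Pour $\Phi\circ\Psi=\mathrm{id}$, je me servirai de $A_{L_\sigma^\circ}=A_{L_\sigma}=A_L$ (la dernière égalité étant précisément l'$F$-ellipticité de $\sigma$ dans $\mathfrak{l}$) pour obtenir $\envL(L_\sigma^\circ;G)=\Cent(A_{L_\sigma^\circ},G)=\Cent(A_L,G)=L$.

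Je ne m'attends pas à un obstacle sérieux : l'énoncé ne fait que reconditionner le lemme \ref{lem:envLell} et la bijection dépendante de $M$ du lemme précédent sous une forme intrinsèque, et tous les ingrédients (connexité du centralisateur d'un tore dans un groupe réductif connexe, l'égalité $A_H=A_{H^\circ}$, la caractérisation des sous-groupes de Levi comme centralisateurs de leur tore central déployé maximal) ont déjà été employés plus haut. Le point demandant le plus de soin est l'identification $\Cent(A_{L'},G)_\sigma^\circ=\Cent(A_{L'},G_\sigma^\circ)$ ainsi que le transfert de la condition d'$F$-ellipticité le long de la chaîne d'inclusions de tores centraux déployés.
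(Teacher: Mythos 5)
Your proof is correct and takes essentially the same route as the paper, whose own proof merely refers to the reasoning of the preceding lemma: the key computation $\Cent(A_{L'},G)_\sigma^\circ=\Cent(A_{L'},G_\sigma^\circ)=L'$ and the identity $L=\Cent(A_L,G)$ with $A_{L_\sigma^\circ}=A_{L_\sigma}=A_L$ are exactly the steps carried out there. Your explicit verification that $\sigma$ is $F$-elliptique in the envelope only spells out what the paper draws from the lemma \ref{lem:envLell} and the inclusion of split central tori, so nothing is missing.
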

\begin{proof}
Cela dérive du raisonnement similaire à celui du lemme précédent.    
\end{proof}

On voit, compte tenu des discussions, que tout sous-groupe de Levi de $G_\sigma^\circ$ est de la forme $L_\sigma^\circ$ avec $L$ un sous-groupe de Levi de $G$ dont l'algèbre de Lie contient $\sigma$. Il est en de même pour les sous-groupes paraboliques.

\`{A} cet égard, on définit, pour $L$ un sous-groupe de Levi de $G$ dont l'algèbre de Lie contient $\sigma$, le groupe de Weyl relatif associé à $(G,L,\sigma)$ par
\[W^{(G,L,\sigma)}\eqdef W^{(G_\sigma^\circ,L_\sigma^\circ)}.\]

\subsection{Spécificités des groupes généraux linéaires ainsi que leurs formes intérieures}

On voudrait travailler dans la suite exclusivement avec les groupes généraux linéaires ainsi que leurs formes intérieures. 

Pour $S$ un anneau commutatif et $A$ une $S
$-algèbre à gauche. On note $A^{\text{op}}$ l'opposé de $A$, qui est une $S$-algèbre à droite. Il y a une structure de $A^{\text{op}}\otimes_SA$-module à droite sur $A$ donnée par $a\cdot (b\otimes_S b')=bab'$ avec $a,b'\in A$ et $b\in A^{\text{op}}$. Une $S$-algèbre $A$ à gauche est dite séparable si $A$ est un module projectif sur $A^{\text{op}}\otimes_SA$.

\begin{definition}
On dit qu'un groupe réductif $G$ est du type GL (sur $F$) s'il est le groupe des unités d'une algèbre séparable sur $F$.

De façon équivalente, un groupe réductif est du type GL (sur $F$) s'il est le groupe des unités d'une algèbre semi-simple de dimension finie sur $F$, ou encore, par le théorème d'Artin-Wedderburn, s'il est le groupe des unités d'un produit fini de restrictions des scalaires d'algèbres simples centrales sur des extensions finies de $F$.
\end{definition}

Donnons des propriétés classiques des groupes du type GL.

\begin{proposition}
~{}\begin{enumerate}
    \item Un produit fini de groupes du type GL l'est aussi.
    \item Tout sous-groupe de Levi d'un groupe du type GL l'est aussi.
    \item Toute forme intérieure d'un groupe du type GL l'est aussi.
    \item Toute extension des scalaires par un sur-corps d'un groupe du type GL l'est aussi (sur le sur-corps).
    \item Soit $G$ un groupe du type GL et $\g$ son algèbre de Lie. On dispose de $G \hookrightarrow \g$ une inclusion canonique $G$-équivariante ($G$ agit par conjugaison sur les deux espaces).
    \item Soient $G$ un groupe du type GL et $X\in\g_\ss$, alors $G_X(F)$ est un groupe du type GL.
    \item Soient $G$ un groupe du type GL et $X\in\g(F)$, alors $G_X$ est géométriquement connexe.
    \item Dans l'algèbre des $F$-points de l'algèbre de Lie d'un groupe $G$ du type GL, les notions de $G(F)$-conjugaison et de $G(\overline{F})$-conjugaison coïncident.  
\end{enumerate}
\end{proposition}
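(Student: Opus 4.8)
Le plan consiste à ramener chaque assertion à des propriétés standard des algèbres semi-simples de dimension finie. Écrivons $G=A^\times$ avec $A$ une $F$-algèbre séparable ; le théorème d'Artin–Wedderburn donne $A\cong\prod_i\mathrm{Res}_{E_i/F}M_{n_i}(D_i)$, où chaque $E_i/F$ est une extension finie \emph{séparable} (le centre d'une algèbre séparable étant étale) et $D_i$ une algèbre simple centrale sur $E_i$. Les points (1) et (4) sont alors presque immédiats : un produit fini d'algèbres semi-simples est semi-simple ; la séparabilité est stable par tout changement de base de l'anneau de base, ce que l'on lit directement sur la définition module-théorique puisque $(A\otimes_F F')^{\mathrm{op}}\otimes_{F'}(A\otimes_F F')\cong(A^{\mathrm{op}}\otimes_F A)\otimes_F F'$ et que la projectivité est préservée par changement de base ; et une algèbre séparable sur un corps est semi-simple. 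Pour (5), le groupe $A^\times$ est un ouvert de l'espace affine sous-jacent à $A$, d'où $\g=\Lie(A^\times)=A$ canoniquement, l'action adjointe de $A^\times$ sur $A$ n'étant autre que la conjugaison ; l'inclusion $G$-équivariante voulue est l'inclusion évidente $A^\times\hookrightarrow A$.

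Pour (2), un sous-groupe de Levi $M$ de $G$ est le centralisateur $\Cent(A_M,G)$ ; la condition de centraliser $\Lie(A_M)$ étant linéaire, on obtient $M=B^\times$ où $B$ est la $F$-sous-algèbre de $A$ formée des éléments commutant à $\Lie(A_M)$. Après extension à $\overline{F}$, $B\otimes_F\overline{F}$ est le centralisateur, dans $\prod_i M_{n_i}(\overline{F})$, d'une sous-algèbre diagonalisable, donc une sous-algèbre diagonale par blocs, déployée et semi-simple ; ainsi $B$ est séparable sur $F$ et $M=B^\times$ est du type GL. Pour (3), une forme intérieure $G'$ de $G$ correspond à une classe de $H^1(F,G^{\mathrm{ad}})$, et $G^{\mathrm{ad}}(\overline{F})=\operatorname{Inn}(A\otimes_F\overline{F})$ s'identifie à un sous-groupe de $\operatorname{Aut}_{\overline{F}\text{-alg}}(A\otimes_F\overline{F})$ ; en tordant l'action galoisienne semi-linéaire sur $A\otimes_F\overline{F}$ par un cocycle représentant cette classe, la descente produit une $F$-algèbre $A'$ vérifiant $A'\otimes_F\overline{F}\cong A\otimes_F\overline{F}$ — donc séparable — avec $G'=(A')^\times$. (On pourrait aussi invoquer la classification classique des formes de $\mathrm{GL}_n(D)$.)

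Pour (6) et (7), soit $X\in\g(F)=A$. Le centralisateur schématique $G_X$ représente le foncteur $R\mapsto(B\otimes_F R)^\times$, où $B\eqdef\Cent(X,A)$ est la $F$-sous-algèbre des éléments commutant à $X$ — la condition $gX=Xg$ est le noyau d'une application linéaire et commute donc au changement de base plat. En particulier $G_X=B^\times$ est un ouvert non vide (il contient $1$) de l'espace affine sous-jacent à $B$, donc géométriquement irréductible, a fortiori géométriquement connexe : c'est (7) ; de surcroît $G_X(F)=B^\times$. Pour (6), si $X\in\g_\ss$ alors $B\otimes_F\overline{F}=\Cent(X,A\otimes_F\overline{F})$ est le centralisateur d'un élément diagonalisable dans $\prod_i M_{n_i}(\overline{F})$, soit un produit d'algèbres de matrices, donc déployée et semi-simple ; $B$ est donc séparable sur $F$ et $G_X(F)=B^\times$ est du type GL.

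Le point (8) est le plus délicat et constitue le cœur technique de la démonstration. Soient $X,Y\in\g(F)$ conjugués par $G(\overline{F})$. Le transporteur $\operatorname{Transp}_G(X,Y)\eqdef\{g\in G\mid gXg^{-1}=Y\}$ est un torseur à droite sous $G_X$, défini sur $F$ et non vide sur $\overline{F}$ par hypothèse ; comme $G_X=B^\times$ est lisse, ce torseur possède un $F$-point si et seulement si sa classe dans $H^1(F,G_X)$ est triviale. Il suffit donc d'établir $H^1(F,B^\times)=1$ pour toute $F$-algèbre $B$ de dimension finie. On utilise la suite exacte $1\to 1+\operatorname{rad}(B)\to B^\times\to (B/\operatorname{rad}(B))^\times\to 1$ : le noyau $1+\operatorname{rad}(B)$ est unipotent déployé (filtré par les $1+\operatorname{rad}(B)^j$, à quotients successifs isomorphes à des puissances de $\mathbb{G}_a$), de sorte que son $H^1$, ainsi que celui de ses formes intérieures, s'annule, d'où une injection $H^1(F,B^\times)\hookrightarrow H^1(F,(B/\operatorname{rad}(B))^\times)$ ; or $(B/\operatorname{rad}(B))^\times$ est le groupe des unités d'une $F$-algèbre semi-simple, et un dévissage à la Artin–Wedderburn combiné au lemme de Shapiro ramène l'affaire à l'annulation classique $H^1(K,\mathrm{GL}_1(C))=1$ pour $C$ une algèbre simple centrale sur un corps $K$ (une forme du théorème 90 de Hilbert). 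Le torseur est donc trivial, ce qui donne la $G(F)$-conjugaison de $X$ et $Y$ ; la réciproque est triviale. La principale difficulté est ce dernier dévissage cohomologique : en caractéristique positive les extensions apparaissant dans le centre de $B/\operatorname{rad}(B)$ peuvent être inséparables, ce qui impose de travailler en cohomologie fppf, où le lemme de Shapiro pour la restriction des scalaires le long d'une extension finie quelconque reste valide et coïncide avec la cohomologie étale, les groupes en jeu étant lisses.
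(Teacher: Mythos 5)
Votre démonstration est correcte et, pour l'essentiel, elle explicite la preuve très condensée du texte (qui renvoie les points 1, 2, 4, 5 « à la définition », le point 6 à Pierce, et le point 8 à « Hilbert 90 conjugué avec le lemme de Shapiro » — exactement le dévissage torseur $\rightarrow$ radical unipotent $\rightarrow$ Artin--Wedderburn $\rightarrow$ Shapiro $\rightarrow$ Hilbert 90 non commutatif que vous détaillez). Deux points suivent toutefois une route réellement différente. Pour (7), le texte décompose $G_X$ en produit semi-direct d'une puissance du groupe additif par un groupe du type GL ; vous observez plutôt que $G_X$ représente le foncteur $R\mapsto (B\otimes_F R)^\times$ avec $B=\Cent(X,\g)$ une algèbre de dimension finie (non nécessairement semi-simple), donc un ouvert non vide de l'espace affine sous-jacent à $B$, géométriquement irréductible : c'est plus direct et cela vous sert aussi en (6) et (8), puisque la même identification $G_X=B^\times$ fournit la lissité et le groupe sous lequel le transporteur est un torseur. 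Pour (3), le texte invoque la théorie des corps de classes, alors que vous procédez par torsion galoisienne de la structure d'algèbre via $H^1(F,G^{\mathrm{ad}})\subseteq H^1(F,\mathrm{Aut}_{\text{alg}}(A\otimes_F\overline{F}))$ et descente ; votre argument est plus élémentaire et vaut sur un corps quelconque, au prix de vérifier (ce que vous faites) que le groupe adjoint agit bien par automorphismes d'algèbre — c'est Skolem--Noether facteur par facteur. Enfin, votre précaution sur la cohomologie fppf en caractéristique positive est superflue ici (les points 6--8 n'ont de sens que sur un corps parfait, cadre dans lequel le texte se place), mais elle n'introduit aucune erreur.
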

\begin{proof}
Les points 1,2,4 et 5 ressortent de la définition. Le point 3 vient de la théorie des corps de classes ; le point 6 vient de \cite[section 12.7]{Pi82} ; le point 7 vient du fait que $G_X$ est un produit semi-directe d'un groupe isomorphe en tant que schéma à une puissance du groupe additif, par un groupe du type GL, puis une puissance du groupe additif est géométriquement connexe et un groupe du type GL est aussi géométriquement connexe, car son extension à $\overline{F}$ est un produit fini de groupes généraux linéaires sur $\overline{F}$ ; le point 8 est une conséquence du théorème de Hilbert 90, conjugué avec le lemme de Shapiro. 
\end{proof}

\begin{proposition}[{{\cite[proposition A.13]{YDL23b}}}]\label{prop:condition(H?)}Soit $G$ un groupe du type GL sur un corps parfait $F$.
\begin{enumerate}
    \item Toute orbite est l'induite d'une orbite semi-simple elliptique. C'est-à-dire que pour tout $X\in \g(F)$, il existe $M$ un sous-groupe de Levi de $G$ dont l'algèbre de Lie contient $X_\ss$ comme élément elliptique, tel que $X\in\Ind_P^G(X_\ss)$ avec $P\in \P^G(M)$.
    \item Soient $M_1$ (resp. $M_2$) un sous-groupe de Levi de $G$ et $X_1$ (resp. $X_2$) un élément ellitique dans $\m_{1}(F)$ (resp. $\m_2(F)$), tels que $\Ind_{M_1}^G(X_1)=\Ind_{M_2}^G(X_2)$, alors il existe $g\in G(F)$ tel que $M_1=gM_2g^{-1}$ et $X_1=gX_2g^{-1}$.
\end{enumerate}
\end{proposition}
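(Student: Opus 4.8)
Voici comment je procéderais. Les deux assertions se ramènent, via la décomposition de Jordan de l'induite (équation \eqref{chap3eq:indJordan} de la proposition \ref{chap3prop:indprop}) et la machinerie des pseudo-sous-groupes de Levi du numéro \ref{subsec:pseudo-levi}, à l'énoncé combinatoire suivant : \emph{si $H$ est un groupe réductif connexe du type GL sur $F$, alors $L'\mapsto\Ind_{L'}^{H}(0)$ induit une bijection entre classes de $H(F)$-conjugaison de sous-groupes de Levi de $H$ et orbites nilpotentes de $\mathfrak{h}$ rencontrant $\mathfrak{h}(F)$}. En particulier, tout élément nilpotent de $\mathfrak{h}(F)$ appartient à $\Ind_{L'}^{H}(0)$ pour un sous-groupe de Levi $L'$ de $H$ défini sur $F$, unique à $H(F)$-conjugaison près. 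Pour établir ce lemme, je me ramènerais, par le théorème d'Artin--Wedderburn, au cas $H=\mathrm{Res}_{E/F}\bigl(GL_m(D)\bigr)$ avec $D$ une algèbre à division centrale sur une extension finie $E$ de $F$ : les classes de conjugaison de sous-groupes de Levi de $H$ y sont paramétrées par les partitions de $m$ (sous-groupes de Levi standards $\mathrm{Res}_{E/F}(\prod_j GL_{m_j}(D))$), les orbites nilpotentes rencontrant $\mathfrak{h}(F)$ aussi (diviseurs élémentaires sur l'anneau principal $D[t]$, c'est-à-dire type de Jordan sur $D$), et $\Ind_{\prod_j GL_{m_j}(D)}^{GL_m(D)}(0)$ est l'orbite de type la partition transposée de $(m_1,\dots,m_k)$ ; la transposition des partitions étant une involution, on obtient la bijection annoncée.

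Pour la première assertion, je poserais $\sigma=X_{\ss}$ et $\nu=X_{\nilp}\in\g_{\sigma}^{\circ}(F)$. Le groupe $G_{\sigma}^{\circ}$ est connexe et du type GL, donc le lemme fournit un sous-groupe de Levi $M'$ de $G_{\sigma}^{\circ}$, défini sur $F$, avec $\nu\in\Ind_{M'}^{G_{\sigma}^{\circ}}(0)$. Je poserais alors $L=\envL(M';G)=\Cent(A_{M'},G)$ : c'est un sous-groupe de Levi de $G$ vérifiant $L_{\sigma}^{\circ}=M'$ et $A_{L}=A_{M'}=A_{L_{\sigma}}$, de sorte que $\sigma\in\mathfrak{l}(F)$ est elliptique dans $\mathfrak{l}$. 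Comme la partie semi-simple de $\sigma$ est $\sigma$ et sa partie nilpotente est nulle, l'équation \eqref{chap3eq:indJordan} donne $\Ind_{L}^{G}(\sigma)=\Ad(G)\bigl(\sigma+\Ind_{L_{\sigma}^{\circ}}^{G_{\sigma}^{\circ}}(0)\bigr)=\Ad(G)\bigl(\sigma+\Ind_{M'}^{G_{\sigma}^{\circ}}(0)\bigr)$, d'où $X=\sigma+\nu\in\Ind_{L}^{G}(\sigma)$. Puisque $\sigma$ appartient à $\mathfrak{l}\subseteq\mathfrak{p}$ et s'y projette sur lui-même, on a $\Ind_{P}^{G}(X_{\ss})=\Ind_{L}^{G}(\sigma)$ pour tout $P\in\P^{G}(L)$ (définition \ref{YDLiopdef:IndPG}), et la première assertion est acquise avec $M=L$.

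Pour la seconde assertion, je noterais $\mathcal{O}=\Ind_{M_1}^{G}(X_1)=\Ind_{M_2}^{G}(X_2)$. D'après \eqref{chap3eq:indJordan} on a $\mathcal{O}=\Ad(G)\bigl(X_i+\Ind_{(M_i)_{X_i}^{\circ}}^{G_{X_i}^{\circ}}(0)\bigr)$ pour $i=1,2$, donc la partie semi-simple de tout élément de $\mathcal{O}$ est conjuguée à $X_1$ comme à $X_2$ ; ainsi $X_1$ et $X_2$ sont $G(\overline{F})$-conjugués, donc $G(F)$-conjugués puisque $G$ est du type GL. Quitte à conjuguer $(M_2,X_2)$ par un élément de $G(F)$, je supposerais $X_1=X_2=:\sigma$. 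L'intersection de $\mathcal{O}$ avec l'ensemble $\sigma+\mathcal{N}_{\g_{\sigma}^{\circ}}$ des éléments de partie semi-simple exactement $\sigma$ vaut à la fois $\sigma+\Ind_{(M_1)_{\sigma}^{\circ}}^{G_{\sigma}^{\circ}}(0)$ et $\sigma+\Ind_{(M_2)_{\sigma}^{\circ}}^{G_{\sigma}^{\circ}}(0)$ : en effet, un élément $\Ad(g)(\sigma+\nu_0)$ de $\mathcal{O}$ ayant $\sigma$ pour partie semi-simple impose $g\in G_{\sigma}^{\circ}$, donc appartient à $\sigma+\Ind_{(M_i)_{\sigma}^{\circ}}^{G_{\sigma}^{\circ}}(0)$. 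On en déduit $\Ind_{(M_1)_{\sigma}^{\circ}}^{G_{\sigma}^{\circ}}(0)=\Ind_{(M_2)_{\sigma}^{\circ}}^{G_{\sigma}^{\circ}}(0)$ dans $\g_{\sigma}^{\circ}$. Le volet d'unicité du lemme, appliqué à $G_{\sigma}^{\circ}$, fournit $h\in G_{\sigma}^{\circ}(F)$ tel que $h(M_1)_{\sigma}^{\circ}h^{-1}=(M_2)_{\sigma}^{\circ}$ ; quitte à conjuguer $M_1$ par $h$, qui fixe $\sigma$, je supposerais $(M_1)_{\sigma}^{\circ}=(M_2)_{\sigma}^{\circ}=:M'$. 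Enfin, $\sigma$ étant elliptique dans $\m_1$ comme dans $\m_2$ et $(M_i)_{\sigma}^{\circ}=M'$, la bijection du numéro \ref{subsec:pseudo-levi} entre sous-groupes de Levi de $G_{\sigma}^{\circ}$ et sous-groupes de Levi de $G$ dont l'algèbre de Lie contient $\sigma$ comme élément elliptique impose $M_1=\envL(M';G)=M_2$, ce qui conclut.

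La principale difficulté est le lemme combinatoire, et plus précisément son aspect de $F$-rationalité : il faut contrôler les orbites nilpotentes et les sous-groupes de Levi de Richardson sur $F$ d'une forme intérieure de $GL_m$, ce qui demande de manier proprement l'anneau non commutatif $D[t]$ puis, pour la descente de $\overline{F}$ à $F$, d'invoquer le fait que $G(F)$-conjugaison et $G(\overline{F})$-conjugaison coïncident sur les $F$-points de l'algèbre de Lie d'un groupe du type GL.
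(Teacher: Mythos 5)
Le texte ne démontre pas cette proposition : il renvoie à \cite[proposition A.13]{YDL23b}, de sorte qu'il n'y a pas de preuve interne à laquelle confronter la vôtre. Cela dit, votre argument est correct et reconstitue très vraisemblablement celui de la référence citée : la réduction des deux assertions au cas nilpotent via \eqref{chap3eq:indJordan}, la connexité et le type GL de $G_\sigma$, puis le dictionnaire enveloppe de Levi $\leftrightarrow$ pseudo-sous-groupe de Levi sont exactement les outils que le présent article met en place, et les étapes de descente (conjugaison rationnelle de $X_1$ et $X_2$ via la coïncidence des $G(F)$- et $G(\overline F)$-conjugaisons, réduction à $(M_i)_\sigma^\circ$ égaux, puis $M_i=\envL(M';G)$ par ellipticité) sont toutes justifiées par des énoncés du texte. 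Le seul point où le vrai travail est laissé en suspens est votre « lemme combinatoire » : la bijection entre classes de $H(F)$-conjugaison de sous-groupes de Levi de $H=\GL_m(D)$ et orbites nilpotentes rencontrant $\mathfrak{h}(F)$, via $L'\mapsto \Ind_{L'}^H(0)$ et la transposition des partitions. Vous l'énoncez et en identifiez correctement les ingrédients (réduction de Jordan sur $D$, description de l'orbite de Richardson par la partition transposée), mais vous ne le démontrez pas ; c'est précisément le contenu de \cite[section A.2, théorème A.12]{YDL23b}, auquel le présent article renvoie ailleurs pour la même raison. Votre preuve est donc complète modulo ce lemme, dont la démonstration détaillée (notamment le fait que l'induite de $0$ depuis le Levi de type $(m_1,\dots,m_k)$ a pour type de Jordan sur $D$ la partition duale, ce qui se vérifie sur les diviseurs élémentaires) constituerait l'essentiel d'une rédaction autonome.
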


\begin{remark}
Vis-à-vis de chacune des ces propriétés, on peut trouver des contrexemples avec $G$ un groupe réductif non du type GL. Pour la facilité des lecteurs et lectrices nous donnons des contrexemples ici. Il est connu que, au moins pour les groupes de Lie classiques, les orbites nilpotentes sont classifiées par des partitions d'un entier, assujetties à une certaine condition de parité (cf. \cite[chapitre 5]{CM93}). Puis l'induite d'une orbite nilpotente peut être explicitée en termes de partition (cf. \textit{Ibid.} chapitre 7). Il n'est donc par difficile de fabriquer des contrexemples : (\textit{Ibid.} example 7.3.6.) dans $\mathfrak{sp}_6$ aucune des orbites nilpotentes correspondant aux partitions $[1^6]$ ou $[2,1^4]$ n'est induite de $0$ ; dans $\mathfrak{so}_7$ aucune des orbites nilpotentes correspondant aux partitions $[1^7]$ ou $[2^2,1^3]$ n'est induite de $0$ ; dans $\mathfrak{so}_8$ aucune des orbites nilpotentes correspondant aux partitions $[1^8]$, $[2^2,1^4]$ ou $[3,2^2,1]$ n'est induite de $0$. Finalement dans $\mathfrak{so}_{12}$, on prend $L_1$ un sous-groupe de Levi isomorphe à $\GL_2\times \text{SO}_{
8}$ et $L_2$ un sous-groupe de Levi isomorphe à $\GL_1\times \text{SO}_{
10}$, quoique non conjugués, ils donnent la même orbite induite $\Ind_{L_1}^G(0)=\Ind_{L_2}^G(0)$, à savoir $[3^2,1^6]$. Si l'on veut des points rationnels, il convient de prendre un groupe quasi-déployé du type correspondant.
\end{remark}

Désormais $G$ désigne un groupe du type GL sur un corps parfait $F$.

\subsection{Ensemble de Richardson et de Lustig-Spaltenstein généralisé}
Soit $X\in\g(F)$.

Posons
\begin{equation}\label{eq:cLSGX}
\cLS^G(X)=\{P\subseteq G\text{ sous-}\text{groupe parabolique} \mid X\in \p, X\in\Ind_P^G(X)\},      
\end{equation}
c'est un ensemble non-vide puisque $G\in\cLS^G(X)$. Les éléments de $\cLS^G(X)$ sont appelés les sous-groupes de Lusztig-Spaltenstein généralisés de $X$.  Notons que si $P\in \cLS^G(X)$ alors $X_\nilp\in \p$ et il existe automatiquement $M$ un facteur de Levi de $P$ tel que $X_\ss\in \m$. Puis regardons l'ensemble
\begin{equation}\label{eq:cRG'X}
\cR^G(X)'=\{P\subseteq G\text{ sous-}\text{groupe parabolique} \mid X_\nilp\in \n_P, X\in \p, X\in\Ind_P^G(X_\ss)\},    
\end{equation}
il est pareillement non-vide d'après le point 1 de la proposition \ref{prop:condition(H?)}. Une première description de ces ensembles est donnée par le lemme suivant :
\begin{lemma}\label{lem:LSR'ss}
Soit $P$ un sous-groupe parabolique de $G$, avec $X\in \p$. Alors $P\in \cLS^G(X)$ si et seulement si $P_{X_\ss}\in \cLS^{G_{X_\ss}}(X_\nilp)$ ; $P\in \cR^G(X)'$ si et seulement si $P_{X_\ss}\in \cR^{G_{X_\ss}}(X_\nilp)'$.
\end{lemma}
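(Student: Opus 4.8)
Le plan est de tout ramener à la proposition~\ref{chap3prop:indprop}(2), qui affirme que l'orbite induite commute à la décomposition de Jordan. Posons $\sigma=X_\ss$, $\nu=X_\nilp$, $H=G_\sigma$ — connexe, $G$ étant du type GL — et $\mathfrak{h}=\Lie(H)=\g_\sigma$. Comme $X\in\p$ et que $\p$ est une sous-algèbre algébrique de $\g$, on a $\sigma,\nu\in\p$, d'où $\nu\in\p\cap\mathfrak{h}=\p_\sigma\eqdef\Lie(\Cent(\sigma,P))$ puisque $\nu$ commute à $\sigma$. Quitte à remplacer $M$ par un autre facteur de Levi de $P$ — ce qui est licite, $\Ind_P^G$ ne dépendant pas de ce choix d'après la définition~\ref{YDLiopdef:IndPG} —, je commencerais par supposer $\sigma\in\m$, tout élément semi-simple de $\p$ étant contenu dans l'algèbre de Lie d'un facteur de Levi de $P$. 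Alors $P_\sigma\eqdef\Cent(\sigma,P)$ est un sous-groupe parabolique de $H$ de facteur de Levi $M_\sigma\eqdef\Cent(\sigma,M)$, avec $\n_{P_\sigma}=\n_P\cap\mathfrak{h}$ et $\p_\sigma=\m_\sigma\oplus\n_{P_\sigma}$ (sous-section~\ref{subsec:pseudo-levi}).

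Le point qui demande le plus d'attention est l'identité, pour $Z\in\{\sigma,X\}$,
\[\Ind_P^G(Z)=\Ad(G)\bigl(\sigma+\Ind_{P_\sigma}^H(Z-\sigma)\bigr).\]
Pour l'établir, on part de $\Ind_P^G(Z)=\Ind_M^G(\pi_{\p,\m}(Z))$ (définition~\ref{YDLiopdef:IndPG}). Comme $\sigma\in\m$, on a $\pi_{\p,\m}(Z)=\sigma+\pi_{\p,\m}(Z-\sigma)$ ; l'application $\pi_{\p,\m}:\p\to\m$ étant un morphisme d'algèbres de Lie (quotient par l'idéal $\n_P$), l'élément $\pi_{\p,\m}(Z-\sigma)$ est nilpotent dans $\m$ et commute à $\sigma$, de sorte que $\sigma$ et $\pi_{\p,\m}(Z-\sigma)$ sont les parties semi-simple et nilpotente de $\pi_{\p,\m}(Z)$. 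La proposition~\ref{chap3prop:indprop}(2) fournit alors
\[\Ind_M^G(\pi_{\p,\m}(Z))=\Ad(G)\bigl(\sigma+\Ind_{M_\sigma}^{H}(\pi_{\p,\m}(Z-\sigma))\bigr),\]
et, $Z-\sigma$ appartenant à $\p_\sigma$ et $\pi_{\p,\m}$ restreint à $\p_\sigma$ coïncidant avec la projection $\p_\sigma\to\m_\sigma$, la définition~\ref{YDLiopdef:IndPG} appliquée dans $H$ identifie $\Ind_{M_\sigma}^H(\pi_{\p,\m}(Z-\sigma))$ à $\Ind_{P_\sigma}^H(Z-\sigma)$, d'où l'identité annoncée (le cas $Z=\sigma$ correspondant à $Z-\sigma=0$).

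On utiliserait ensuite la remarque élémentaire suivante : pour $\nu_1,\nu_2$ nilpotents dans $\mathfrak{h}$, les éléments $\sigma+\nu_1$ et $\sigma+\nu_2$ sont $G$-conjugués si et seulement si $\nu_1$ et $\nu_2$ sont $H$-conjugués. En effet, un $g\in G$ tel que $\Ad(g)(\sigma+\nu_2)=\sigma+\nu_1$ fixe $\sigma$, car $\Ad(g)$ préserve la décomposition de Jordan et $\sigma$ est la partie semi-simple des deux membres ; ainsi $g\in G_\sigma=H$, puis $\Ad(g)\nu_2=\nu_1$ ; la réciproque est triviale. Comme $\Ind_{P_\sigma}^H(Z-\sigma)$ est une orbite nilpotente de $\mathfrak{h}$, on en déduit, à l'aide de l'identité ci-dessus, que $X=\sigma+\nu\in\Ind_P^G(Z)$ si et seulement si $\nu\in\Ind_{P_\sigma}^H(Z-\sigma)$.

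Il resterait à assembler. Sous l'hypothèse $X\in\p$ et compte tenu de $\nu\in\p_\sigma$, la condition $P\in\cLS^G(X)$ signifie $X\in\Ind_P^G(X)$, soit (cas $Z=X$) $\nu\in\Ind_{P_\sigma}^H(\nu)$, c'est-à-dire $P_\sigma\in\cLS^H(\nu)$ (on utilise $\nu_\ss=0$ et $\nu_\nilp=\nu$). De même, $\nu\in\n_P$ équivaut à $\nu\in\n_P\cap\mathfrak{h}=\n_{P_\sigma}$, tandis que $X\in\Ind_P^G(\sigma)$ équivaut (cas $Z=\sigma$) à $\nu\in\Ind_{P_\sigma}^H(0)=\Ind_{P_\sigma}^H(\nu_\ss)$ ; donc $P\in\cR^G(X)'$ équivaut à la conjonction de ces deux conditions, autrement dit à $P_\sigma\in\cR^H(\nu)'$. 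Puisque $P_\sigma=P_{X_\ss}$ et $H=G_{X_\ss}$, c'est exactement l'assertion voulue. L'ingrédient de fond, déjà acquis, est la proposition~\ref{chap3prop:indprop}(2) ; le reste ne consiste qu'en des vérifications de routine sur les sous-groupes paraboliques des centralisateurs (rassemblées à la sous-section~\ref{subsec:pseudo-levi}), la seule subtilité étant le choix du facteur de Levi $M$ contenant $\sigma$ et la compatibilité de $\pi_{\p,\m}$ avec le passage au centralisateur.
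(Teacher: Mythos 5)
Votre démonstration est correcte et suit essentiellement la même démarche que celle du texte : on ramène tout à la commutation de l'induite avec la décomposition de Jordan (proposition \ref{chap3prop:indprop}, point 2) combinée à l'unicité de cette décomposition, qui force l'élément conjuguant à appartenir à $G_{X_\ss}$. Vous explicitez simplement les étapes que le texte laisse implicites (choix d'un facteur de Levi contenant $X_\ss$, décomposition de Jordan de $\pi_{\p,\m}(X)$ et identité $\Ind_P^G(Z)=\Ad(G)\bigl(X_\ss+\Ind_{P_{X_\ss}}^{G_{X_\ss}}(Z-X_\ss)\bigr)$), ce qui est conforme.
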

\begin{proof}
Effectivement, un sous-groupe parabolique $P$ de $G$ vérifiant l'hypothèse de l'énoncé est dans $\cLS^G(X)$ si et seulement si, grâce à l'équation \eqref{chap3eq:indJordan}, il existe $g\in G$ tel que $X=X_\ss+X_\nilp\in (\Ad g)(X_\ss+\Ind_{P_{X_{\ss}}}^{G_{X_{\ss}}}(X_{\nilp}))$. L'unicité de la décomposition de Jordan nous garantit que $g\in G_{X_\ss}$, ainsi $P$ est dans $\cLS^G(X)$ si et seulement si $X\in\p$ et $X_\nilp\in \Ind_{P_{X_{\ss}}}^{G_{X_{\ss}}}(X_{\nilp})$, soit $X_{\nilp}\in \p_{X_\ss}$ et $P_{X_\ss}\in \cLS^{G_{X_\ss}}(X_\nilp)$. L'argument pour $\cR^G(X)'$ est similaire.
\end{proof}

Un corollaire immédiat du lemme est que $\cR^G(X)'$ est un sous-ensemble de $\cLS^G(X)$ : partant du lemme, si $P\in \cR^G(X)'$ on aura $X_\nilp \in \mathfrak{n}_{P_{X_\ss}}$ donc $\pi_{\p_{X_\ss},\mathfrak{m}_{X_\ss}}(X_\nilp)=0$ d'où $P\in\cLS^G(X)$, ce qu'il fallait.

\begin{lemma}\label{lem:ellinRichardson}
Soient $P\in \cR^G(X)'$ et $M$ un facteur de Levi de $P$ dont l'algèbre de Lie contient $X_\ss$. Alors si $L=\envL(M_{X_\ss};G)$, et $Q$ l'unique élément de $\P^G(L)$ qui est contenu dans $P$, on aura également $Q\in \cR^G(X)'$.
\end{lemma}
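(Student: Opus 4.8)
Le plan est de vérifier pour $Q$ les trois conditions qui définissent $\cR^G(X)'$ — à savoir $X_\nilp\in\n_Q$, $X\in\mathfrak{q}$ et $X\in\Ind_Q^G(X_\ss)$ — les deux premières étant formelles et la troisième constituant le cœur de l'argument. Pour la première : comme $Q\subseteq P$ on a $\n_P\subseteq\n_Q$, et l'hypothèse $P\in\cR^G(X)'$ donne $X_\nilp\in\n_P$, d'où $X_\nilp\in\n_Q$. Pour la deuxième : d'après la discussion précédant le lemme \ref{lem:envLell} on a $M_{X_\ss}^\circ\subseteq L\subseteq M$, donc $X_\ss\in\mathfrak{m}_{X_\ss}^\circ(F)\subseteq\mathfrak{l}(F)\subseteq\mathfrak{q}$ (en utilisant $L\subseteq Q$) ; joint à $X_\nilp\in\n_Q$ cela donne $X=X_\ss+X_\nilp\in\mathfrak{q}$.

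Reste $X\in\Ind_Q^G(X_\ss)$. Puisque $P\in\cR^G(X)'$ fournit déjà $X\in\Ind_P^G(X_\ss)$, il suffit d'établir l'égalité d'orbites $\Ind_Q^G(X_\ss)=\Ind_P^G(X_\ss)$. Le point clé est l'identité $L_{X_\ss}^\circ=M_{X_\ss}^\circ$ : par définition $L$ est l'enveloppe de Levi de $M_{X_\ss}^\circ$ dans $G$, et les lemmes du numéro \ref{subsec:pseudo-levi} disent que $\envL(-;G)$ et $(-)_{X_\ss}^\circ$ sont des bijections réciproques entre sous-groupes de Levi de $G_{X_\ss}^\circ$ et sous-groupes de Levi de $G$ dont l'algèbre de Lie contient $X_\ss$ comme élément elliptique, d'où $L_{X_\ss}^\circ=\envL(M_{X_\ss}^\circ;G)_{X_\ss}^\circ=M_{X_\ss}^\circ$. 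Cette égalité est exactement l'hypothèse (4) de la proposition \ref{YDLiopprop:whenXinIndMGX} appliquée au groupe réductif $M$, à son sous-groupe de Levi $L$ et à l'élément $X_\ss\in\mathfrak{l}(F)$ (dont la partie semi-simple est $X_\ss$ lui-même) ; l'équivalence (1) $\Leftrightarrow$ (4) de cette proposition donne donc $X_\ss\in\Ind_L^M(X_\ss)$, c'est-à-dire $\Ind_L^M(X_\ss)=(\Ad M)X_\ss$. On conclut alors par transitivité de l'induite : le point 7 de la proposition \ref{chap3prop:indprop}, appliqué aux inclusions $L\subseteq M\subseteq G$ et à l'orbite $(\Ad L)X_\ss$, donne
\[\Ind_L^G\big((\Ad L)X_\ss\big)=\Ind_M^G\Big(\Ind_L^M\big((\Ad L)X_\ss\big)\Big)=\Ind_M^G\big((\Ad M)X_\ss\big),\]
et comme $L$ (resp. $M$) est un facteur de Levi de $Q$ (resp. $P$) avec $\pi_{\mathfrak{q},\mathfrak{l}}(X_\ss)=X_\ss$ (resp. $\pi_{\mathfrak{p},\mathfrak{m}}(X_\ss)=X_\ss$), la définition \ref{YDLiopdef:IndPG} identifie le membre de gauche à $\Ind_Q^G(X_\ss)$ et celui de droite à $\Ind_P^G(X_\ss)$. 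D'où l'égalité voulue.

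Le seul endroit où il faut réfléchir — et donc le principal obstacle — est de reconnaître que l'égalité $L_{X_\ss}^\circ=M_{X_\ss}^\circ$, conséquence purement formelle de la définition de l'enveloppe de Levi, n'est autre que l'hypothèse de la proposition \ref{YDLiopprop:whenXinIndMGX} pour le couple $(M,L)$ : c'est elle qui transforme l'information « $X_\ss$ est elliptique modulo $L$ dans $M$ » en l'égalité d'orbites $\Ind_L^M(X_\ss)=(\Ad M)X_\ss$ faisant s'effondrer la formule de transitivité de l'induite. Le reste n'est que vérification de routine. On notera au passage que l'argument n'utilise de $Q$ que les propriétés $Q\in\P^G(L)$ et $Q\subseteq P$, si bien qu'il vaut en fait pour n'importe quel tel $Q$.
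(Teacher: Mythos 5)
Votre démonstration est correcte. Elle repose sur le même pivot que celle du texte, à savoir l'identité $L_{X_\ss}=M_{X_\ss}$ (conséquence immédiate de $M_{X_\ss}\subseteq L\subseteq M$), mais le dernier pas est organisé différemment : le texte descend au centralisateur $G_{X_\ss}$ via le lemme \ref{lem:LSR'ss} et observe que $\Ind_{Q_{X_\ss}}^{G_{X_\ss}}(0)=\Ind_{P_{X_\ss}}^{G_{X_\ss}}(0)$ puisque ces induites ne dépendent que des facteurs de Levi $L_{X_\ss}=M_{X_\ss}$, tandis que vous restez dans $G$ et établissez directement $\Ind_Q^G(X_\ss)=\Ind_P^G(X_\ss)$ en combinant l'induction dégénérée $\Ind_L^M(X_\ss)=(\Ad M)X_\ss$ (équivalence (1) $\Leftrightarrow$ (4) de la proposition \ref{YDLiopprop:whenXinIndMGX} appliquée au couple $(M,L)$) avec la transitivité (point 7 de la proposition \ref{chap3prop:indprop}). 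Les deux routes sont de difficulté comparable ; la vôtre a l'avantage de ne pas invoquer la décomposition de Jordan de l'orbite induite, celle du texte celui d'être plus courte une fois le lemme \ref{lem:LSR'ss} acquis. Les vérifications annexes ($\n_P\subseteq\n_Q$ car $Q\subseteq P$, et $X\in\mathfrak{q}$) sont exactes.
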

\begin{proof}
De $M_{X_\ss}\subseteq L\subseteq M$ il découle que $M_{X_\ss}=L_{X_\ss}$. Pour cette raison $X_\nilp\in \Ind_{Q_{X_\ss}}^{G_{X_\ss}}(0)$. On a ensuite $X_\nilp\in \n_P\subseteq \n_Q$, puis $X_\ss\in \mathfrak{m}_{X_\ss}=\mathfrak{l}_{X_\ss}\subseteq \mathfrak{l}$, ce qu'il fallait.
\end{proof}

Posons en l'occurrence
\begin{equation}\label{eq:cRGX}\cR^G(X)=\{P\in \cR^G(X)' \mid P\text{ un élément minimal (pour l'inclusion) dans }\cR^G(X)'\},
\end{equation}
c'est un ensemble non-vide. Les éléments de $\cR^G(X)$ sont appelés les sous-groupes de Richardson généralisés de $X$. 

\begin{lemma}\label{lem:Rlevienvlop}Soit $P\in \cR^G(X)$ et $M$ un facteur de Levi de $P$ dont l'algèbre de Lie contient $X_\ss$. Alors $M=\envL(M_{X_\ss};G)$. Inversement, soit $P\in \cR^G{}'(X)$ et $M$ un facteur de Levi de $P$ dont l'algèbre de Lie contient $X_\ss$, si $M=\envL(M_{X_\ss};G)$, alors $P\in \cR^G(X)$.
\end{lemma}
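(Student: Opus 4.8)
The plan is to treat the two implications separately: the direct one is a quick consequence of minimality together with Lemma~\ref{lem:ellinRichardson}, while the converse rests on descending to the centraliser $G_{X_\ss}$ and invoking the rigidity of nilpotent induction for groups of type GL, namely Proposition~\ref{prop:condition(H?)}(2).

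\textbf{Direct implication.} Let $P\in\cR^G(X)$ and let $M$ be a Levi factor of $P$ with $X_\ss\in\m$. Apply Lemma~\ref{lem:ellinRichardson}: with $L=\envL(M_{X_\ss};G)$ and $Q\in\P^G(L)$ the unique parabolic contained in $P$, one gets $Q\in\cR^G(X)'$. Minimality of $P$ in $\cR^G(X)'$ then forces $Q=P$. Since $\envL(M_{X_\ss};G)$ is the smallest Levi subgroup of $G$ containing $M_{X_\ss}$ and $M$ is such a subgroup, we have $L\subseteq M$; but $L$ and $M$ are then two Levi factors of $P=Q$, hence of equal dimension, hence $L=M$, i.e. $M=\envL(M_{X_\ss};G)$.

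\textbf{Converse.} Assume $P\in\cR^G(X)'$ has a Levi factor $M$ with $X_\ss\in\m$ and $M=\envL(M_{X_\ss};G)$, and let $P'\in\cR^G(X)'$ with $P'\subseteq P$; I must show $P'=P$. As $\cR^G(X)'\subseteq\cLS^G(X)$, there is a Levi factor $M'$ of $P'$ with $X_\ss\in\m'$; set $L'=\envL(M'_{X_\ss};G)$ and let $Q'\in\P^G(L')$ be the unique parabolic contained in $P'$, so by Lemma~\ref{lem:ellinRichardson}, $Q'\in\cR^G(X)'$ and $Q'\subseteq P'\subseteq P$. By Lemma~\ref{lem:LSR'ss}, $P_{X_\ss}$ and $P'_{X_\ss}$ lie in $\cR^{G_{X_\ss}}(X_\nilp)'$, which (as $X_\nilp$ is nilpotent, so its semisimple part is $0$) means precisely that $X_\nilp$ belongs to both $\Ind_{M_{X_\ss}}^{G_{X_\ss}}(0)$ and $\Ind_{M'_{X_\ss}}^{G_{X_\ss}}(0)$; these two induced orbits therefore coincide, both being $\Ad(G_{X_\ss})(X_\nilp)$. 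Now $G_{X_\ss}$ is connected reductive of type GL over the perfect field $F$, and $0$ is $F$-elliptic in the Lie algebra of any Levi subgroup (its centraliser being the whole Levi), so Proposition~\ref{prop:condition(H?)}(2) provides $g\in G_{X_\ss}(F)$ with $M_{X_\ss}=gM'_{X_\ss}g^{-1}$. Since the Levi envelope commutes with conjugation (being $\Cent(A_{-},G)$), this gives $M=\envL(M_{X_\ss};G)=g\,L'\,g^{-1}$, whence $\dim L'=\dim M$ and $\dim Q'=(\dim G+\dim L')/2=(\dim G+\dim M)/2=\dim P$. As $Q'\subseteq P$ with both connected, $Q'=P$, hence $P'=P$. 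So $P$ is minimal in $\cR^G(X)'$, i.e. $P\in\cR^G(X)$.

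\textbf{Main obstacle.} The crux is the $G_{X_\ss}$-conjugacy of the descended Levi factors $M_{X_\ss}$ and $M'_{X_\ss}$ in the converse: this is exactly where the type GL hypothesis is used, via Proposition~\ref{prop:condition(H?)}(2) applied to $X_\nilp\in\g_{X_\ss}$ with the trivially elliptic elements $0$. Once this is granted, the remainder is routine: bookkeeping with Levi envelopes, the identity $\dim Q=(\dim G+\dim L)/2$ for $Q\in\P^G(L)$, and the fact that two nested connected subgroups of equal dimension are equal; the existence of a Levi factor of $P'$ through $X_\ss$ is already ensured by $P'\in\cLS^G(X)$.
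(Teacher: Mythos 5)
Your proof is correct and follows essentially the same route as the paper: the direct implication via Lemma \ref{lem:ellinRichardson} combined with minimality, and the converse via descent to $G_{X_\ss}$ (Lemma \ref{lem:LSR'ss}) together with the type-GL rigidity of Proposition \ref{prop:condition(H?)} (2) applied to the elliptic element $0$. The only (cosmetic) difference is in the endgame of the converse: you treat an arbitrary $P'\in\cR^G(X)'$ contained in $P$ and conclude by a dimension count forcing $Q'=P$, whereas the paper first passes to a minimal $Q\in\cR^G(X)$ inside $P$ and conjugates Levi factors of $P_{X_\ss}$ by $P_{X_\ss}(F)$ to force $L=M$; the key ingredients are identical.
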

\begin{proof}
On raisonne de la façon suivante quant à l'implication directe : eu égard au lemme précédent, $P$ est également un sous-groupe parabolique ayant $L\eqdef\envL(M_{X_\ss};G)$, comme facteur de Levi, $L$ et $M$ sont donc conjugués. Or $M_{X_\ss}\subseteq L\subseteq M$, il s'ensuit donc $L=M$.

Inversement, soit $Q\in\cR^G(X)$ avec $Q\subseteq P$, soit $L$ un facteur de Levi de $Q$ dont l'algèbre de Lie contient $X_\ss$. Soit $M'$ le facteur de Levi de $P$ qui contient $L$, son algèbre de Lie contient bien sûr $X_\ss$. Les groupes $M_{X_\ss}'$ et $M_{X_\ss}$ étant deux facteurs de Levi de $P_{X_\ss}$, il existe donc $p\in P_{X_\ss}(F)$ tel que $M_{X_\ss}'=(\Ad p)M_{X_\ss}$. Il s'ensuit $A_{M'}=A_{M_{X_\ss}'}=(\Ad p)A_{M_{X_\ss}}=(\Ad p)A_M$ et $M'=\Cent(A_{M'},G)=\Cent((\Ad p)A_M,G)=(\Ad p)M$, on peut par conséquent supposer que $M'=M$. Du lemme \ref{lem:LSR'ss} et le point 2 de la proposition \ref{prop:condition(H?)}, on sait qu'il existe $g\in G_{X_\ss}(F)$ tel que $L_{X_\ss}=(\Ad g)M_{X_{\ss}}$. En outre $L$ est l'enveloppe de Levi de $L_{X_\ss}$ selon la première partie de la démonstration, il en résulte que $A_L=A_{L_{X_\ss}}=(\Ad g)A_{M_{X_{\ss}}}=(\Ad g)A_M$ puis $L=(\Ad g)M$. Comme $L\subseteq M$ on en déduit que $L=M$ et $P=Q\in \cR^G(X)$.
\end{proof}

\begin{corollary}\label{coro:elementisellinRich}
Soit $P\in \cR^G(X)$ et $M$ un facteur de Levi de $P$ dont l'algèbre de Lie contient $X_\ss$. Alors $X_\ss$ est $F$-elliptique dans $\m(F)$.
\end{corollary}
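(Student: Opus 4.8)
The plan is to deduce this formally from Lemma~\ref{lem:Rlevienvlop} together with the elementary properties of the Levi envelope recorded in Subsection~\ref{subsec:pseudo-levi}; no serious obstacle arises, the real content having already been spent on the proof of Lemma~\ref{lem:Rlevienvlop}.

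First I would apply Lemma~\ref{lem:Rlevienvlop} to $P\in\cR^G(X)$ and to the Levi factor $M$ of $P$ whose Lie algebra contains $X_\ss$: this gives $M=\envL(M_{X_\ss};G)$. The one point to keep straight here is that $\envL(\cdot;G)$ is only defined for pseudo-sous-groupes de Levi, so $M_{X_\ss}$ must be read as the pseudo-sous-groupe de Levi $M_{X_\ss}^\circ$; this is immaterial for what follows, since $A_H=A_{H^\circ}$ for every algebraic group $H$ (and in any case $M_{X_\ss}$ is connected when $G$ is of type GL).

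Next I would invoke the explicit description of the Levi envelope given by the first lemma of Subsection~\ref{subsec:pseudo-levi}: for a pseudo-sous-groupe de Levi $L'$ one has $\envL(L';G)=\Cent(A_{L'},G)$, and in particular $A_{\envL(L';G)}=A_{L'}$. Taking $L'=M_{X_\ss}^\circ$ then yields the equality $A_M=A_{M_{X_\ss}^\circ}=A_{M_{X_\ss}}$.

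Finally I would unwind the definition of $F$-ellipticity: a semisimple element $Y$ of the Lie algebra of a connected reductive group $H$ is $F$-elliptic exactly when $A_H=A_{H_Y}$. Since $X_\ss$ is semisimple and lies in $\m(F)$ by hypothesis (using that $F$ is perfect, so the Jordan decomposition is $F$-rational), the equality $A_M=A_{M_{X_\ss}}$ obtained above says precisely that $X_\ss$ is $F$-elliptic in $\m(F)$, which is the assertion. The only conceivable difficulty, namely that a proper Levi of $\envL(G_{X_\ss}^\circ;G)$ might see $X_\ss$ non-elliptically, never intervenes because we control $A_M$ directly through the description of $\envL$ rather than through the maximality statement of Lemma~\ref{lem:envLell}.
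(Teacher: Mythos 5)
Your proof is correct and follows essentially the same route as the paper: the paper combines Lemma~\ref{lem:Rlevienvlop} with Lemma~\ref{lem:envLell}, and you simply unwind the latter to its underlying content, namely the equality $A_{\envL(L';G)}=A_{L'}$ from the first lemma of Subsection~\ref{subsec:pseudo-levi}, which applied to $M=\envL(M_{X_\ss};G)$ gives $A_M=A_{M_{X_\ss}}$, i.e.\ the $F$-ellipticity of $X_\ss$ in $\m(F)$. Your side remarks on connectedness and the rationality of the Jordan decomposition are accurate but not essential.
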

\begin{proof}
On combine le lemme précédent et le lemme \ref{lem:envLell}.
\end{proof}

Soit $P$ un sous-groupe parabolique de $G$, avec $X_\ss\in \p$. Un autre corollaire immédiat du lemme précédent est que $P\in \cR^G(X)$ implique $P_{X_\ss}\in \cR^{G_{X_\ss}}(X_\nilp)$. La réciproque est fausse.

\begin{corollary}\label{coro:LeviRssconj}Soient $P\in \cR^G(X)$ et $M_1,M_2$ deux facteurs de Levi de $P$, tels que l'algèbre de Lie de chacun contient $X_\ss$. Alors $M_1$ et $M_2$ sont conjugués par un élément de $P_{X_\ss}(F)$.
\end{corollary}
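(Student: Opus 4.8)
Le plan est de ramener la conjugaison des facteurs de Levi $M_1$ et $M_2$ de $P$ à celle de leurs centralisateurs $M_{1,X_\ss}$ et $M_{2,X_\ss}$, vus comme facteurs de Levi du sous-groupe parabolique $P_{X_\ss}$ de $G_{X_\ss}$, puis de remonter à $M_1$ et $M_2$ grâce à la caractérisation $M_i=\envL(M_{i,X_\ss};G)$ fournie par le lemme \ref{lem:Rlevienvlop}.

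Je commencerais par observer que, $G$ étant du type GL, les groupes $G_{X_\ss}$ et $M_{i,X_\ss}$ sont connexes réductifs sur $F$ ; de plus, puisque $X_\ss\in\p$ et que $X_\ss$ appartient à l'algèbre de Lie de $M_i$, le groupe $P_{X_\ss}$ est un sous-groupe parabolique de $G_{X_\ss}$ admettant $M_{i,X_\ss}$ pour facteur de Levi et $(N_P)_{X_\ss}$ pour radical unipotent. Cela résulte, pour $\sigma=X_\ss$, de la surjectivité de l'application $P\mapsto P_{X_\ss}^\circ$ de $\F^G(M_i)$ dans $\F^{G_{X_\ss}^\circ}(M_{i,X_\ss}^\circ)$ établie au numéro \ref{subsec:pseudo-levi}, un facteur de Levi s'envoyant sur un facteur de Levi.

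Ensuite, $M_{1,X_\ss}$ et $M_{2,X_\ss}$ étant deux facteurs de Levi, définis sur $F$, d'un même sous-groupe parabolique du groupe réductif connexe $G_{X_\ss}$, le fait standard sur la conjugaison des facteurs de Levi fournit un unique élément $u$ du groupe des $F$-points du radical unipotent $(N_P)_{X_\ss}$ tel que $(\Ad u)M_{1,X_\ss}=M_{2,X_\ss}$ ; en particulier $u\in P_{X_\ss}(F)$. On poserait $M_1'=(\Ad u)M_1$ : c'est encore un facteur de Levi de $P$ puisque $u\in P(F)$, son algèbre de Lie contient $(\Ad u)X_\ss=X_\ss$ puisque $u\in G_{X_\ss}$, et $(M_1')_{X_\ss}=(\Ad u)M_{1,X_\ss}=M_{2,X_\ss}$. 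Le lemme \ref{lem:Rlevienvlop} s'appliquant à $M_1'$ comme à $M_2$ (facteurs de Levi de $P\in\cR^G(X)$ dont l'algèbre de Lie contient $X_\ss$), on obtiendrait $M_1'=\envL((M_1')_{X_\ss};G)=\envL(M_{2,X_\ss};G)=M_2$, d'où $(\Ad u)M_1=M_2$ avec $u\in P_{X_\ss}(F)$.

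La seule difficulté réelle, au-delà de la vérification de routine que $M_{i,X_\ss}$ est bien un facteur de Levi de $P_{X_\ss}$, est de garantir que l'élément conjuguant se choisit rationnel sur $F$ et à l'intérieur de $P_{X_\ss}$ ; c'est assuré par le fait que deux facteurs de Levi définis sur un corps $k$ d'un sous-groupe parabolique défini sur $k$ sont conjugués par un unique élément des $k$-points de son radical unipotent, appliqué ici au groupe réductif connexe $G_{X_\ss}$. Le reste se réduit à une application du lemme \ref{lem:Rlevienvlop}.
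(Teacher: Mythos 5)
Votre démonstration est correcte et suit essentiellement la même démarche que celle du texte : on conjugue d'abord $M_{1,X_\ss}$ et $M_{2,X_\ss}$ comme facteurs de Levi de $P_{X_\ss}$ par un élément de $P_{X_\ss}(F)$, puis on remonte à $M_1$ et $M_2$ via le lemme \ref{lem:Rlevienvlop}. La seule différence, purement cosmétique, est que vous concluez par l'unicité de l'enveloppe de Levi $\envL(M_{2,X_\ss};G)$ alors que le texte passe par l'égalité $A_{M_i}=A_{M_{i,X_\ss}}$ et $M_i=\Cent(A_{M_i},G)$, ce qui revient au même.
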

\begin{proof}Puisque $M_{1,X_\ss}$ et $M_{2,X_\ss}$ sont deux facteurs de Levi de $P_{X_\ss}$, ils sont conjugués par un élément de $P_{X_\ss}(F)$, on a l'existence de $p\in P_{X_\ss}(F)$ tel que $M_{1,X_\ss}=(\Ad p)M_{2,X_\ss}$. On tire ensuite, du lemme \ref{lem:Rlevienvlop}, que  $A_{M_1}=A_{M_{1,X_\ss}}=(\Ad p)A_{M_{2,X_\ss}}=(\Ad p)A_{M_2}$. Il vient $M_1=(\Ad p)M_2$.
\end{proof}

Soient $P_1$ et $P_2$ deux sous-groupes paraboliques de $G$. On dit que $P_1$ et $P_2$ sont associés par $g\in G$ s'il existe $M_1$ et $M_2$ des facteurs de Levi respectifs de $P_1$ et $P_2$ tels que $M_1=(\Ad g)M_2$.

\begin{proposition}\label{prop:Rasso}Deux élément de $\cR^G(X)$ sont associés par un élément de $G_{X_\ss}(F)$. Inversement, un sous-groupe parabolique dans $\cR^G(X)'$ associé par un élément de $G_{X_\ss}(F)$ à un sous-groupe parabolique dans $\cR^G(X)$ est lui-même dans $\cR^G(X)$.
\end{proposition}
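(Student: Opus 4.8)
The plan is to establish the two assertions separately; both reduce to the rigidity of induction from semisimple elliptic elements, namely Proposition~\ref{prop:condition(H?)}(2), and the converse in addition calls on Lemma~\ref{lem:Rlevienvlop} and on a dimension count for Levi factors. For the direct assertion, take $P_1,P_2\in\cR^G(X)$ and choose, for each $i$, a Levi factor $M_i$ of $P_i$ whose Lie algebra contains $X_\ss$ (such a factor exists since $\cR^G(X)\subseteq\cLS^G(X)$). By Corollary~\ref{coro:elementisellinRich} the element $X_\ss$ is $F$-elliptic in $\m_i(F)$. Since $P_i\in\cR^G(X)'$ we have $X\in\Ind_{P_i}^G(X_\ss)$, and as $X_\ss\in\m_i$ the projection $\pi_{\p_i,\m_i}$ fixes it, so $\Ind_{P_i}^G(X_\ss)=\Ind_{M_i}^G(X_\ss)$ by Definition~\ref{YDLiopdef:IndPG}; this is a single $G$-orbit and contains $X$, hence $\Ind_{M_1}^G(X_\ss)=\Ind_{M_2}^G(X_\ss)$. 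Proposition~\ref{prop:condition(H?)}(2) then furnishes $g\in G(F)$ with $M_1=(\Ad g)M_2$ and $X_\ss=(\Ad g)X_\ss$; the second equality means exactly that $g\in G_{X_\ss}(F)$, and then $M_1=(\Ad g)M_2$ exhibits $P_1$ and $P_2$ as associated by $g\in G_{X_\ss}(F)$.

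For the converse, let $P\in\cR^G(X)'$ and $Q\in\cR^G(X)$ be associated by some $g\in G_{X_\ss}(F)$; then in particular $P$ and $Q$ are associated in $G$, so any Levi factor of $P$ has the same dimension as any Levi factor of $Q$. Choose a Levi factor $M$ of $P$ with $X_\ss\in\m$ and put $L=\envL(M_{X_\ss};G)=\Cent(A_{M_{X_\ss}},G)$. As recalled in \S\ref{subsec:pseudo-levi}, $M_{X_\ss}\subseteq L\subseteq M$, whence $L_{X_\ss}=M_{X_\ss}$ and $A_L=A_{M_{X_\ss}}=A_{L_{X_\ss}}$, so that $X_\ss$ is $F$-elliptic in $\mathfrak l(F)$. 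Using $L_{X_\ss}=M_{X_\ss}$ and Proposition~\ref{chap3prop:indprop}(2) (together with the connectedness of centralizers, $G$ being of type GL) one gets
\[\Ind_L^G(X_\ss)=\Ad(G)\bigl(X_\ss+\Ind_{L_{X_\ss}}^{G_{X_\ss}}(0)\bigr)=\Ad(G)\bigl(X_\ss+\Ind_{M_{X_\ss}}^{G_{X_\ss}}(0)\bigr)=\Ind_M^G(X_\ss),\]
an orbit which contains $X$ because $P\in\cR^G(X)'$. Choosing now a Levi factor $M_Q$ of $Q$ with $X_\ss\in\m_Q$, Corollary~\ref{coro:elementisellinRich} makes $X_\ss$ $F$-elliptic in $\m_Q(F)$, while $X\in\Ind_Q^G(X_\ss)=\Ind_{M_Q}^G(X_\ss)$; hence $\Ind_L^G(X_\ss)=\Ind_{M_Q}^G(X_\ss)$ with $X_\ss$ elliptic on both sides, and Proposition~\ref{prop:condition(H?)}(2) yields in particular $\dim L=\dim M_Q$. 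Combining with $\dim M=\dim M_Q$ (from the association) and with the inclusion $L\subseteq M$ of connected reductive groups, we conclude $L=M$, i.e.\ $M=\envL(M_{X_\ss};G)$; the converse part of Lemma~\ref{lem:Rlevienvlop} then gives $P\in\cR^G(X)$.

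The step I expect to be the main obstacle is organising the converse so as not to get entangled in the choices of Levi factors: the hypothesis ``associated by $g$'' supplies only one conjugate pair of Levi factors, which need not be those containing $X_\ss$, so the argument must be carried by the dimension of a Levi factor — an invariant of the parabolic — rather than by the chosen factors, and by the enveloppe $L=\envL(M_{X_\ss};G)$ rather than by transporting $P$ directly onto $Q$. The delicate point along the way is the identity $\Ind_L^G(X_\ss)=\Ind_M^G(X_\ss)$: inducing the semisimple element $X_\ss$ depends only on its centralizer, which is unchanged in passing from $M$ to the smaller Levi $L$, so that all the induced orbits in play coincide with the $G$-orbit of $X$ and Proposition~\ref{prop:condition(H?)}(2) becomes applicable. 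I would also remark that the converse in fact only uses association in $G$, the stronger hypothesis $g\in G_{X_\ss}(F)$ being retained merely for symmetry with the direct statement.
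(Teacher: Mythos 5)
Votre démonstration est correcte et suit pour l'essentiel la stratégie du texte : la rigidité de l'induction à partir d'éléments elliptiques (point 2 de la proposition \ref{prop:condition(H?)}) pour le sens direct, puis le critère d'enveloppe de Levi du lemme \ref{lem:Rlevienvlop} combiné à un comptage de dimensions pour la réciproque. La seule divergence est l'endroit où la rigidité est invoquée : le texte descend à $G_{X_\ss}$ via le lemme \ref{lem:LSR'ss} et compare les induites nilpotentes $\Ind_{M_{i,X_\ss}}^{G_{X_\ss}}(0)$, ce qui fournit directement $g\in G_{X_\ss}(F)$, tandis que vous comparez $\Ind_{M_i}^G(X_\ss)$ dans $G$ (l'ellipticité venant du corollaire \ref{coro:elementisellinRich}) et récupérez $g\in G_{X_\ss}(F)$ de la condition $(\Ad g)X_\ss=X_\ss$ ; les deux variantes sont légitimes. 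Votre traitement explicite de la réciproque — mener l'argument par la dimension d'un facteur de Levi, invariant du parabolique, puisque la paire de facteurs de Levi fournie par l'hypothèse d'association n'est pas nécessairement celle contenant $X_\ss$ — détaille proprement l'étape que le texte expédie par « en argumentant de la même manière », et votre remarque finale selon laquelle seule l'association dans $G$ y intervient est cohérente avec cet argument.
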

\begin{proof}Commençons par l'implication directe. Soient $P_1$ (resp. $P_2$) un élément de $\cR^G(X)$, et $M_1$ (resp. $M_2$) un facteur de Levi dont l'algèbre de Lie contient $X_\ss$. Par  le point 2 de la proposition \ref{prop:condition(H?)} on sait qu'il existe $g\in G_{X_\ss}(F)$ tel que $M_{1,X_\ss}=(\Ad g)M_{2,X_\ss}$. On tire ensuite, du lemme \ref{lem:Rlevienvlop}, que  $A_{M_1}=A_{M_{1,X_\ss}}=(\Ad g)A_{M_{2,X_\ss}}=(\Ad g)A_{M_2}$. Il vient $M_1=(\Ad g)M_2$. 

Inversement, soient $P\in R^G(X)'$ associé par un élément de $G_{X_\ss}(F)$ à un sous-groupe parabolique dans $\cR^G(X)$, et $M$ un facteur de Levi de $P$ dont l'algèbre de Lie contient $X_\ss$. Alors en argumentant de la même manière que dans la preuve du lemme précédent on voit que $M$ est l'enveloppe de Levi de $M_{X_\ss}$, par conséquent $P\in \cR^G(X)$. La preuve est accomplie.
\end{proof}

\begin{proposition}\label{prop:LS->R}Un sous-groupe parabolique de $G$ appartient à $\cLS^G(X)$ si et seulement s'il contient un élément de $\cR^G(X)$.
\end{proposition}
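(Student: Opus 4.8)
The plan is to prove the two implications separately: for $\Leftarrow$, I will show that $\cLS^G(X)$ is stable under enlarging the parabolic and then invoke the inclusions $\cR^G(X)\subseteq\cR^G(X)'\subseteq\cLS^G(X)$ already established; for $\Rightarrow$, I will construct an element of $\cR^G(X)'$ inside the given $P$ by reducing to $G_{X_\ss}$ and using that every nilpotent orbit of a type-GL group is a Richardson orbit, then pass to a minimal element.

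\textbf{The implication $\Leftarrow$.} The key tool is a dimension criterion: for a parabolic $R$ of a connected reductive group $K$ and $Y\in\mathfrak r$, writing $Y_M=\pi_{\mathfrak r,\m_R}(Y)$ and $\o=(\Ad M_R)Y_M$, one has $Y\in Y_M+\n_R\subseteq\o+\n_R\subseteq\overline{\Ind_{M_R}^K(\o)}$ by Proposition~\ref{chap3prop:indprop}(1), hence $\dim K\cdot Y\le\dim\Ind_{M_R}^K(\o)$ with equality iff $Y$ lies in the open orbit, i.e. iff $R\in\cLS^K(Y)$; feeding in the codimension identity of Proposition~\ref{chap3prop:indprop}(3) (and $\dim\o=\dim M_R-\dim(M_R)_{Y_M}$) turns this into
\[
\dim K_Y\ \ge\ \dim (M_R)_{\pi_{\mathfrak r,\m_R}(Y)},\qquad\text{equality }\iff\ R\in\cLS^K(Y).
\]
Applying this inside $K=M_P$ to the parabolic $Q\cap M_P$ of $M_P$ (for $Q\subseteq P$ with compatible Levis $M_Q\subseteq M_P$) and to $Y=\pi_{\p,\m_P}(X)$, using that the projections compose ($\pi_{\q\cap\m_P,\m_Q}\circ\pi_{\p,\m_P}\big|_{\q}=\pi_{\q,\m_Q}$), yields the monotonicity $\dim(M_Q)_{\pi_{\q,\m_Q}(X)}\le\dim(M_P)_{\pi_{\p,\m_P}(X)}$. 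Now if $Q\in\cLS^G(X)$ and $Q\subseteq P$: the criterion (with $K=G$, $R=Q$) gives $\dim(M_Q)_{\pi_{\q,\m_Q}(X)}=\dim G_X$; monotonicity gives $\dim(M_P)_{\pi_{\p,\m_P}(X)}\ge\dim G_X$; and the criterion (with $K=G$, $R=P$) gives the reverse inequality; so equality holds and $P\in\cLS^G(X)$. Since $\cR^G(X)\subseteq\cLS^G(X)$, any parabolic containing an element of $\cR^G(X)$ lies in $\cLS^G(X)$.

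\textbf{The implication $\Rightarrow$.} Let $P\in\cLS^G(X)$. By Lemma~\ref{lem:LSR'ss} I pass to $H=G_{X_\ss}$ (connected and of type GL) and to the nilpotent element $\nu=X_\nilp$, so $P':=P_{X_\ss}\in\cLS^H(\nu)$. Set $\mu=\pi_{\p',\m_{P'}}(\nu)\in\m_{P'}$, nilpotent, so $\nu\in\Ind_{M_{P'}}^H(\mu)$. Since $M_{P'}$ is of type GL, Proposition~\ref{prop:condition(H?)}(1) together with Proposition~\ref{chap3prop:indprop}(4)--(5) (and the coincidence of $M_{P'}(F)$- and $M_{P'}(\overline F)$-conjugacy) furnishes a parabolic $S'$ of $M_{P'}$ with $\mu\in\n_{S'}$ lying in the open dense $S'$-orbit, i.e. $\mu\in\Ind_{S'}^{M_{P'}}(0)$. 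Let $R'\subseteq P'$ be the preimage of $S'$ under $P'\twoheadrightarrow M_{P'}$; then $\n_{R'}=\n_{S'}\oplus\n_{P'}\ni\mu+(\nu-\mu)=\nu$, and transitivity of induction (Proposition~\ref{chap3prop:indprop}(7)) gives $\Ind_{R'}^H(0)=\Ind_{M_{P'}}^H\bigl(\Ind_{S'}^{M_{P'}}(0)\bigr)=\Ind_{M_{P'}}^H(\mu)\ni\nu$, so $R'\in\cR^H(\nu)'$. Finally I lift: using surjectivity of $P''\mapsto P''_{X_\ss}$ from parabolics of $G$ inside $P$ to parabolics of $H$ inside $P'$ (the lemma of \S\ref{subsec:pseudo-levi}), pick $R\subseteq P$ with $R_{X_\ss}^{\circ}=R'$ and a Levi $M_R$ of $R$ containing $X_\ss$; then $(M_R)_{X_\ss}$ is a Levi of $R'$, whence $\dim(M_R)_{X_\ss}=\dim M_{R'}=\dim H_\nu=\dim G_X$, and $X_\nilp=\nu\in\n_{R'}\subseteq\n_R$ forces $X\in X_\ss+\n_R$ and $X\in\Ind_R^G(X_\ss)$ by the dimension criterion above; thus $R\in\cR^G(X)'$ and $R\subseteq P$. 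Choosing a minimal element $Q$ of $\cR^G(X)'$ with $Q\subseteq R$ (a finite poset), any element of $\cR^G(X)'$ strictly below $Q$ would again be $\subseteq R$, contradicting minimality; hence $Q\in\cR^G(X)$ and $Q\subseteq P$.

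\textbf{Main obstacle.} The substantive points are, on the $\Leftarrow$ side, recognizing that membership in $\cLS^G(X)$ is governed by the single dimension equality above (so that upward closure becomes a short inequality chase), and, on the $\Rightarrow$ side, the construction of $R'$, which relies essentially on the type-GL hypothesis through Proposition~\ref{prop:condition(H?)}; the lifting step is routine given the correspondences of \S\ref{subsec:pseudo-levi}.
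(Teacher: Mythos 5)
Your proof is correct, and the two directions compare differently with the paper's argument. For the implication $\Leftarrow$ you take a genuinely different route: the paper reduces to the nilpotent case via the lemma \ref{lem:LSR'ss} and then argues that the projection of $\Ind_P^G(0)\cap\n_P$ onto $\n_{P\cap L}^L$ is dense, hence meets $\Ind_{P\cap L}^L(0)$, and concludes by transitivity; you instead extract from the points 1, 3, 4, 5 of the proposition \ref{chap3prop:indprop} the numerical criterion $\dim K_Y\ge\dim(M_R)_{\pi_{\mathfrak r,\m_R}(Y)}$ with equality iff $R\in\cLS^K(Y)$, and deduce upward closure by a three-line inequality chase. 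This is cleaner, needs neither the nilpotent reduction nor the type-GL hypothesis, and is valid for any connected reductive group. For the implication $\Rightarrow$ you follow essentially the paper's strategy (produce a Richardson-type parabolic of a Levi factor via the proposition \ref{prop:condition(H?)}, extend it by the unipotent radical, conclude by transitivity), but you route it through $H=G_{X_\ss}$ and lift back, whereas the paper reduces once and for all to the nilpotent case; you also make explicit the passage from $\cR^G(X)'$ to $\cR^G(X)$ by taking a minimal element inside $R$, a step the paper leaves implicit. Two points deserve one more sentence each in a final write-up: (i) the ``restricted surjectivity'' you invoke for the lift is not literally the statement of the lemma of the sous-section \ref{subsec:pseudo-levi}; since your $R'$ has the form $S'N_{P'}$ with $S'$ a parabolic of the Levi $L_{X_\ss}$ of $P'$, the correct lift is obtained by applying that lemma inside the Levi $L$ of $P$ to get $S$ with $S_{X_\ss}=S'$ and then setting $R=SN_P\subseteq P$; (ii) the choice of $S'$ with $\mu\in\n_{S'}$ requires conjugating the parabolic furnished by the proposition \ref{prop:condition(H?)} by an element of $M_{P'}(F)$, which you correctly flag via the coincidence of rational and geometric conjugacy. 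Neither point is a gap, only a spot where the routine justification should be spelled out.
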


\begin{proof}Montrons au premier abord la réciproque. Soient $P\in\cR^G(X)$ et $Q\supseteq P$. On prend $M$ un facteur de Levi de $P$ et $L$ un facteur de Levi de $Q$ tels que $L\supseteq M$. Par le lemme \ref{lem:LSR'ss} on peut supposer que $X$ est nilpotent. Notons $\pi :\n_{P}\rightarrow \n_{P \cap L}^{L}$ la projection, avec $\n_{P \cap L}^{L}\eqdef \n_{P}\cap\mathfrak{l}$. Alors $\pi(\Ind_{P}^{G}(0)\cap \n_{P})$ est un ouvert dense de Zariski dans $\n_{P\cap L}^{L}$, cette image intersecte de ce fait $\Ind_{P \cap L}^{L}(0)\cap \n_{P\cap L}^{L}$ dans un ouvert dense de Zariski. Or $\Ind_{P}^{G}(0)\cap \n_{P}=(\Ad P)X$ par le point 5 de la proposition \ref{chap3prop:indprop}, quitte à remplaçer $X$ par un $P$-conjugué on peut donc supposer que $\pi_{\mathfrak{q},\mathfrak{l}}(X)\in \Ind_{P \cap L}^{L}(0)$. Il résulte maintenant de la transitivité de l’orbite induite que $X\in \Ind_Q^G(X)$, par voie de conséquence $Q\in\cLS^G(X)$.

Montrons dans un deuxième temps le sens direct. Soient $Q\in\cLS^G(X)$ et $L$ un facteur de Levi dont l'algèbre de Lie contient $X_\ss$. Constatons que la projection $\pi_{\mathfrak{q}_{X_\ss},\mathfrak{l}_{X_\ss}}(X)$ dans l'algèbre réductive $\mathfrak{l}_{X_\ss}$ vaut $X_\ss+\pi_{\mathfrak{q}_{X_\ss},\mathfrak{l}_{X_\ss}}(X_\nilp)$, cette expression est également sa décomposition de Jordan, car l'action adjointe par $L_{X_\ss}$ laisse stable la décomposition $\mathfrak{q}_{X_\ss}=\mathfrak{l}_{X_\ss}\oplus\n_{Q_{X_\ss}}$. De là et le lemme \ref{lem:LSR'ss}, on peut supposer que $X$ est nilpotent. Le point 1 de la proposition \ref{prop:condition(H?)} nous garantit l'existence d'un sous-groupe parabolique $P'$ de $L$ tel que $\pi_{\mathfrak{q},\mathfrak{l}}(X)\in \Ind_{P'}^L(0)$. Soit $P$ l'unique sous-groupe parabolique de $G$ qui est contenu dans $Q$ et dont l'intersection avec $L$ vaut $P'$. Il découle à nouveau de la transitivité de l'orbite induite que $X\in \Ind_P^G(0)$, par voie de conséquence $P\in\cR^G(X)'$.
\end{proof}

Les ensembles $\cLS^G(X)$ et $\cR^G(X)$ ont été introduits par Chaudouard dans le cas $X$ nilpotent (\cite{Ch17}). \`{A} noter que son article est rédigé dans le cadre de $G=\GL_{n,F}$, cela dit il n'y a guère d'obstable pour généraliser sa théorie nilpotente à tous les groupes du type GL. Lorsque l'on a besoin des résultats de son article, mais pour un groupe du type GL, on fera habituellement mention de celui-ci de manière concise, sans commentaire, tout en apportant les ajustements nécessaires lorsque la situation les réclame.

\section{Nouvelle définition d'une intégrale orbitale pondérée}\label{sec:2nouvdef}

On donne un bref aperçu du numéro \ref{sec:2nouvdef}. On fixe ici $M_0$ un sous-groupe de Levi minimal de $G$. Soient $L$ un sous-groupe de Levi de $G$ contenant $M_0$, $Q$ un sous-groupe parabolique de $G$ contenant $L$, et $\o$ une $L(F)$-orbite dans l'algèbre de Lie $\mathfrak{l}(F)$. L'objectif de ce numéro est de définir l'intégrale orbitale pondérée $J_L^Q(\o,f)$ pour une fonction test $f$ sur $\g(F)$ comme 
\[
J_L^Q(\o,f)\eqdef |D^{\g}(X)|_{F}^{1/2} \int_{G_{X}(F)\backslash G(F)} f((\Ad g^{-1})X)v_{(\Ad w)L,X}^{(\Ad w)Q}(g)\,dg,
\]
(définition \ref{def:NIOPl}). Ici, $D^{\g}(X)$ est le discriminant de Weyl de $X$, rappelé dans la sous-section \ref{YDLiopsubsec:normalizationmeasure}, et $|-|_F$ désigne la valeur absolue usuelle sur $F$. Ensuite, $X$ est le représentant standard de $\Ind_L^G(\o)(F)$, dont la définition sera expliquée dans la section \ref{subsec:ssssschoix}. Enfin, $v_{(\Ad w)L,X}^{(\Ad w)Q}(g)\in \C$ est le poids dérivé de la théorie des $(G,M)$-familles, rappelée dans la sous-section \ref{YDLiopsubsec:GMfamily}. Nous vérifions dans le lemme \ref{lem:poidsinvnorm} que la fonction $v_{(\Ad w)L,X}^{(\Ad w)Q}$ est invariante à gauche par $G_{X}(F)$, ce qui légitime la définition ci-dessus.

\subsection{Paramétrage de \texorpdfstring{$\cR^G(X)$}{RG(X)}}

Dorénavant, soit $X\in\g(F)$. 

\begin{proposition}Soit $M$ un facteur de Levi d'un élément de $\cR^G(X)$ dont l'algèbre de Lie contient $X_\ss$. Alors
\begin{enumerate}
    \item Pour tout $P \in \P^G(M)$, il existe un unique élément $\widetilde{P} \in \cR^G(X)$ tel que $P$ et $\widetilde{P}$ soient conjugués sous $G_{X_\ss}$. Ils sont en fait conjugués sous $G_{X_\ss}(F)$.
    \item L'application
    \begin{align}\label{R}
    \begin{split}
        \P^G(M) &\longrightarrow \cR^G(X)
        \\
        P& \longmapsto \widetilde{P}
    \end{split}
    \end{align}
    est surjective.
    \item On a $\text{Norm}_{G_{X_\ss}(F)}(M)=\text{Norm}_{G_{X_\ss}(F)}(M_{X_\ss})$
. Les fibres de l’application \eqref{R} sont naturellement des torseurs sous le groupe de Weyl relatif 
    \[\text{Norm}_{G_{X_\ss}(F)}(M_{X_\ss})/M_{X_{\ss}}(F).\]
\end{enumerate}
\end{proposition}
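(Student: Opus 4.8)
The plan is to transport everything, by means of the semisimple part $\sigma\eqdef X_{\ss}$ (and $\nu\eqdef X_{\nilp}$), to the theory of induced nilpotent orbits of Proposition \ref{chap3prop:indprop} applied inside $G_{\sigma}$. Since $G$ is of type GL, every centralizer below is connected. I first fix an element $R_{0}\in\cR^{G}(X)$ having $M$ as a Levi factor. By Corollary \ref{coro:elementisellinRich} and Lemma \ref{lem:Rlevienvlop}, $\sigma$ is $F$-elliptic in $\m$ and $M=\envL(M_{\sigma};G)=\Cent(A_{M_{\sigma}},G)$, so $A_{M}=A_{M_{\sigma}}$; moreover $R_{0,\sigma}\in\cR^{G_{\sigma}}(\nu)$ with Levi factor $M_{\sigma}$ (the consequence of Lemma \ref{lem:Rlevienvlop} recalled just after its proof), whence $\nu\in\Ind_{M_{\sigma}}^{G_{\sigma}}(0)$. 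I will also use freely that whenever $Q$ is a parabolic of $G$ with $\sigma$ in the Lie algebra of one of its Levi factors, one has $\n_{Q}\cap\g_{\sigma}=\n_{Q_{\sigma}}$, because the adjoint action of $\sigma$ respects the Levi decomposition of $Q$.

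The heart of the argument is the following criterion, which I would establish first: for $g\in G_{\sigma}$ and $P\in\P^{G}(M)$,
\[(\Ad g)P\in\cR^{G}(X)\quad\Longleftrightarrow\quad (\Ad g^{-1})\nu\in\n_{P_{\sigma}}.\]
One unwinds the three conditions defining $\cR^{G}(X)'$ for $Q\eqdef(\Ad g)P$, which has $(\Ad g)M$ as a Levi factor with $\sigma\in(\Ad g)\m$. The condition $\nu\in\n_{Q}$ is, by the identity above, equivalent to $\nu\in(\Ad g)\n_{P_{\sigma}}$, and it forces $X\in\mathfrak{q}$; the condition $X\in\Ind_{Q}^{G}(\sigma)$ is, by \eqref{chap3eq:indJordan} and the uniqueness of the Jordan decomposition, equivalent to $\nu\in\Ind_{(\Ad g)M_{\sigma}}^{G_{\sigma}}(0)=\Ind_{M_{\sigma}}^{G_{\sigma}}(0)$, which holds by the first paragraph and does not depend on $g$. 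Finally, minimality is automatic: $(\Ad g)M=\envL((\Ad g)M_{\sigma};G)$ since $M=\envL(M_{\sigma};G)$ and $\envL$ is $G$-equivariant, so Lemma \ref{lem:Rlevienvlop} upgrades membership in $\cR^{G}(X)'$ to membership in $\cR^{G}(X)$.

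Granting the criterion, Part 1 follows. For existence I use that $\n_{P_{\sigma}}(F)$ is Zariski dense in $\n_{P_{\sigma}}$ and that $\Ind_{M_{\sigma}}^{G_{\sigma}}(0)\cap\n_{P_{\sigma}}$ is dense open in it (Proposition \ref{chap3prop:indprop}(1) inside $G_{\sigma}$), so the two meet in an $F$-point $\nu'$; since $\nu'\in\g_{\sigma}(F)$ lies in the $G_{\sigma}$-orbit of $\nu$, the coincidence of rational and geometric conjugacy for type-GL groups produces $g\in G_{\sigma}(F)$ with $(\Ad g^{-1})\nu=\nu'\in\n_{P_{\sigma}}$, and then $\widetilde P\eqdef(\Ad g)P\in\cR^{G}(X)$. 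For uniqueness, if $(\Ad g_{1})P,(\Ad g_{2})P\in\cR^{G}(X)$ with $g_{i}\in G_{\sigma}$, I set $\mu\eqdef(\Ad g_{2}^{-1})\nu$ and $g\eqdef g_{2}^{-1}g_{1}$: both $\mu$ and $(\Ad g^{-1})\mu$ lie in $\n_{P_{\sigma}}\cap\Ind_{M_{\sigma}}^{G_{\sigma}}(0)$, a single $P_{\sigma}$-orbit by Proposition \ref{chap3prop:indprop}(5), so $gp\in\Cent(\mu,G_{\sigma})$ for some $p\in P_{\sigma}$; as $\Cent(\mu,G_{\sigma})$ is connected and contained in $P_{\sigma}$ by Proposition \ref{chap3prop:indprop}(6), this gives $g\in P_{\sigma}$ and $(\Ad g_{1})P=(\Ad g_{2})P$. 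Part 2 is then immediate: given $R\in\cR^{G}(X)$, Proposition \ref{prop:Rasso} (with Corollary \ref{coro:LeviRssconj} to arrange the Levi factors through $\sigma$) furnishes $g\in G_{\sigma}(F)$ carrying $M$ onto a Levi factor of $R$, so $P\eqdef(\Ad g^{-1})R\in\P^{G}(M)$ is $G_{\sigma}$-conjugate to $R\in\cR^{G}(X)$, whence $\widetilde P=R$ by uniqueness.

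For Part 3, the equality $\mathrm{Norm}_{G_{\sigma}(F)}(M)=\mathrm{Norm}_{G_{\sigma}(F)}(M_{\sigma})$ is formal: inside $G_{\sigma}$, normalizing $M$ forces normalizing $M\cap G_{\sigma}=M_{\sigma}$, and normalizing $M_{\sigma}$ forces normalizing $A_{M_{\sigma}}=A_{M}$ hence $\Cent(A_{M},G)=M$. The group $W\eqdef\mathrm{Norm}_{G_{\sigma}(F)}(M_{\sigma})/M_{\sigma}(F)$ acts by conjugation on $\P^{G}(M)$ and preserves the fibres of $P\mapsto\widetilde P$ (a $G_{\sigma}$-conjugate of $P$ has the same image). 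This action is free: if $n\in\mathrm{Norm}_{G_{\sigma}(F)}(M)$ stabilizes some $P\in\P^{G}(M)$, then $n\in P\cap\mathrm{Norm}_{G}(M)=M$ (using $\mathrm{Norm}_{G}(M)\cap N_{P}=\{1\}$), so $n\in M(F)\cap G_{\sigma}(F)=M_{\sigma}(F)$. It is transitive on each fibre: if $\widetilde{P_{1}}=\widetilde{P_{2}}$, then $P_{1}$ and $P_{2}$ are $G_{\sigma}(F)$-conjugate by Part 1, and correcting the conjugating element by an element of $(P_{2})_{\sigma}(F)$ so that it also stabilizes $M$ — two Levi factors of $P_{2}$ whose Lie algebras contain $\sigma$ being $(P_{2})_{\sigma}(F)$-conjugate, exactly as in Corollary \ref{coro:LeviRssconj} — puts it in $\mathrm{Norm}_{G_{\sigma}(F)}(M)$. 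Combined with the surjectivity of Part 2, every fibre is thus a $W$-torsor. The main obstacle is the uniqueness in Part 1: this is the one place where one genuinely needs a parabolic to act transitively on the dense part of its nilradical together with the containment of the associated centralizer in the parabolic (Proposition \ref{chap3prop:indprop}(5),(6)), the type-GL hypothesis being precisely what turns this, via connectedness of centralizers, into the clean inclusion $\Cent(\mu,G_{\sigma})\subseteq P_{\sigma}$; setting up the criterion of the second paragraph and keeping the passage between $G$ and $G_{\sigma}$, and between geometric and $F$-rational points, under control is the other point demanding care.
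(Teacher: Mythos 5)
Your proposal is correct and takes essentially the same route as the paper: Part 1 rests on points (5) and (6) of Proposition \ref{chap3prop:indprop}, Lemma \ref{lem:Rlevienvlop} and the coincidence of rational and geometric conjugacy for groups of type GL, while Parts 2 and 3 reproduce the paper's use of Proposition \ref{prop:Rasso}, Corollary \ref{coro:LeviRssconj} and the identity $\text{Norm}_{P(F)}(M)=M(F)$. The only difference is organizational: you reduce Part 1 to the nilpotent orbit of $X_\nilp$ inside $G_{X_\ss}$ via your explicit criterion, whereas the paper argues directly with $X$ in $G$ using uniqueness of the Jordan decomposition; this is a harmless repackaging of the same ingredients.
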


\begin{remark}
Comme $\P^G(M)$ est fini, on obtient en particulier que l'ensemble $\cR^G(X)$, et donc aussi $\cLS^G(X)$, est fini.
\end{remark}

\begin{proof}Commençons par montrer l'unicité dans l'assertion 1 : autrement dit deux éléments de $\cR^G(X)$ qui sont conjugués sous $G_{X_\ss}$ sont égaux. Soient $P_1$ et $P_2$ dans $\cR^G(X)$, et $g\in G_{X_\ss}$ tel que $P_1=(\Ad g)P_2$. Posons $M_{P_1}$ et $M_{P_2}$ des facteurs de Levi de $P_1$ et $P_2$ tels que $M_{P_1}=(\Ad g)M_{P_2}$, on suppose que $\mathfrak{m}_{P_1}$ contient $X_\ss$, alors il en est de même pour $\mathfrak{m}_{P_2}$. On sait par définition et proposition \ref{chap3prop:indprop} que
\[X\in(\Ad P_i)X=(\Ad G) X\cap ((\Ad M_{P_i})X_\ss+\mathfrak{n}_{P_i}),\]
pour $i=1,2$.  En conjuguant par $g$ aux deux côtés de l'égalité précédente pour $i=2$ on trouve 
\[(\Ad g) X\in (\Ad G) X\cap ((\Ad M_{P_1})X_\ss+\mathfrak{n}_{P_1})=(\Ad P_1)X.\]
Il existe alors $p_1\in P_1$ tel que $(\Ad g)X=(\Ad p_1)X$, soit $p_1^{-1}g\in G_X$. Or, toujours par la proposition \ref{chap3prop:indprop}, $G_X\subseteq P_1$, on en déduit $g\in P_1$ d'où 
\[P_2=(\Ad g^{-1})P_1=P_1.\]

Montrons ensuite l'existence dans l'assertion 1, c'est-à-dire pour tout $P \in \P^G(M)$, il existe un élément de $\cR^G(X)$ qui est conjugué par $G_{X_\ss}(F)$ à $P$. Soit $P \in \P^G(M)$. On a $X\in \Ind_P^G(X_\ss)(F)=(\Ad G(F))(X_\ss+\mathfrak{n}_{P_{X_\ss}})_{G_{X_\ss}-\reg}(F)$. L'unicité de la décomposition de Jordan (de $X$) nous garantit donc l'existence de $g\in G_{X_\ss}(F)$ tel que $(\Ad g)X\in (X_\ss +\mathfrak{n}_{P_{X_\ss}})_{G_{X_\ss}-\reg}\subseteq \mathfrak{p}$. On prétend primo que $\widetilde{P}\eqdef(\Ad g^{-1})P\in \cR^G(X)'$. En effet $X_\nilp\in (\Ad g^{-1})\n_{P}=\n_{\widetilde{P}}$, et $\widetilde{M}\eqdef(\Ad g^{-1})M$ est un facteur de Levi de $\widetilde{P}$ dont l'algèbre de Lie contient $(\Ad g^{-1})X_\ss=X_\ss$. Enfin $X\in (\Ad g^{-1})(X_\ss +\mathfrak{n}_{P_{X_\ss}})_{G_{X_\ss}-\reg}=(X_\ss +\mathfrak{n}_{\widetilde{P}_{X_\ss}})_{G_{X_\ss}-\reg}\subseteq\Ind_{\widetilde{P}}^G(X_\ss)$. On prétend secundo que $\widetilde{P}\in \cR^G{}(X)$, en effet, la rationalité de l'élément $g$ ci-dessus et la première partie du lemme \ref{lem:Rlevienvlop} nous assurent que $\widetilde{M}=\envL(\widetilde{M}_{X_\ss};G)$, après la deuxième partie du même lemme nous donne $\widetilde{P}\in \cR^G{}(X)$.

Continuons par la surjectivité de l'assertion 2. Fixons $P\in \cR^G(X)$ ayant $M$ comme facteur de Levi. Soit $Q\in \cR^G(X)$. D'après la proposition \ref{prop:Rasso}, les sous-groupes paraboliques $P$ et $Q$ sont associés par un élément de $G_{X_\ss}(F)$, on en déduit l'existence de $g \in G_{X_\ss}(F)$ tel que $Q \in \P((\Ad g)M)$ c'est-à-dire $(\Ad g^{-1})Q \in \P(M)$ ce qui montre la surjectivité.

Tournons-nous vers l'égalité d'ensembles
\begin{equation}\label{eq:pW=W}
\text{Norm}_{G_{X_\ss}(F)}(M)=\text{Norm}_{G_{X_\ss}(F)}(M_{X_\ss}).    
\end{equation}
Amorçons par l'inclusion $\subseteq$ : soit $g\in G_{X_\ss}(F)$ tel que $(\Ad g)M=M$. Nous avons, en vertu du lemme \ref{lem:Rlevienvlop}, $(\Ad g)A_{M_{X_\ss}}=(\Ad g)A_{M}=A_M=A_{M_{X_\ss}}$, d'où $(\Ad g)M_{X_\ss}=\Cent((\Ad g)A_{M_{X_\ss}},G_{X_\ss})=\Cent(A_{M_{X_\ss}},G_{X_\ss})=M_{X_\ss}$. L'inclusion inverse dérive du raisonnement similaire.

Calculons une fibre de \eqref{R}. Soit $P\in\P^G(M)$. La fibre de l’image de $P$ par
\eqref{R} est formée des éléments de $\P^G(M)$ qui sont $G_{X_\ss}(F)$-conjugués à $P$. Soit $g\in G_{X_\ss}(F)/P_{X_\ss}(F)$ tel que $Q=(\Ad g)P\in\P^G(M)$. Identifions $g$ à un représentant dans $G_{X_\ss}(F)$. Les groupes $(\Ad g^{-1})M$ et $M$ sont deux facteurs de Levi de $P$, tels que l'algèbre de Lie de chacun contient $X_\ss$. Ils sont donc conjugués par un élément de $P_{X_\ss}(F)$ d'après le corollaire \ref{coro:LeviRssconj}, quitte à changer le représentant $g$, on peut supposer que $(\Ad g^{-1})M=M$, c'est-à-dire $g\in \text{Norm}_{G_{X_\ss}(F)}(M)$. Comme $\text{Norm}_{P_{X_\ss}(F)}(M)=\text{Norm}_{P(F)}(M)\cap P_{X_\ss}(F)=M_{X_\ss}(F)$, une fibre de \eqref{R} est bien un torseur sous $\text{Norm}_{G_{X_\ss}(F)}(M)/M_{X_{\ss}}(F)$. 
\end{proof}


\subsection{Paramétrage de \texorpdfstring{$\cLS^G(X)$}{LSG(X)}}

\begin{proposition}Soit $M$ un facteur de Levi d'un élément de $\cR^G(X)$ dont l'algèbre de Lie contient $X_\ss$. Il existe une unique application
\begin{align}\label{LS}
    \begin{split}
        \F^G(M) &\longrightarrow \cLS^G(X)
        \\
        Q& \longmapsto \widetilde{Q}
    \end{split}
    \end{align}
telle que
\begin{enumerate}
    \item elle co\"{i}ncide sur $\P^G(M)$ avec \eqref{R} ;
    \item un élément de $\F^G(M)$ est $G_{X_\ss}(F)$-conjugué à son image ;
    \item elle préserve les inclusions ;
    \item elle est surjective ;
    \item le groupe $\text{Norm}_{G_{X_\ss(F)}}(M_{X_\ss})$ agit transitivement sur les fibres de \eqref{LS}. Le stabilisateur de $Q\in \F^G(M)$ est $\text{Norm}_{Q_{X_\ss(F)}}(M_{X_\ss})$.
\end{enumerate}
\end{proposition}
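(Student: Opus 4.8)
The plan is to define the map by transporting, through the parametrisation \eqref{R} of the preceding proposition, a parabolic of $G$ contained in $Q$ with $M$ as Levi factor; then to check it lands in $\cLS^G(X)$, is well defined, has properties (1)--(5), and that (1)--(3) already force it. \emph{Construction:} fix $Q\in\F^G(M)$. Since $M$ is a Levi subgroup of the Levi component $M_Q$ of $Q$, the set $\P^Q(M)$ of parabolics of $G$ contained in $Q$ with $M$ as a Levi factor is nonempty; pick $P$ in it. As $M$ is a Levi factor of an element of $\cR^G(X)$, the earlier results apply (so $X_\ss$ is $F$-elliptic in $\m$ by Corollary \ref{coro:elementisellinRich}); assertion 1 of the preceding proposition applied to $P\in\P^G(M)$ gives $\widetilde P\in\cR^G(X)$ and $g\in G_{X_\ss}(F)$ with $(\Ad g)P=\widetilde P$. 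Put $\widetilde Q\eqdef(\Ad g)Q$. Since $\widetilde Q\supseteq(\Ad g)P=\widetilde P\in\cR^G(X)$, Proposition \ref{prop:LS->R} gives $\widetilde Q\in\cLS^G(X)$.

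\emph{Well-definedness — the main obstacle.} For a fixed $P$, two admissible $g$ differ by an element of $\text{Norm}_{G_{X_\ss}(F)}(P)=P_{X_\ss}(F)\subseteq Q(F)$ (since $\text{Norm}_G(P)=P$), hence yield the same $\widetilde Q$. The delicate point is independence of the choice of $P\in\P^Q(M)$: writing a second choice as $P'=(\Ad q)P$ with $q\in\text{Norm}_{M_Q(F)}(M)$ (using that $\P^{M_Q}(M)$ is a torsor under $\text{Norm}_{M_Q(F)}(M)/M(F)$), one must produce $g'\in G_{X_\ss}(F)$ with $(\Ad g')P'\in\cR^G(X)$ and $(\Ad g')Q=(\Ad g)Q$. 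Working inside $\widetilde Q=(\Ad g)Q$, both $\widetilde P$ and $(\Ad g)P'=(\Ad gqg^{-1})\widetilde P$ lie in $\P^{\widetilde Q}(\widetilde M)$ with $\widetilde M=(\Ad g)M$ (here $q$ normalises $M$), so it suffices to find a $\widetilde Q_{X_\ss}(F)$-conjugate of $(\Ad g)P'$ lying in $\cR^G(X)$ and then correct $g$ by the conjugating element (which lies in $\widetilde Q(F)$). Via Lemma \ref{lem:LSR'ss} this reduces to the corresponding nilpotent statement over $G_{X_\ss}$, where one combines Chaudouard's parametrisation of $\cR^{G_{X_\ss}}(X_\nilp)$ with Lemma \ref{lem:Rlevienvlop}, Corollary \ref{coro:LeviRssconj} and Proposition \ref{prop:Rasso} to pin down the Levi factors. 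I expect this verification, interlacing the induction process with the enveloppe-de-Levi bookkeeping, to be the technical core of the proof.

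\emph{Properties (1)--(4).} (1): if $Q=P\in\P^G(M)$ then $\P^P(M)=\{P\}$, so $\widetilde Q=\widetilde P$ is the value of \eqref{R}. (2): holds by construction. (3): for $Q_1\subseteq Q_2$, pick $P\in\P^{Q_1}(M)\subseteq\P^{Q_2}(M)$ and compute both images with the same $g$, so $\widetilde{Q_1}=(\Ad g)Q_1\subseteq(\Ad g)Q_2=\widetilde{Q_2}$. (4): given $S\in\cLS^G(X)$, Proposition \ref{prop:LS->R} yields $R\in\cR^G(X)$ with $R\subseteq S$, and the surjectivity of \eqref{R} yields $P_0\in\P^G(M)$ and $g_0\in G_{X_\ss}(F)$ with $(\Ad g_0)P_0=R$; then $(\Ad g_0^{-1})S\supseteq P_0$ lies in $\F^G(M)$, and computing its image with the choice $P_0\in\P^{(\Ad g_0^{-1})S}(M)$ returns $(\Ad g_0)((\Ad g_0^{-1})S)=S$.

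\emph{Property (5) and uniqueness.} By \eqref{eq:pW=W} the group $\text{Norm}_{G_{X_\ss}(F)}(M_{X_\ss})=\text{Norm}_{G_{X_\ss}(F)}(M)$ normalises $M$, hence acts on $\F^G(M)$; replacing a chosen $P\in\P^Q(M)$ by $(\Ad w)P$ shows at once that this action commutes with \eqref{LS}, so it stabilises each fibre, and the set-stabiliser of $Q$ is $\text{Norm}_{G_{X_\ss}(F)}(M_{X_\ss})\cap Q(F)=\text{Norm}_{Q_{X_\ss}(F)}(M_{X_\ss})$. Transitivity on a fibre follows the well-definedness pattern: if $\widetilde Q=\widetilde{Q'}$ one writes $Q'=(\Ad u)Q$ with $u\in G_{X_\ss}(F)$ and corrects $u$ by an element of $N_{Q'}(F)$, using Corollary \ref{coro:LeviRssconj} applied to the element of $\cR^G(X)$ inside $\widetilde Q$, so that the corrected element normalises $M$. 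Finally, if $\Phi$ satisfies (1)--(3), then for $Q\in\F^G(M)$ and $P\in\P^Q(M)$ one has $\Phi(P)=\widetilde P\subseteq\Phi(Q)$ with $\Phi(Q)$ conjugate to $Q$ under $G_{X_\ss}(F)$, and the argument of the well-definedness step forces $\Phi(Q)=\widetilde Q$, giving uniqueness.
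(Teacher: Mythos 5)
Your overall architecture matches the paper's: define $\widetilde Q=(\Ad g)Q$ for a choice of $P\in\P^G(M)$ with $P\subseteq Q$ and of $g\in G_{X_\ss}(F)$ sending $P$ into $\cR^G(X)$, observe that properties (1)--(3) force this formula (uniqueness), and read off (2)--(4). The problem is that the one genuinely nontrivial step --- independence of $(\Ad g)Q$ from the choice of $P\subseteq Q$ --- is not proved. You correctly identify it as ``the technical core'' and then defer it: you reduce to producing an element $h\in\widetilde Q_{X_\ss}(F)$ conjugating $(\Ad g)P'$ into $\cR^G(X)$, and assert that this follows from Lemma \ref{lem:LSR'ss}, Lemma \ref{lem:Rlevienvlop}, Corollaire \ref{coro:LeviRssconj} and Proposition \ref{prop:Rasso}. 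None of these produces a conjugator inside $\widetilde Q_{X_\ss}(F)$: Proposition \ref{prop:Rasso} only gives association by \emph{some} element of $G_{X_\ss}(F)$, and Corollaire \ref{coro:LeviRssconj} compares two Levi factors of one and the same parabolic. The paper isolates exactly this point as Lemma \ref{lem:2.10.2}, whose proof is a genuine argument: one decomposes $(\Ad g_i)X_\nilp=Y_i+U_i$ along $\mathfrak m_{Q,X_\ss}\oplus\n_{Q,X_\ss}$, observes via le point 5 de la proposition \ref{chap3prop:indprop} that the $Y_i$ lie in the same induced orbit of $M_{Q,X_\ss}$, applies le point 2 de la proposition \ref{prop:condition(H?)} to conjugate them by an element of $M_{Q,X_\ss}(F)$, corrects by $P_{2,X_\ss}(F)$, and finally absorbs the remaining discrepancy into $G_X(F)\subseteq(\Ad g_1)Q$ using le point 6 de la proposition \ref{chap3prop:indprop}. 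Without this (or an equivalent) argument the map is simply not defined.

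The same gap propagates to property (5): your transitivity sketch (``correct $u$ by an element of $N_{Q'}(F)$'') is not the right correction. In the paper the correcting element lies in $L_{X_\ss}(F)$, with $L$ the Levi factor of the common image $Q$, and is again produced by comparing the induced orbits $\Ind^{L_{X_\ss}}_{P'_{i,X_\ss}\cap L_{X_\ss}}(0)$ and invoking le point 2 de la proposition \ref{prop:condition(H?)}; conjugating by a unipotent element of $N_{Q'}$ cannot move one choice of the Levi $M$ to another while fixing $Q'$. The remaining items --- property (1), (3), (4), the stabiliser computation $\text{Norm}_{G_{X_\ss}(F)}(M_{X_\ss})\cap Q(F)=\text{Norm}_{Q_{X_\ss}(F)}(M_{X_\ss})$, and the derivation of uniqueness from (1)--(3) --- are correct and coincide with the paper's.
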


\begin{proof}
Montrons d'abord l'unicité d'une telle application. Soit $Q \in \F^G(M)$. Soit $P \in \P^G(M)$ tels que $P \subseteq Q$. Par la propriété 1, $\widetilde{P}$ est l'image de $P$ par l'application \eqref{R}. Donc il existe un unique élément $g \in G_{X_\ss}(F)/P_{X_\ss}(F)$ tel que $(\Ad g)P = \widetilde{P}$. Par conséquent, on  $\widetilde{P}\subseteq (\Ad g)Q$. D'autre part, par la propriété 3 on a $\widetilde{P}\subseteq \widetilde{Q}$, et par la propriété 2 le groupe $\widetilde{Q}$ est conjugué à $(\Ad g)Q$. Comme $\widetilde{Q}$ et $(\Ad g)Q$ contiennent tous deux $\widetilde{P}$, on a nécessairement
\begin{equation}\label{appdef:LS}
\widetilde{Q}=(\Ad g) Q.    
\end{equation}

Réciproquement, l'équation (\ref{appdef:LS}) permet de définir l'application \eqref{LS} : déjà $\widetilde{Q}$ est dans $\cLS^G(X)$ car il contient $\widetilde{P}$ un élément de $\cR^G(X)$, en second lieu d'après le lemme \ref{lem:2.10.2} ci-dessous, le membre de droite de (\ref{appdef:LS}) ne dépend pas du choix de $P \in \P(M)$ tel que $P \subseteq Q$. Les propriétés 1 à 3 sont alors évidentes.

Prouvons la surjectivité. Soit $Q \in \cLS^G(X)$. Il existe $P \in \cR^G(X)$ tel que $P \subseteq Q$. D’après la surjectivité de \eqref{R}, il existe $g \in G_{X_\ss}(F)$ tel que $(\Ad g^{-1})P \in\P^G(M)$, donc $(\Ad g^{-1})Q \in \F^G(M)$ et l’image de $(\Ad g^{-1})Q$ par \eqref{LS} est donc $Q$.

Prouvons l’assertion 5. Soient $Q_1$ et $Q_2$ deux éléments de $\F^G(M)$ qui ont la même image, notée $Q$, dans $\cLS^G(X)$ par l’application \eqref{LS}. Pour $i=1,2$, soient $P_i\in\P^G(M)$ et $g_i\in G_{X_\ss}(F)$ tels que $P_i\subseteq Q_i$ et $P_i'\eqdef(\Ad g_i)P_i\in \cR^G(X)$. Soit $M_{Q_i}$ l'unique facteur de Levi de $Q_i$ contenant $M$. Par construction même de l’application \eqref{LS}, on a
\begin{equation}\label{eq:2.10.3}
Q=(\Ad g_1)Q_1=(\Ad g_2)Q_2\in \cLS^G(X).    
\end{equation}
Notons $L=(\Ad g_1)M_{Q_1}=(\Ad g_2)M_{Q_2}$, c'est un facteur de Levi de $Q$ dont l'algèbre de Lie contient $X_\ss$. Notons $\n_i\eqdef \n_{P_{i,X_\ss}'}$. On a $X_\nilp\in \n_1\cap \n_2$ et on écrit
\[X_\nilp=Y+U\]
avec $Y\in \mathfrak{l}_{X_\ss}\cap \n_1\cap \n_2$ et $U\in \n_{Q,X_\ss}$. Comme $X_\nilp\in \Ind_{P_{i,X_\ss}'}^{G_{X_{\ss}}}(0)$ on a également $Y\in \Ind_{P_{i,X_\ss}'\cap L_{X_\ss}}^{L_{X_{\ss}}}(0)$ (cf. la preuve de la proposition \ref{prop:LS->R}). Donc
\[\Ind_{P_{1,X_\ss}'\cap L_{X_\ss}}^{L_{X_{\ss}}}(0)=\Ind_{P_{2,X_\ss}'\cap L_{X_\ss}}^{L_{X_{\ss}}}(0).\]
D'après le point 2 de la proposition \ref{prop:condition(H?)} on a l'existence de $l\in L_{X_{\ss}}(F)$ tel qu'on ait
\[(\Ad l)(\Ad g_1)M_{X_\ss}=(\Ad g_2)M_{X_\ss},\]
c'est-à-dire
\[x=g_2^{-1}mg_1\in\text{Norm}_{G_{X_\ss}(F)}(M_{X_\ss}).\]
On conclut en utilisant (\ref{eq:2.10.3}) d'où l'on tire
\[(\Ad x)Q_1=(\Ad g_2^{-1})(\Ad l)Q=(\Ad g_2^{-1})Q=Q_2.\]
Réciproquement si $Q\in \F^G(M)$ et si $x\in\text{Norm}_{G_{X_\ss}(F)}(M)$ alors pour tout $P\in \P^G(M)$ tel que $P\subseteq Q$, on a $(\Ad x)P\in \P^G(M)$ donc $(\Ad x)Q$ et $Q$ ont même image par \eqref{LS}. Le stabilisateur de $Q$ sous l'action par conjugaison de $G_{X_\ss}(F)$ est évidemment $\text{Norm}_{G_{X_\ss}(F)}(M_{X_\ss})\cap \text{Norm}_{G_{X_\ss}(F)}(Q)=\text{Norm}_{G(F)}(M_{X_\ss})\cap \text{Norm}_{G(F)}(Q)\cap G_{X_\ss}(F)=\text{Norm}_{Q_{X_\ss}(F)}(M_{X_\ss})$.
\end{proof}

\begin{lemma}\label{lem:2.10.2}Soient $M$ un facteur de Levi d'un élément de $\cR^G(X)$ dont l'algèbre de Lie contient $X_\ss$, et $Q\in \F^G(M)$. Pour $i=1,2$ soient $P_i\in\P^G(M)$ et $g_i\in G_{X_\ss}(F)$ tels que 
\[P_i\subseteq Q\,\,\,\,\text{ et }\,\,\,\,(\Ad g_i)P_i\in \cR^G(X).\]
Alors $g_2$ appartient à l'ensemble 
\[G_X(F)g_1M_{Q,X_\ss}(F)P_{2,X_\ss}(F).\]
En particulier, on a
\[(\Ad g_1)Q=(\Ad g_2)Q.\]
\end{lemma}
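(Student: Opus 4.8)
The goal is to show that the two choices $g_1, g_2$ of $G_{X_\ss}(F)$-conjugators making $P_i\subseteq Q$ land in $\cR^G(X)$ differ by an element of $G_X(F)\,M_{Q,X_\ss}(F)\,P_{2,X_\ss}(F)$ (with $g_1$ in the middle), and in particular that $(\Ad g_1)Q=(\Ad g_2)Q$. The plan is to mimic the argument already used in the proof of assertion 5 of the preceding proposition, but keeping careful track of the group elements rather than just the parabolics. First I would set $P_i'\eqdef(\Ad g_i)P_i\in\cR^G(X)$, and let $M_{Q}$ be the Levi factor of $Q$ containing $M$, so that $L_i\eqdef(\Ad g_i)M_Q$ is a Levi factor of $(\Ad g_i)Q$ whose Lie algebra contains $X_\ss$. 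The subtlety compared with the cited proof is that one does not yet know $(\Ad g_1)Q=(\Ad g_2)Q$; this is precisely what must be proved, so I cannot start from a common $L$.

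To get around this, I would argue as follows. Inside $Q_{X_\ss}$, the subgroups $P_{1,X_\ss}$ and $P_{2,X_\ss}$ are parabolic subgroups of $Q_{X_\ss}$ containing the common Levi $M_{X_\ss}$ of $Q_{X_\ss}$; more to the point, $(\Ad g_i)$ carries $P_{i,X_\ss}$ to $P'_{i,X_\ss}$, and by Lemma~\ref{lem:Rlevienvlop} applied to $P_i'\in\cR^G(X)$ the group $L_i$ is the Levi envelope $\envL(L_{i,X_\ss};G)$, hence $A_{L_i}=A_{L_{i,X_\ss}}$. Now I would look at the two parabolic subgroups $P'_{1,X_\ss}\cap L_{1,X_\ss}$-type data differently: writing $X_\nilp = Y+U$ with $Y$ the $L_{i,X_\ss}$-part, the key input is that $Y\in\Ind^{L_{i,X_\ss}}_{P'_{i,X_\ss}\cap L_{i,X_\ss}}(0)$ (as in the proof of Proposition~\ref{prop:LS->R}); but to compare these I first need $L_1$ and $L_2$ conjugate. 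Since $(\Ad g_1)Q$ and $(\Ad g_2)Q$ both contain $P'$-type parabolics in $\cR^G(X)$ lying over $M_{X_\ss}$-conjugates, I would use the uniqueness part of assertion 1 of the preceding proposition, or rather Corollary~\ref{coro:LeviRssconj} together with the structure of $Q$: one shows $(\Ad g_2 g_1^{-1})$ carries a Levi factor of $(\Ad g_1)Q$ whose algebra contains $X_\ss$ to one of $(\Ad g_2)Q$, and since $P'_1,P'_2\in\cR^G(X)$ are associated by an element of $G_{X_\ss}(F)$ (Proposition~\ref{prop:Rasso}), an element of $P_{2,X_\ss}(F)$ can be absorbed to reduce to the case $L_1=L_2=:L$ and $(\Ad g_1)Q=(\Ad g_2)Q=:Q'$.

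Once $L_1=L_2=L$ and $Q'$ are common, the argument of assertion 5 applies verbatim: with $\n_i\eqdef\n_{P'_{i,X_\ss}}$ one writes $X_\nilp=Y+U$, $Y\in\mathfrak l_{X_\ss}\cap\n_1\cap\n_2$, $U\in\n_{Q',X_\ss}$, gets $\Ind^{L_{X_\ss}}_{P'_{1,X_\ss}\cap L_{X_\ss}}(0)=\Ind^{L_{X_\ss}}_{P'_{2,X_\ss}\cap L_{X_\ss}}(0)$, and invokes Proposition~\ref{prop:condition(H?)}(2) to produce $l\in L_{X_\ss}(F)$ with $(\Ad l)(\Ad g_1)M_{X_\ss}=(\Ad g_2)M_{X_\ss}$, i.e. $x\eqdef g_2^{-1}lg_1\in\text{Norm}_{G_{X_\ss}(F)}(M_{X_\ss})$. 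The refinement needed here, beyond assertion 5, is to control $x$ up to $G_X(F)$ on the left and $P_{2,X_\ss}(F)$ on the right: after the reductions I would track that $l$ may be chosen in $M_{Q',X_\ss}(F)$ — because the two inductions of $0$ agree and $\cR$-parabolics over a fixed $M_{X_\ss}$ are unique — and that the freedom in choosing $g_i$ within its coset $G_X(F)g_i P_{i,X_\ss}(F)$ (which is exactly the ambiguity in the defining relation $(\Ad g_i)P_i\in\cR^G(X)$, via $G_X\subseteq P_i'$ from Proposition~\ref{chap3prop:indprop}(6)) lets one push the correction into $G_X(F)g_1 M_{Q,X_\ss}(F)P_{2,X_\ss}(F)$. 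Conjugating $Q_1$ by $x$ and using $(\Ad g_i)Q=Q'$ then gives $(\Ad x)Q_1=Q_2$ and, a fortiori, $(\Ad g_1)Q=(\Ad g_2)Q$.

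The main obstacle I anticipate is precisely the bootstrapping issue: assertion 5 of the previous proposition was proved \emph{assuming} the two elements of $\F^G(M)$ have the same image in $\cLS^G(X)$, whereas here equality of images — $(\Ad g_1)Q=(\Ad g_2)Q$ — is part of the conclusion. So the real work is establishing that equality first, using only that $(\Ad g_i)P_i\in\cR^G(X)$ and the uniqueness of $\cR$-elements up to $G_{X_\ss}$-conjugacy (assertion 1 of the proposition, and Proposition~\ref{prop:Rasso}); once that is in hand, the coset bookkeeping producing the precise statement $g_2\in G_X(F)g_1 M_{Q,X_\ss}(F)P_{2,X_\ss}(F)$ is routine but must be done with care, since it is the form of this decomposition that will be used downstream to make the weight $v_P$ well-defined.
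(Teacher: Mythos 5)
Your proposal correctly isolates most of the key ingredients (the decomposition of the nilpotent part relative to $Q$, the equality of two induced orbits inside the Levi factor of $Q$, rationality of conjugacy, and points 5 and 6 of la proposition \ref{chap3prop:indprop}), but its logical architecture is inverted with respect to what actually works, and your first step is a genuine gap. You propose to \emph{first} establish $L_1=L_2$ and $(\Ad g_1)Q=(\Ad g_2)Q$, invoking la proposition \ref{prop:Rasso} and an ``absorption'' of an element of $P_{2,X_\ss}(F)$. But la proposition \ref{prop:Rasso} only says that the Levi factors of $P_1'=(\Ad g_1)P_1$ and $P_2'=(\Ad g_2)P_2$ are conjugate by \emph{some} element of $G_{X_\ss}(F)$, a priori unrelated to $g_2g_1^{-1}$; it identifies neither the two conjugates of $Q$ nor the $L_i$. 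Nor is there, at this stage, any uniqueness statement for elements of $\cLS^G(X)$ under $G_{X_\ss}$-conjugacy: the uniqueness proved for $\cR^G(X)$ relies on the fact that the relevant $M_P$-orbit is the semisimple orbit $(\Ad M_P)X_\ss$, which is preserved by $G_{X_\ss}$-conjugation, whereas for an element of $\cLS^G(X)$ the $M_Q$-orbit involves the nilpotent part, and two distinct $M_Q$-orbits can induce the same $G$-orbit. The equality $(\Ad g_1)Q=(\Ad g_2)Q$ is precisely the ``en particulier'' of the lemma — it is what makes the map \eqref{LS} well defined — and cannot be assumed, or cheaply obtained, at the outset.

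The paper's proof avoids this bootstrap entirely by proving the coset membership first and \emph{deducing} $(\Ad g_1)Q=(\Ad g_2)Q$ at the very end. Concretely: one writes $(\Ad g_i^{-1})X_\nilp=Y_i+U_i$ with $Y_i\in(\m_{Q,X_\ss}\cap\n_{P_{i,X_\ss}})(F)$ and $U_i\in\n_{Q,X_\ss}(F)$; the decomposition is taken relative to the \emph{original} $Q$, not to a putative common image $Q'$, so no prior identification is needed. The parabolic subgroups $P_{1,X_\ss}\cap M_{Q,X_\ss}$ and $P_{2,X_\ss}\cap M_{Q,X_\ss}$ of $M_{Q,X_\ss}$ have the same Levi factor $M_{X_\ss}$, hence induce the same orbit of $0$; this yields $m\in M_{Q,X_\ss}(F)$ with $(\Ad m)Y_1=Y_2$, and then point 5 of la proposition \ref{chap3prop:indprop} yields $p_2\in P_{2,X_\ss}(F)$ matching the full elements, so that $g_2p_2mg_1^{-1}\in G_X(F)$. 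The conclusion $(\Ad g_1)Q=(\Ad g_2)Q$ then falls out because $m,p_2\in Q(F)$ and $G_X\subseteq(\Ad g_1)P_1\subseteq(\Ad g_1)Q$. You should restructure your argument along these lines; as written, the reduction on which everything else in your plan depends does not hold up.
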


\begin{proof}
Notons $\n_i\eqdef \n_{P_{i,X_\ss}}$. On écrit
\[(\Ad g_i)X_\nilp=Y_i+U_i\]
avec $Y\in (\mathfrak{m}_{Q,X_\ss}\cap \n_i)(F)$ et $U\in \n_{Q,X_\ss}(F)$. On a 
\[(\Ad P_{i,X_\ss})Y_i=(\Ad G_{X_\ss})X_\nilp\cap \n_i\,\,\,\,\text{ et }\,\,\,\,(\Ad M_{Q,X_\ss})Y_i=\Ind_{P_{1,X_\ss}\cap M_{Q,X_\ss}}^{M_{Q,X_\ss}}(0)\]
par hypothèse et le point 5 de la proposition \ref{chap3prop:indprop}. Les sous-groupes paraboliques $P_{1,X_\ss}\cap M_{Q,X_\ss}$ et $P_{2,X_\ss}\cap M_{Q,X_\ss}$ de $M_{Q,X_\ss}$ ayant même facteur de Levi $M_{X_\ss}$, il en découle 
\[\Ind_{P_{1,X_\ss}\cap M_{Q,X_\ss}}^{M_{Q,X_\ss}}(0)=\Ind_{P_{1,X_\ss}\cap M_{Q,X_\ss}}^{M_{Q,X_\ss}}(0).\]
Il existe donc $m\in M_{Q,X_\ss}(F)$ tel que $(\Ad m)Y_1=Y_2$. Mais alors $(\Ad m)Y_1$ et $Y_2$ appartient à $\n_2\cap (\Ad G_{X_\ss})X_\nilp$. Il existe donc $p_2\in P_{2,X_\ss}(F)$ tel que
\[(\Ad p_2m)Y_1=Y_2.\]
D'où
\[(\Ad p_2mg_1^{-1})X=(\Ad g_2^{-1})X.\]
Ainsi $g_2\in G_X(F)g_1M_{Q,X_\ss}(F)P_{2,X_\ss}(F)$. On a donc
\[(\Ad g_2)Q=(\Ad x)(\Ad g_1)Q\]
pour un certain $x\in G_X(F)$. Or on a $(\Ad g_1)P_1\in \cR^G(X)$ d'où
\[G_X\subseteq (\Ad g_1)P_1\subseteq (\Ad g_1)Q,\]
selon le point 6 de la proposition \ref{chap3prop:indprop}, c'est-à-dire $(\Ad x)(\Ad g_1)Q=(\Ad g_1)Q$.
\end{proof}

La propriété suivante ainsi que sa démonstration joueront un rôle essentiel lorsqu'on discutera la compatibilité des différentes définitions d'une intégrale orbitale pondérée dans la section \ref{sec:4comparedef}.

\begin{proposition}\label{prop:LX=GX}
Soit $L$ un sous-groupe de Levi de $G$ dont l'algèbre de Lie contient $X$, tel que $L_X=G_X$. Alors $L$ est facteur de Levi d'un élément de $\cLS^G(X)$, il contient donc un facteur de Levi d'un élément de $\cR^G(X)$ dont l'algèbre de Lie contient $X_\ss$. L'application \eqref{LS} restreinte sur $\F^G(L)$ devient l'application identité.

La réciproque est vraie. Autrement dit si $L$ est facteur de Levi d'un élément de $\cLS^G(X)$ tel que l'application \eqref{LS} restreinte sur $\F^G(L)$ est l'application identité, alors $L_X=G_X$.
\end{proposition}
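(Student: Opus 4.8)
Le plan est de traiter séparément les deux implications, en s'appuyant dans les deux cas sur les équivalences de la proposition~\ref{YDLiopprop:whenXinIndMGX} et sur la description de $\cR^G(X)$ et $\cLS^G(X)$ obtenue plus haut.

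\emph{Sens direct.} On supposerait $X\in\mathfrak{l}(F)$ et $L_X=G_X$. D'après la proposition~\ref{YDLiopprop:whenXinIndMGX}, $(5)\Leftrightarrow(1)$, cela équivaut à $X\in\Ind_L^G(X)$ ; comme $X\in\mathfrak{l}$, la définition~\ref{YDLiopdef:IndPG} donne $\Ind_P^G(X)=\Ind_L^G(X)$ pour tout $P\in\P^G(L)$, d'où $\P^G(L)\subseteq\cLS^G(X)$, ce qui montre que $L$ est facteur de Levi d'un élément de $\cLS^G(X)$. On fixerait alors $P\in\P^G(L)$ ; par la proposition~\ref{prop:LS->R}, $P$ contient un $R\in\cR^G(X)$, et comme $L$ en est un facteur de Levi, $R\cap L$ est un sous-groupe parabolique de $L$. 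Puisque $X_\ss\in\mathfrak{l}\cap\mathfrak{r}$ (on a $X\in\mathfrak{l}$ et $X_\nilp\in\n_R$ par \eqref{eq:cRG'X}), ce sous-groupe parabolique de $L$ admet un facteur de Levi $M\subseteq L$ dont l'algèbre de Lie contient $X_\ss$ (argument standard, cf. la remarque suivant \eqref{eq:cRG'X}) ; ce $M$ étant un facteur de Levi de $R\in\cR^G(X)$ dont l'algèbre de Lie contient $X_\ss$, la deuxième assertion en découle, et c'est relativement à ce $M$ que l'on formerait l'application \eqref{LS}, en notant $\F^G(L)\subseteq\F^G(M)$.

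Il resterait alors à vérifier que $\widetilde{Q}=Q$ pour tout $Q\in\F^G(L)$, ce qui est le point décisif. On noterait $M_Q$ le facteur de Levi de $Q$ contenant $L$, de sorte que $M\subseteq L\subseteq M_Q$ et $X_\nilp\in\mathfrak{l}\subseteq\m_{Q}$. L'étape clé consiste à construire un $P\in\P^G(M)$ tel que $P\subseteq Q$ et $P\in\cR^G(X)$ : on écrirait $R=P_{\lambda_0}$, le sous-groupe parabolique attaché à un cocaractère $\lambda_0\colon\mathbb{G}_m\to A_M$ avec $\Cent(\lambda_0,G)=M$ ; comme $M\subseteq M_Q$ on a $\Cent(\lambda_0,M_Q)=M_Q\cap M=M$, donc $\lambda_0$ définit aussi un sous-groupe parabolique $P'$ de $M_Q$ ayant $M$ comme facteur de Levi, et $\n_{P'}=\n_R\cap\m_{Q}$ ; de $X_\nilp\in\n_R\cap\m_{Q}$ on tire $X_\nilp\in\n_{P'}$. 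On poserait $P\eqdef P'N_Q\in\P^G(M)$ : il est contenu dans $Q$, vérifie $X_\nilp\in\n_P$, $X\in\p$, et $\Ind_P^G(X_\ss)=\Ind_M^G(X_\ss)=\Ind_R^G(X_\ss)\ni X$, donc $P\in\cR^G(X)'$ ; comme $M=\envL(M_{X_\ss};G)$ (lemme~\ref{lem:Rlevienvlop}), la seconde partie de ce lemme donne $P\in\cR^G(X)$. Dans la construction de \eqref{LS} on utiliserait ce $P$ : puisque $P\in\cR^G(X)$ et que deux éléments de $\cR^G(X)$ conjugués sous $G_{X_\ss}$ coïncident, l'image de $P$ par \eqref{R} est $P$ lui-même, l'élément $g\in G_{X_\ss}(F)$ qui intervient peut être pris égal à l'identité, et l'on obtient $\widetilde{Q}=(\Ad e)Q=Q$ ; le lemme~\ref{lem:2.10.2} assure que ce résultat ne dépend pas des choix. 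Ainsi \eqref{LS} se restreint en l'identité sur $\F^G(L)$.

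\emph{Réciproque.} On supposerait $L$ facteur de Levi d'un élément de $\cLS^G(X)$ et \eqref{LS} restreinte en l'identité sur $\F^G(L)$ (ce qui présuppose $M\subseteq L$, donc $\P^G(L)\subseteq\F^G(L)$). Pour $Q\in\P^G(L)$ on aurait $Q=\widetilde{Q}\in\cLS^G(X)$, d'où $X\in\mathfrak{q}$ par \eqref{eq:cLSGX} ; en appliquant ceci à $Q$ et à son opposé relativement à $L$, on obtiendrait $X\in\mathfrak{q}\cap\overline{\mathfrak{q}}=\mathfrak{l}$. Alors $\Ind_Q^G(X)=\Ind_L^G(X)$ et $X\in\Ind_Q^G(X)$, donc $X\in\Ind_L^G(X)$, et la proposition~\ref{YDLiopprop:whenXinIndMGX}, $(1)\Leftrightarrow(5)$, donne $L_X=G_X$.

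L'obstacle principal sera sans doute la construction, à l'intérieur d'un $Q\in\F^G(L)$ donné, d'un sous-groupe de Richardson généralisé ayant $M$ comme facteur de Levi : il faut y combiner la description dynamique des sous-groupes paraboliques de facteur de Levi $M$, l'appartenance $X_\nilp\in\m_{Q}$ (qui remonte en définitive à $L_X=G_X$, via le fait que $L$ contient un tel $M$), et la caractérisation de $\cR^G(X)$ par l'enveloppe de Levi du lemme~\ref{lem:Rlevienvlop} ; une fois ce point acquis, le reste n'est qu'un dévidage des définitions des applications \eqref{R} et \eqref{LS}.
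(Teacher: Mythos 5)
Votre démonstration est correcte et suit pour l'essentiel le même schéma que celle du texte : le sens direct commence de la même façon par l'équivalence (1) $\Leftrightarrow$ (5) de la proposition \ref{YDLiopprop:whenXinIndMGX} et par la proposition \ref{prop:LS->R} pour loger dans $L$ un facteur de Levi $M$ d'un élément $R$ de $\cR^G(X)$, et votre réciproque (on prend $Q\in\P^G(L)$ et son opposé pour obtenir $X\in\mathfrak{q}\cap\overline{\mathfrak{q}}=\l$, puis $X\in\Ind_L^G(X)$ et (1) $\Leftrightarrow$ (5)) coïncide avec celle du texte, au détail près que la justification entre parenthèses devrait être « $M\subseteq L$ donc $\F^G(L)\subseteq\F^G(M)$ », et non « donc $\P^G(L)\subseteq\F^G(L)$ », cette dernière inclusion étant triviale. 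La seule divergence réelle porte sur la vérification que \eqref{LS} se restreint en l'identité sur $\F^G(L)$ : le texte observe simplement que $L_X=G_X$ entraîne $G_{X_\ss}=L_{X_\ss}$, de sorte que l'élément $g\in G_{X_\ss}(F)=L_{X_\ss}(F)\subseteq Q(F)$ intervenant dans \eqref{appdef:LS} fixe $Q$ par conjugaison, quel que soit le choix de $P\subseteq Q$ ; vous construisez au lieu de cela, dans chaque $Q\in\F^G(L)$, un élément explicite $P=(R\cap M_Q)N_Q\in\P^G(M)$ appartenant à $\cR^G(X)$ (via la description dynamique, l'appartenance $X_\nilp\in\n_R\cap\m_Q$ et les deux parties du lemme \ref{lem:Rlevienvlop}), ce qui rend le conjugateur trivial. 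Votre construction est valable et chaque étape se vérifie ; elle est plus longue que l'argument en une ligne du texte, mais elle montre au passage un énoncé un peu plus fort, à savoir que $X\in\l$ joint au fait que $L$ contienne un tel $M$ suffit déjà à entraîner $Q\in\cLS^G(X)$ pour tout $Q\in\P^G(L)$, donc $L_X=G_X$.
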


\begin{proof}
Pour l'implication directe, on sait que $X\in\Ind_L^G(X)$ selon l'équivalence (1) $\Leftrightarrow$ (5) de la proposition \ref{YDLiopprop:whenXinIndMGX}. Tout élément de $\P^G(L)$ est de ce fait dans $\cLS^G(X)$. Suivant la preuve de la proposition \ref{prop:LS->R} on voit que $L$ contient un facteur de Levi d'un élément de $\cR^G(X)$ dont l'algèbre de Lie contient $X_\ss$. Finalement l'application \eqref{LS} est définie par l'équation (\ref{appdef:LS}), dans laquelle l'élément $g$ est dans $G_{X_\ss}(F)=L_{X_\ss}(F)$, la restriction sur sur $\F^G(L)$ est ainsi l'application identité.

Quant à la réciproque, notons au premier abord que $X\in \cap_{P\in\P^G(L)}\p=\l$, d'où $X\in\Ind_L^G(X)$. Le point 3 de la proposition \ref{chap3prop:indprop} nous assure ainsi $\dim L_X=\dim G_X$, il s'ensuit $L_X=G_X$. 
\end{proof}

\subsection{\'{E}léments standard}\label{subsec:ssssschoix}

Désormais, jusqu'à la fin de l'article, sauf mention explicite du contraire, $F$ désigne un corps local de caractéristique 0.

L'objectif de ce numéro est de désigner, de façon purement combinatoire, un élément spécifique dans chaque classe de $G(F)$-conjugaison de $\g(F)$ (définition \ref{def':repstd}).

Pour la simplicité on prend dans la suite $D$ une algèbre à division dont le centre contient $F$, $V$ un $D$-module à droite libre de rang $n$, muni de $e$ une base ordonnée. Posons $G\eqdef\text{Aut}_{D}(V)\simeq\GL_{n,D}$, où l'isomorphisme est donnée via la base ordonnée $e$. On a $\g=\text{End}_{D}(V)\simeq\gl_{n,D}$. On étend sans difficulté la discussion ci-dessous à tout groupe du type GL en travaillant sur ses facteurs irréductibles. 

Posons $M_0=M_{0}^{e}\eqdef\GL_{1,D}^n$ le sous-groupe diagonal relativement à la base $e$ de $G$, qui est aussi un sous-groupe de Levi minimal. On dit qu'un sous-groupe de Levi (resp. sous-groupe parabolique) de $G$ est semi-standard s'il contient $M_0$. Pour tout $L$ sous-groupe de Levi de $G$ semi-standard, $e$ nous donne une base ordonnée pour $L$. Autrement dit il existe $I$ un ensemble fini, $e_i$ une sous-famille de $e$ ordonnée de la façon respectant la relation d'ordre totale de $e$ pour tout $i\in I$, tels que $L=\prod_{i\in I}\text{Aut}_{D}(V_i)\simeq \prod_{i\in I}\GL_{|e_i|,D}$, où $V_i$ le sous-$D$-module à droite libre engendré par $e_i$. La base $e$ nous fournit également $\uW^{G,0}=\uW^{G,0,e}$, le groupe des matrices de permutation relativement à $e$. C'est un sous-groupe de $G(F)$ isomorphe au groupe de Weyl relatif $W^{(G,M_0)}$.

Quand $F$ est non-archimédien on écrit $\O_D$ pour l'anneau des entiers de $D$ ; quand $F$ est archimédien on écrit $X\mapsto\overline{X}$  pour l'involution principale et $X\mapsto {}^tX$ pour la transposée sur $\gl_m(D)$. On dit que le sous-groupe
\[K=K^e\eqdef\begin{cases*}
 \GL_n(\O_D)  & \text{si $F$ est non-archimédien} \\
 \{g\in \GL_n(D)\mid g\cdot{}^t\overline{g}=\text{Id}\}& \text{si $F$ est archimédien} 
\end{cases*}\] 
de $G(F)$ est en bonne position par rapport à $e$. Il est opportun de remarquer que $\uW^{G,0}\subseteq K$.

Posons $\Irr_F$ l'ensemble des polynômes irréductibles de coefficient dominant 1 de $F[T]$, on munit $\Irr_F$ d'une relation d'ordre totale quelconque $\leq$. Pour $p\in\Irr_F$ on pose $F_p\eqdef F[T]/(p)$. 
Pour tout $p\in \Irr_F$, fixons ${}^p\textbf{X}$ une matrice carrée à coefficient dans $D$ de taille $\underline{n}_p$, de polynôme caractéristique $p^{\underline{a}_p}$, avec 
\[\underline{n}_p\eqdef\frac{\deg p \deg_{F_p}D_p}{\deg_F D}\,\,\,\,\text{ et }\,\,\,\,\underline{a}_p\eqdef\deg_{F_p}D_p,\]
et $D_p$ une $F_p$-algèbre à division telle que $D\otimes_F F_p\simeq \Mat_{c_p}(D_p)$ où $c_p$ est un entier positif.

Pour $V'$ un $D$-module à droite libre, on écrit $\rk(V')=\rk_D(V')$ son rang. Soit $p\in \Irr_F$. Supposons que $V$ est muni d'une décomposition en somme directe de $D$-modules libres
\[V=\bigoplus_{1\leq i\leq j\leq {}^pr}{}^pV_j^i\]
telle que 
\begin{enumerate}
    \item ${}^pr\in\N_{>0}$ ;
    \item $\rk({}^pV_j^i)$ ne dépend que de $j$, et il vaut $\underline{n}_p{}^pd_j$ pour un ${}^pd_j\in \N_{>0}$;
    \item pour tous $1\leq i\leq j\leq {}^pr$ et $1\leq k\leq {}^pd_j$, il existe ${}^pE_{k,j}^i\subseteq {}^pV_j^i$ un sous-$D$-module libre de rang $\underline{n}_p$, muni d'une base ordonnée  $({}^{l,p}e_{k,j}^i)_{1\leq l\leq \underline{n}_p}$ ; 
    \item soit $e'\eqdef\bigsqcup_{i,j,k,l}({}^{l,p}e_{k,j}^i)$ la base ordonnée de la façon suivante :  ${}^{l,p}e_{k,j}^i<{}^{l',p}e_{k',j'}^{i'}$ si l'une des conditions suivantes est satisfaite
    \begin{itemize}
    \item[$-$] $i<i'$ ;
    \item[$-$] $i=i'$ et $j>j'$ (on souligne le sens de l'inégalité ici) ;
    \item[$-$] $i=i'$, $j=j'$ et $k<k'$ ; 
    \item[$-$] $i=i'$, $j=j'$, $k=k'$ et $l< l'$. 
    \end{itemize}
    Alors $e'=e$.
\end{enumerate}

Nous avons 
\[\g=\bigoplus_{i,j,k,i',j',k'}\text{Hom}_{D}({}^pE_{k,j}^{i},{}^pE_{k',j'}^{i'}).\]
Soit ${}^pX\in \g(F)\simeq \gl_n(D)$ défini par
\begin{equation}\label{eq:elementpXellstd}
{}^pX|_{\text{Hom}_{D}({}^{p}E_{k,j}^{i},{}^{p}E_{k',j'}^{i'})}=\begin{cases}
{}^p\textbf{X}& \text{si }i=i',j=j',k=k'\\
\text{Id}_{\underline{n}_p\times \underline{n}_p} & \text{si }i>1,i'=i-1, j=j',k=k'\\
0 & \text{sinon},
\end{cases}   
\end{equation}
pour tous $i,j,k,i',j',k'$, ici le membre de gauche est la représentation matricielle du morphisme en quesiton dans les bases ordonnées concernées. L'élément $X_\ss\in \g_{\ss}(F)$ est elliptique. On dira que $X\in \g(F)$ est standard relativement à $e$ de partie semi-simple elliptique associée à $p\in \Irr_F$.

Matriciellement parlant, dans la base $e$, nous avons
\begin{equation*}
{}^pX=\begin{bmatrix}
{}^p\textbf{X}_{1}  & {}^p\mathbf{J}_{2}  &         &         &  \\
  & {}^p\textbf{X}_2  & {}^p\mathbf{J}_{3}    &        &     \\
       &  &  \ddots &  \ddots   &\\
            &         &  & {}^p\textbf{X}_{{}^pr-1} &  {}^p\mathbf{J}_{{{}^pr}}\\
     &         &         &  &  {}^p\textbf{X}_{{}^pr}
\end{bmatrix}    
\end{equation*}
avec 
\[{}^p\textbf{X}_{i}=\text{la concatenation diagonale $\sum_{l=i}^{{}^pr} {}^pd_l$ fois de ${}^p\textbf{X}$}\in \Mat_{(\sum_{l=i}^{{}^pr} \underline{n}_p{}^pd_l)\times (\sum_{l=i}^{{}^pr} \underline{n}_p{}^pd_l)}(D)\]
et
\[{}^p\mathbf{J}_{i}=
\left[ 
\begin{array}{@{}ccc@{}}
&&\\
 &{\Id_{\Mat_{\sum_{l=i}^{{}^pr}\underline{n}_p{}^pd_l}}}(D) & \\
 && \\
 \hline 
 &0 & \\
\end{array}\right]\in\Mat_{(\sum_{l=i-1}^{{}^pr} \underline{n}_p{}^pd_l)\times (\sum_{l=i}^{{}^pr} \underline{n}_p{}^pd_l)}(D).
\]    

Plus généralement, on dira qu'un élément $X\in \g(F)$ est standard (relativement à $e$ et à la relation d'ordre totale $\leq$ sur $\Irr_F$) s'il existe une décomposition de $V$ en somme directe de $D$-modules libres
\[V=\bigoplus_{p\in \Irr_F}{}^pV\]
telle que 
\begin{enumerate}
    \item soit $p\in \Irr_F$ avec $\text{rk}({}^{p}V)>0$, alors ${}^{p}V$ est muni d'une base ordonnée ${}^pe$ ;
    \item soit $e'\eqdef\bigsqcup_{p : \text{rk}({}^pV)>0}{}^pe$ la base ordonnée de la façon suivante :  
    \begin{itemize}
    \item[$-$] pour tout $p$ avec $\text{rk}({}^{p}V)>0$, la relation d'ordre totale de $e'$ restreinte sur ${}^pe$ est la même que celle de ${}^pe$ ;
    \item[$-$] si $p<p'$ avec $\text{rk}({}^{p}V)>0$ et $\text{rk}({}^{p'}V)>0$ alors tout élément de ${}^{p}e$ est strictement plus petit que tout élément de ${}^{p'}e$. 
    \end{itemize}
    Alors $e'=e$ ;
    \item pour tous $p$ et $p'$ avec $\text{rk}({}^{p}V)>0$ et $\text{rk}({}^{p'}V)>0$, on a $X|_{\text{Hom}_{D}({}^{p}V,{}^{p'}V)}=0$ ;
    \item pour tout $p$ avec $\text{rk}({}^{p}V)>0$, on a que $X|_{\text{End}_{D}({}^{p}V,{}^{p}V)}$ est standard relativement à ${}^pe$ de partie semi-simple elliptique associée à $p$.
\end{enumerate}

Dans la suite, sauf mention explicite du contraire, lorsqu'on parle d'un élément standard de $\mathfrak{g}(F)$, on fait toujours référence à l'élément $X$ défini par le procédé précédent. Autrement dit $V$ est muni d'une décomposition en somme directe de $D$-modules libres $V=\bigoplus_{p\in \Irr_F}{}^pV$ respectant la base ordonnée $e$, et pour tous $p$ et $p'$ avec $\text{rk}({}^{p}V)>0$ et $\text{rk}({}^{p'}V)>0$, on a 
\[X|_{\text{Hom}_{D}({}^{p}V,{}^{p'}V)}=\begin{cases}
\text{l'élément ${}^pX$ défini par l'équation \eqref{eq:elementpXellstd}}& \text{ si }p=p'\\
0 & \text{ sinon}.
\end{cases}   \]
On peut alors parler des objets ${}^pr$, ${}^pe$, etc.


\begin{lemma}
Soit $V'$ un $D$-module à droite libre de rang fini. Soient $f$ et $f'$ deux bases ordonnées de $V'$. Alors tout élément de $\text{End}_{D}(V')$ admet la même écriture matricelle dans les deux bases ordonnées si et seulement s'il existe $d\in D^\times$ tel que $f=df'$ (i.e. $f_s=df_s'$ pour tout $1\leq s\leq \text{rk}(V)$).   
\end{lemma}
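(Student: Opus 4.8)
The plan is to pass from endomorphisms to matrices and to reduce the statement to the description of the centre of a full matrix ring over a division ring. Put $n\eqdef\rk_D(V')$; the case $n=0$ is vacuous, so assume $n\geq 1$. Write $f=(f_1,\dots,f_n)$, $f'=(f'_1,\dots,f'_n)$, and let $P=(P_{ts})\in\GL_n(D)$ be the change-of-basis matrix, characterised by $f'_s=\sum_t f_t P_{ts}$ for all $s$. For $\phi\in\text{End}_D(V')$, writing $\phi(f_s)=\sum_t f_t A_{ts}$ gives the matrix $A=[\phi]_f\in\Mat_n(D)$ of $\phi$ in the basis $f$; this identifies $\text{End}_D(V')$ with $\Mat_n(D)$, and a one-line computation using only the right $D$-linearity of $\phi$ yields $[\phi]_{f'}=P^{-1}AP$. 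Hence the two matrix representations $\phi\mapsto[\phi]_f$ and $\phi\mapsto[\phi]_{f'}$ coincide on all of $\text{End}_D(V')$ if and only if $P^{-1}AP=A$ for every $A\in\Mat_n(D)$, that is, if and only if $P$ lies in the centre of $\Mat_n(D)$.

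Next I would invoke the classical identification $Z(\Mat_n(D))=Z(D)\cdot\text{Id}_n$: for $n\geq 2$, commuting with the matrix units $E_{ij}$ forces $P$ to be a scalar matrix $d\,\text{Id}_n$, and then commuting with the scalar matrices $c\,\text{Id}_n$ ($c\in D$) forces $d\in Z(D)$; the case $n=1$ is a tautology. So every endomorphism has the same matrix in the two bases if and only if $P=d\,\text{Id}_n$ for some $d\in Z(D)^\times\subseteq D^\times$. Unwinding the definition of $P$, the equality $P=d\,\text{Id}_n$ says exactly that $f'_s=f_s d$ for all $s$, equivalently $f_s=f'_s d^{-1}$ for all $s$; since $d$, hence $d^{-1}$, is a central unit, this is precisely the proportionality relation $f=d^{-1}f'$ asserted in the statement (the element named $d$ there being our $d^{-1}$).

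The reverse implication requires nothing new: if $f$ and $f'$ differ by a central unit $d$, then the change-of-basis matrix is the scalar matrix $d^{-1}\,\text{Id}_n$, which lies in $Z(\Mat_n(D))$, and the transformation formula $[\phi]_{f'}=P^{-1}[\phi]_f P$ gives $[\phi]_f=[\phi]_{f'}$ for every $\phi$. The one point that demands care throughout is the bookkeeping of sides: for a right $D$-module one must represent vectors as columns and let endomorphisms act by left matrix multiplication, so that a change of basis acts by the conjugation $A\mapsto P^{-1}AP$ rather than by a one-sided transformation — and it is exactly here that the noncommutativity of $D$ intervenes, forcing the scalar relating $f$ and $f'$ to lie in $Z(D)$. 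Beyond that, the proof reduces to the routine computation of $Z(\Mat_n(D))$.
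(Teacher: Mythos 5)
Your proof is correct, and it takes a genuinely different (though closely related) route from the paper's. The paper argues by testing against two explicit endomorphisms: one whose matrix in the basis $f$ is diagonal with pairwise distinct entries (forcing the change-of-basis matrix $P$ to be diagonal), and one whose matrix is the all-ones first column (forcing the diagonal entries of $P$ to coincide). You instead observe that the hypothesis is exactly $P\in Z(\Mat_n(D))$ and invoke $Z(\Mat_n(D))=Z(D)\cdot \text{Id}_n$. Your route buys two things. First, it makes explicit that the scalar relating $f$ and $f'$ must lie in $Z(D)^\times$, not merely in $D^\times$: as you note, conjugation by a non-central scalar matrix acts nontrivially on $\Mat_n(D)$, so the ``if'' direction of the statement as literally written would fail for non-central $d$ when $D$ is noncommutative; the paper's proof only treats the ``only if'' direction and leaves this point implicit (it is harmless for the subsequent application, where the basis is only needed up to a scalar). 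Second, it sidesteps a small subtlety in the paper's first test element: over a noncommutative $D$, ``pairwise distinct'' diagonal entries only force $P$ to be diagonal if they are pairwise non-conjugate (e.g.\ taken in $F$), since the relation $a_tP_{ts}=P_{ts}a_s$ with $P_{ts}\neq 0$ only gives conjugacy of $a_t$ and $a_s$. Both arguments are elementary and of comparable length; yours is the more robust bookkeeping, the paper's the more hands-on computation.
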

\begin{proof}
On regarde d'abord un élément de $\text{End}_{D}(V')$ qui a pour écriture matricelle dans la base $f$ une matrice diagonale de coefficients deux à deux différents. Comme cet élément s'écrit de la même façon dans la base $f'$, on voit qu'il existe $d_s\in D^\times$ tel que $f_s=d_sf_s'$ pour tout $1\leq s\leq \text{rk}(V)$. On regarde ensuite l'élément de $\text{End}_{D}(V')$ qui a pour écriture matricelle dans la base $f$ la matrice qui vaut 1 partout dans la première colonne et qui vaut 0 ailleurs. Comme cet élément s'écrit de la même façon dans la base $f'$, on voit que $d_1=\cdots= d_{\text{rk}(V)}$.
\end{proof}

Soit $X\in\g(F)$ standard. Soit $p\in \Irr_F$ tel que $\rk({}^pV)>0$. Le centralisateur de ${}^p\textbf{X}$ dans $\Mat_{\underline{n}_p\times \underline{n}_p}(D)$ est une algèbre à division, notons-la par $D^{{}^p\textbf{X}}$. On a
\[\g_{X_\ss}=\bigoplus_{p\in \Irr_F,\rk({}^pV)>0}\text{End}_{D^{{}^p\textbf{X}},{}^pe_{X_\ss}}({}^pV_{X_\ss}),\]
avec ${}^pV_{X_\ss}$ un $D^{{}^p\textbf{X}}$-module à droite libre de rang $\sum_{1\leq j\leq {}^pr}{}^pd_j$. Ensuite, on dote ${}^pV_{X_\ss}$ d'une base ordonnée ${}^pe_{X_\ss}$ définie de la manière suivante : d'après le lemme précédent, il existe une base ordonnée unique, à un scalaire près, telle que tout élément de $\text{End}_{D^{{}^p\textbf{X}}}({}^pV_{X_\ss})$ admette la même écriture matricielle dans cette base que dans la base ${}^pe$, lorsque cet élément est considéré comme un élément de $\g$ qui commute avec $X_\ss$. On choisit alors une base ordonnée ${}^pe_{X_\ss}$ quelconque satisfaisant cette condition.

Un élément de $\g(F)$ commutant à $X_\ss$ est standard relativement à $e$ si et seulement s'il est standard relativement à $e_{X_\ss}\eqdef\sqcup_{p:\rk({}^pV)>0}{}^pe_{X_\ss}$ quand il est vu comme élément de $\g_{X_\ss}(F)$. Notons $\uM_{0}^{X}=\uM_{0}^{G,X}$ le sous-groupe diagonal relativement à la base $e_{X_\ss}$ de $G_{X_\ss}$, c'est un sous-groupe de Levi minimal de $G_{X_\ss}$. On note $\uM^{X}=\uM^{G,X}\eqdef\envL(\uM_0^{X};G)$, c'est un sous-groupe de Levi de $G$ semi-standard relativement à $M_0$. On sait, du lemme \ref{lem:envLell}, que l'algèbre de Lie de $\uM^{X}$ contient $X_\ss$ et $(\uM^{X})_{X_\ss}=\uM_{0}^{X}$.

\begin{proposition}\label{def:repstd} Toute classe de conjugaison rationnelle de $G(F)$ contient un unique élément standard. 
\end{proposition}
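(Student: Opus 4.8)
\emph{The plan} is to establish existence and uniqueness separately, and to encode each standard element $X$ by the combinatorial datum $\mathcal D(X)\eqdef\bigl(p\mapsto({}^pr,({}^pd_j)_{1\le j\le {}^pr})\bigr)_{p\in\Irr_F}$ occurring in its construction (with ${}^pr=0$ when $p$ does not occur). Three ingredients are used, together amounting to the rational canonical form for $\gl_{n,D}$. \emph{(a)} If $\sigma\in\g_\ss(F)$ and $V=\bigoplus_p{}^pV$ is the primary decomposition, ${}^pV\eqdef\ker p(\sigma)$ — a module over $D\otimes_F F_p\simeq\Mat_{c_p}(D_p)$, since $D$ commutes with $F[\sigma]$ — then $\rk_D({}^pV)$ is a multiple of $\underline{n}_p$ and ${}^pV$ is determined up to isomorphism by the integer $\rk_D({}^pV)/\underline{n}_p$; hence the $G(F)$-orbit of $\sigma$ depends only on $p\mapsto\rk_D({}^pV)$, and any semisimple $D$-linear operator with minimal polynomial $p$ is conjugate to a block-diagonal concatenation of copies of the fixed matrix ${}^p\textbf{X}$ (whose centraliser $D^{{}^p\textbf{X}}$ is a division algebra, so such an operator is automatically elliptic). \emph{(b)} For a division algebra $D'$ over a field $F'$, the $\GL_{N,D'}(F')$-conjugacy classes of nilpotent elements of $\gl_{N,D'}(F')$ are in bijection with partitions of $N$, via the Jordan form of a nilpotent $D'$-linear endomorphism of $(D')^N$ — equivalently, the isomorphism classes of finitely generated $D'\otimes_{F'}F'[t]$-modules killed by a power of $t$. \emph{(c)} Since $F$ is perfect, the Jordan decomposition $X=X_\ss+X_\nilp$ exists and is unique; consequently two elements of $\g(F)$ are $G(F)$-conjugate if and only if their semisimple parts are, and, once these are made equal, their nilpotent parts are conjugate under the centraliser of that common semisimple part.

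\emph{Existence.} Let $X\in\g(F)$, with primary decomposition $V=\bigoplus_p{}^pV$ along the (squarefree) minimal polynomial of $X_\ss$. Since $X_\nilp$ commutes with $X_\ss$, each ${}^pV$ is $X$-stable, and $X|_{{}^pV}$ has elliptic semisimple part of minimal polynomial $p$ and nilpotent part $\nu_p$. By (a) and (c), $X|_{{}^pV}$ is conjugate under $\text{Aut}_D({}^pV)$ to an operator whose semisimple part is a concatenation of $N_p\eqdef\rk_D({}^pV)/\underline{n}_p$ copies of ${}^p\textbf{X}$; the centraliser of the latter in $\text{Aut}_D({}^pV)$ is isomorphic to $\GL_{N_p,D^{{}^p\textbf{X}}}$, and by (b) and (c) a further conjugation inside it turns $X|_{{}^pV}$ into the model ${}^pX$ of \eqref{eq:elementpXellstd}, with ${}^pd_j$ the number of parts equal to $j$ in the Jordan type of $\nu_p$ and ${}^pr$ its largest part. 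Assembling these conjugations over $p$, $X$ is conjugate to $\bigoplus_p{}^pX$, which is standard relative to the ordered basis $\bigsqcup_p{}^pe$ of $V$ (ordered first by $p$, then internally). Finally, choose $g\in G(F)$ carrying that ordered basis onto the fixed ordered basis $e$ — possible because $G(F)=\text{Aut}_D(V)$ acts transitively on ordered $D$-bases; since the notion ``standard relative to an ordered basis'' is equivariant under $\text{Aut}_D(V)$, the element $\Ad(g)\bigl(\bigoplus_p{}^pX\bigr)$ is standard relative to $e$ and conjugate to $X$. This proves existence.

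\emph{Uniqueness and conclusion.} First, $X\mapsto\mathcal D(X)$ is injective on standard elements: the defining conditions reconstruct $X$ from $e$, from the fixed total order on $\Irr_F$, and from $\mathcal D(X)$, in that one recovers successively the blocks ${}^pe$ as the consecutive subintervals of $e$ of the prescribed lengths, the vectors ${}^{l,p}e_{k,j}^i$ as the entries of ${}^pe$ listed in the prescribed order, the submodules ${}^pE_{k,j}^i$, and finally $X$ itself via \eqref{eq:elementpXellstd} (the matrix ${}^p\textbf{X}$ being fixed once and for all). Second, $\mathcal D(X)$ is a $G(F)$-conjugacy invariant: by (a) the orbit of $X_\ss$ determines the ${}^pV$, their ranks, and the $D^{{}^p\textbf{X}}$; and the $G_{X_\ss}(F)$-conjugacy class of $X_\nilp$ inside $\g_{X_\ss}=\bigoplus_p\text{End}_{D^{{}^p\textbf{X}}}({}^pV_{X_\ss})$ is, by (c), an invariant of the orbit of $X$, whence by (b) it determines a partition of $N_p$ on each component, that is the pair $({}^pr,({}^pd_j)_j)$. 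Third, for a standard $X$ one checks directly from \eqref{eq:elementpXellstd} that this intrinsic partition coincides with the defining one — the chain of $E$-blocks carrying a fixed index $(j,k)$ is exactly a Jordan block of length $j$ over $D^{{}^p\textbf{X}}$. Combining the three: every orbit contains a standard element (existence); and two standard elements in the same orbit share the same datum (invariance), hence are equal (injectivity). So each orbit contains exactly one standard element.

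\emph{The main obstacle} is the bookkeeping matching the intrinsic structure of $X$ on each primary component — an endomorphism of the $D^{{}^p\textbf{X}}$-module ${}^pV_{X_\ss}$ of a prescribed Jordan type — with the rigid extrinsic description as a chain of submodules spanned by prescribed vectors of $e$ in a fixed order; concretely, the last conjugation in the existence argument, and the block-by-block reconstruction of $X$ from $\mathcal D(X)$ in the uniqueness argument. By contrast, ingredients (a) (Artin--Wedderburn together with Morita equivalence), (b) (modules over $D'\otimes_{F'}F'[t]$), and the Jordan reductions in (c) are routine, given the properties of centralisers in groups of type GL already recorded — in particular that $G_{X_\ss}(F)$ is again of type GL and that $G_X$ is connected.
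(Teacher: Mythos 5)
Your proof is correct and follows the same route as the paper, which simply cites the theory of Jordan reduction for $\gl_{n,D}$ (\cite[section A.2]{YDL23b}): primary decomposition along the minimal polynomial of $X_\ss$, Morita reduction of each primary block to $\GL_{N_p,D^{{}^p\textbf{X}}}$, Jordan form of the nilpotent part there, and the observation that the resulting combinatorial datum is both a conjugacy invariant and a faithful encoding of the standard element. You have merely written out in full the argument the paper delegates to that reference.
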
 
\begin{proof}
Ceci est une conséquence de la théorie de la réduction de Jordan \cite[section A.2]{YDL23b}.
\end{proof}

\begin{definition}[Représentant standard]\label{def':repstd} L'élément standard d'une classe de $G(F)$-conjugaison dans $\g(F)$ est dit le représentant standard de cette classe de conjugaison rationnelle. 
\end{definition}

\begin{lemma}\label{lem:LSsstand} Soit $X\in \g(F)$ standard. Alors tout sous-groupe parabolique de Lusztig-Spaltenstein généralisé de $X$ contient le sous-groupe parabolique $\left(\prod_{p:\rk({}^pV)>0}P_0^{{}^pe}\right)\uM^{G,X}$ de $\left(\prod_{p:\rk({}^pV)>0}\Aut_D({}^pV)\right)$, avec $P_0^{{}^pe}$ le sous-groupe des matrices triangulaires supérieures relativement à la base ${}^pe$ de $\Aut_D({}^pV)$. En particulier tout sous-groupe parabolique de Lusztig-Spaltenstein généralisé de $X$ est semi-standard.
\end{lemma}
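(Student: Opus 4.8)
The plan is to deduce the statement from its nilpotent case, by Jordan descent to $G_{X_\ss}$. The nilpotent input is the type-GL counterpart of Chaudouard's analysis of $\cLS$ in \cite{Ch17}: if $H$ is a group of type GL with an ordered basis $f$ and $Y\in\mathfrak{h}(F)$ is a standard nilpotent element relative to $f$, then every element of $\cLS^H(Y)$ contains the minimal parabolic $P_0^f$ of upper triangular matrices relative to $f$ (whose Levi factor is the diagonal minimal Levi). Since $\cLS$ of a direct product of groups of type GL is the product of the $\cLS$'s of the factors, it is enough to know this for $\GL_n$ over a division algebra, which is Chaudouard's framework. (One can also prove the nilpotent case directly from the dimension count of Proposition \ref{chap3prop:indprop}(4) together with the combinatorics of standard nilpotents; in either case this is the substantive point.)

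Now let $P\in\cLS^G(X)$, and fix a Levi factor $M$ of $P$ with $X_\ss\in\mathfrak{m}$ (which exists by the remark after the definition of $\cLS^G(X)$). By Lemma \ref{lem:LSR'ss}, $P_{X_\ss}\in\cLS^{G_{X_\ss}}(X_\nilp)$, with $M_{X_\ss}$ as a Levi factor. Recall from Subsection \ref{subsec:ssssschoix} that $G_{X_\ss}=\prod_{p\,:\,\rk({}^pV)>0}\Aut_{D^{{}^p\textbf{X}}}({}^pV_{X_\ss})$, and that, seen in $\mathfrak{g}_{X_\ss}(F)$, the element $X_\nilp$ is standard nilpotent relative to $e_{X_\ss}=\bigsqcup_p{}^pe_{X_\ss}$. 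The nilpotent input applied to $G_{X_\ss}$ thus gives $P_{X_\ss}\supseteq P_0^{e_{X_\ss}}=\prod_p P_0^{{}^pe_{X_\ss}}$, a minimal parabolic of $G_{X_\ss}$ with Levi factor $\uM_0^{X}$. As the minimal Levi factors of $G_{X_\ss}$ contained in $P_{X_\ss}$ form a single $P_{X_\ss}(F)$-conjugacy class, after replacing $M$ by a suitable $P_{X_\ss}(F)$-conjugate (which preserves $X_\ss\in\mathfrak{m}$) we may assume $\uM_0^{X}\subseteq M_{X_\ss}$. Then $\uM_0^{X}$ is a Levi factor of the pseudo-Levi $M_{X_\ss}$, so $A_{M_{X_\ss}}\subseteq A_{\uM_0^{X}}$ and hence $\uM^{G,X}=\Cent(A_{\uM_0^{X}},G)\subseteq\Cent(A_{M_{X_\ss}},G)=\envL(M_{X_\ss};G)\subseteq M$, the last inclusion being the general fact $M_{X_\ss}\subseteq\envL(M_{X_\ss};G)\subseteq M$ established in Subsection \ref{subsec:pseudo-levi}. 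In particular $\uM^{G,X}\subseteq P$.

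It remains to check that $\prod_p P_0^{{}^pe}\subseteq P$; combined with $\uM^{G,X}\subseteq P$, this yields $P\supseteq\left(\prod_p P_0^{{}^pe}\right)\uM^{G,X}$, since the right-hand side is the smallest parabolic subgroup of $\prod_p\Aut_D({}^pV)$ containing both of those subgroups. We reason on Lie algebras. From $\uM^{G,X}\subseteq P$ one gets that $\mathfrak{p}$ contains $\text{End}_D({}^pE^i_{k,j})$ for all $p,i,j,k$, that is, all the diagonal blocks of $\left(\prod_p P_0^{{}^pe}\right)\uM^{G,X}$. From $P_0^{e_{X_\ss}}\subseteq P_{X_\ss}\subseteq P$ — together with the fact that the ordering of the blocks ${}^pE^i_{k,j}$ induced by $e$ agrees with the one induced by $e_{X_\ss}$ — one gets that $\mathfrak{p}$ contains, for each pair of blocks occupying an off-diagonal position of $\left(\prod_p P_0^{{}^pe}\right)\uM^{G,X}$, the block-identity homomorphism ${}^pE^i_{k,j}\to{}^pE^{i'}_{k',j'}$ (the one with matrix $\mathrm{Id}_{\underline{n}_p}$ in the bases ${}^{\bullet,p}e^i_{k,j}$, ${}^{\bullet,p}e^{i'}_{k',j'}$, which commutes with $X_\ss$ and is an elementary matrix of $\Lie(P_0^{e_{X_\ss}})$). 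Bracketing such a block-identity with $\text{End}_D({}^pE^{i'}_{k',j'})\subseteq\mathfrak{p}$ produces all of $\text{Hom}_D({}^pE^i_{k,j},{}^pE^{i'}_{k',j'})$; summing over all positions, $\mathfrak{p}\supseteq\Lie\left(\left(\prod_p P_0^{{}^pe}\right)\uM^{G,X}\right)$, i.e. $P\supseteq\left(\prod_p P_0^{{}^pe}\right)\uM^{G,X}$. Finally this parabolic contains $M_0$, being generated by subgroups each containing $M_0$; hence every $P\in\cLS^G(X)$ is semi-standard.

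As indicated, the crux is the nilpotent case — locating, for a standard nilpotent element of a type-GL group, the smallest parabolic lying below every generalized Lusztig–Spaltenstein parabolic — which rests on Chaudouard's combinatorics; granting it, the Jordan descent and the bracketing argument are routine given the preliminary material of Subsections \ref{subsec:orbitind}--\ref{subsec:ssssschoix}.
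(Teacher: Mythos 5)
Your proof is correct and follows essentially the same route as the paper: descend to $G_{X_\ss}$ via the Jordan compatibility of $\cLS$ (lemme \ref{lem:LSR'ss}), invoke Chaudouard's nilpotent result \cite[proposition 3.4.1]{Ch17} (combined with the proposition \ref{prop:LS->R}-type reduction) to get $P_{X_\ss}\supseteq P_0^{e_{X_\ss}}$ and $\uM_0^{X}\subseteq P_{X_\ss}$, then lift to the stated containment in $G$. The only difference is that you spell out the lifting step the paper compresses into « on en déduit » (the $P_{X_\ss}(F)$-conjugation of Levi factors, the monotonicity of Levi envelopes, and the bracketing argument producing the full off-diagonal $\Hom_D$-blocks), which is a faithful filling-in rather than a different argument.
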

\begin{proof}
On montre d'abord que tout sous-groupe parabolique de Lusztig-Spaltenstein généralisé de $X$ est semi-standard : nous avons $\cLS^{G_{X_\ss}}(X_\nilp)\subseteq \F^{G_{X_\ss}}(\uM_0^{G,X})$ selon \cite[proposition 3.4.1]{Ch17} et la proposition \ref{prop:LS->R}. Ainsi $\cLS^G(X)\subseteq \F^G(\uM^{G,X})$ grâce au lemme \ref{lem:LSR'ss}. En particulier tout sous-groupe parabolique de Lusztig-Spaltenstein généralisé de $X$ est semi-standard. Soit maintenant $P\in \cLS^G(X)$, on a $P_{X_\ss}\in \cLS^{G_{X_\ss}}(X_\nilp)$ grâce encore une fois au lemme \ref{lem:LSR'ss}, et  \cite[proposition 3.4.1]{Ch17} nous assure aussi que $P_{X_\ss}$ contient $\left(\prod_{p:\rk({}^pV)>0}P_0^{{}^p{e_{X_\ss}}}\right)$ avec $P_0^{{}^pe_{X_\ss}}$ le sous-groupe des matrices triangulaires supérieures relativement à la base ${}^p{e_{X_\ss}}$ de ${}^pV_{X_\ss}$. On en déduit que $P$ contient $\left(\prod_{p:\rk({}^pV)>0}P_0^{{}^pe}\right)\uM^{G,X}$.
\end{proof}

\begin{lemma}\label{lem:uMmin}
Soit $X\in \g
_\ss(F)$ standard. Alors pour tout $M\in\L^G(M_0)$, $X\in\m(F)$ si et seulement si $\uM^{X}\subseteq M$.
\end{lemma}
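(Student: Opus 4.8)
\emph{Esquisse de preuve.} L'implication « $\uM^X\subseteq M\ \Rightarrow\ X\in\m(F)$ » est immédiate : d'après le lemme \ref{lem:envLell} (et la remarque qui le suit à la fin du numéro \ref{subsec:ssssschoix}), l'algèbre de Lie de $\uM^X$ contient $X_\ss=X$, de sorte que $X\in\Lie(\uM^X)(F)\subseteq\m(F)$ dès que $\uM^X\subseteq M$. Pour la réciproque, l'idée est de transformer l'inclusion de groupes $\uM^X\subseteq M$ en une inclusion de tores déployés. On utilisera $M=\Cent(A_M,G)$ et, par définition de l'enveloppe de Levi (voir la section \ref{subsec:pseudo-levi}), $\uM^X=\envL(\uM_0^X;G)=\Cent(A_{\uM_0^X},G)$. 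Comme $\Cent(-,G)$ renverse les inclusions, il suffit d'établir $A_M\subseteq A_{\uM_0^X}$.

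On y parvient en deux temps. D'une part, $M_0\subseteq M$ entraîne $A_M\subseteq A_{M_0}$, et $X=X_\ss\in\m$ montre que le tore central $A_M\subseteq Z(M)$ commute à $X_\ss$, d'où $A_M\subseteq M_{X_\ss}\subseteq G_{X_\ss}$ ($G_{X_\ss}$ est connexe puisque $G$ est du type GL) ; $A_M$ étant connexe, on en tire $A_M\subseteq (A_{M_0}\cap G_{X_\ss})^\circ$. D'autre part, il reste à prouver
\[(A_{M_0}\cap G_{X_\ss})^\circ\subseteq A_{\uM_0^X}.\]
Pour cela, je montrerais que tout $t\in(A_{M_0}\cap G_{X_\ss})(F)$ s'écrit, dans la base $e_{X_\ss}$ de $G_{X_\ss}$, comme une matrice diagonale à coefficients scalaires : comme $t$ est diagonal à coefficients dans $F^\times$ dans la base $e$, il stabilise chaque bloc ${}^pE_{k,1}^1$, et sa restriction à un tel bloc commute à ${}^p\mathbf{X}$, donc appartient à $D^{{}^p\mathbf{X}}$ ; $D^{{}^p\mathbf{X}}$ étant une algèbre à division, cette restriction doit être une homothétie $c\,\text{Id}_{\underline{n}_p}$ avec $c\in F^\times$, faute de quoi, en lui retranchant l'un de ses coefficients diagonaux, on obtiendrait un élément non nul et non inversible de $D^{{}^p\mathbf{X}}$ ; la construction de $e_{X_\ss}$ montre alors que $t$ y est diagonal à coefficients dans $F^\times\subseteq Z(D^{{}^p\mathbf{X}})^\times$. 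Un tel $t$ est donc central dans le sous-groupe diagonal $\uM_0^X$, et comme $A_{M_0}\cap G_{X_\ss}$ est déployé, sa composante neutre est un sous-tore central déployé de $\uM_0^X$, donc contenue dans $A_{\uM_0^X}$.

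Le point délicat sera cette dernière inclusion : il s'agit au fond de comparer explicitement la base $e$ de $G$ et la base $e_{X_\ss}$ de $G_{X_\ss}$ qui en est déduite au numéro \ref{subsec:ssssschoix}. L'ingrédient algébrique essentiel est que chaque $D^{{}^p\mathbf{X}}$ est une algèbre à division, ce qui force chaque bloc elliptique ${}^p\mathbf{X}$ à n'être centralisé, parmi les matrices diagonales à coefficients dans $F$, que par les homothéties ; les réductions sur les tores déployés qui précèdent sont, elles, formelles.
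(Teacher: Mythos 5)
Your proof is correct, but it takes a genuinely different route from the paper's. The paper argues abstractly: the set $\{M\in\L^G(M_0)\mid X\in\m(F)\}$ has a smallest element $M_{\min}$ (the intersection of its members); $(M_{\min})_X$ is a Levi subgroup of $G_X$ contained in the minimal Levi $\uM_0^{X}=(\uM^{X})_X$, hence equal to it; and therefore $\uM^{X}=\envL((M_{\min})_X;G)\subseteq M_{\min}\subseteq \uM^{X}$. You instead reduce the inclusion $\uM^{X}\subseteq M$ to an inclusion of tores déployés $A_M\subseteq A_{\uM_0^{X}}$ via $\Cent(-,G)$, and establish the latter through the chain $A_M\subseteq (A_{M_0}\cap G_{X})^\circ\subseteq A_{\uM_0^{X}}$, the second inclusion resting on the explicit block computation and the fact that each $D^{{}^p\textbf{X}}$ is a division algebra. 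Both arguments are sound. The paper's is shorter and avoids the explicit bases, but it silently uses that the intersection of all Levis of $\L^G(M_0)$ whose Lie algebra contains $X$ is again such a Levi; yours avoids that assertion at the price of the matrix computation, and in return identifies concretely the torus $A_{\uM^{X}}$ as containing $(A_{M_0}\cap G_{X})^\circ$. Two small points to tighten: (i) your division-algebra argument is carried out on $F$-points, so to conclude that the algebraic subgroup $(A_{M_0}\cap G_{X})^\circ$ is central in $\uM_0^{X}$ you should invoke the Zariski density of the $F$-points of a split torus over the infinite field $F$; (ii) it is worth stating that $X$ semisimple and standard forces ${}^pr=1$ for every $p$, which is why the only blocks occurring are the ${}^pE_{k,1}^1$.
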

\begin{proof}
Regardons l'ensemble
\[\{M\in\L^G(M_0)\mid X\in \m(F)\}\not=\emptyset.\]
Le plus petit élément de cet ensemble existe : c'est l'intersection de tous les éléments. On note $M_{\text{min}}\subseteq L$ le plus petit élément. Il s'agit de prouver que $M_{\text{min}}=\uM^{X}$. On sait que $\uM^{X}$ appartient à cet ensemble, donc $M_{\text{min}}\subseteq\uM^{X}$ et $(M_{\text{min}})_X\subseteq(\uM^{X})_X=\uM_0^{X}$. Comme $\uM_0^{X}$ est un sous-groupe de Levi minimal de $G_X$, il vient $(M_{\text{min}})_X=\uM_0^{X}$. Il s'ensuit que $\uM^{X}=\envL((M_{\text{min}})_X;G)$, il est alors inclus dans $M_{\text{min}}$, par voie de conséquence $M_{\text{min}}=\uM^{X}$.
\end{proof}

\begin{corollary}\label{YDLiop:RG'(X)=RG(X)ell}
Soit $X\in \g(F)$ tel que $X_\ss$ est $F$-elliptique, alors $\cR^G(X)'=\cR^G(X)$.
\end{corollary}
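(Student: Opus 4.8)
The plan is to reduce $\cR^G(X)'=\cR^G(X)$ to the statement that an $F$-elliptic semisimple element of $\g$ remains $F$-elliptic in every Levi subalgebra that contains it, and then to prove that statement using the structure of groups of type GL. Since $\cR^G(X)\subseteq\cR^G(X)'$ holds by definition, I must show that every $P\in\cR^G(X)'$ is already an element of $\cR^G(X)$. First I fix a Levi factor $M$ of $P$ whose Lie algebra contains $X_\ss$ (such an $M$ exists because $\cR^G(X)'\subseteq\cLS^G(X)$). By the second assertion of Lemma \ref{lem:Rlevienvlop} it suffices to check $M=\envL(M_{X_\ss};G)$. Now $\envL(M_{X_\ss};G)=\Cent(A_{M_{X_\ss}},G)$ (\S\ref{subsec:pseudo-levi}) while $M=\Cent(A_M,G)$; and since $A_M$ is an $F$-split torus central in $M$, hence contained in $\Cent(X_\ss,M)=M_{X_\ss}$ and central there, one has $A_M\subseteq A_{M_{X_\ss}}$. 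Thus the equality to prove amounts to $A_M=A_{M_{X_\ss}}$, that is, to the $F$-ellipticity of $X_\ss$ in $\m(F)$.

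To prove this last point I argue componentwise, and so may assume $G=\Aut_D(V)\simeq\GL_{n,D}$, where $\dim A_G=1$. As $X_\ss$ is semisimple, its minimal polynomial over $F$ is squarefree, so the commutative $F$-algebra $F[X_\ss]\subseteq\End_D(V)$ is isomorphic to $\prod_{p}F_p$, with $p$ running over the distinct monic irreducible factors of that polynomial and $F_p=F[T]/(p)$. Correspondingly $V=\bigoplus_p V^{(p)}$ and $G_{X_\ss}=\Cent(X_\ss,G)$ is a direct product of general linear groups over division algebras, one factor for each $p$ with $V^{(p)}\neq0$; hence $\dim A_{G_{X_\ss}}$ equals the number of such $p$. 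By hypothesis $X_\ss$ is $F$-elliptic in $\g$, i.e. $\dim A_{G_{X_\ss}}=\dim A_G=1$, so the minimal polynomial of $X_\ss$ is a power of a single $p\in\Irr_F$. Now write $M=\prod_{i\in I}\Aut_D(V_i)$ with $V=\bigoplus_{i\in I}V_i$ and $X_\ss=\bigoplus_i X_\ss|_{V_i}$. For each $i$ with $V_i\neq0$ the minimal polynomial of $X_\ss|_{V_i}$ divides that of $X_\ss$ and is therefore again a power of $p$; consequently $\Cent(X_\ss|_{V_i},\Aut_D(V_i))$ is a general linear group over a single division algebra, contributing exactly one $\mathbb{G}_m$ to $A_{M_{X_\ss}}$. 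Summing over $i$ gives $\dim A_{M_{X_\ss}}=\#I=\dim A_M$, and together with $A_M\subseteq A_{M_{X_\ss}}$ this yields $A_M=A_{M_{X_\ss}}$, completing the proof.

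The only real difficulty is the second paragraph, namely transferring $F$-ellipticity from $\g$ to an arbitrary Levi subalgebra containing $X_\ss$; this step genuinely uses that $G$ is of type GL, through the fact (equivalently, the Jordan-reduction description recalled in \S\ref{subsec:ssssschoix} and in \cite[section A.2]{YDL23b}) that an $F$-elliptic semisimple element of $\gl_{n,D}$ is isotypic---its minimal polynomial is a power of one irreducible polynomial---and that this property is inherited by each block of a Levi subgroup. For a general reductive $G$ one would need a different argument, and the statement may even fail.
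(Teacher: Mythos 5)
Your proof is correct and follows essentially the same route as the paper: both reduce, via the second half of Lemma \ref{lem:Rlevienvlop} (equivalently $A_M=A_{M_{X_\ss}}$), to the fact that an $F$-elliptic semisimple element stays $F$-elliptic in any Levi subalgebra of a type GL group containing it. The only difference is presentational: the paper verifies this last fact by a direct computation on the standard representative, whereas you give the intrinsic minimal-polynomial/centralizer-algebra argument (which, consistently with the paper's convention that $D\otimes_F F_p\simeq \Mat_{c_p}(D_p)$ is simple, is exactly the ``calcul direct'' left implicit there).
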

\begin{proof}
Il suffit de prouver ce corollaire pour $X$ standard. Soit $P\in \cR^G(X)'$. On note $M$ son facteur de Levi semi-standard. On a donc $M\in\L^G(\uM^{X_\ss})$. Or, par des calculs directs il est facile à voir que $X_\ss$ est $F$-elliptique dans tout élément de $\L^G(\uM^{X_\ss})$, le lemme \ref{lem:Rlevienvlop} entraîne alors $\cR^{G}(X)'=\cR^{G}(X)$.    
\end{proof}

\subsection{\'{E}lément \texorpdfstring{$w_P$}{wP}}\label{YDLiopsubsec:elementwP}
Soit $X\in \g(F)$ standard défini par le procédé de la sous-section précédente.

Pour $A$ un ensemble on note $\mathfrak{S}(A)$ son groupe symétrique. Soit $\varepsilon$ un élément de $\prod_{p\in \Irr_F : \rk({}^pV)>0}\mathfrak{S}(\{(i,j,k)\mid 1\leq i\leq j\leq {}^pr,1\leq k\leq {}^pd_j\})$. On pose
\[\varepsilon_X\in G(F)\subseteq\bigoplus_{i,j,k,i',j',k'}\text{Hom}_{D}(E_{k,j}^{i},E_{k',j'}^{i'}).\]
l'élément défini par
\begin{equation}\label{elementepsilonX}
\varepsilon_X|_{\text{Hom}_{D}({}^{p}E_{k,j}^{i},{}^{p'}E_{k',j'}^{i'})}=\begin{cases}
\text{Id}_{\underline{n}_p\times \underline{n}_p} & \text{si }p=p'\text{ et }(i',j',k')=\varepsilon(i,j,k)\\
0 & \text{sinon},
\end{cases}   
\end{equation}
pour tous $p,i,j,k,p',i',j',k'$. Posons
\[\uW^{G,X}=\{\varepsilon_X\mid \varepsilon\in \prod_{p\in \Irr_F : \rk({}^pV)>0}\mathfrak{S}(\{(i,j,k)\mid 1\leq i\leq j\leq {}^pr,1\leq k\leq {}^pd_j\})\}.\]
On vérifie aisément que $\uW^{G,X}\subseteq {\text{Norm}_{G_{X_\ss}(F)}(\uM_{0}^{G,X})}$ et il est isomorphe à $W^{(G,\uM^{G,X},X_\ss)}$. Il est en effet le sous-groupe de $G_{X_\ss}(F)$ des matrices de permutation relativement à la base $e_{X_\ss}$. En outre, pour tout $P$ sous-groupe parabolique de $G$ dont l'algèbre de Lie contient $X_\ss$ et $M_P$ son facteur de Levi semi-standard (donc son algèbre de Lie contient $X_\ss$), nous avons
\begin{equation}\label{eq:weylindP}
P(F)\cap \uW^{G,X}=\text{Norm}_{P_{X_\ss}(F)}(\uM_{0}^{X})\cap \uW^{G,X} =M_P(F)\cap \uW^{G,X}, 
\end{equation}
tous vus comme intersection dans $G(F)$.  Pour tout $M$ sous-groupe de Levi de $G$ dont l'algèbre de Lie contient $X_\ss$, on note $\uW^{G,X,M}\eqdef \uW^{G,X}\cap M(F)$. 

En parallèle, l'inclusion naturelle $\text{Norm}_{G_{X_\ss}(F)}(\uM_{0}^{G,X})=\text{Norm}_{G_{X_\ss}(F)}(\uM^{G,X})\hookrightarrow \text{Norm}_{G(F)}(\uM^{G,X})$ (cf. la preuve de l'équation \eqref{eq:pW=W} quant à la première égalité) induit une inclusion naturelle
\begin{align*}
W^{(G,\uM^{G,X},X_\ss)}=\text{Norm}_{G_{X_\ss}(F)}(\uM_{0}^{G,X})/\uM_{0}^{G,X}(F)&=\text{Norm}_{G_{X_\ss}(F)}(\uM_{0}^{G,X})/(\text{Norm}_{G_{X_\ss}(F)}(\uM_{0}^{G,X})\cap \uM^{G,X}(F))\\
&\hookrightarrow\text{Norm}_{G(F)}(\uM^{G,X})/\uM^{G,X}(F)=W^{(G,\uM^{G,X})}    
\end{align*}
du groupe de Weyl relatif de $(G_{X_\ss},\uM_0^{G,X})$ dans le groupe de Weyl de $(G,\uM^{G,X})$. 

Constatons que la formation des groupes $\uW$ commute à la décomposition de Jordan, i.e. $\uW^{G,X}=\uW^{G_{X_\ss},X_\nilp}$ et $\uW^{G,X,M}=\uW^{G_{X_\ss},X_\nilp,M_{X_\ss}}$ pour tout $M$ sous-groupe de Levi de $G$ dont l'algèbre de Lie contient $X_\ss$. 
 
L'élément nul $0$ est standard. Il y a des inclusions naturelles $\uW^{G,X}\hookrightarrow \uW^{G,0}$, et $\uW^{L,0}\hookrightarrow \uW^{G,0}$ pour tout $L\in\L^G(M_0)$.

\begin{lemma}\label{lem:wP}
Soit $M$ un facteur de Levi semi-standard d'un élément de $\cR^G(X)$. Pour tout $P\in \F^G(M)$ il existe un élément $w_{P,X}=w_P \in \uW^{G,0}$, unique à translation à gauche près par $\uW^{G,0,M_P}$ tel que
\[(\Ad w_P^{-1})P\in \cLS^G(X)\]
soit l’image de $P$ par l’application \eqref{LS}. Si $P\in\P^G(M)$, la condition $(\Ad w_P^{-1})P\in \cR^G(X)$ suffit pour que $(\Ad w_P^{-1})P$ soit l’image de $P$ par l’application \eqref{LS}. Cet élément est compatible à la décomposition de Jordan, i.e. on peut prendre $w_{P,X}=w_{P_{X_\ss},X_\nilp}\in \uW^{G,X}$. Si $w_{P,X}$ est choisi dans $\uW^{G,X}$, il sera unique à translation à gauche près par $\uW^{G,X,M_P}$.
\end{lemma}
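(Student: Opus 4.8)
The plan is to reduce the statement to the already-known nilpotent case treated by Chaudouard \cite[section 3.4]{Ch17}, exploiting the compatibility with the Jordan decomposition that was established throughout the preceding subsection. First I would recall that by Lemme~\ref{lem:LSsstand}, every element of $\cLS^G(X)$ lies in $\F^G(\uM^{G,X})$, and that by the parametrization proposition the map \eqref{LS} sends $P\in\F^G(M)$ to an element $\widetilde{P}\in\cLS^G(X)$ with $(\Ad g)P=\widetilde{P}$ for some $g\in G_{X_\ss}(F)$, unique modulo $P_{X_\ss}(F)$ on the left. The point is to show this $g$ can be chosen inside $\uW^{G,X}\subseteq\uW^{G,0}$; this is purely a statement about $G_{X_\ss}$ and the nilpotent element $X_\nilp$, via the identities $\uW^{G,X}=\uW^{G_{X_\ss},X_\nilp}$ and $\cLS^G(X)\leftrightarrow\cLS^{G_{X_\ss}}(X_\nilp)$ from Lemme~\ref{lem:LSR'ss}. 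Thus the existence of $w_{P,X}\in\uW^{G,X}$ with $(\Ad w_{P,X}^{-1})P$ equal to the image of $P$ under \eqref{LS} follows from applying the nilpotent result of \cite{Ch17} inside $G_{X_\ss}$ to $P_{X_\ss}\in\F^{G_{X_\ss}}(\uM_0^{G,X})$, and then noting $\uW^{G,X}\subseteq\uW^{G,0}$.

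Next I would handle uniqueness. If $w,w'\in\uW^{G,0}$ both satisfy $(\Ad w^{-1})P=(\Ad w'^{-1})P=\widetilde{P}$, then $w'w^{-1}$ normalizes... more precisely $w^{-1}w'$ lies in the stabilizer of $\widetilde P$, so by assertion~5 of the parametrization proposition (applied after conjugating back) $w^{-1}w'\in\text{Norm}_{Q_{X_\ss}(F)}(M_{X_\ss})\cap\uW^{G,0}$-type group; combined with \eqref{eq:weylindP}, which identifies $P(F)\cap\uW^{G,X}$ with $M_P(F)\cap\uW^{G,X}$, one gets that $w$ is determined modulo left translation by $\uW^{G,0,M_P}=\uW^{G,0}\cap M_P(F)$, and if $w$ is taken in $\uW^{G,X}$, modulo $\uW^{G,X,M_P}$. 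I would be a little careful here to state the coset on the correct side: since \eqref{appdef:LS} reads $\widetilde Q=(\Ad g)Q$ and we want $(\Ad w_P^{-1})P=\widetilde P$, the ambiguity in $w_P$ is by the stabilizer of $P$ inside the relevant Weyl group acting on the left, which is exactly $\uW^{G,0,M_P}$.

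For the refinement in the case $P\in\P^G(M)$: here I would invoke assertion~1 of the $\cR^G(X)$-parametrization proposition, which says that for $P\in\P^G(M)$ there is a \emph{unique} $\widetilde P\in\cR^G(X)$ conjugate to $P$ under $G_{X_\ss}$, and this $\widetilde P$ is automatically the image under \eqref{LS} (since \eqref{LS} restricts to \eqref{R} on $\P^G(M)$). Hence the mere condition $(\Ad w_P^{-1})P\in\cR^G(X)$ pins down $(\Ad w_P^{-1})P$ as that unique element, so it coincides with the image under \eqref{LS}; no extra verification is needed.

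The main obstacle I anticipate is bookkeeping rather than conceptual: making the transfer between $G$ and $G_{X_\ss}$ fully rigorous, i.e. checking that the matrix-of-permutation group $\uW^{G,X}$ relative to the basis $e_{X_\ss}$ genuinely coincides with $\uW^{G_{X_\ss},X_\nilp}$ and sits inside $\uW^{G,0}$ compatibly with the inclusion $\cLS^{G_{X_\ss}}(X_\nilp)\hookrightarrow\cLS^G(X)$, and that the group $\uW^{G,0,M_P}$ controlling the left-ambiguity is the correct one under this identification (this uses \eqref{eq:weylindP} crucially). Once these identifications are set up, the statement is a direct translate of \cite[section 3.4]{Ch17} for the nilpotent element $X_\nilp$ in the reductive group $G_{X_\ss}$, together with Lemme~\ref{lem:LSsstand} to guarantee semi-standardness so that everything can be phrased in terms of $\uM^{G,X}$ and $\uM_0^{G,X}$.
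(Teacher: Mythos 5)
Your strategy --- transferring the whole statement to the nilpotent element $X_\nilp$ inside $G_{X_\ss}$ and quoting Chaudouard --- is genuinely different from the paper's proof. The paper argues directly in $G$: for $P\in\P^G(M)$ it takes the unique class $g\in P_{X_\ss}(F)\backslash G_{X_\ss}(F)$ with $(\Ad g^{-1})P\in\cR^G(X)$ furnished by the parametrization \eqref{R}, observes via the lemme \ref{lem:LSsstand} that $(\Ad g)\uM_0^{X}\subseteq P_{X_\ss}$ and via le lemme \ref{lem:uMmin} que $\uM_0^{X}\subseteq P_{X_\ss}$, then uses the $P_{X_\ss}(F)$-conjugacy of minimal Levi factors of $P_{X_\ss}$ to push $g$ into $\mathrm{Norm}_{G_{X_\ss}(F)}(\uM_0^X)$ and, modulo $\uM_0^X$, into $\uW^{G,X}$; the case $P\in\F^G(M)$ follows by choosing $P'\subseteq P$ in $\P^G(M)$, and the Jordan compatibility is a byproduct (lemme \ref{lem:LSR'ss}). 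Your reduction delegates the combinatorial core to \cite{Ch17}; what the paper's route buys is that it never has to compare the map \eqref{LS} for $(G,X)$ with its analogue for $(G_{X_\ss},X_\nilp)$, which is exactly where your ``bookkeeping'' hides real content. Your uniqueness argument is essentially the paper's (the stabilizer of $P$ in the relevant Weyl group is $\uW^{G,0,M_P}$, resp.\ $\uW^{G,X,M_P}$ via \eqref{eq:weylindP}); the detour through assertion 5 of the $\cLS$-parametrization is unnecessary.

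Two steps need more than bookkeeping. First, descent of $\cR$ is not an equivalence: lemme \ref{lem:LSR'ss} controls $\cLS$ and $\cR'$ only, and the paper points out that $P_{X_\ss}\in\cR^{G_{X_\ss}}(X_\nilp)$ does \emph{not} imply $P\in\cR^G(X)$. So from the nilpotent statement you obtain $w\in\uW^{G,X}$ with $(\Ad w^{-1})P_{X_\ss}$ in $\cR^{G_{X_\ss}}(X_\nilp)$ (resp.\ in $\cLS^{G_{X_\ss}}(X_\nilp)$), and to conclude that $(\Ad w^{-1})P$ lies in $\cR^G(X)$ and is the image of $P$ under \eqref{LS} you still need lemme \ref{lem:Rlevienvlop} (via $M=\envL(M_{X_\ss};G)$ and the rationality of $w$) plus the uniqueness statements of the parametrization (or lemme \ref{lem:2.10.2}) to identify the two images; also the reference Levi for Chaudouard's lemma must be $M_{X_\ss}$, not $\uM_0^{G,X}$, i.e.\ $P_{X_\ss}\in\F^{G_{X_\ss}}(M_{X_\ss})$. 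Second, your justification of the assertion for $P\in\P^G(M)$ (``the mere condition $(\Ad w_P^{-1})P\in\cR^G(X)$ pins it down, no extra verification needed'') invokes the uniqueness of assertion 1, which applies only if $(\Ad w_P^{-1})P$ is $G_{X_\ss}$-conjugate to $P$, i.e.\ if $w_P$ is taken in $\uW^{G,X}$ (or at least in $G_{X_\ss}(F)$). For an arbitrary $w_P\in\uW^{G,0}$ this is not automatic: already for $G=\GL_2$ and $X=X_\ss=\mathrm{diag}(a,b)$ with $a\neq b$ in $F$, both Borel subgroups containing the diagonal torus belong to $\cR^G(X)$ and are exchanged by the nontrivial permutation matrix, while the image of each under \eqref{R} is itself. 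So as written that step fails; it holds precisely under the proviso $w_P\in\uW^{G,X}$, which is what your construction produces and what is used later in the paper (the paper's own proof is also silent on this sentence, so the proviso should in any case be made explicit).
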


\begin{proof}
Montrons d'abord le lemme lorsque $P \in \P^G(M)$. Il  existe un unique élément $\widetilde{P} \in R^G(X)$ tel que $\widetilde{P}$ soit $G_{X_\ss}(F)$-conjugué à $P$. Donc il existe un unique élément $g\in P_{X_\ss}(F)\backslash G_{X_\ss}(F)$ tel que $(\Ad g^{-1})P\in \cR^G(X)$. On identifie $g$ à un représentant dans $G_{X_\ss}(F)$. On a $(\Ad g)\uM_0^X\subseteq P$ par le lemme \ref{lem:LSsstand}, donc $(\Ad g)\uM_0^X\subseteq P_{X_\ss}$. D'un autre côté $P$ contient $M$, donc contient $\uM^{X}$ selon le lemme \ref{lem:uMmin}, il vient $\uM_0^X\subseteq P_{X_{\ss}}$. Ces deux sous-groupes de Levi minimaux sont donc $P_{X_{\ss}}(F)$-conjugués : il existe $p\in P_{X_{\ss}}(F)$ tel que $pg\in \text{Norm}_{G_{X_\ss}(F)}(\uM_0^X)$. Quitte
à translater $p$ à gauche par un élément de $\uM_0^X$, on peut supposer que $pg\in \uW^{G,X}\subseteq \uW^{G,0}$. D'où l'existence de $w_P$. L'unicité résulte des égalités $\text{Norm}_{\uW^{G,0}}(P)=\uW^{G,0,M_P}$ et $\text{Norm}_{\uW^{G,X}}(P)=\uW^{G,X,M_P}$, conséquences de l'équation \eqref{eq:weylindP}.

Soit $Q\in \F^G(M)$. Il existe $P\in \P^G(M)$ tel que $P\subseteq Q$. Soit $w_P\in \uW^{G,0}$ tel que $(\Ad w_P^{-1})P\in \cR^G(X)$, on a $(\Ad w_P^{-1})Q\in \cLS^G(X)$. Cela donne l'existence. L'unicité résulte de $\text{Norm}_{\uW^{G,0}}(Q)=\uW^{G,0,M_Q}$ et $\text{Norm}_{\uW^{G,X}}(Q)=\uW^{G,X,M_Q}$.

Enfin on peut prendre $w_{P,X}=w_{P_{X_\ss},X_\nilp}$ grâce au lemme \ref{lem:LSR'ss}, et s'il est choisi dans $\uW^{G,X}$, il sera unique à translation à gauche près par $\uW^{G,X,M_P}$ toujours grâce à l'équation \eqref{eq:weylindP}.
\end{proof}

\begin{remark}\label{rem:w_Psimple}
On voit aisément, grâce à \cite[proposition 3.4.1]{Ch17}, que lorsque $X$ est un nilpotent standard, $w_{P,X}$ est également l'unique élément de $\uW^{G,0}$ modulo $\uW^{G,0,M_P}$ à gauche, tel que $(\Ad w_{P,X}^{-1})P\supseteq P_0^{e}$ avec $P_0^{e}$ le sous-groupe des matrices triangulaires supérieures relativement à la base $e$. Ainsi pour tout $X\in \g(F)$  standard, $w_{P,X}$ est également l'unique élément de $\uW^{G,X}$ modulo $\uW^{G,X,M_P}$ à gauche, tel que $(\Ad w_{P,X}^{-1})P_{X_\ss}\supseteq P_0^{e_{X_\ss}}$ avec $P_0^{e_{X_\ss}}$ le sous-groupe (de $G_{X_\ss}$) des matrices triangulaires supérieures relativement à la base $e_{X_\ss}$.
\end{remark}

\subsection{\texorpdfstring{$(G,M)$}{(G,M)}-famille}\label{YDLiopsubsec:GMfamily}
Pour rappel un sous-groupe de Levi (resp. sous-groupe parabolique) de $G$ est appelé semi-standard s'il contient $M_0$. Pour $H$ un groupe algébrique défini sur $F$, on note $X^\ast(H)$ le groupe des caractères de $G$ définis sur $F$ et $a_H$ (resp. $a_H^\ast$) l’espace vectoriel reél $\Hom(X^\ast(H),\R)$ (resp.$X^\ast(H)\otimes_\Z\R$). Les espaces $a_H$ et $a_H^\ast$ sont duaux l'un de l'autre. Leur dimension est égale à celle de $A_H$ sur $F$. 
Soit $M$ un sous-groupe de Levi semi-standard. Soit $P\in\F^G(M)$. Notons $M_P$ l'unique facteur de Levi de $P$ contenant $M$. On a $A_P= A_{M_P}$. Par restriction il y des isomorphismes canoniques de groupes $X^\ast(P)\simeq X^\ast(M_P)$, $a_P\simeq a_{M_P}$, et $a_{P}^\ast\simeq a_{M_P}^\ast$. ,

Soit $H$ un sous-groupe de $G$ stable par conjugaison par $A_M$. On note $\Sigma(\mathfrak{h}; A_M)$ l’ensemble des racines de $A_M$ sur $\mathfrak{h}$, on a alors
\[\mathfrak{h}=\mathfrak{h}^{A_M}\oplus\bigoplus_{\alpha\in\Sigma(\mathfrak{h};A_M)}\mathfrak{h}_\alpha\]
avec $\mathfrak{h}^{A_M}$ le sous-espace invariant et $\mathfrak{h}_\alpha$ le sous-espace propre relativement à $\alpha$ de $\mathfrak{h}$ sous l'action adjointe par $A_M$ :
\[\mathfrak{h}_\alpha=\{X\in\mathfrak{h}\mid \Ad(a)X=\alpha(a)X\,\,\,\,\forall a\in A_M\}.\]

Soit de plus $Q\in\F^G(M)$ tel que $P\subseteq Q$. On note $\Delta_P^Q$ l’ensemble des racines simples dans $\Sigma (M_Q\cap N_P; A_{M_P})$, et $\rho_P^Q\eqdef \frac{1}{2}\sum_{\alpha\in\Sigma (M_Q\cap N_P; A_{M_P})}\left(\dim (\mathfrak{m}_Q\cap \mathfrak{n}_P)_\alpha\right)\alpha$. Lorsque $Q = G$, on note simplement $\Delta_P = \Delta_P^G$ et $\rho_P=\rho_P^G$.  On a $\Delta_P^Q\subseteq \Delta_P$.

Les ensembles $\Sigma (G; A_{M_P})$ et $\Sigma (P; A_{M_P})$ sont canoniquement inclus dans $a_{M_P}^\ast$. \`{A} chaque racine $\beta\in \Sigma (G; A_{M})$ est associée une coracine $\beta^\vee\in a_{M}$ (Il existe $R\in \P^G(M)$ tel que $\beta\in\Delta_R$ et $P_0$ un sous-groupe parabolique minimal contenu dans $R$. Dans ce cas, $\beta$ est la restriction à $a_M$ d’une unique racine $\beta_0\in\Delta_{P_0}$. On définit alors la coracine $\beta^\vee$ comme la projection de la coracine $\beta_0^\vee$ sur $a_M$. On vérifie que $\beta^\vee$ ne dépend que de $\beta$ et non pas des choix de $P$ ou $P_0$.) On définit alors dans $a_{M_P}$ le sous-ensemble $(\Delta_P^Q)^\vee$ des coracines des éléments de $\Delta_P^Q$. Pareillement,  \`{a} chaque racine $\beta\in a_{M}^\ast$ est associée un copoids $\varpi_{\beta}^\vee\in a_{M}$ ; \`{a} chaque coracine $\beta^\vee\in a_{M}$ est associée un poids $\varpi_{\beta}\in a_{M}^\ast$. On définit dans $a_{M_P}$ le sous-ensemble $(\widehat{\Delta}_P^Q)^\vee$ des copoids des éléments de $\Delta_P^Q$, puis dans $a_{M_P}^\ast$ le sous-ensemble $\widehat{\Delta}_P^Q$ des poids des éléments de $(\Delta_P^Q)^\vee$.

Soit $a_P^Q$ (resp. $(a_p^Q)^\ast$) le sous-espace vectoriel engendré par $(\Delta_P^Q)^\vee$ (resp. $\Delta_P^Q$). Notons que $a_{Q}$ (resp. $a_{Q}^\ast$) s’identifie canoniquement à un sous-espace de $a_{P}$ (resp. $a_{P}^\ast$).
On sait que 
\begin{equation}\label{eq:a0decomporthoW}
a_P=a_P^Q\oplus a_Q\,\,\,\,\text{et}\,\,\,\,a_P^\ast=(a_P^Q)^\ast\oplus a_Q^\ast.    
\end{equation}
La réalisation $\uW^{G,0}$ du groupe de Weyl $W^{(G,M_0)}$ de $(G,M_0)$ agit naturellement sur $a_{M_0}$ et son dual. On peut et on va munir chacun de $a_{M_0}$ et son dual d’un produit scalaire invariant sous $\uW^{G,0}$, et note $\|\cdot\|$ la norme euclidienne qui s’en déduit. Les décompositions \eqref{eq:a0decomporthoW} sont orthogonales. 

Pour tout espace vectoriel $V$ sur $\R$, on note $V_{\C}\eqdef V\otimes_{\R}\C$ sa complexification, et $iV\subseteq V_{\C}$ le sous-$\R$-espace évident.

Une $(G, M)$-famille est un ensemble d'applications $\{c_P(\lambda)\mid P\in\P^G(M)\}$ de $\lambda\in ia_M^\ast$ vérifiant la propriété que si $P$ et $Q$ sont adjacents (i.e. $\Sigma(\mathfrak{p}; A_{M})\cap (-\Sigma(\mathfrak{q}; A_{M}))$ est un singleton) et si $\lambda$ appartient à l'hyperplan engendré par le mur en commun des chambres de $P$ et $Q$ dans $ia_M^\ast$ alors $c_P(\lambda) = c_{Q}(\lambda)$. 

Pour tout $P\in \P^G(M)$ on définit les fonctions polynomiales homogènes
\[\theta_P(\lambda)=\vol\left(a_M^G/\Z(\Delta_P^\vee)\right)^{-1}\cdot\prod_{\alpha\in \Delta_P}\lambda(\alpha^\vee),\,\,\,\,\lambda\in ia_M^\ast.\]
et
\[\widehat{\theta}_P(\lambda)=\vol\left(a_M^G/\Z(\widehat{\Delta}_P^\vee)\right)^{-1}\cdot\prod_{\varpi\in \widehat{\Delta}_P}\lambda(\varpi^\vee),\,\,\,\,\lambda\in ia_M^\ast.\]
Pour toute $(G,M)$-famille $(c_P)_{P\in\P^G(M)}$, la fonction
\[c_M(\lambda)=\sum_{P\in \P^G(M)}c_P(\lambda)\theta_P(\lambda)^{-1} \]
s'étend en une fonction sur $ia_M^\ast$. On note $c_M$ la valeur $c_M(0)$.

Soit $(c_P)_{P\in\P^G(M)}$ une $(G,M)$-famille. D'une part pour tout $L \in \L^G(M)$, on peut définir une $(G, L)$-famille $(c_R)_{R\in \P^G(L)}$ par
\begin{equation}\label{eq:GMfamilletoGL}
c_R(\lambda)=c_P(\lambda),\,\,\,\,\lambda\in ia_L^\ast    
\end{equation}
où $P$ est un élément de $\P^G(M)$ qui vérifie $P\subseteq R$. Si l’on fixe d'autre part un élément $Q\in \F^G(M)$, on définit une $(M_Q,M)$-famille $(c_R^Q)_{R\in\P^{M_Q}(M)}$ par
\begin{equation}\label{eq:GMfamilletoMQM}
c_R^Q(\lambda)=c_{Q(R)}(\lambda),\,\,\,\,\lambda\in ia_M^\ast
\end{equation}
où $Q(R)$ est l’unique élément de $\P^G(M)$ qui est contenu dans $Q$, à savoir $RN_Q$.

Soit $(d_P)_{P\in\P^G(M)}$ une  $(G,M)$-famille. Pour tout élément $Q\in \F^G(M)$, il existe un
nombre complexe $d_Q'$ de sorte que pour toute $(G,M)$-famille $(c_P)_{P\in\P^G(M)}$, le produit s’écrive comme
\[(cd)_M=\sum_{Q\in\F^G(M)}c_M^Qd_Q'.\]
S'il existe de plus pour tout $L\in \L^G(M)$ un nombre complexe $c_M^L$ tel que $c_M^Q= c_M^L$ pour tout $Q \in \P^G(L)$, alors on a
\[(cd)_M=\sum_{L\in\L^G(M)}c_M^Ld_L.\]

Soit $\lambda\in a_{M,\C}^\ast$ un vecteur en position générale. D'une part pour calculer $c_M^G$ on possède la formule, indépendante de $\lambda$, 
\begin{equation}\label{eq:GMcalcul}
c_M^G=\frac{1}{p!}\sum_{R\in\P^{G}(M)}\left(\lim_{t\to 0}\left(\frac{d}{dt}\right)^p c_R^G(t\lambda)\right)\theta_R^{G}(\lambda)^{-1},
\end{equation} 
où $t\in\R$ et $p=\dim(a_{M}^{G})$ (cf. \cite[l'équation (6.5)]{Art81}). D'autre part pour calculer $c_Q'$ on possède la formule, indépendante de $\lambda$, 
\begin{equation}\label{eq:GM'calcul}
c_Q^G{}'=\frac{1}{q!}\sum_{\{R : Q\subseteq R\}}(-1)^{\dim(a_Q^R)}\widehat{\theta}_Q^R(\lambda)^{-1}\left(\lim_{t\to 0}\left(\frac{d}{dt}\right)^q c_R^G(t\lambda)\right)\theta_R^{G}(\lambda)^{-1},
\end{equation}
où $t\in\R$ et $q=\dim(a_{Q}^{G})$ (\textit{Ibid. }la ligne après l'équation (6.5)). 

Moyennant certains choix (\cite[section 7]{Art88I}), on peux montrer qu’il existe des applications
\begin{align*}
    d_M^G :\L^G(M)\times \L^G(M)&\rightarrow [0,+\infty[ \\
    s:\L^G(M)\times \L^G(M)&\rightarrow \F^G(M)\times \F^G(M)
\end{align*}
de sorte que, pour tout $(L_1,L_2)\in \L^G(M)\times \L^G(M)$, on ait
\begin{enumerate}
    \item si $s(L_1,L_2)=(Q_1,Q_2)$ alors $(Q_1,Q_2)\in \P^G(L_1)\times\P^G(L_2)$ ;
    \item pour tout $w\in \uW^{G,0}$ on a $s((\Ad w )L_1,(\Ad w)L_2)=(\Ad w)s(L_1,L_2)$ ;
    \item $d_M^G(L_1,L_2)\not =0$ si et seulement si l'une des flèches naturelles
    \[a_M^{L_1}\oplus a_M^{L_2}\longrightarrow a_M^G\]
    et
    \[a_{L_1}^G\oplus a_{L_2}^G\longrightarrow a_M^G\]
    est un isomorphisme, auquel cas les deux sont isomorphismes et $d_M^G(L_1,L_2)$ est le volume dans $a_M^G$ du parallélotope formé par les bases orthonormées de $a_M^{L_1}$ et de $a_M^{L_2}$ ;
    \item si $(c_P)_{P\in\P^G(M)}$ et $(d_P)_{P\in\P^G(M)}$ sont des $(G,M)$-familles, on a l'égalité
    \begin{equation}\label{eq:GMfamilleproductformula}
    (cd)_M=\sum_{(L_1,L_2)\in \L^G(M)\times \L^G(M)}d_M^G(L_1,L_2)c_M^{Q_1}d_M^{Q_2}    
    \end{equation}
    \item (Formule de descente) si $(c_P)_{P\in\P^G(M)}$ est une $(G,M)$-famille et $L\in \L^G(M)$, alors
    \begin{equation}\label{eq:GMfamilledescentformula}
    c_L=\sum_{L'\in \L^G(M)}d_M^G(L,L')c_M^{Q'}
    \end{equation}
    où l'on note $Q'$ la deuxieme composante de $s(L,L')$.
\end{enumerate}

On dit qu'une famille de points $(Y_P)_{P\in\P^G(M)}$ de $a_M$ est orthogonale si
elle vérifie la condition suivante : si $P, Q\in P^G(M)$ sont adjacents, le vecteur $Y_P-Y_Q$ appartient à la droite engendrée par la coracine associée à l’unique élément de $\Sigma(\mathfrak{p}; A_{M})\cap (-\Sigma(\mathfrak{q}; A_{M}))$. Lorsqu'on dispose d'une famille orthogonale $(Y_P)_{P\in\P^G(M)}$, on définit pour tout
$Q \in \F^G(M)$ un point $Y_Q \in a_{M_Q}$ de la manière suivante : on choisit $P \in \P^G(M)$ tel
que $P\subseteq Q$ et, on définit $Y_Q$ comme l'image de $Y_P$ par la projection orthogonale $a_M\rightarrow a_{M_Q}$. Les propriétés d'orthogonalité font que le point $Y_Q$ ne dépend pas du choix de $P \subseteq Q$, et si $L \in \L^G(M)$, la famille ainsi obtenue $(Y_Q)_{Q\in\P^G(L)}$ est encore orthogonale. 

Notons $|-|=|-|_F$ la valeur absolue normalisée sur $F$.

Nous avons fixé $K$ un sous-groupe compact maximal de $G(F)$ en bonne position par rapport à $e$. On défini l'application de Harish-Chandra comme suit : soit $M$ un sous-groupe de Levi semi-standard, on définit alors $H_M : M(F)\rightarrow a_{M}$ par $e^{\langle H_M(m),\chi\rangle}=|\chi(m)|_{F}$. Puis si $P\in\F^G(M_0)$, on définit $H_P : G(F)\rightarrow a_{P}$ par $H_P(mnk)\eqdef H_{M_P}(m)$, suivant la décomposition d'Iwasawa $G(F)=M_P(F)N_P(F)K$.

La famille $(-H_{P}(g))_{P\in \P^G(M)}$ est orthogonale pour tout $g \in G(F)$, et si $P\subseteq Q$, le point $H_Q(g)$ est bien la projection orthogonale de $H_P(g)$. On note $(v_{P,G-\reg}(\lambda,g))_{P\in \P^G(M)}$ la $(G,M)$-famille $v_{P,G-\reg}(\lambda,g)\eqdef e^{-\langle H_{P}(g),\lambda\rangle}$ pour $g\in G(F)$.

\subsection{Normalisations de mesures}\label{YDLiopsubsec:normalizationmeasure}

On fixe à présent les normalisations de mesures. 

Rappelons que l'on a fixé un produit scalaire sur $a_{M_0}$ et son dual invariant par $\uW^{G,0}$. La mesure sur les sous-espaces de $a_{M_0}$ et son dual (en particulier les $a_M$ et $a_M^\ast$ pour $M\in \L^G(M_0)$) est héréditée de celle sur $a_{M_0}$ et son dual.

Pour tout $X\in \g$ on pose $D^{\g}(X)=\det(\text{ad}(X_{\ss}); \g/\g_{X_{\ss}})$, avec $\text{ad}$ l'action adjointe de $\g$ sur lui-même. 

Notons $\tau_\g$ la trace réduite de l'algèbre séparable sous-jacente de $G$, elle envoie alors un élément de $\g(F)$ sur un élément de $F$. On fixe $\langle X,Y\rangle =\tau_\g(XY)$ comme forme bilinéaire sur $\g(F)$ non-dégénérée et invariante par adjonction. Nous appellerons $\langle-,-\rangle$ la forme bilinéaire canonique de $\g$ dans cet article. Remarquons que pour $X\in\g_\ss(F)$, la forme bilinéaire canonique de $\g$ restreint à $\g_X$ est la forme bilinéaire canonique de $\g_X$. 

On se donne un caractère non-trivial $\psi$ de $F$. Pour $\mathfrak{h}$ une sous-algèbre séparable de $\g$ de la forme $G_X$ avec $X\in \g_{\ss}(F)$, la forme bilinéaire $\langle-,-\rangle$ est non-dégénéré sur $\mathfrak{h}(F)$, et on prend la mesure de Haar auto-duale sur $\mathfrak{h}(F)$ relativement à sa transformée de Fourier. Puis on note $H$ le groupe des unités de $\mathfrak{h}$, et on prend sur $H(F)$ la mesure de Haar 
\begin{equation}\label{eq:HaarmeasureonGXss}
dh=\frac{dH}{|\text{Norm}_{\mathfrak{h}}(h)|},    
\end{equation}
où $dH$ est la mesure sur $\mathfrak{h}(F)$ et $\text{Norm}_{\mathfrak{h}}$ est la norme (et non la norme réduite) sur $F$ de l'algèbre séparable $\mathfrak{h}$. On remarque cette mesure est également celle induite via l’exponentielle de la mesure sur un voisinage de $0$ de $\mathfrak{h}(F)$, i.e.
\[\int_{V} f(\exp H)|\det(d\exp)_H|\,dH=\int_{\exp V}f(h)\,dh\]
pour tous $V$ un voisinage de $0$ de $\mathfrak{h}(F)$ tel que $\exp :V \to \exp(V)$ soit bijectif et $f\in L^1(\exp V)$. Tout sous-groupe de Levi, tout sous-tore maximal d'un sous-group de Levi de $G$, et tout sous-groupe de $G$ de la forme $M_Y$ avec $M$ un sous-groupe de Levi et $Y\in\m_\ss(F)$, est de la forme $G_X$ avec $X\in \g_\ss(F)$. Remarquons que pour tout $H$ sous-groupe de $G$ comme plus haut et $g\in G(F)$, la mesure sur $(\Ad g )H(F)$ correspond à la mesure sur $H(F)$ par conjugaison par $g$.

La mesure sur un espace quotient est la mesure quotient. On obtient alors une mesure sur $G_X(F)\backslash G(F)$ pour tout $X\in \g_\ss(F)$.

Soit $M$ un sous-groupe de Levi. Soit $\o$ une $M$-orbite dans $\m$ pour l'action adjointe, qui contient un $F$-point. On pose 
\begin{equation}\label{YDLiopeq:defaMoG-reg}
\a_{M,\o,G-\reg}\eqdef \{A\in\a_M=\text{Lie}(A_M)\mid A+Y\in \Ind_M^G(A+Y),\forall Y\in \o\}.    
\end{equation}

\begin{proposition}\label{YDLiopprop:whatisaMoG-reg}~{}
\begin{enumerate}
    \item $\a_{M,\o,G-\reg}$ est un ouvert dense de $\a_{M}$ défini sur $F$.
    \item L'intersection $\a_{M,\o,G-\reg}\cap \a_{M,G-\reg}$ contient un voisinage de $0$ dans $\a_{M,G-\reg}$, avec $\a_{M,G-\reg}$ le lieu régulier de $\a_{M}$ pour la $G$-conjugaison (équation \eqref{eq:deflieuG-reg}).
    \item $\a_{M,\o,G-\reg}=\a_{M,\o_\ss,G-\reg}$, où $\o_\ss\eqdef\{Y_\ss\mid Y\in \o\}$ est une $M$-orbite semi-simple dans $\m$.
    \item Pour tout $Y\in \o$, on a 
    \[\a_{M,\o,G-\reg}\subseteq \a_{M_{Y_\ss},G_{Y_\ss}-\reg}.\]
    \item Si $\o$ est nilpotent, on a 
     \[\a_{M,\o,G-\reg}=\a_{M,G-\reg}.\]
\end{enumerate}    
\end{proposition}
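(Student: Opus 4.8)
Le plan est de ramener les cinq énoncés à un calcul explicite de $\mathrm{ad}(A+\sigma)$. On commencerait par fixer $\sigma\eqdef X_\ss$ pour un $X\in\o(F)$ (la décomposition de Jordan est rationnelle sur le corps parfait $F$), de sorte que $\o_\ss\eqdef\{Y_\ss\mid Y\in\o\}=(\Ad M)\sigma$. Pour $A\in\a_M=\Lie(A_M)$ et $Y\in\o$ : comme $A$ est central dans $\m$, il commute à $Y_\ss$, donc $(A+Y)_\ss=A+Y_\ss$ ; la proposition \ref{YDLiopprop:whenXinIndMGX} (équivalence (1)$\Leftrightarrow$(4)), réécrite en termes de dimensions (via la théorie des pseudo-sous-groupes de Levi, ou la connexité des centralisateurs d'éléments semi-simples pour un groupe du type GL), donne
\[A+Y\in\Ind_M^G(A+Y)\iff\dim G_{A+Y_\ss}=\dim M_{A+Y_\ss}=\dim M_{Y_\ss}.\]
En écrivant $Y_\ss=(\Ad m)\sigma$ avec $m\in M$ (qui fixe $A$), on a $A+Y_\ss=(\Ad m)(A+\sigma)$, d'où $\dim G_{A+Y_\ss}=\dim G_{A+\sigma}$ et $\dim M_{Y_\ss}=\dim M_\sigma$ ; la condition « pour tout $Y\in\o$ » équivaut donc à « $\dim G_{A+\sigma}=\dim M_\sigma$ », et ceci vaut aussi bien sur $\overline F$. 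Ainsi $\a_{M,\o,G-\reg}=\{A\in\a_M\mid \dim G_{A+\sigma}=\dim M_\sigma\}$.

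Le cœur de la preuve serait alors le calcul de $\dim G_{A+\sigma}=\dim\g_{A+\sigma}$ (caractéristique $0$) via la décomposition $\g=\m\oplus\bigoplus_{\gamma\in\Sigma(\g;A_M)}\g_\gamma$, stable sous $\mathrm{ad}(A+\sigma)$ puisque $A+\sigma\in\m=\g^{A_M}$ : sur $\m$, $\mathrm{ad}(A+\sigma)$ agit par $\mathrm{ad}(\sigma)|_\m$, et sur $\g_\gamma$ par $d\gamma(A)\cdot\mathrm{id}_{\g_\gamma}+\mathrm{ad}(\sigma)|_{\g_\gamma}$. On en tirerait
\[\dim G_{A+\sigma}=\dim M_\sigma+\sum_{\gamma\in\Sigma(\g;A_M)}\dim\ker\bigl(d\gamma(A)\cdot\mathrm{id}_{\g_\gamma}+\mathrm{ad}(\sigma)|_{\g_\gamma}\bigr),\]
donc $A\in\a_{M,\o,G-\reg}$ si et seulement si $-d\gamma(A)$ n'est valeur propre de $\mathrm{ad}(\sigma)|_{\g_\gamma}$ pour aucun $\gamma$, c'est-à-dire si et seulement si $P(A)\neq0$ où $P(A)\eqdef\prod_{\gamma}\det\bigl(d\gamma(A)\cdot\mathrm{id}_{\g_\gamma}+\mathrm{ad}(\sigma)|_{\g_\gamma}\bigr)$. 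Ce polynôme est à coefficients dans $F$ (car $\sigma\in\m(F)$ et $A_M$ est $F$-déployé) et non nul (chaque $d\gamma$ est une forme linéaire non nulle, chaque facteur un polynôme unitaire en $d\gamma(A)$), d'où le point 1 : $\a_{M,\o,G-\reg}=\{P\neq0\}$ est un ouvert de Zariski dense défini sur $F$.

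Pour le point 2 : $\a_{M,G-\reg}=\{\prod_\gamma d\gamma(A)\neq0\}$, et $\a_{M,G-\reg}\smallsetminus\a_{M,\o,G-\reg}$ est contenu dans la réunion finie des hyperplans $\{d\gamma(A)=-e\}$ où $e$ parcourt les valeurs propres \emph{non nulles} de $\mathrm{ad}(\sigma)|_{\g_\gamma}$ ; aucun ne passe par $0$, donc un voisinage de $0$ dans $\a_M$ intersecté avec $\a_{M,G-\reg}$ est contenu dans $\a_{M,\o,G-\reg}$. Le point 3 est immédiat, la caractérisation par $P$ ne dépendant que de $\sigma$, donc de $\o_\ss$ (et $(\o_\ss)_\ss=\o_\ss$). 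Pour le point 5 : si $\o$ est nilpotent, $\sigma=0$ et $P(A)=\prod_\gamma d\gamma(A)^{\dim\g_\gamma}$, donc $\{P\neq0\}=\a_{M,G-\reg}$. Pour le point 4 : on a $\a_M\subseteq\a_{M_{Y_\ss}}$ (car $Z(M)^\circ$, agissant trivialement sur $\m$, est central dans $M_{Y_\ss}$, d'où $A_M\subseteq A_{M_{Y_\ss}}$), et pour $A\in\a_{M,\o,G-\reg}$, en écrivant $Y_\ss=(\Ad m)\sigma$, on obtient $(G_{Y_\ss})_A=(\Ad m)\bigl((G_\sigma)_A\bigr)$, donc $\dim(G_{Y_\ss})_A=\dim(\g_\sigma\cap\g_A)$ ; or $\g_A=\m\oplus\bigoplus_{\gamma:\,d\gamma(A)=0}\g_\gamma$, et la caractérisation (appliquée à $-d\gamma(A)=0$) force $\ker(\mathrm{ad}(\sigma)|_{\g_\gamma})=0$ dès que $d\gamma(A)=0$, d'où $\dim(G_{Y_\ss})_A=\dim M_\sigma=\dim M_{Y_\ss}$ ; comme $M_{Y_\ss}=\Cent(A_{M_{Y_\ss}},G_{Y_\ss})$ est un sous-groupe de Levi de $G_{Y_\ss}$, cette valeur est le minimum de $\dim(G_{Y_\ss})_{A'}$ sur $\a_{M_{Y_\ss}}$, atteint en $A$, donc $A\in\a_{M_{Y_\ss},G_{Y_\ss}-\reg}$.

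La principale difficulté est le calcul sur chaque facteur du noyau de $\mathrm{ad}(A+\sigma)$, ainsi que le fait — qu'il serait tentant de supposer mais qui est faux — que $\a_{M,G-\reg}$ soit contenu dans $\a_{M,\o,G-\reg}$ : on n'a que l'inclusion locale du point 2, comme le montre déjà le cas $G=\GL_2$, $M$ le tore diagonal, $\sigma+A$ scalaire. Une fois établie la formule pour $\dim G_{A+\sigma}$, les cinq assertions en découlent formellement.
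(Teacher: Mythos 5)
Votre démonstration est correcte et suit essentiellement la même stratégie que celle du texte : réduction, via la proposition \ref{YDLiopprop:whenXinIndMGX}, à l'annulation d'un polynôme en les $\alpha(A)$ obtenu en factorisant l'action de $\ad(A+\sigma)$ sur les espaces de poids de $A_M$, de terme dominant $\prod_\alpha\alpha(A)^{\dim\g_\alpha}$, d'où découlent les cinq points. La seule différence est cosmétique : le texte utilise l'équivalence (1)\,$\Leftrightarrow$\,(6) et le déterminant de $\ad(A+Y)$ restreint à $\n_P$, tandis que vous utilisez (1)\,$\Leftrightarrow$\,(4) et le calcul de $\dim\g_{A+\sigma}$ sur $\g$ tout entier, ce qui revient au même.
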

\begin{proof}On a $\a_{M,G-\reg}=\{A\in \a_M\mid \prod_{\alpha\in\Sigma(\g;A_M)}\alpha(A)\not=0\}$.  Soit dans la suite $P\in\P^G(M)$.
\begin{enumerate}
    \item Soit $A\in \a_M$. Considérons l'endomorphisme $F$-linéaire $[-,A+Y]:\n_P\to\n_P$, avec $[-,-]$ le crochet de Lie sur $\g$. En utilisant la décomposition $\n_P=\bigoplus_{\alpha\in \Sigma(\g;A_M)}\n_{P,\alpha}$ où $\n_{P,\alpha}$ est le sous-espace propre  relativement à $\alpha$, on voit que le déterminant $\det([-,A+Y];\n_P)$ est une fonction polynômiale à $|\Sigma(\g;A_M)|$ variables $\{\alpha(A)\mid \alpha\in\Sigma(\g;A_M) \}$ de terme dominant $\prod_{\alpha\in\Sigma(\g;A_M)}\alpha(A)^{\dim \n_{P,\alpha}}$. Selon l'équivalence (1) $\Leftrightarrow$ (6) de la proposition \ref{YDLiopprop:whenXinIndMGX} on voit que $A\in \a_{M,\o,G-\reg}$ si et seulement si $\det([-,A+Y];\n_P)\not =0$. On en déduit que $\a_{M,\o,G-\reg}$ est un ouvert dense de $\a_{M}$ défini sur $F$.
    \item Si $M=G$ il n'y a rien à faire. Supposons que $M\not =G$. Bien sûr $0\not\in \a_{M,G-\reg}$. On conclut ainsi par la description du déterminant $\det([-,A+Y];\n_P)$ du point précédent.
    \item Cela découle de l'équivalence (1) $\Leftrightarrow$ (4) de la proposition \ref{YDLiopprop:whenXinIndMGX}.
    \item Soient $A\in \a_{M,\o,G-\reg}$ et $Y\in \o$. On veut prouver que $A\in  \a_{M_{Y_\ss},\o,G_{Y_\ss}-\reg}$.
    D'abord on a $A\in \a_M\subseteq a_{M_{Y_\ss}}$. Il reste, grâce à l'équivalence (1) $\Leftrightarrow$ (4) de la proposition \ref{YDLiopprop:whenXinIndMGX}, à prouver que $(G_{Y_\ss})_A=(M_{Y_{\ss}})_A$. Soit $V\in (G_{Y_\ss})_A$. Alors $V\in \g$ et $[V,Y_\ss]=[V,A]=0$. On a donc $V\in G_{A+Y_\ss}=M_{A+Y_\ss}\subseteq M$. Ainsi $V\in (M_{Y_\ss})_A$, ce qu'il fallait.
    \item Si $\o$ est nilpotent, alors le déterminant $\det([-,A+Y];\n_P)$ vaut $\prod_{\alpha\in\Sigma(\g;A_M)}\alpha(A)^{\dim \n_{P,\alpha}}$ pour tout $Y\in\o$. Puisque $\dim \n_{P,\alpha}>0$ pour tout $\alpha$, on a $\a_{M,\o,G-\reg}=\a_{M,G-\reg}$. \qedhere  
\end{enumerate}    
\end{proof}


Une description plus précise de $\det([-,A+Y];\n_P)$ est fournie ici. On suppose pour l'instant que $G=\GL_{n,D}$ et $M=\prod_{i=1}^r\GL_{n_i,D}$ avec $D$ une algèbre simple dont le centre contient $F$, $r$ un entier strictement positif, et $(n_1,\dots,n_r)$ une partition de $n$. On a $A_M=\prod_{i=1}^r\GL_{1,F}$. On note $M_i=\GL_{n_i,D}$ pour $i=1,\dots,r$. L'ensemble des racines $\Sigma(\g;A_M)$ est en bijection avec l'ensemble des paires $(i,j)$ d'entiers différents entre $1$ et $r$ : soit $(a_1\Id_{n_1\times n_1},\dots,a_r\Id_{n_r\times n_r})\in A_M$ avec $a_i\in \GL_{1,F}$, alors la racine qui correspond à $(i,j)$ est la racine $\alpha_{(i,j)}(a_1\Id_{n_1\times n_1},\dots,a_r\Id_{n_r\times n_r})=a_ia_j^{-1}$. La projection de $\o$ sur chaque bloc $\m_i$ de $\m$ est une une $M_i$-orbite dans $\m_i$. On écrit $p_{i,\o}$ pour le polynôme caractéristique de la projection de $\o$ sur le bloc $\m_i$. Enfin pour $(i,j)$ une paire d'entiers différents entre $1$ et $r$, on note $p_{\alpha_{(i,j)},\o}\in F[T]$ le résultant des $p_{i,\o}(X)$ et $p_{j,\o}(X-T)$, vus comme polynômes en $X$. On a $p_{\alpha_{(j,i)},\o}(T)=p_{\alpha_{(i,j)},\o}(-T)$. Pour $G$ un groupe du type GL général, on définit les polynômes $p_{\alpha,\o}\in F[T]$ pour $\alpha\in \Sigma(\g;A_M)$ en travaillant sur les facteurs de $G$.

\begin{proposition}
On a 
    \[\a_{M,\o,G-\reg}=\{A\in \a_M\mid \prod_{\alpha\in \Sigma(\g;A_M)}p_{\alpha,\o}(\alpha(A))\not=0\}.\]    
\end{proposition}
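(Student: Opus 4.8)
The plan is to identify the zero locus of $\prod_{\alpha\in\Sigma(\g;A_M)}p_{\alpha,\o}(\alpha(A))$ with that of a single determinant attached to the adjoint action, and then to quote the description of $\a_{M,\o,G-\reg}$ already obtained. Concretely, the proof of point (1) of Proposition~\ref{YDLiopprop:whatisaMoG-reg} (itself resting on the equivalence (1)~$\Leftrightarrow$~(6) of Proposition~\ref{YDLiopprop:whenXinIndMGX}) shows that, for a fixed $P\in\P^G(M)$ and a fixed $Y\in\o$, one has $A\in\a_{M,\o,G-\reg}$ if and only if the $F$-linear endomorphism $N\mapsto[N,A+Y]$ of $\n_P$ is bijective, i.e. $\det\big(N\mapsto[N,A+Y]\,;\,\n_P\big)\neq 0$; moreover this condition depends neither on $Y\in\o$ nor on $P$. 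It therefore suffices to prove that this determinant and $\prod_{\alpha\in\Sigma(\g;A_M)}p_{\alpha,\o}(\alpha(A))$ have the same vanishing locus in $\a_M$.

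Since the $p_{\alpha,\o}$ are defined factor by factor for a general group of type GL and since $\GL_{n,\Mat_c(D')}\cong\GL_{nc,D'}$, I may assume $G=\GL_{n,D}$ with $D$ a central division algebra over $F$, $M=\prod_{i=1}^r\GL_{n_i,D}$, $P$ the upper-triangular parabolic so that $\n_P=\bigoplus_{i<j}\Hom_D(V_j,V_i)$, $\m=\bigoplus_i\End_D(V_i)$, $A=(a_1\mathrm{Id},\dots,a_r\mathrm{Id})$ and $Y=(Y_1,\dots,Y_r)$ with $Y_i\in\End_D(V_i)$ of reduced characteristic polynomial $p_{i,\o}$. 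As $A+Y$ is block-diagonal, $\operatorname{ad}(A+Y)$ stabilises each root space $\g_{\alpha_{(i,j)}}=\Hom_D(V_j,V_i)$, on which it is the operator $N\mapsto N(a_j\mathrm{Id}+Y_j)-(a_i\mathrm{Id}+Y_i)N=-\alpha_{(i,j)}(A)\,N+NY_j-Y_iN$. Extending scalars to a splitting field $\overline F$, one has $D\otimes_F\overline F\cong\Mat_d(\overline F)$ and $\Hom_D(V_j,V_i)\otimes_F\overline F\cong\Mat_{n_id\times n_jd}(\overline F)$, with $Y_i,Y_j$ becoming square matrices of characteristic polynomials $p_{i,\o},p_{j,\o}$; since left-multiplication by $Y_i$ and right-multiplication by $Y_j$ commute and are simultaneously triangulable, the classical Sylvester resultant identity yields
\[
\det\big(\operatorname{ad}(A+Y)\,;\,\g_{\alpha_{(i,j)}}\big)\ =\ \prod_{\lambda,\mu}\big(\mu-\lambda-\alpha_{(i,j)}(A)\big),
\]
the product running over the roots $\lambda$ of $p_{i,\o}$ and $\mu$ of $p_{j,\o}$ counted with multiplicity. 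Comparing with $p_{\alpha_{(i,j)},\o}(T)=\operatorname{Res}_X\big(p_{i,\o}(X),p_{j,\o}(X-T)\big)=\prod_{\lambda,\mu}(\lambda-\mu-T)$, this determinant equals, up to a nonzero constant depending only on the resultant and sign conventions, $p_{\alpha_{(i,j)},\o}\big(\alpha_{(i,j)}(A)\big)$ (equivalently, by $p_{\alpha_{(j,i)},\o}(T)=\pm p_{\alpha_{(i,j)},\o}(-T)$, the value of $p_{\alpha_{(j,i)},\o}$ at the appropriate argument).

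Multiplying over $i<j$, we get $\det\big(N\mapsto[N,A+Y];\n_P\big)=c\cdot\prod_{i<j}p_{\alpha_{(i,j)},\o}\big(\alpha_{(i,j)}(A)\big)$ with $c\in F^\times$, so $A\in\a_{M,\o,G-\reg}$ if and only if $p_{\alpha,\o}(\alpha(A))\neq 0$ for every root $\alpha$ occurring in $\n_P$. To pass from these roots to all of $\Sigma(\g;A_M)$ I would invoke the symmetry $p_{-\alpha,\o}(T)=\pm\,p_{\alpha,\o}(-T)$ noted just after the definition of $p_{\alpha,\o}$: combined with $(-\alpha)(A)=-\alpha(A)$ it gives $p_{-\alpha,\o}\big((-\alpha)(A)\big)=\pm\,p_{\alpha,\o}\big(\alpha(A)\big)$, so $\alpha$ and $-\alpha$ contribute the same vanishing condition, whence $\prod_{\alpha\in\Sigma(\g;A_M)}p_{\alpha,\o}(\alpha(A))\neq 0$ is equivalent to $p_{\alpha,\o}(\alpha(A))\neq 0$ for all $\alpha$ occurring in $\n_P$. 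Combining with the preceding equivalence finishes the proof.

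The step I expect to be the main obstacle is the determinant computation over the division algebra $D$: one has to check carefully that after base change the relevant commuting operators are governed by the \emph{reduced} characteristic polynomials $p_{i,\o}$, of the correct degree $n_id$, so that Sylvester's formula produces the resultant itself with no spurious exponent, and one has to keep track of the sign conventions in the resultant and in the identification of roots of $A_M$ with linear forms on $\a_M$ — this is exactly why the computation naturally outputs $p_{\alpha_{(j,i)},\o}$ or a reflected argument, and why the passage through the symmetry $p_{-\alpha,\o}(T)=\pm p_{\alpha,\o}(-T)$ (equivalently, the bookkeeping relating $\n_P$ to the full root system $\Sigma(\g;A_M)$) is indispensable in order to land precisely on the symmetric formula stated in the proposition.
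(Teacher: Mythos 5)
Your route is sound in outline and genuinely different from the paper's. The paper's proof passes through the equivalence (1) $\Leftrightarrow$ (4) of la proposition \ref{YDLiopprop:whenXinIndMGX} ($A+Y\in \Ind_M^G(A+Y)$ si et seulement si $G_{A+Y_\ss}\subseteq M$) and reads the eigenvalue condition directly off the centralizer of the semi-simple part; you instead use (1) $\Leftrightarrow$ (6) and compute $\det([-,A+Y];\n_P)$ block by block via the Sylvester/resultant identity. Your version is heavier but yields more, namely an explicit factorisation of the determinant already used in the proof of point 1 of la proposition \ref{YDLiopprop:whatisaMoG-reg}; the reductions (to $\GL_{n,D}$ with $D$ à division, and the fact that after base change the blocks are governed by the reduced characteristic polynomials of degree $n_i\deg D$) are correct.

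The step that does not hold as written is the identification of the block determinant with $p_{\alpha_{(i,j)},\o}(\alpha_{(i,j)}(A))$ « up to a nonzero constant ». Your own computation gives $\det(\cdot;\g_{\alpha_{(i,j)}})=\pm\prod_{\lambda,\mu}(\mu-\lambda-\alpha_{(i,j)}(A))$, while $p_{\alpha_{(i,j)},\o}(\alpha_{(i,j)}(A))=\pm\prod_{\lambda,\mu}(\lambda-\mu-\alpha_{(i,j)}(A))$: these are related by the substitution $A\mapsto -A$, not by a constant, and their vanishing loci differ in general. The $\{\alpha,-\alpha\}$ symmetry cannot repair this, since $p_{-\alpha,\o}((-\alpha)(A))=\pm p_{\alpha,\o}(\alpha(A))$ reproduces the \emph{same} condition, not the reflected one. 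Carried out carefully, your argument proves $\a_{M,\o,G-\reg}=\{A\in\a_M\mid \prod_{\alpha}p_{\alpha,\o}(-\alpha(A))\not=0\}$. For $\g=\gl_2$, $M$ the diagonal torus and $\o=\{\mathrm{diag}(1,0)\}$, this set is $\{a_1-a_2\not=-1\}$ (the degenerate locus being $a_1+1=a_2$), whereas the displayed right-hand side is $\{a_1-a_2\not=1\}$; the discrepancy is thus real and in fact points to a sign slip in the statement itself (the resultant should involve $p_{j,\o}(X+T)$, or equivalently the polynomials should be evaluated at $-\alpha(A)$). You should either prove the sign-corrected formula or flag the needed correction explicitly, rather than absorbing the mismatch into « sign conventions ».
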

\begin{proof}
Soient $A\in\a_M$ et $Y\in \o$. Alors $A+Y\in \Ind_M^G(A+Y)$ si et seulement si  $M_{A+Y_\ss}=G_{A+Y_\ss}$, soit $G_{A+Y_\ss}\subseteq M$. \`{A} partir de cette dernière condition on voit aisément que $\a_{M,\o,G-\reg}=\{A\in \a_M\mid \prod_{\alpha\in \Sigma(\g;A_M)}p_{\alpha,\o_{\ss}}(\alpha(A))\not=0\}$.    
\end{proof}
Pour $Y\in \m(F)$, on pose
\begin{equation}\label{YDLiopeq:defaMYG-reg}
a_{M,Y,G-\reg}\eqdef a_{M,(\Ad M)Y,G-\reg}.    
\end{equation}

Soit maintenant $X\in \g(F)$ quelconque, on prend d'abord $M$ un sous-groupe de Levi tel que $X_\ss \in\m(F)$ elliptique et $X\in \Ind_M^G(X_\ss)$ (proposition \ref{prop:condition(H?)}). Puis on prend sur $G_{X}(F)$ l'unique mesure de Haar telle que 
\begin{equation}\label{eq:defmeasureonanyorb}
\begin{split}
|D^\g(X)|_F^{1/2}&\int_{G_{X}(F)\backslash G(F)}f(g^{-1}Xg)\,dg\\
&=\lim_{\substack{A\to 0\\A\in\a_{M,X_\ss,G-\reg}(F)}}|D^\g(A+X)|_F^{1/2}\int_{M_{X_\ss}(F)\backslash G(F)}f(g^{-1}(A+X_\ss)g)\,dg 
\end{split}
\end{equation}
pour tout $f\in C_c^\infty(G(F))$. On doit vérifier que ce procédé donne une mesure bien définie, c'est-à-dire :
\begin{proposition}\label{YDLiopprop:compatibilityHaarmeasures}On a les compatibilités suivantes :    
\begin{enumerate}
    \item la limite du membre de droite de l'égalité \eqref{eq:defmeasureonanyorb} existe, et elle ne dépend pas du choix de $M$ ;
    \item si $X$ est semi-simple, alors la mesure sur $G_X(F)$ définie par l'équation \eqref{eq:defmeasureonanyorb} coïncide avec la mesure définie par l'équation \eqref{eq:HaarmeasureonGXss} ;
    \item si $L$ est un sous-groupe de Levi de $G$ dont l'algèbre de Lie contient $X$ tel que $L_X=G_X$, alors la mesure sur $G_X(F)$ définie par l'équation \eqref{eq:defmeasureonanyorb} en voyant $X$ comme élément de $\g(F)$ coïncide avec la mesure sur $L_X$ définie par la même équation en voyant $X$ comme élément de $\mathfrak{l}(F)$ ;
    \item la mesure sur $G_X(F)$ définie par l'équation \eqref{eq:defmeasureonanyorb} en voyant $X$ comme élément de $\g(F)$ coïncide avec la mesure sur $(G_{X_\ss})_{X_\nilp}(F)$ définie par la même équation en voyant $X_\nilp$ comme élément de $\g_{X_\ss}(F)$ ;
    \item la mesure sur $G_{X+Z}(F)$ coïncide avec la mesure sur $G_X(F)$ pour tous $X\in \g(F)$ et $Z\in\mathfrak{z}(F)$.
\end{enumerate}
\end{proposition}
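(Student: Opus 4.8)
The plan is to isolate the real content in assertion~(1) and then derive (2)--(5) by specialising the choice of $M$. For~(1) I would fix $P\in\P^G(M)$ and, after conjugating $X$ by an element of $G_{X_\ss}(F)$ into $X_\ss+\n_{P_{X_\ss}}\subseteq\p$ --- possible because $X\in\Ind_M^G(X_\ss)$ and, by equation~\eqref{chap3eq:indJordan}, $X_\nilp\in\Ind_{M_{X_\ss}}^{G_{X_\ss}}(0)$, whose intersection with $\n_{P_{X_\ss}}$ is dense --- assume $X\in\p$, so that $A+X\in\p$ and $D^\g(A+X)=D^\g(A+X_\ss)$ for every $A\in\a_M$ (only the transport of the measure is affected by this reduction). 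The first key step is a determinant identity: for $A\in\a_{M,X_\ss,G-\reg}(F)$ one has $\g_{A+X_\ss}=\g_A\cap\g_{X_\ss}=\m_{X_\ss}$ (using $\a_{M,X_\ss,G-\reg}\subseteq\a_{M_{X_\ss},G_{X_\ss}-\reg}$, Proposition~\ref{YDLiopprop:whatisaMoG-reg}(4)) and $\mathrm{ad}(A+X_\ss)$ is invertible on $\n_P$ (equivalence (1)$\Leftrightarrow$(6) of Proposition~\ref{YDLiopprop:whenXinIndMGX}); decomposing $\g/\m_{X_\ss}\simeq\overline{\n_P}\oplus(\m/\m_{X_\ss})\oplus\n_P$ as $\mathrm{ad}(A+X_\ss)$-modules, using that the canonical bilinear form pairs $\n_P$ with $\overline{\n_P}$ in an $\mathrm{ad}$-invariant way and that $A$ acts by zero on $\m$, one obtains
\[
|D^\g(A+X)|_F^{1/2}=|D^\m(X_\ss)|_F^{1/2}\cdot|\det(\mathrm{ad}(A+X_\ss);\n_P)|_F .
\]
The second key step is the parabolic descent of orbital integrals over induced orbits: the substitution $n\mapsto(\Ad n)(A+X_\ss)-(A+X_\ss)$ is a bijection $N_P(F)\to\n_P(F)$ with Jacobian $|\det(\mathrm{ad}(A+X_\ss);\n_P)|_F$ (Proposition~\ref{YDLiopprop:whenXinIndMGX}(7); cf.\ the nilpotent case in \cite{Ch17,Ch02a}), and it rewrites the orbital integral of the semisimple element $A+X_\ss$ over its centraliser $M_{X_\ss}$ as $|\det(\mathrm{ad}(A+X_\ss);\n_P)|_F^{-1}$ times an integral of $f$ over a transversal to $\Ind_M^G((\Ad M)(A+X_\ss))$ carrying an inner integration along $\n_P$. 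Multiplying the two identities, the right-hand side of \eqref{eq:defmeasureonanyorb} becomes $|D^\m(X_\ss)|_F^{1/2}$ times that transversal integral, in which the dependence on $A$ now lies only in the argument of $f$.

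Next I would pass to the limit $A\to 0$: the orbit $(\Ad G)(A+X_\ss)$ degenerates continuously onto $(\Ad G)X=\Ind_M^G((\Ad M)X_\ss)$ (Proposition~\ref{chap3prop:indprop}), and, $f$ being in $C_c^\infty(\g(F))$, the transversal integrand is dominated near $A=0$ by a single integrable majorant --- here one invokes the convergence of the induced orbital integral over $(\Ad G)X$, which for groups of type GL is the semisimple descent of Rao's convergence theorem for nilpotent orbital integrals (cf.\ \cite{Ch17}). By dominated convergence the limit in \eqref{eq:defmeasureonanyorb} exists and equals $|D^\m(X_\ss)|_F^{1/2}$ times the induced orbital integral of $f$ over $(\Ad G)X$ --- no smaller orbit in the closure contributes, the non-regular locus of $X_\ss+\n_P$ having measure zero. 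As a functional of $f$ this is a nonzero $G(F)$-invariant distribution supported exactly on the locally closed orbit $(\Ad G)X\simeq G_X(F)\backslash G(F)$, hence, by uniqueness up to a scalar of invariant distributions on an orbit, it coincides with $f\mapsto|D^\g(X)|_F^{1/2}\int_{G_X(F)\backslash G(F)}f(g^{-1}Xg)\,dg$ for a unique Haar measure on $G_X(F)$; that is the definition of the measure. Its independence of $M$ I would get from the fact that any two admissible choices of $M$ are $G_{X_\ss}(F)$-conjugate (Proposition~\ref{prop:condition(H?)}(2), or Proposition~\ref{prop:Rasso}) while the limit just computed is manifestly invariant under $G_{X_\ss}(F)$-conjugation of the pair $(M,P)$, the conjugation being absorbed into $g$ and into the conjugation-invariant Haar measures fixed in \S\ref{YDLiopsubsec:normalizationmeasure}.

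The remaining assertions would then follow. For~(2): when $X$ is semisimple the orbit $(\Ad G)X$ is closed and $M_{X_\ss}=G_X$, so the maps $g\mapsto g^{-1}(A+X)g$ on the fixed space $G_X(F)\backslash G(F)$ are uniformly proper for $A$ near $0$, and one may pass to the limit in \eqref{eq:defmeasureonanyorb} directly by dominated convergence, landing on $|D^\g(X)|_F^{1/2}\int_{G_X(F)\backslash G(F)}f(g^{-1}Xg)\,dg$ computed with the self-dual measure \eqref{eq:HaarmeasureonGXss} already fixed on $M_{X_\ss}(F)=G_X(F)$; hence that is the measure produced by \eqref{eq:defmeasureonanyorb}. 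Assertion~(5) is immediate: for $Z\in\mathfrak{z}(F)$ one has $G_{X+Z}=G_X$ and $D^\g(A+X+Z)=D^\g(A+X)$ ($Z$ acts by zero in the adjoint representation), while $f(g^{-1}(A+X_\ss+Z)g)=f_Z(g^{-1}(A+X_\ss)g)$ with $f_Z(Y)\eqdef f(Y+Z)$, so \eqref{eq:defmeasureonanyorb} for $(X+Z,f)$ is \eqref{eq:defmeasureonanyorb} for $(X,f_Z)$, forcing the measure on $G_{X+Z}(F)$ to equal that on $G_X(F)$. For~(3): choose $M\subseteq L$ with $X_\ss$ elliptic in $\m$ and $X\in\Ind_M^L(X_\ss)$ (Proposition~\ref{prop:condition(H?)}(1) inside $L$); since $L_X=G_X$ we have $X\in\Ind_L^G(X)$, hence $X\in\Ind_M^G(X_\ss)$ by transitivity of induction (Proposition~\ref{chap3prop:indprop}(7)), so the same $M$ computes the measure in $\g$; factoring $M_{X_\ss}(F)\backslash G(F)=(M_{X_\ss}(F)\backslash L(F))\times(L(F)\backslash G(F))$, passing to the limit inside the outer integral and invoking the form of the limit already established in $L$, the equality of the two measures reduces to transitivity of quotient measures. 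For~(4): apply \eqref{eq:defmeasureonanyorb} inside $G_{X_\ss}$ to the nilpotent element $X_\nilp$ with $M'=M_{X_\ss}$ (legitimate since $X\in\Ind_M^G(X_\ss)$ is equivalent to $X_\nilp\in\Ind_{M_{X_\ss}}^{G_{X_\ss}}(0)$ by \eqref{chap3eq:indJordan}), and compare, via the parabolic descent of the orbital integral of $X$ to $G_{X_\ss}$ (the inner variable running over $G_X(F)\backslash G_{X_\ss}(F)$) together with the explicit limit from~(1) in $G$ and in $G_{X_\ss}$, the two resulting normalisations.

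The hard part will be the core of~(1): the limit in \eqref{eq:defmeasureonanyorb} is an indeterminate $0\cdot\infty$ form --- the factor $|D^\g(A+X)|_F^{1/2}$ tends to $0$ while the orbital integral of $A+X_\ss$ over $M_{X_\ss}(F)\backslash G(F)$ diverges as $A\to 0$ --- and controlling it, together with pinning the limit down to integration over $(\Ad G)X$ alone with no boundary contribution, rests on the parabolic integration formula for induced orbital integrals and on their convergence, which is exactly where the type-GL hypothesis (or an adequate substitute) enters. A secondary, purely bookkeeping, difficulty is to keep the normalisation constants consistent with the measure conventions of \S\ref{YDLiopsubsec:normalizationmeasure}, so that the equalities of invariant distributions obtained above translate faithfully into equalities of Haar measures.
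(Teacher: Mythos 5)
Your overall architecture coincides with the paper's: everything is reduced to assertion (1), and (2)--(5) are then obtained by specialising $M$, descending through $L(F)\backslash G(F)$ resp. $G_{X_\ss}(F)\backslash G(F)$, and translating by the centre; on those points the two arguments are essentially identical. Where you genuinely diverge is in the proof of (1). The paper disposes of it in one line: reduce the domain $M_{X_\ss}(F)\backslash G(F)$ to a compact set independent of $A$ via Harish-Chandra's compactness lemma, then apply dominated convergence. You instead first perform the parabolic descent (Iwasawa decomposition along some $P\in\P^G(M)$ and the substitution $N_P(F)\to\n_P(F)$ of Jacobian $|\det(\ad(A+X_\ss);\n_P)|_F$) together with the factorisation $|D^{\g}(A+X)|_F^{1/2}=|D^{\m}(X)|_F^{1/2}\,|\det(\ad(A+X_\ss);\n_P)|_F$, so that after cancellation the $A$-dependence survives only in the argument of $f$, and only then invoke compactness and dominated convergence --- now on the integral over $M_{X_\ss}(F)\backslash M(F)$, where they legitimately apply. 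This extra step is not cosmetic: when $M_{X_\ss}\neq G_{X_\ss}$ the integrals $\int_{M_{X_\ss}(F)\backslash G(F)}f(g^{-1}(A+X_\ss)g)\,dg$ diverge as $A\to 0$ (which is precisely why the vanishing prefactor $|D^{\g}(A+X)|_F^{1/2}$ is there), so the domain cannot be cut down to a fixed compact set uniformly in $A$, and the hypothesis of the compactness lemma (centralisers contained in $M_{X_\ss}$ for all elements of a compact neighbourhood) fails at $A=0$. Your descended expression is exactly the integration formula recorded at the end of la sous-section \ref{YDLiopsubsec:normalizationmeasure}, and your identification of the limit with $|D^{\g}(X)|_F^{1/2}\int_{G_X(F)\backslash G(F)}f((\Ad g^{-1})X)\,dg$ --- by uniqueness of positive invariant functionals carried by the single orbit $(\Ad G)X$, the non-regular locus of $(\Ad M)X_\ss+\n_P$ being negligible --- is what makes the definition of the Haar measure on $G_X(F)$ licit. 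In short: correct, same skeleton as the paper, but your treatment of the key analytic step is the more careful one and is in fact what is needed to close the argument; the appeal to Rao's theorem is harmless but can be replaced by the compactness of the descended domain.
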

\begin{proof}~{}
	\begin{enumerate}
		\item On peut réduire le domaine d'intégration à une partie compacte de $G(F)$ indépandante de $A$ selon un résultat connu de Harish-Chandra (cf. \cite[lemme 14.1]{Kottbook}, le groupe $H$ mentionné dans son article est notre groupe $M_{X_\ss}$ ici). L'existence de la limite résulte alors du théorème de convergence dominée. Soit ensuite $M'$ un autre sous-groupe de Levi tel que $X_\ss \in\m'(F)$ elliptique et $X\in \Ind_{M'}^G(X_\ss)$. On prend $h\in G_{X_\ss}(F)$ tel que $M'=h^{-1}Mh$ (proposition \ref{prop:condition(H?)}). Alors 
		\begin{align*}
			\int_{M_{X_\ss}(F)\backslash G(F)}f(g^{-1}(A+X_\ss)g)\,dg =\int_{M_{X_\ss}'(F)\backslash G(F)}f(g^{-1}h^{-1}(A+X_\ss)hg)\,dg \\
			=\int_{M_{X_\ss}'(F)\backslash G(F)}f(g^{-1}(h^{-1}Ah+X_\ss)g)\,dg. 
		\end{align*} 
		Puis la congugaison par $h^{-1}$ induit un isomorphisme de $\a_{M,X_\ss,G-\reg}$ à $\a_{M',X_\ss,G-\reg}$. La limite du membre de droite de l'égalité \eqref{eq:defmeasureonanyorb} ne dépend en conséquence pas du choix de $M$. 
		\item Cela résulte du fait que si $X=X_\ss$ alors $X_\ss\in\Ind_M^G(X_\ss)$ et donc
		$M_{X_{\ss}}=G_{X_{\ss}}=G_X$ (proposition \ref{YDLiopprop:whenXinIndMGX}). Ainsi, si l'on munit
		$G_X(F)$ dans le domaine d'intégration du membre de gauche de l'équation \eqref{eq:defmeasureonanyorb} de la mesure de Haar définie par l'équation \eqref{eq:HaarmeasureonGXss}, alors l'équation \eqref{eq:defmeasureonanyorb} sera une conséquence du fait que l'on peut réduire le domaine d'intégration en une partie compacte indépandante de $A$ selon le résultat de compacité de Harish-Chandra précédemment mentionné.
		\item On prend $M$ un sous-groupe de Levi de $L$ tel que $X_\ss \in\m(F)$ elliptique et $X\in \Ind_M^L(X_\ss)$, alors la condition $L_X=G_X$ implique que $X\in \Ind_L^G(X)=\Ind_L^G(\Ind_M^L(X_\ss))=\Ind_M^G(X_\ss)$ (proposition \ref{YDLiopprop:whenXinIndMGX}). Ainsi, si l'on munit $G_X(F)$ dans le domaine d'intégration du membre de gauche de l'équation \eqref{eq:defmeasureonanyorb} de la même mesure de Haar que $L_X(F)$, alors 
		\begin{align*}
			\int_{G_{X}(F)\backslash G(F)}&f(g^{-1}Xg)\,dg\\
            &= \int_{L(F)\backslash G(F)}\int_{L_{X}(F)\backslash L(F)}f(g^{-1}l^{-1}Xlg)\,dl\,dg   \\
			&=\int_{L(F)\backslash G(F)}\lim_{\substack{A\to 0\\A\in\a_{M,X_\ss,G-\reg}(F)}}\frac{|D^\l(A+X)|_F^{1/2}}{|D^{\l}(X)|_F^{1/2}}\int_{M_{X_\ss}(F)\backslash L(F)}f(g^{-1}l^{-1}(A+X_\ss)lg)\,dl\,dg\\
			&=\lim_{\substack{A\to 0\\A\in\a_{M,X_\ss,G-\reg}(F)}}\frac{|D^\l(A+X)|_F^{1/2}}{|D^{\l}(X)|_F^{1/2}}\int_{L(F)\backslash G(F)}\int_{M_{X_\ss}(F)\backslash L(F)}f(g^{-1}l^{-1}(A+X_\ss)lg)\,dl\,dg \\
			&=\lim_{\substack{A\to 0\\A\in\a_{M,X_\ss,G-\reg}(F)}}\frac{|D^\l(A+X)|_F^{1/2}}{|D^{\l}(X)|_F^{1/2}}\int_{M_{X_\ss}(F)\backslash G(F)}f(g^{-1}(A+X_\ss)g)\,dg\\
            &=\lim_{\substack{A\to 0\\A\in\a_{M,X_\ss,G-\reg}(F)}}\frac{|D^\g(A+X)|_F^{1/2}}{|D^{\g}(X)|_F^{1/2}}\int_{M_{X_\ss}(F)\backslash G(F)}f(g^{-1}(A+X_\ss)g)\,dg,
		\end{align*}
		ici toute interversion d'opérations est justifié par le résultat de compacité de Harish-Chandra précédemment mentionné. 
		\item Cela découle du point 2 de la proposition \ref{chap3prop:indprop} et d'un raisonnement similaire à celui du point précédent. 
        \item Pour $X$ semi-simple cela découle de la définition. Pour $X$ général cela découle du cas semi-simple et l'équation \eqref{eq:defmeasureonanyorb} 
        \qedhere
	\end{enumerate}    
\end{proof}

Remarquons que pour tous $X\in \g(F)$ et $g\in G(F)$, la mesure sur $G_{(\Ad g)X}(F)$ correspond à la mesure sur $G_X(F)$ par conjugaison par $g$.

Soit $P$ un sous-groupe parabolique de $G$. On définit la fonction module pour tout $p \in P(F)$ par
\[\delta_P (p)=\delta_P^G(p)\eqdef e^{2\rho_P(H_{M_0}(p))} = |\det(\Ad(p);\mathfrak{n}_P (F))|.\]

Pour tout $L\in \L^G(M_0)$ on munit $L(F)\cap K$ d'une mesure de Haar. On munit $N_P(F)$, pour tout $P\in\F^L(M_0)$, d'une mesure de Haar. On écrit $\gamma^L(P)>0$ la constante telle que
\begin{align*}
\int_{L(F)} f(x)\,dx &=\gamma^L(P)\int_{M_P(F)}\int_{N_P(F)}\int_{L(F)\cap K} f(mnk)\,dk\,dn\,dm\\
&=\gamma^L(P)\int_{N_P(F)}\int_{M_P(F)}\int_{L(F)\cap K} f(nmk)e^{-2\rho_P^L(H_{M}(m))}\,dk\,dm\,dn
\end{align*}
pour toute fonction $f\in L^1(L(F))$. 

Finalement, on munit $\n_P(F)$, pour tout $P\in\F^L(M_0)$, de la mesure de Haar qui correspond à celle de $N_P(F)$ au sens suivant : prenant une filtration par des sous-groupes normaux $N_P=N_0\supseteq N_1\supseteq \cdots \supseteq N_r=\{1\}$ stable par l'action par $A_P$ telle que $N_k/N_{k+1}$ est abélien et $[N_0,N_k]\subseteq N_{k+1}$ pour tout $k$ (une telle filtration existe toujours : supposons que $N_{k+1}$ est défini, on pose $N_k$ le sous-groupe unipotent de l'algèbre de Lie l'espace associé au poids maximal de $A_P$ dans $\n_P/\n_{k+1}$), alors $N_{K}/N_{k+1}$ s'identifie canoniquement avec $\n_k/\n_{k+1}$ et la mesure de $N_{K}/N_{k+1}$ coïncide avec celle de $\n_k/\n_{k+1}$. Bien sûr cette compatibilité ne dépend pas de la filtration choisie.

Soient $M\in \L^G(M_0)$ et $Y\in \m(F)$. Soient $P\in \P^G(M)$ et $X\in \Ind_M^G(Y)(F)\cap ((\Ad M(F))Y+\n_P(F))$. On a $P_X=G_X$ (point 6 de la proposition \ref{chap3prop:indprop}). En particulier $P_X(F)$ est muni d'une mesure de Haar. On munit $P(F)$ de la mesure produit de celle de $M(F)$ avec celle de $N_P(F)$ (la mesure sur $P_X(F)\backslash P(F)$ n'est pas $P(F)$-invariante). Alors on dispose de la formule d'intégration suivante :
\begin{align*}
\int_{G_X(F)\backslash G(F)}&f\left((\Ad g^{-1})X\right)\,dg \\
&=\gamma^G(P)\int_{P_X(F)\backslash
 P(F)}\int_K f\left((\Ad k^{-1})(\Ad p^{-1})X\right)\,dp\,dk \\
&=\gamma^G(P)|D^{\mathfrak{g}}(X)|^{-1/2}|D^{\mathfrak{m}}(Y)|^{1/2}\\
&\,\,\,\,\,\,\,\,\,\,\,\,\times\int_{M_{Y}(F)\backslash M(F)}\int_{\n_P(F)}\int_{K}f\left((\Ad k^{-1})((\Ad m^{-1})Y+U)\right)\,dk\,dU\,dm,      
\end{align*}
pour tout $f\in L^1((\Ad G(F))X)$. En effet, pour la première égalité on applique simplement la décomposition d'Iwasawa. Puis la deuxième égalité  vient du changement de variables $p^{-1}Xp=m^{-1}Ym+U$ avec $(m,U)$ défini sur $((M_{Y}\backslash M) \times \n_P)_{G-\reg}(F)$ où $((M_{Y}\backslash M) \times \n_P)_{G-\reg}\eqdef \{(m,U)\in (M_{Y}\backslash M) \times \n_P : m^{-1}Ym+U\in \Ind_M^G(Y)\}$ est un ouvert de Zariski dense de $(M_{Y}\backslash M) \times
 \n_P$, le Jacobien de ce morphisme vaut $|D^{\g}(X)|^{-1/2}|D^{\m}(Y)|^{1/2}$.

\subsection{Fonction \texorpdfstring{$R_P$}{RP}}
Soit $X\in \g(F)$ standard. Prenons $\MR^{X}=\MR^{G,X}$ le facteur de Levi semi-standard d'un élément de $\cR^G(X)$. On souligne que dans son article \cite{Ch17}, Chaudouard fixe un $\MR^{X}$ spécifique. Cependant, nous vérifierons que l'abandon de ce choix n'altère pas la véracité des résultats dans le présent texte.

Pour tous $P\in \P^G(\MR^{X})$ et $g\in G(F)$ on pose
\[R_{P}(g)=R_{P,X}(g)\eqdef H_P(w_Pg)\in a_{\MR^{X}}\]
où $w_P\in\uW^{G,0}$ vérifie $(\Ad w_P^{-1})P\in\cR^G(X)$. D'après le lemme \ref{lem:wP}, un tel élément non seulement existe mais est unique à translation à gauche près par un élément $w\in\uW^{G,0,\MR^{X}}$. Soit $
w_Pg=mnk$ la décomposition d'Iwasawa suivant $G(F)=\MR^{X}(F)N_P(F)K$. Alors on a la décomposition d'Iwasawa $ww_Pg=(wm)nk$. En conséquence
\begin{align*}
\langle \chi, H_P(ww_Pg)\rangle =\log |\chi(wm)|=\log|\chi(w)|+\log|\chi(m)|=\langle \chi, H_P(w_Pg)\rangle. 
\end{align*}
Ainsi $R_P$ est indépendante du choix de $w_P$.

Semblable à $(-H_P(g))_{P\in \P^G(\MR^{X})}$, on montre que cette nouvelle famille est aussi orthogonale.

\begin{lemma}
La famille $(-R_P(g))_{P\in \P^G(\MR^{X})}$ est orthogonale au sens d'Arthur.
\end{lemma}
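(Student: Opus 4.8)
The plan is to deduce the orthogonality of $(-R_P(g))_{P\in\P^G(\MR^X)}$ from the orthogonality of the ordinary Harish-Chandra families, exploiting the freedom in the choice of the elements $w_P$ granted by Lemma~\ref{lem:wP}. Fix two adjacent $P,Q\in\P^G(\MR^X)$ and let $\beta$ be the unique element of $\Sigma(\mathfrak p;A_{\MR^X})\cap(-\Sigma(\mathfrak q;A_{\MR^X}))$. Let $R\in\F^G(\MR^X)$ be the smallest parabolic containing both $P$ and $Q$; then $P,Q\subseteq R$, $\MR^X\subsetneq M_R$, $\dim a_{\MR^X}^{M_R}=1$, and since $G$ is of type GL (so that all its relative root systems are of type $A$, hence reduced) one has $\Delta_{\MR^X}^{M_R}=\{\beta\}$, whence $a_{\MR^X}^{M_R}=\R\beta^\vee$. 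Because the decomposition $a_{\MR^X}=a_{M_R}\oplus a_{\MR^X}^{M_R}$ is orthogonal, it suffices to show that $R_P(g)$ and $R_Q(g)$ have the same orthogonal projection onto $a_{M_R}$, i.e. that $H_R(w_Pg)=H_R(w_Qg)$.

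By Lemma~\ref{lem:wP} pick $w_P\in\uW^{G,0}$ with $(\Ad w_P^{-1})P\in\cR^G(X)$ and $w_Q\in\uW^{G,0}$ with $(\Ad w_Q^{-1})Q\in\cR^G(X)$; recall that $R_P$ and $R_Q$ do not depend on these choices. Since $P\subseteq R$, the parabolic $(\Ad w_P^{-1})R$ is the image of $R$ under the map~\eqref{LS}, and likewise $(\Ad w_Q^{-1})R$ is that same image because $Q\subseteq R$; as the image of $R$ depends only on $R$, we obtain $(\Ad w_P^{-1})R=(\Ad w_Q^{-1})R$. Consequently $v:=w_Qw_P^{-1}\in\uW^{G,0}$ normalizes $R$, so equation~\eqref{eq:weylindP} (whose case for the zero element reads $R(F)\cap\uW^{G,0}=M_R(F)\cap\uW^{G,0}$) gives $v\in\uW^{G,0}\cap M_R(F)$. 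In particular $v\in M_R(F)$, and since $v\in\uW^{G,0}\subseteq K$ we have $H_{M_R}(v)=0$.

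It remains to compute the projections. Using an Iwasawa decomposition relative to $R$ together with $v\in M_R(F)$ and $H_{M_R}(v)=0$, one gets $H_R(vx)=H_{M_R}(v)+H_R(x)=H_R(x)$ for all $x\in G(F)$; taking $x=w_Pg$ yields $H_R(w_Qg)=H_R(vw_Pg)=H_R(w_Pg)$. On the other hand, since $P\subseteq R$ (resp. $Q\subseteq R$), the point $H_R(w_Pg)$ (resp. $H_R(w_Qg)$) is exactly the orthogonal projection of $R_P(g)=H_P(w_Pg)$ (resp. of $R_Q(g)=H_Q(w_Qg)$) onto $a_{M_R}$. These projections therefore coincide, so $R_P(g)-R_Q(g)\in a_{\MR^X}^{M_R}=\R\beta^\vee$, which is precisely Arthur's orthogonality condition; running this argument over all adjacent pairs proves the lemma.

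The only step needing genuine care — and the main obstacle — is the equality $(\Ad w_P^{-1})R=(\Ad w_Q^{-1})R$: one must know that applying $(\Ad w_\bullet^{-1})$ to $R$ produces a parabolic depending only on $R$ and not on the auxiliary $P\in\P^G(\MR^X)$ with $P\subseteq R$ used to manufacture $w_\bullet$. This is nothing but the well-definedness of the map~\eqref{LS} on $\F^G(\MR^X)$, established in Lemma~\ref{lem:wP} (and ultimately resting on Lemma~\ref{lem:2.10.2}). Granting it, the remainder is routine bookkeeping with Iwasawa decompositions.
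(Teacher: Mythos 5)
Your proof is correct and follows essentially the same route as the paper: both arguments reduce Arthur's adjacency condition to the equality $H_R(w_Pg)=H_R(w_Qg)$ of projections onto $a_{M_R}$ (with $R$ the minimal parabolic containing the adjacent pair), obtained by showing $w_Qw_P^{-1}\in\uW^{G,0,M_R}$. The only cosmetic difference is that you deduce this membership from the well-definedness of the map \eqref{LS} via Lemma \ref{lem:wP}, whereas the paper invokes Lemma \ref{lem:2.10.2} together with $G_X\subseteq(\Ad w_P^{-1})P$ directly — the same underlying argument in a slightly different packaging.
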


\begin{proof}
Abrégeons $\MR^{X}$ en $M$. Il s'agit de vérifier que pour $P$ et $P'$ adjacents, le vecteur
\[-R_P(g)+R_{P'}(g)\]
appartient à la droite engendrée par l’unique élément de $\Sigma(\mathfrak{p};A_M)^\vee\cap (-\Sigma(\mathfrak{p}';A_M)^\vee)$. Cette
droite est la droite $a_M^{M_Q}$ pour l'unique sous-groupe parabolique $Q$ de $G$ contenant $P$ et $P'$. Si l'on revient à la définition de $R_P$ et $R_{P'}$, on voit qu'il s'agit de prouver que l'on a
\[H_Q(w_Pg)=H_Q(w_{P'}g).\]
En raison du lemme \ref{lem:2.10.2}, on a
\[w_{P'}^{-1}\in G_X(F) w_P^{-1}M_{Q,X_\ss}(F)P_{X_\ss}'(F).\]
Puisque $(\Ad w_P^{-1})P\in \cR^G(X)$, on a $G_X\in (\Ad w_P^{-1})P$. Par suite
\[w_{P'}^{-1}\in w_P^{-1} Q(F),\]
de sorte que $w_Pw_{P'}^{-1}\in Q(F) \cap \uW^{G,0}=\uW^{G,0,M_Q}$. Ce qui nous permet de conclure que $H_Q(w_Pw_{P'}^{-1})=0$ et 
\[H_Q(w_Pg)=H_Q(w_Pw_{P'}^{-1})+H_Q(w_{P'}g)=H_Q(w_{P'}g),\]
ce qu'il fallait.
\end{proof}

Soit $g \in G(F)$. Attendu que la famille $(-R_P(g))_{P\in\P^G(\MR^{X})}$ est orthogonale, on dispose pour tout $Q \in \F^G(\MR^{X})$ du point $-R_Q(g) \in a_{Q}$ par projection orthogonale évidente. On vérifie qu'on a
\[ R_Q(g) = H_Q(w_Qg)\]
où $w_Q \in \uW^{G,0}$ vérifie $(\Ad w_Q^{-1})Q\in\cLS^G(X)$ (cf. lemme \ref{lem:wP}). Comme l'expression $H_Q(w_Qg)$ est indépendante de toute translation à gauche de $w_Q$ par un élément de $\uW^{G,0,(\MR^{X})_Q}$, elle ne dépend pas du choix de $w_Q$. On pourrait penser à la famille $(-R_P(g))_P$ comme étant la version tordue de la famille $(-H_P(g))_P$.

\subsection{Action du normalisateur de \texorpdfstring{$\MR^{X}$}{MRG,X} dans \texorpdfstring{$\uW^{G,0}$}{WG,0}}
Pour tous $w\in \text{Norm}_{\uW^{G,0}}(\MR^{X})$ et $P\in\P^G(\MR^{X})$, le groupe $(\Ad w^{-1})P$ appartient encore à $\P^G(\MR^{X})$. Nous avons la formule suivante :
\[R_P(g)=w\cdot R_{(\Ad w^{-1})P}(g),\,\,\,\,\forall g\in G(F),\]
où l'on note $w\cdot$ l'action naturelle de $\text{Norm}_{\uW^{G,0}}(\MR^{X})$ sur l'espace $a_{\MR^{X}}$ et $Y_{\MR^{X}}$ la projection orthogonale d'un vecteur $Y\in a_{M_0}$ sur $a_{\MR^{X}}$. En effet, soit $w_Pg=mnk$ la décomposition d'Iwasawa suivant $G(F)=\MR^{X}(F)N_P(F)K$. Alors
\begin{align*}
R_{(\Ad w^{-1})P}(g)&=H_{(\Ad w^{-1})P}(w_{(\Ad w^{-1})P}g)=H_{(\Ad w^{-1})P}(w^{-1}w_Pg)\\
&=H_{(\Ad w^{-1})P}((w^{-1}mw)(w^{-1}nw)(w^{-1}k))   \\
&=w^{-1}\cdot H_P(m)=w^{-1}\cdot R_P(g),
\end{align*}
l'avant dernière égalité résulte du fait $\uW^{G,0}\subseteq K$. 
 
Plus généralement si $L\in \L^G(\MR^{X})$ et $Q\in\F^G(L)$ nous avons la suivante formule dans $a_{L}$ :
\begin{equation}\label{eq:5.5.2}
R_Q(g)=w\cdot R_{(\Ad w^{-1})Q}(g),\,\,\,\,\forall g\in G(F).    
\end{equation}

\subsection{Action du centralisateur}

\begin{lemma}\label{lem:actcent}
Soit $h\in G_X(F)$. Soit $L$ un sous-groupe de Levi de $G$ contenant $\MR^{X}$. La famille de vecteurs $(-R_P(h))_{P\in\P^G(L)}$ est une famille constante. De surcroît, pour tout $g\in G(F)$, la famille orthogonale $(-R_P(hg))_{R\in\P^G(L)}$ s'obtient de la famille $(-R_P(g))_{R\in\P^G(L)}$ par translation par le vecteur constant $-R_P(h)$.
\end{lemma}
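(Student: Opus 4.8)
The plan is to treat the two assertions in reverse order. I would first establish, by a direct Iwasawa computation, the additivity identity $R_P(hg)=R_P(h)+R_P(g)$ for all $P\in\P^G(L)$, $h\in G_X(F)$ and $g\in G(F)$. This immediately gives $(-R_P(hg))_{P}=(-R_P(g))_{P}+(-R_P(h))_{P}$ as families indexed by $\P^G(L)$, so that once the first assertion, namely the constancy of $(-R_P(h))_{P\in\P^G(L)}$, is known, the ``de surcroît'' follows; thus the real content is the constancy.

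\textbf{The additivity $R_P(hg)=R_P(h)+R_P(g)$.} The key point is the containment $w_PG_X(F)w_P^{-1}\subseteq P(F)$. Indeed $w_P$ is chosen so that $(\Ad w_P^{-1})P\in\cLS^G(X)$; by Proposition~\ref{prop:LS->R} this parabolic contains some $P''\in\cR^G(X)$, and since $X_\nilp\in\n_{P''}$ while $X_\ss$ lies in the Levi of $P''$, Proposition~\ref{chap3prop:indprop}(6) (together with connectedness of $G_X$ for a group of type GL) gives $G_X\subseteq P''\subseteq(\Ad w_P^{-1})P$, whence the claim. Now for $h\in G_X(F)$ write $w_Phw_P^{-1}=\ell_Pn_P$ with $\ell_P\in L(F)$ and $n_P\in N_P(F)$; since $w_P\in\uW^{G,0}\subseteq K$, the equality $w_Ph=\ell_Pn_Pw_P$ is an Iwasawa decomposition, so $R_P(h)=H_P(w_Ph)=H_L(\ell_P)$. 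Writing $w_Pg=mnk$ along $G(F)=L(F)N_P(F)K$ and using that $L$ normalizes $N_P$, one gets $w_Phg=(\ell_Pm)\bigl((m^{-1}n_Pm)n\bigr)k$, again an Iwasawa decomposition, hence $R_P(hg)=H_L(\ell_Pm)=H_L(\ell_P)+H_L(m)=R_P(h)+R_P(g)$.

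\textbf{Constancy of $(-R_P(h))_{P\in\P^G(L)}$.} I would reduce in two steps. First, to the case $L=\MR^X$: for $P\in\P^G(L)$ choose $P_0\in\P^G(\MR^X)$ with $P_0\subseteq P$; by Lemma~\ref{lem:wP} one may take $w_P=w_{P_0}$, and then $R_P(h)$ is the orthogonal projection onto $a_L$ of $R_{P_0}(h)$, so it suffices that $(-R_{P_0}(h))_{P_0\in\P^G(\MR^X)}$ be constant, the projection of a constant family being constant. Second, to the nilpotent case: since $h\in G_X(F)\subseteq G_{X_\ss}(F)$ and all the relevant objects ($\cR$, $\uW$, $\MR^X=\envL((\MR^X)_{X_\ss};G)$, $w_{P,X}=w_{P_{X_\ss},X_\nilp}$, and the measures) are compatible with the Jordan decomposition as established above, $R_{P_0}(h)$ coincides, under the identification $a_{\MR^X}=a_{(\MR^X)_{X_\ss}}$ (valid because $X_\ss$ is $F$-elliptic in $\MR^X$ by Corollary~\ref{coro:elementisellinRich}), with the analogous vector computed inside $G_{X_\ss}$ for the nilpotent $X_\nilp$ and the Richardson parabolic $P_{0,X_\ss}\in\cR^{G_{X_\ss}}(X_\nilp)$ (Lemma~\ref{lem:LSR'ss}). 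This places the statement in Chaudouard's nilpotent framework, from which it can be quoted with the routine adjustments for groups of type GL (cf. \cite{Ch17}). Concretely, one argues as follows: by the formula $R_{P_0}(g)=w\cdot R_{(\Ad w^{-1})P_0}(g)$ for $w\in\text{Norm}_{\uW^{G,0}}(\MR^X)$ and the simple transitivity of $W^{(G,\MR^X)}$ on $\P^G(\MR^X)$, the family $(-R_{P_0}(h))_{P_0}$ is the $W^{(G,\MR^X)}$-orbit of the single vector $R_{P_{\mathcal R}}(h)$, where $P_{\mathcal R}\in\cR^G(X)$ is a fixed element with Levi $\MR^X$; it is constant exactly when $R_{P_{\mathcal R}}(h)=H_{\MR^X}(\bar h)$ (taking $w_{P_{\mathcal R}}=1$) is $W^{(G,\MR^X)}$-invariant, where $\bar h\in(\MR^X)_{X_\ss}(F)$ is the image of $h$ under the natural projection $P_{\mathcal R}\to\MR^X$. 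That $\bar h$ indeed lies in $(\MR^X)_{X_\ss}$ follows because this projection is adjoint-equivariant and $\pi_{\mathfrak p_{\MR^X},\mathfrak m_{\MR^X}}(X)=X_\ss$, as $X_\nilp\in\n_{P_{\mathcal R}}$.

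\textbf{Main obstacle.} The delicate step is the last one: the $W^{(G,\MR^X)}$-invariance of $H_{\MR^X}(\bar h)$ is not purely formal, since $R_{P_{\mathcal R}}(h)$ is a priori an arbitrary-looking vector of $a_{\MR^X}$. One must first identify the image $\pi_{P_{\mathcal R}}(G_X)$ of the centralizer inside the Levi $(\MR^X)_{X_\ss}$: using the explicit description of $\cR^G(X)$, of $\MR^X$ and of $w_P$ for standard $X$ (cf. remark~\ref{rem:w_Psimple} and \cite[proposition 3.4.1]{Ch17}), this is a proper ``block-diagonally linked'' copy of the reductive part of $G_X$, the linkage between the blocks of $\MR^X$ being provided by the Richardson nilpotent $X_\nilp\in\n_{P_{\mathcal R}}$. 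It is precisely this linkage that forces $H_{\MR^X}(\bar h)$ into the $W^{(G,\MR^X)}$-invariants of $a_{\MR^X}$, and hence makes the family constant; this is the point at which the special features of groups of type GL (the combinatorics of partitions governing Richardson orbits and centralizers of nilpotents) genuinely enter.
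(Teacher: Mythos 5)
Your overall route is essentially the paper's: the additivity $R_P(hg)=R_P(h)+R_P(g)$ is obtained, as in the paper, from the inclusion $G_X\subseteq(\Ad w_P^{-1})P$ (point 6 de la proposition \ref{chap3prop:indprop}) together with an Iwasawa manipulation; the reduction to $L=\MR^{X}$ and then to the nilpotent case via the Jordan decomposition and the elliptic envelope of $G_{X_\ss}$ is also the paper's reduction; and the constancy in the nilpotent case is ultimately \cite[lemme 5.6.1]{Ch17}. Up to that point the proposal is sound.

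The gap is in your ``concrete'' derivation of the constancy. You invoke ``the simple transitivity of $W^{(G,\MR^{X})}$ on $\P^G(\MR^{X})$'' to conclude that the family $(-R_P(h))_P$ is a single Weyl orbit, so that constancy would reduce to the $W^{(G,\MR^{X})}$-invariance of one vector. This transitivity fails as soon as $\MR^{X}$ is not a minimal Levi: for $G=\GL_3$ and $X$ nilpotent of partition $[2,1]$ one has $\MR^{X}\simeq\GL_2\times\GL_1$, the set $\P^G(\MR^{X})$ has two elements (a parabolic and its opposite), while $\text{Norm}_{\uW^{G,0}}(\MR^{X})$ maps onto the trivial relative Weyl group; the identity $R_P(g)=w\cdot R_{(\Ad w^{-1})P}(g)$ then relates no two distinct members of the family. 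The comparison of $R_{P_1}(h)$ and $R_{P_2}(h)$ for adjacent, non-$W$-conjugate $P_1,P_2$ is exactly where the computation with the image of $G_X$ in the Levi factors of the corresponding $\widetilde{P_i}$ (the ``diagonal linkage'' you gesture at in your final paragraph) must actually be carried out; your sketch does not supply it, so the constancy in your write-up rests entirely on the citation of Chaudouard and not on the mechanism you propose. The paper makes the same citation but does not claim the Weyl-orbit reduction.
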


\begin{proof}
Il suffit de prouver le lemme pour $L=\MR^{X}$. Abrégeons $\MR^{X}$ en $M$. Soit $P\in \P^G(M)$, et $w_P\in \uW^{G,0}$ est tel que $(\Ad w_P^{-1})P$ vaut $\widetilde{P}$, l'image de $P$ par l'application \eqref{R}. Le facteur de Levi semi-standard de $\widetilde{P}$ est $M_{\widetilde{P}}\eqdef (\Ad w_P^{-1})M$. L'action de conjugaison par $w_P$ induit un isomorphisme d'espaces vectoriels $a_{M_{\widetilde{P}}}\xrightarrow{\sim}a_M$ indépendant du choix de $w_P$ et pour lequel on a $R_P(g)=w_P\cdot H_{\widetilde{P}}(g)$. On a donc pour tous $h\in G_X(F)$ et $g\in G(F)$
\begin{equation}\label{eq:5.6.1}
R_P(hg)=w_P\cdot H_{\widetilde{P}}(hg)=w_P\cdot H_{\widetilde{P}}(h)+w_P\cdot H_{\widetilde{P}}(g)=R_P(h)+R_P(g).  
\end{equation}
En particulier, la seconde partie de l'énoncé se déduit de la première partie, puisque $G_X\subseteq P'$ pour tout $P'\in\cR^G(X)'$ selon le point 6 de la proposition \ref{chap3prop:indprop}.

Soit $h\in G_X(F)=(G_{X_\ss})_{X_\nilp}(F)$. On cherche ensuite à justifier que $(R_P(h))_{R\in\P^G(M)}$ est une famille constante. Pour y parvenir, constatons de prime abord, grâce au lemme \ref{lem:envLell}, que si l'on pose $H\eqdef \text{envL}(G_{X_\ss};G)$ alors l'élément $X_\ss$ est $F$-elliptique dans $\mathfrak{h}(F)$ et $M$ est un sous-groupe de Levi semi-standard de $H$, et que l'application 
\begin{align*}
P\in \cR^G(X)\longmapsto P\cap H \in\cR^H(X)  
\end{align*}
est surjective, on en déduit de ce fait $R_{P}^G(h)=R_{P\cap H}^H(h)$ pour tout $h\in H(F)$, ici les exposants ont pour objectif de préciser les groupes dans lesquels chaque application est réalisée. On reprend les notations de la sous-section \ref{subsec:ssssschoix}. L'élément $X$ est dans $\mathfrak{h}(F)$ et est standard (relativement à la base ordonnée pour $H$ qui se déduit de $e$ et à la même relation d’ordre totale sur $\Irr_F$). Quitte à remplacer $G$ par $H$ on peut alors supposer que $X_\ss$ est $F$-elliptique dans $G$. Ainsi $Q\in\cR^G(X)\mapsto Q_{X_\ss} \in\cR^{G_{X_\ss}}(X_\nilp)$ est une bijection. En travaillant avec des matrices par blocs,
la démonstration dans le cas spécial où $X$ est nilpotent (\cite[lemme 5.6.1]{Ch17} : comme indiqué précédemment la théorie y est seulement développée pour $G=\GL_{n,F}$, mais il suffit de remplacer le mot « déterminant » partout par « norme réduite » pour étendre la démonstration du lemme au $G$ du type GL quelconque) peut être aisément étendue au cas général.
\end{proof}

\subsection{Poids}
Considérons la $(G,\MR^{X})$-famille 
\begin{equation}\label{YDLiopeq:defnewGMfamilyinIOP}
v_{P,X}(\lambda,g)\eqdef \exp(\langle\lambda,-R_P(g)\rangle),\,\,\,\,\lambda\in ia_{\MR^{X}}^\ast,    
\end{equation}
avec $P\in \P^G(\MR^{X})$. On note $v_{L,X}^Q(g)$ la valeur associée en 0 à cette famille et au couple $(L,Q)$ selon le procédé expliqué dans les équations \eqref{eq:GMfamilletoGL} et \eqref{eq:GMfamilletoMQM}, pour $L\in\L^G(\MR^{X})$ et $Q\in\F^G(L)$. Lorsque $Q \in \P^G(L)$, cette fonction vaut
identiquement 1.

\begin{lemma}\label{lem:poidsinvnorm}
La fonction
\[g\in G(F)\longmapsto v_{L,X}^Q(g)\]
est invariante à droite par $K$ et à gauche par le centralisateur $G_X(F)$ de $X$ dans $G(F)$. De plus, pour tout $w\in\text{Norm}_{\uW^{G,0}}(\MR^{X})$ et tout $g \in G(F)$, on a
\[v_{L,X}^Q(g)=v_{(\Ad w)L,X}^{(\Ad w)Q}(g).\]
\end{lemma}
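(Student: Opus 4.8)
The plan is to treat the three assertions separately, each as a consequence of the orthogonality of the family $(-R_P(g))_P$, of Lemma~\ref{lem:actcent}, and of equation~\eqref{eq:5.5.2}. Throughout I use that $v_{L,X}^Q(g)$ is obtained from the $(G,\MR^{X})$-family $v_{P,X}(\lambda,g)=\exp(\langle\lambda,-R_P(g)\rangle)$ of \eqref{YDLiopeq:defnewGMfamilyinIOP} by successively applying the passages \eqref{eq:GMfamilletoGL} and \eqref{eq:GMfamilletoMQM} and then evaluating at $\lambda=0$, and that $R_P(g)=H_P(w_Pg)$. The right $K$-invariance is immediate: since $H_P$ is by construction right $K$-invariant (it is read off the Iwasawa decomposition $G(F)=M_P(F)N_P(F)K$), one has $R_P(gk)=H_P(w_Pgk)=H_P(w_Pg)=R_P(g)$ for every $P\in\P^G(\MR^{X})$, $g\in G(F)$, $k\in K$; hence the whole $(G,\MR^{X})$-family $v_{\bullet,X}(\lambda,gk)$ equals $v_{\bullet,X}(\lambda,g)$, and a fortiori $v_{L,X}^Q(gk)=v_{L,X}^Q(g)$.

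For the left invariance under $h\in G_X(F)$, I would invoke Lemma~\ref{lem:actcent} with $L=\MR^{X}$: it says that $(-R_P(h))_{P\in\P^G(\MR^{X})}$ is a constant family, say of value $Y_0\in a_{\MR^{X}}$, and that $-R_P(hg)=-R_P(g)+Y_0$ for all such $P$. Therefore $v_{P,X}(\lambda,hg)=\mu(\lambda)\,v_{P,X}(\lambda,g)$, where $\mu(\lambda)=\exp(\langle\lambda,Y_0\rangle)$ is a scalar independent of $P$ with $\mu(0)=1$. One then observes that the construction of $v_{L,X}^Q$ only involves restricting $\lambda$ to linear subspaces, selecting certain of the values $v_{P,X}(\lambda,\cdot)$, forming the sum $\sum_R(\cdot)\,\theta_R(\cdot)^{-1}$, and evaluating at $0$; a constant scalar factor $\mu$ thus factors out at every stage, and since $\mu(0)=1$ it does not affect the value at $0$. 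This yields $v_{L,X}^Q(hg)=v_{L,X}^Q(g)$.

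For the $w$-equivariance, note first that $w\in\text{Norm}_{\uW^{G,0}}(\MR^{X})$ fixes $\MR^{X}$, so that $(\Ad w)L\in\L^G(\MR^{X})$ and $(\Ad w)Q\in\F^G((\Ad w)L)$ and the right-hand side is meaningful. Specializing \eqref{eq:5.5.2} to $L=\MR^{X}$ and to the parabolic $(\Ad w)P$ with $P\in\P^G(\MR^{X})$ gives $R_{(\Ad w)P}(g)=w\cdot R_P(g)$; since $w$ acts by an isometry on $a_{\MR^{X}}$ and contragrediently on its dual, this becomes $v_{(\Ad w)P,X}(\lambda,g)=v_{P,X}(w^{-1}\cdot\lambda,g)$. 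All the combinatorial ingredients entering the construction of $v_{L,X}^Q$ — the sets $\Delta_P^Q$, the coroots, the polynomials $\theta_P$, the passages \eqref{eq:GMfamilletoGL} and \eqref{eq:GMfamilletoMQM}, the assignment $R\mapsto Q(R)=RN_Q$ — are defined intrinsically, hence intertwine conjugation by $w$ on subgroups with the action of $w$ on the spaces $a_\bullet$ (for instance $\theta^{(\Ad w)H}_{(\Ad w)R}(\lambda)=\theta^{H}_R(w^{-1}\cdot\lambda)$ and $(\Ad w)Q\big((\Ad w)R\big)=(\Ad w)\big(Q(R)\big)$). Propagating the identity $v_{(\Ad w)P,X}(\lambda,g)=v_{P,X}(w^{-1}\cdot\lambda,g)$ through the two passages and the final summation, with the change of index $R'=(\Ad w)R$, shows that the $(M_{(\Ad w)Q},(\Ad w)L)$-family attached to $\big((\Ad w)L,(\Ad w)Q\big)$ evaluated at $\lambda$ equals the $(M_Q,L)$-family attached to $(L,Q)$ evaluated at $w^{-1}\cdot\lambda$; setting $\lambda=0$ gives $v_{(\Ad w)L,X}^{(\Ad w)Q}(g)=v_{L,X}^Q(g)$.

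None of the three parts is genuinely difficult once Lemma~\ref{lem:actcent} and equation~\eqref{eq:5.5.2} are available; the main obstacle is bookkeeping. In the $G_X(F)$-invariance one must make explicit that a constant scalar exponential factor survives the entire procedure but is killed by $\mu(0)=1$; in the $w$-equivariance one must check carefully that every combinatorial ingredient is $w$-equivariant, so that the substitution $R'=(\Ad w)R$ is legitimate at each stage of the construction. Both verifications are routine but should be spelled out.
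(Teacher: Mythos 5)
Your proof is correct and follows essentially the same route as the paper: right $K$-invariance from the right $K$-invariance of $H_P$, left $G_X(F)$-invariance by factoring out the constant scalar $\exp(\langle\lambda,-R_P(h)\rangle)$ supplied by Lemma~\ref{lem:actcent} and letting $\lambda\to 0$, and the $\text{Norm}_{\uW^{G,0}}(\MR^{X})$-equivariance from formula~\eqref{eq:5.5.2}. The only difference is that you spell out the $w$-equivariance bookkeeping (equivariance of $\Delta_P^Q$, $\theta_P$, $R\mapsto Q(R)$, using the $\uW^{G,0}$-invariant inner product) which the paper leaves as an immediate consequence of the definition.
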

\begin{proof}
L'invariance à droite par $K$ est évidente. Quant à l'invariance par le centralisateur, il s'agit de montrer $v_{L,X}^Q(g)=v_{L,X}^Q(hg)$ pour $h\in G_X(F)$. En tenant compte du lemme \ref{lem:actcent} on a pour $\lambda\in ia_L^\ast$
\begin{align*}
\sum_{P\in \P^Q(L)}\exp(\langle-\lambda,R_P(hg)\rangle)\theta_P^Q(\lambda)^{-1}= \exp(\langle-\lambda,R_P(h)\rangle) \sum_{P\in \P^Q(L)}\exp(\langle-\lambda,R_P(g)\rangle)\theta_P^Q(\lambda)^{-1}.
\end{align*} 
L'opération de passage à la limite $\lambda\to 0$ donne l'égalité voulue. Enfin l'invariance du poids par le normalisateur $\text{Norm}_{\uW^{G,0}}(\MR^{X})$ découle de la définition et la formule (\ref{eq:5.5.2}).
\end{proof}

On a introduit la $(G,M)$-famille usuelle chez Arthur $v_{P}(\lambda,g)\eqdef \exp(\langle\lambda,-H_P(g)\rangle)$. Clairement $v_P(\lambda,g)=v_{P,A}(\lambda,g)$ pour tout $A\in \m(F)$ avec $G_A=M_A$, selon la proposition \ref{prop:LX=GX}.

\begin{proposition}\label{prop:paradesccomp}
Soient $L\in \L^G(M_0)$ et $Q\in \P^G(L)$. Soient $Y\in\l(F)$ standard pour $L$ et $X\in\Ind_L^G(Y)(F)$ standard pour $G$. Alors pour tout $P^Y\in \cR^{L}(Y)$ fixé, il existe $w\in \uW^{G,0}$, $P^Y\in \cR^{L}(Y)$ et  $P^X\in\cR^{G}(X)$ tels que, en notant $\MR^{L,Y}$ et $\MR^{G,X}$ respectivement les facteurs de Levi semi-standard de $P^Y$ et de $P^X$, on ait
\[\begin{cases}
\MR^{G,X} = (\Ad w)\MR^{L,Y},\\
P^X =(\Ad w)(P^YN_Q), \\
X\in(\Ad w)(Y+\n_Q(F)).
\end{cases} \]
\end{proposition}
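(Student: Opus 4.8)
The plan is to derive the statement from the transitivity of the induced orbit together with Chaudouard's combinatorial analysis of the nilpotent case. Fix $P^Y\in\cR^L(Y)$ and let $\MR^{L,Y}$ be its semi-standard Levi factor; since $L$ is semi-standard so is $\MR^{L,Y}$, and $P^YN_Q$ is then a semi-standard parabolic subgroup of $G$ with Levi factor $\MR^{L,Y}$. Its Lie algebra contains $Y_\ss$ (lemme \ref{lem:LSsstand}), so $Y\in\Ind_{P^Y}^L(Y_\ss)=\Ind_{\MR^{L,Y}}^L(Y_\ss)$. The whole argument reduces to producing a permutation matrix $w\in\uW^{G,0}$ with
\[
(\Ad w^{-1})X\in Y+\n_{Q_{Y_\ss}}(F),
\]
after which I will set $P^X\eqdef(\Ad w)(P^YN_Q)$.

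Granting such a $w$, put $\tilde X\eqdef(\Ad w^{-1})X$. Since $[Y_\ss,\n_{Q_{Y_\ss}}]=0$, uniqueness of the Jordan decomposition gives $\tilde X_\ss=Y_\ss$ and $\tilde X_\nilp\in Y_\nilp+\n_{Q_{Y_\ss}}(F)$; as $Y_\nilp\in\n_{P^Y}\subseteq\n_{P^YN_Q}$ and $\n_{Q_{Y_\ss}}\subseteq\n_Q\subseteq\n_{P^YN_Q}$, we obtain $\tilde X_\nilp\in\n_{P^YN_Q}$ and $\tilde X\in\p^Y\oplus\n_Q$, i.e. $\tilde X$ lies in the Lie algebra of $P^YN_Q$. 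Also $\tilde X$ is $G(F)$-conjugate to $X$, and by transitivity of the induced orbit (point 7 de la proposition \ref{chap3prop:indprop}) one has $(\Ad G)X=\Ind_L^G(Y)=\Ind_{\MR^{L,Y}}^G(Y_\ss)=\Ind_{P^YN_Q}^G(Y_\ss)=\Ind_{P^YN_Q}^G(\tilde X_\ss)$, so $\tilde X\in\Ind_{P^YN_Q}^G(\tilde X_\ss)$ and therefore $P^YN_Q\in\cR^G(\tilde X)'$. Finally $\MR^{L,Y}$ is a Levi subgroup of $G$ whose Lie algebra contains $\tilde X_\ss=Y_\ss$, and $\MR^{L,Y}=\envL((\MR^{L,Y})_{Y_\ss};L)=\Cent(A_{(\MR^{L,Y})_{Y_\ss}},G)=\envL((\MR^{L,Y})_{Y_\ss};G)$ by le lemme \ref{lem:Rlevienvlop} and the identity $M=\Cent(A_M,G)$ valid for every Levi subgroup; so the second half of le lemme \ref{lem:Rlevienvlop} upgrades this to $P^YN_Q\in\cR^G(\tilde X)$. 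Conjugating by $w$ gives $P^X=(\Ad w)(P^YN_Q)\in\cR^G(X)$ whose semi-standard Levi factor $\MR^{G,X}$ equals $(\Ad w)\MR^{L,Y}$ (because $w$ normalises $M_0$), and $X=(\Ad w)\tilde X\in(\Ad w)(Y+\n_{Q_{Y_\ss}}(F))\subseteq(\Ad w)(Y+\n_Q(F))$; these are the three asserted identities.

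It remains to produce $w\in\uW^{G,0}$ with $(\Ad w^{-1})X\in Y+\n_{Q_{Y_\ss}}(F)$, which is the real content. First I would reduce to the nilpotent case: the formations of $\cLS$, $\cR$, of the groups $\uW^{G,\bullet}$ and of the elements $w_P$ all commute with the Jordan decomposition (le lemme \ref{lem:LSR'ss}, le lemme \ref{lem:wP}, \S\ref{YDLiopsubsec:elementwP}), and the Jordan reduction theory of standard representatives (\cite[section A.2]{YDL23b}) supplies $w_\ss\in\uW^{G,0}$ with $(\Ad w_\ss^{-1})X_\ss=Y_\ss$; after replacing $X$ by $(\Ad w_\ss^{-1})X$ and passing to $G_{Y_\ss}$ with its standard basis $e_{Y_\ss}$, one is reduced to the case where $X$ and $Y$ are nilpotent standard elements of $L_{Y_\ss}\subseteq G_{Y_\ss}$ with $X\in\Ind_{L_{Y_\ss}}^{G_{Y_\ss}}(Y)$ and $Q_{Y_\ss}\in\P^{G_{Y_\ss}}(L_{Y_\ss})$, and one must bring $X$ into $Y+\n_{Q_{Y_\ss}}(F)$ by a permutation matrix. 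In this nilpotent setting the characterisation of $w_P$ in la remarque \ref{rem:w_Psimple} --- namely that $w_P$ is, modulo $\uW^{G,0,M_P}$, the unique element carrying the upper-triangular parabolic $P_0^e$ inside $(\Ad w_P^{-1})P$ --- together with the explicit block-matrix computations of \cite{Ch17} (in particular \cite[proposition 3.4.1]{Ch17}, with the routine adjustments for groups of type GL) delivers the desired $w$.

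The main obstacle is precisely this last reduction step. A conjugating element of $G(F)$ exists for soft reasons --- $\Ind_L^G(Y)$ meets $(\Ad L)Y+\n_Q$ in a single, hence $F$-rational, $Q$-orbit by point 5 de la proposition \ref{chap3prop:indprop} and the rationality of adjoint conjugacy for groups of type GL, and this orbit meets $Y+\n_{Q_{Y_\ss}}(F)$ by le lemme \ref{YDLioplem:Y+VtoJordan} --- but forcing it to lie in the finite group $\uW^{G,0}$ is a genuinely combinatorial matching between the ``global'' standard structure attached to the basis $e$ of $V$ and the ``relative'' one on $\mathfrak l$ obtained by restricting $e$ to the blocks of $L$. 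Once this is settled in the nilpotent case it propagates to the general case through the Jordan-compatibilities assembled above; the remaining ingredients --- transitivity of induction and the envelope-of-Levi formalism of \S\ref{subsec:pseudo-levi} --- are routine.
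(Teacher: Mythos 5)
Your reduction is sound and in fact mirrors the paper's own strategy: once one has $w\in\uW^{G,0}$ with $(\Ad w^{-1})X\in Y+\n_Q(F)$, the verification that $P^X=(\Ad w)(P^YN_Q)$ lies in $\cR^G(X)$ (transitivity of induction, membership in $\cR^G(\cdot)'$, then the envelope criterion of the lemme \ref{lem:Rlevienvlop} combined with $A_{\MR^{L,Y}}=A_{(\MR^{L,Y})_{Y_\ss}}$) and the identity $\MR^{G,X}=(\Ad w)\MR^{L,Y}$ are the soft part, and your argument for them is correct. The problem is that the entire content of the proposition is the existence of that $w$, and this is precisely what you do not prove. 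Neither la remarque \ref{rem:w_Psimple} nor \cite[proposition 3.4.1]{Ch17} delivers it: those results parametrize $\cR^G(X)$ and characterize the elements $w_{P,X}$ \emph{for a given standard $X$}; they say nothing about how the standard representative of the induced orbit $\Ind_L^G(Y)$ sits relative to the affine subspace $Y+\n_Q$. Your soft argument produces an element of $(Y+\n_Q(F))\cap\Ind_L^G(Y)(F)$ conjugate to $X$ by some $g\in G(F)$, but upgrading $g$ to a permutation matrix is not a formal consequence of rational conjugacy: it forces the matrix coefficients of the witness in $Y+\n_Q(F)$ to be exactly the normalized ones ($0$, $1$ and the blocks ${}^p\textbf{X}$), and la remarque \ref{rem:whychoixrepstd} (2) shows this normalization genuinely matters, since rescaling the nilpotent coefficient changes the resulting distribution. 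You flag this step yourself as ``the main obstacle'', so the proposal stops exactly where the proof has to begin.

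What is missing is the explicit combinatorial construction that the paper carries out after the Jordan and ellipticity reductions (which you set up correctly): for $L$ with two blocks and for each of the two elements $Q$ of $\P^G(L)$, one writes down an explicit $Y_{\text{ind}}\in\n_Q(F)$ sending prescribed basis vectors of one block to prescribed basis vectors of the other (or to $0$), one checks via the description of the induced orbit in terms of elementary divisors (\cite[théorème A.12]{YDL23b}) that $Y+Y_{\text{ind}}$ lies in $\Ind_L^G(Y)$, and one observes that, since $Y+Y_{\text{ind}}$ permutes the basis $e$ up to the blocks ${}^p\textbf{X}$, it is $\uW^{G,0}$-conjugate to the standard representative $X$; the case of a general semi-standard $L$ then follows by transitivity of induction and a $\uW^{G,0}$-conjugation. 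Without this construction, or some substitute identifying $\Ind_L^G(Y)\cap(Y+\n_Q(F))$ up to $\uW^{G,0}$-conjugacy with $X$, the proposition is not proved.
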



\begin{proof}
On fixe $M=\MR^{L,Y}$ le facteur de Levi semi-standard d'un sous-groupe de Richardson généralisé de $Y$ dans $L$, il existe $w_1\in \uW^{G,0}$ tel que $\sigma\eqdef X_\ss=(\Ad w_1)Y_\ss$, par hypothèse $X$ est le représentant standard de $\Ind_{(\Ad w_1)M}^G(\sigma)(F)$ et $Y$ le représentant standard de $\Ind_{M}^L((\Ad w_1^{-1})\sigma)(F)$. 
Constatons que $(\Ad w_1)M\subseteq H\eqdef \envL(G_{\sigma};G)$. Quitte à travailler sur les composantes irréductibles de $H$ on peut supposer que $X_\ss$ est $F$-elliptique dans $G$. En particulier $Q\in\cR^G(X)\mapsto Q_{X_\ss} \in\cR^{G_{X_\ss}}(X_\nilp)$ est une bijection. On va traiter uniquement le cas où $\sigma=0$ afin de ne pas trop alourdir les notations, puisque en travaillant sur les matrices par blocs on peut étendre la preuve ci-dessous au cas général. On reprend les notations de la sous-section \ref{subsec:ssssschoix} en dessous, en particulier $G=\Aut_{D}(V)$ avec $V$ un $D$-module à droite libre de rang $n$ muni d'une base ordonnée $e$, et $M_0$ le sous-groupe diagonal. Sans perte de généralité on suppose que $L=\Aut_{D}({}^1V)\times \Aut_{D}({}^2V)$ avec ${}^lV$ ($l=1,2$) le sous-$D$-module à droite libre de rang ${}^ln$ engendré par la base ${}^le$, où ${}^1e$ regroupe les plus petits ${}^1n$ éléments de $e$ et ${}^2e$ regroupe les plus grands ${}^2n$ éléments de $e$ (le cas où $L$ est un sous-groupe de Levi semi-standard quelconque s'obtient par la transitivité de l'orbite induite et une $\uW^{G,0}$-conjugaison). Supposons que ${}^lV$ est muni d'une décomposition en somme directe de $D$-modules libres
\[{}^lV=\bigoplus_{1\leq i\leq j\leq {}^lr}{}^lV_j^i\]
telle que le rang ${}^ld_j\geq 0$ de ${}^lV_j^i$ ne dépende que de $j$. Pour tous $1\leq i\leq j\leq {}^lr$ et $1\leq k\leq {}^ld_j$ soit ${}^le_{k,j}^i\in {}^lV_j^i$ tel que la famille $({}^le_{k,j}^i)_{1\leq k\leq {}^ld_j}$ soit une base du $D$-module ${}^lV_j^i$, ordonnée de la façon suivante :  ${}^le_{k,j}^i<{}^le_{k',j'}^{i'}$ si l'une des conditions suivantes est satisfaite
\begin{itemize}
    \item[$-$] $i<i'$ ;
    \item[$-$] $i=i'$ et $j>j'$ ;
    \item[$-$] $i=i'$, $j=j'$ et $k<k'$, 
\end{itemize}
tous ceci tels que $Y\in \l(F)$ soit le nilpotent standard défini par, pour $l=1,2$,
\[Y{}^le_{k,j}^i=\begin{cases}
{}^le_{k,j}^{i-1} & \text{si }i>1,\\
0 & \text{si }i=1,
\end{cases}\]
pour tous $1\leq i\leq j\leq {}^lr$ et $1\leq k\leq {}^ld_j$. 

Alors en utilisant $V={}^1V\oplus{}^2V$ et en tenant compte que $e$ est la base ordonnée $({}^1e_{k,j}^i)_{1\leq j\leq {}^1d_{{}^1r},1\leq k\leq {}^1d_j}\sqcup ({}^2e_{k,j}^i)_{1\leq j\leq {}^2d_{{}^2r},1\leq k\leq {}^2d_j}$ avec ${}^{l}e_{k,j}^i<{}^{l'}e_{k',j'}^{i'}$ si l'une des conditions suivantes est satisfaite
\begin{itemize}
    \item[$-$] $l<l'$ ;
    \item[$-$] $l=l'$ et ${}^{l}e_{k,j}^i<{}^{l'}e_{k',j'}^{i'}$,
\end{itemize}
on écrit $Y_{\text{ind}}\in \n_Q(F)$, lorsque $Q\eqdef L\cdot P_0^e\in \P^G(L)$ ($P_0^e$ est le sous-groupe de $G$ des matrices triangulaires supérieures relativement à la base $e$), l'élément  
\[Y_{\text{ind}}{}^2e_{s-({}^2d_{t_s+1}+\cdots +{}^2d_{{}^2r}),t_s}^1=\begin{cases}
{}^1e_{s-({}^2d_{t_s+1}+\cdots +{}^2d_{{}^2r}),t_s}^{t_s} & \text{si }t_s\leq {}^1r, s-({}^2d_{t_s+1}+\cdots +{}^2d_{{}^2r})\leq {}^1d_{t_s}, \\
0 & \text{sinon}, 
\end{cases}\]
pour tous $1\leq s\leq {}^2d_1+\cdots +{}^2d_{{}^2r}$ et $1\leq t_s\leq {}^2r$ l'unique entier tel que ${}^2d_{t+1}+\cdots +{}^2d_{{}^2r}< s\leq {}^2d_{t}+\cdots +{}^2d_{{}^2r}$ ; et lorsque $Q\eqdef L\cdot \overline{P_0^e}\in \P^G(L)$ ($\overline{P_0^e}$ est l'opposé de $P_0^e$), \[Y_{\text{ind}}{}^1e_{s-({}^1d_{t_s+1}+\cdots +{}^1d_{{}^1r}),t_s}^2=\begin{cases}
{}^2e_{s-({}^1d_{t_s+1}+\cdots +{}^1d_{{}^1r}),t_s}^{t_s} & \text{si }t_s\leq {}^2r, s-({}^1d_{t_s+1}+\cdots +{}^1d_{{}^1r})\leq {}^2d_{t_s}, \\
0 & \text{sinon}, 
\end{cases}\]
pour tous $1\leq s\leq {}^1d_1+\cdots+{}^1d_{{}^1r}$ et $1\leq t_s\leq {}^1r$ l'unique entier tel que ${}^1d_{t+1}+\cdots +{}^1d_{{}^1r}< s\leq {}^1d_{t}+\cdots +{}^1d_{{}^1r}$. Le représentant standard $X\in \g(F)$ de $\Ind_L^G(Y)(F)$ sera $\uW^{G,0}$-conjugué à $Y+Y_{\text{ind}}$, pour le voir on peut passer par la description de l'orbite induite en termes des diviseurs élémentaires, cf. \cite[théorème A.12]{YDL23b}.


Soit $P^Y\in \cR^L(Y)$. Soit $w\in \uW^{G,0}$ tel que $X=(\Ad w)(Y+Y_{\text{ind}})$. On a clairement $P^X\eqdef (\Ad w)(P^YN_Q)\in \cR^G(X)$. Enfin $\MR^{G,X} = (\Ad w)\MR^{L,Y}$ en est bien sûr une conséquence immédiate.
\end{proof}

\begin{corollary}\label{coro:choixtow}
Soient $L\in \L^G(M_0)$ et $\o\subseteq \mathfrak{l}(F)$ une $L(F)$-orbite. Soit $X$ le représentant standard de $\Ind_L^G(\o)(F)$. Alors il existe $w\in \uW^{G,0}$ tel que 
\begin{enumerate}
    \item $(\Ad w^{-1})\MR^{X}\subseteq L$ ;
    \item $\o=\Ind_{(\Ad w^{-1})\MR^X}^L((\Ad w^{-1})X_\ss)$.
\end{enumerate} 
\end{corollary}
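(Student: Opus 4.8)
The plan is to deduce the Corollary from Proposition \ref{prop:paradesccomp}, which already contains the combinatorial substance. I would begin by letting $Y\in\l(F)$ be the standard representative of $\o$ for $L$; then $\Ind_L^G(\o)=\Ind_L^G(Y)$, so its standard representative is precisely the element $X$ of Proposition \ref{prop:paradesccomp}. Fixing any $Q\in\P^G(L)$ and any $P^Y\in\cR^L(Y)$, that proposition furnishes an element $w\in\uW^{G,0}$ and an element $P^X\in\cR^G(X)$ such that, writing $\MR^{L,Y}$ and $\MR^{G,X}$ for the semi-standard Levi factors of $P^Y$ and $P^X$ respectively, one has $\MR^{G,X}=(\Ad w)\MR^{L,Y}$, $P^X=(\Ad w)(P^YN_Q)$ and $X\in(\Ad w)(Y+\n_Q(F))$.

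The particular choice of $\MR^{X}$ among the semi-standard Levi factors of elements of $\cR^G(X)$ does not affect the statement: they are pairwise conjugate under $\uW^{G,X}$ (reduce to the case $X$ nilpotent via Lemma \ref{lem:LSR'ss}, invoke the nilpotent theory of \cite{Ch17}, then pass to Levi envelopes), and $\uW^{G,X}\subseteq G_{X_\ss}(F)$ fixes $X_\ss$; so if $(\Ad v)\MR^{G,X}=\MR^{X}$ with $v\in\uW^{G,X}$ one replaces $w$ by $vw\in\uW^{G,0}$, which changes neither $(\Ad w^{-1})X_\ss$ nor the resulting Levi. Hence I may assume $\MR^{X}=\MR^{G,X}$. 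Then $(\Ad w^{-1})\MR^{X}=\MR^{L,Y}$, and since $P^Y$ is a parabolic subgroup of $L$ all of its Levi factors lie in $L$; this is assertion (1).

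For assertion (2), put $N\eqdef(\Ad w^{-1})X-Y\in\n_Q(F)$, so that $(\Ad w^{-1})X=Y+N$ lies in $\l(F)\oplus\n_Q(F)$, with $\pi_{\mathfrak{q},\l}(Y+N)=Y$ where $\mathfrak{q}$ is the Lie algebra of $Q$. The crucial step is to show that $(\Ad w^{-1})X_\ss=(Y+N)_\ss$ equals $Y_\ss$. To this end note that $(\Ad w^{-1})P^X=P^YN_Q$ belongs to $\cR^G((\Ad w^{-1})X)=\cR^G(Y+N)$, the formation of $\cR^G$ being equivariant under conjugation by $G(F)\supseteq\uW^{G,0}$, and that its semi-standard Levi factor is still $\MR^{L,Y}\subseteq L$. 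By Corollary \ref{coro:elementisellinRich}, $(Y+N)_\ss$ is $F$-elliptic in the Lie algebra of $\MR^{L,Y}$, and in particular lies in $\l(F)$; applying $\pi_{\mathfrak{q},\l}$, which is a Lie algebra morphism compatible with the Jordan decomposition, gives $(Y+N)_\ss=\pi_{\mathfrak{q},\l}((Y+N)_\ss)=(\pi_{\mathfrak{q},\l}(Y+N))_\ss=Y_\ss$.

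It remains to identify $\Ind_{\MR^{L,Y}}^L(Y_\ss)$ with $\o$. Since $P^Y\in\cR^L(Y)\subseteq\cR^L(Y)'$ one has $Y\in\Ind_{P^Y}^L(Y_\ss)$, and $Y_\ss$ lies in the Lie algebra of $\MR^{L,Y}$ (again by Corollary \ref{coro:elementisellinRich}, applied inside $L$), so $\Ind_{P^Y}^L(Y_\ss)=\Ind_{\MR^{L,Y}}^L(Y_\ss)$ by Definition \ref{YDLiopdef:IndPG}; being a single $L$-orbit containing $Y$ (point 1 of Proposition \ref{chap3prop:indprop}), and $G$ being of type GL so that $L(F)$-conjugacy coincides with geometric conjugacy on $\l(F)$, it coincides with the $L(F)$-orbit $\o$ of $Y$. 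Combining this with the previous paragraph yields $\Ind_{(\Ad w^{-1})\MR^{X}}^L((\Ad w^{-1})X_\ss)=\Ind_{\MR^{L,Y}}^L(Y_\ss)=\o$, which is assertion (2). I expect the only genuinely delicate point to be the identification $(\Ad w^{-1})X_\ss=Y_\ss$ — it is tempting but insufficient to know merely that this is some $G$-conjugate of $X_\ss$ — and this is exactly where the ellipticity of the semisimple part in a generalized Richardson Levi, Corollary \ref{coro:elementisellinRich}, is indispensable; everything else is bookkeeping on top of Proposition \ref{prop:paradesccomp}.
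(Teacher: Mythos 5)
Your proof is correct and takes the same route as the paper, which simply records the corollary as an immediate consequence of Proposition \ref{prop:paradesccomp}; you supply the details the paper leaves implicit, namely the reduction from the proposition's $\MR^{G,X}$ to an arbitrary choice of $\MR^{X}$ by a $\uW^{G,X}$-conjugation (harmless since $\uW^{G,X}\subseteq G_{X_\ss}(F)$) and the identification $(\Ad w^{-1})X_\ss=Y_\ss$ via the projection $\pi_{\mathfrak{q},\mathfrak{l}}$, which is indeed the only point requiring an argument. One citation should be adjusted: Corollary \ref{coro:elementisellinRich} \emph{presupposes} that the Levi factor's Lie algebra contains the semisimple part, so it cannot be what yields $(Y+N)_\ss\in\Lie(\MR^{L,Y})$, nor $Y_\ss\in\Lie(\MR^{L,Y})$; these memberships follow instead from standardness — by Lemma \ref{lem:LSsstand} every element of $\cR^G(X)$ contains $\uM^{G,X}$, so the semi-standard Levi factor contains $\uM^{G,X}$ and hence $X_\ss\in\Lie(\MR^{G,X})$, which one transports by $\Ad w^{-1}$ (and argues likewise for $Y$ inside $L$) — after which the ellipticity you invoke plays no role in your argument.
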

\begin{proof}
Cela est une conséquence immédiate de la proposition précédente. \qedhere

\end{proof}

\subsection{Intégrale orbitale pondérée locale}

On note $\S(V(F))$ l'espace de Schwartz-Bruhat d'un espace vectoriel $V$ sur $F$, muni de sa topologie usuelle, cf. \cite[page 61]{Bruhat1961}. Plus précisément, si $F$ est non-archimédien, $\S(V(F))$ est l'espace des fonctions localement constantes à support compact sur $V(F)$, muni de la topologie discrète ; et si $F$ est archimédien, $\S(V(F))$ est l'espace des fonctions de classe Schwartz usuelle sur $V(F)$, muni de la topologie engendrée par les semi-normes $\|-\|_{a,b}$ pour $a$ et $b$ entiers positifs, avec $\|-\|_{a,b}=\sup_{|\alpha|\leq a,|\beta|\leq b} \|x^\alpha \frac{\partial}{\partial x^\beta}-\|_{L^\infty(\g(F))}$. Une fonctionnelle sur $\g(F)$ est dite tempérée si elle est une fonctionnelle continue sur $\S(\g(F))$.

\begin{definition}[Intégrale orbitale pondérée locale]\label{def:NIOPl}
Soient $f\in\S(\g(F))$, $L\in \L^G(M_0)$, $Q\in\F^G(L)$.
\begin{enumerate}
    \item Soit $\o\subseteq \mathfrak{l}(F)$ une $L(F)$-orbite. Soit $X\in \g(F)$ le représentant standard de $\Ind_L^G(\o)(F)$ (cf. définition \ref{def:repstd}). Il existe $w\in \uW^{G,0}$ tel que $(\Ad w^{-1})\MR^{X}\subseteq L$ et $\o=\Ind_{(\Ad w^{-1})\MR^X}^L((\Ad w^{-1})X_\ss)$ (cf. corollaire \ref{coro:choixtow}). On pose,
    \begin{equation}\label{eqdef:NIOPl}
    J_L^Q(\o,f)\eqdef |D^{\g}(X)|_{F}^{1/2} \int_{G_{X}(F)\backslash G(F)} f((\Ad g^{-1})X)v_{(\Ad w)L,X}^{(\Ad w)Q}(g)\,dg.
    \end{equation}
    \item Soit $Y\in \mathfrak{l}(F)$. On pose
    \begin{equation}
    J_L^Q(Y,f)\eqdef J_L^Q((\Ad L(F))Y,f).
    \end{equation}
\end{enumerate}
\end{definition}

La définition dépend, à priori, du choix des matrices carrées ${}^p\textbf{X}$ pour $p\in \Irr_F$, du choix de la relation d'ordre totale sur $\Irr_F$, aussi du choix de la base ordonnée $e$ pour $G$, puis du choix de $\MR^{X}$, enfin du choix de $w\in \uW^{G,0}$. Nous devrions donc adopter plutôt la notation $J_L^Q(\o,f)=J_L^Q(\o,f)_{({}^p\textbf{X})_p,\leq,e,\MR^X,w}$ pour indiquer ces dépendances. \'{E}tudions la dépendance de l'intégrale posée à l'égard de ces choix : 

\begin{proposition}
L'intégrale \eqref{eqdef:NIOPl} dépend uniqument de $M_0^e$, elle ne dépend ni de $w$, ni de $\MR^X$, ni de $e$, ni de $\leq,$ et ni de $({}^p\textbf{X})_p$.    
\end{proposition}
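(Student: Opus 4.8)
The plan is to remove the five choices one at a time, in the order $w$, then $\MR^X$, then the data $(e,\leq,({}^p\textbf{X})_{p})$ defining the standard representative, each step being reduced to the preceding ones; the ambient normalisations ($K$, the Haar measures, the form $\langle-,-\rangle$, the $\uW^{G,0}$-invariant scalar product on $a_{M_0}$, the character $\psi$) are held fixed and are attached to $M_0$. The pivotal remark is that, once a standard $X$ and a semi-standard Levi $\MR^X$ of an element of $\cR^G(X)$ have been chosen, the $(G,\MR^X)$-family $\{v_{P,X}\}_{P\in\P^G(\MR^X)}$ of \eqref{YDLiopeq:defnewGMfamilyinIOP} — and therefore every weight $v_{L',X}^{Q'}$ with $L'\in\L^G(\MR^X)$, $Q'\in\F^G(L')$ — is an honest function on $G(F)$ depending only on $(X,\MR^X)$; the element $w$ enters \eqref{eqdef:NIOPl} solely to pick out the pair $\big((\Ad w)L,(\Ad w)Q\big)$.

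\emph{Independence of $w$.} Given two admissible $w,w'\in\uW^{G,0}$, the equality $\Ind_{(\Ad w^{-1})\MR^X}^L((\Ad w^{-1})X_\ss)=\o=\Ind_{(\Ad w'^{-1})\MR^X}^L((\Ad w'^{-1})X_\ss)$, together with the ellipticity furnished by Corollary \ref{coro:elementisellinRich}, lets us invoke Proposition \ref{prop:condition(H?)}(2) inside $L$: there is $l\in L(F)$ with $(\Ad l)\big((\Ad w'^{-1})\MR^X\big)=(\Ad w^{-1})\MR^X$ and $(\Ad l)(\Ad w'^{-1})X_\ss=(\Ad w^{-1})X_\ss$. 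Then $\gamma\eqdef wlw'^{-1}$ lies in $\text{Norm}_{G_{X_\ss}(F)}(\MR^X)$ and satisfies $(\Ad\gamma)\big((\Ad w')L\big)=(\Ad w)L$ and $(\Ad\gamma)\big((\Ad w')Q\big)=(\Ad w)Q$. Using $\MR^X\supseteq\uM^{G,X}$ (Lemma \ref{lem:LSsstand}), $\MR^X=\envL((\MR^X)_{X_\ss};G)$ (Lemma \ref{lem:Rlevienvlop}), the identity \eqref{eq:pW=W}, and the fact that $\uW^{G,X}$ represents $W^{(G,\uM^{G,X},X_\ss)}$, we write $\gamma=m\,u$ with $m\in(\MR^X)_{X_\ss}(F)$ and $u\in\uW^{G,X}$ normalising $(\MR^X)_{X_\ss}$, hence (by equivariance of $\envL$) normalising $\MR^X$, so $u\in\text{Norm}_{\uW^{G,0}}(\MR^X)$. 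Since $(\MR^X)_{X_\ss}\subseteq\MR^X\subseteq(\Ad u)L''$ for every $L''\in\L^G(\MR^X)$, conjugation by $m$ fixes $(\Ad u)L''$ and every parabolic above it; thus $(\Ad w)L=(\Ad u)\big((\Ad w')L\big)$ and $(\Ad w)Q=(\Ad u)\big((\Ad w')Q\big)$, and Lemma \ref{lem:poidsinvnorm} yields $v_{(\Ad w)L,X}^{(\Ad w)Q}=v_{(\Ad w')L,X}^{(\Ad w')Q}$ on $G(F)$, whence the two integrals agree.

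\emph{Independence of $\MR^X$.} For two semi-standard Levis $\MR^X$, $\MR'^X$ of elements of $\cR^G(X)$, Proposition \ref{prop:Rasso} and Corollary \ref{coro:LeviRssconj} (passed to the centralisers of $X_\ss$) show that $(\MR^X)_{X_\ss}$ and $(\MR'^X)_{X_\ss}$ are $G_{X_\ss}(F)$-conjugate; as both contain $\uM_0^{G,X}$ they are conjugate by some $u\in\uW^{G,X}\subseteq G_{X_\ss}(F)$, and then $\envL$-equivariance together with Lemma \ref{lem:Rlevienvlop} gives $\MR'^X=(\Ad u)\MR^X$. Because $u\in\uW^{G,0}\subseteq K$, conjugation by $u$ commutes with all Harish-Chandra maps $H_P$, so the $(G,\MR'^X)$-family is the $u$-transport of the $(G,\MR^X)$-family; the weight computed with $\MR'^X$ and $w'$ therefore equals the weight computed with $\MR^X$ and $u^{-1}w'$, and $u^{-1}w'$ is admissible for $\MR^X$ since $(\Ad u)X_\ss=X_\ss$ preserves the condition of Corollary \ref{coro:choixtow}. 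This reduces to the case already treated.

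\emph{Independence of the standard representative.} Let $X$ and $X'$ be the standard representatives of $\Ind_L^G(\o)(F)$ — a single $G(F)$-orbit, $G$ being of type GL — for two choices of $(e,\leq,({}^p\textbf{X})_{p})$ sharing the minimal Levi $M_0$, and write $X'=(\Ad h)X$ with $h\in G(F)$. Performing the substitution $g\mapsto hg$ in \eqref{eqdef:NIOPl}, and using $|D^\g(X')|_F=|D^\g(X)|_F$, the conjugation-compatibility of all measures (Proposition \ref{YDLiopprop:compatibilityHaarmeasures} and the remark following it), and $(\Ad(hg)^{-1})X'=(\Ad g^{-1})X$, the sought equality amounts to $v_{(\Ad w')L,X'}^{(\Ad w')Q}(hg)=v_{(\Ad w)L,X}^{(\Ad w)Q}(g)$ for all $g$. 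This is the principal difficulty, since conjugation by $h$ does not preserve semi-standardness and the construction cannot be transported along $h$ verbatim. The plan is to invoke Proposition \ref{prop:paradesccomp} and Corollary \ref{coro:choixtow} on each side to select $w$, $w'$ and generalised Richardson parabolics $P^X\in\cR^G(X)$, $P^{X'}\in\cR^G(X')$ that descend compatibly from $L$ to semi-standard Levis of elements of $\cR^L(Y)$, $Y$ being a representative of $\o$ — permissible by the independence just established — and then to check, with the help of $\uW^{G,0}\subseteq K$, Lemma \ref{lem:2.10.2} and the uniqueness in Proposition \ref{def:repstd}, that the $(G,\MR^{X'})$-family attached to $X'$ and evaluated at $hg$ differs from the $(G,\MR^X)$-family attached to $X$ only by conjugation by an element of $\text{Norm}_{G_{X_\ss}(F)}(\MR^X)$, which was shown harmless above. (For $L=G$ there is no weight and the statement is the elementary conjugation-invariance of an ordinary orbital integral; this is the base case.) Carrying out this comparison — tracking the $\uW^{G,0}$-representatives $w_P$ under the transport by $h$ — is the remaining work; afterwards the value of \eqref{eqdef:NIOPl} plainly depends only on $\Ind_L^G(\o)$, on $(L,Q)$, and on the fixed data attached to $M_0^e$, which is the assertion.
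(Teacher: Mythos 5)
Your first two reductions are correct and are essentially the paper's: for the independence of $w$ you produce, via la proposition \ref{prop:condition(H?)} (2) et le corollaire \ref{coro:elementisellinRich}, an element of $\text{Norm}_{\uW^{G,0}}(\MR^{X})$ carrying $((\Ad w')L,(\Ad w')Q)$ onto $((\Ad w)L,(\Ad w)Q)$ and conclude with le lemme \ref{lem:poidsinvnorm} (the paper obtains this element more directly as $w_1w_lw_2^{-1}$ with $w_l\in\uW^{L,0}$, which makes your decomposition $\gamma=mu$ unnecessary); for the independence of $\MR^{X}$ you transport the $(G,\MR^{X})$-family by a Weyl element $u\in\uW^{G,0}\subseteq K$, which is exactly the paper's second step.

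The gap is the entire last step. You reduce the independence of $(e,\leq,({}^p\textbf{X})_p)$ to the identity $v_{(\Ad w')L,X'}^{(\Ad w')Q}(hg)=v_{(\Ad w)L,X}^{(\Ad w)Q}(g)$ and then stop: declaring this comparison \emph{the remaining work} leaves unproved precisely the part where the content lies. Two concrete defects in the plan itself. First, you freeze $K$ as attached to $M_0$, but $K=K^{e}$ depends on $e$ (cf.\ remarque \ref{rem:whychoixrepstd} (1): the $K^e$ with $M_0^e=M_0$ are the stabilizers of the various special vertices of the apartment of $A_{M_0}$), and the weights are built from $H_P$, hence from $K^e$; controlling this change is part of the statement, and the paper's treatment of the basis change conjugates the whole situation (including $K^{e_1}$, $\uW^{G,0,e_1}$, $\MR^{X_1,e_1}$) by $g=wm_0$ with $m_0\in M_0(F)$, $w\in\uW^{G,0,e_1}$. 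Second, your announced mechanism --- that the family attached to $X'$ at $hg$ differs from the family attached to $X$ at $g$ by conjugation by an element of $\text{Norm}_{G_{X_\ss}(F)}(\MR^{X})$ --- is neither justified by the lemmas you cite nor the right one for the dependence on $({}^p\textbf{X})_p$; the paper itself does not settle that dependence by any transport of the representative, but defers it to explicit computations (point 2 du lemme \ref{lem:diffadjacentR_P}, which expresses $R_{P_1,X}(g)-R_{P_2,X}(g)$ intrinsically through $|\Nrd(U_{1,3})|$ of the orbit point $(\Ad g^{-1})X$, or le théorème \ref{thm:IOPcomparaisonI-II}), and la remarque \ref{rem:whychoixrepstd} (2) shows why a wholesale ``conjugate the representative and transport'' is dangerous: another point of the same orbit with the same $\cR^G$ and $\cLS^G$ (rescaled nilpotent blocks) changes the integral by a nontrivial factor. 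If you want a direct argument for the $({}^p\textbf{X})_p$-part, the correct mechanism is different from the one you claim: take $h$ block-scalar, $h=\mathrm{diag}(c_p,\dots,c_p)$ with $c_p\,{}^p\textbf{X}c_p^{-1}={}^p\textbf{X}'$; then $X'_\nilp=X_\nilp$, $h\in\uM^{G,X}(F)\subseteq\MR^{X}(F)$ normalizes every element of $\cLS^G(X)$ so $\cR^G(X')=\cR^G(X)$, $h$ commutes with the $w_P$ chosen in $\uW^{G,X}$, and therefore $R_{P,X'}(hg)=H_{\MR^{X}}(h)+R_{P,X}(g)$ for all $P\in\P^G(\MR^{X})$: the orthogonal family is translated by a constant vector and the weights agree by the computation in the proof of le lemme \ref{lem:poidsinvnorm} --- an invariance under constant translation by an element of $\MR^X(F)$ which does \emph{not} centralize $X_\ss$, not an invariance under the normalizer. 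None of this is carried out in your text, so as written the proposal does not prove the proposition.
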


\begin{proof}
On se donne un sous-groupe de Levi minimal $M_0$ et un sous-groupe compact maximal $K$ de $G$ sont donnés. On suppose que $M_0^e=M_0$. On \'{e}tudiera dans l'ordre la dépendance de l'intégrale de $w, \MR^X, e, \leq,$ enfin $({}^p\textbf{X})_p$.   

Primo, on prend $w_1\in \uW^{G,0}$ et $w_2\in \uW^{G,0}$ tels que $(\Ad w_i^{-1})\MR^{X}\subseteq L$ et $\o=\Ind_{(\Ad w_i^{-1})\MR^X}^L((\Ad w_i^{-1})X_\ss)$ pour $i=1,2$. Le point 2 de la proposition \ref{prop:condition(H?)} et le corollaire \ref{coro:elementisellinRich} nous assurent qu'il existe $w_l\in \uW^{L,0}$ tel que $(\Ad w_l^{-1}w_1^{-1})\MR^{X}=(\Ad w_2^{-1})\MR^{X}$. Ainsi $w_1w_lw_2^{-1}\in \text{Norm}_{\uW^{G,0}}(\MR^{X})$. Or $w_1w_lw_2^{-1}$ conjugue $(\Ad w_2)L$ à $(\Ad w_1)L$, on obtient de ce fait $v_{(\Ad w_1)L,X}^{(\Ad w_1)Q}(g)=v_{(\Ad w_2)L,X}^{(\Ad w_2)Q}(g)$  grâce au lemme \ref{lem:poidsinvnorm}.

Secundo, des choix différents de $\MR^{X}$ sont conjugués sous $\uW^{G,0
}$, on prend alors $\MR_1$ et $\MR_2$ deux sous-groupes de Levi semi-standards de deux éléments de $\cR^G(X)$ et $u\in \uW^{G,0}$ avec $\MR_1=u^{-1}\MR_2u$. Soit $w\in \uW^{G,0}$ tel que $\MR_1\subseteq wLw^{-1}$, on a $\MR_2\subseteq (u w)L(u w)^{-1}$, il nous faut comparer $v_{(\Ad w)L,X}^{(\Ad w)Q}(g)$ (défini à partir d'une $(G,\MR_1)$-famille) à $v_{(\Ad u w)L,X}^{(\Ad u w)Q}(g)$ (défini à partir d'une $(G,\MR_2)$-famille). Soit $P\in \P^G((\Ad w)L)$, alors $w_{(\Ad u
)P,X}=u w_{P,X}$ (encore une fois le membre de gauche est défini à partir de $\MR_1$ tandis que celui situé à droite est défini à partir de $\MR_2$). On a $\exp(-\langle u\cdot\lambda, H_{(\Ad u P)}(w_{
(\Ad u)P,X}g)\rangle)=\exp(-\langle u\cdot\lambda, H_{(\Ad u P)}(u w_{P,X}g)\rangle)=\exp(-\langle u\cdot\lambda,u\cdot H_{P}(w_{P,X}g)\rangle)$ pour tous $g\in G(F)$ et $\lambda\in a_{\mathcal{M}_1}$, comme $\uW^{G,0}$ fixe le produit scalaire sur $a_0$ on en déduit que $v_{(\Ad w)L,X}^{(\Ad w)Q}(g)=v_{(\Ad u w)L,X}^{(\Ad u w)Q}(g)$.

Tertio, on prend $e_1$ et $e_2$ deux bases pour $G$, il existe bien sûr un unique $g\in G(F)$ tel que $g\cdot e_1=e_2$. On a $g^{-1}\uW^{G,0,e_1}g= \uW^{G,0,e_2}$, $g^{-1}M_0^{e_1}g= M_0^{e_2}$ et $g^{-1}K^{e_1}g= K^{e_2}$. La relation $M_0^{e_2}= M_0= M_0^{e_1}$ nous dit que $g=wm_0$ avec $m_0\in M_0(F)$ et $w\in \uW^{G,0,e_1} $, et si $X_1$ est le représentant standard de sa classe de $G(F)$-conjugaison relativement à $e_1$ alors $g^{-1}X_1g=X_2$ est le représentant standard de la même classe de $G(F)$-conjugaison relativement à $e_2$. On peut prendre $w^{-1}\MR^{X_1,e_1}w=\MR^{X_2,e_2}$. On vérifie alors que 
\begin{align*}
J_L^Q(\o,f)_{({}^p\textbf{X})_p,\leq,e_1}&=|D^{\g}(X_1)|^{1/2} \int_{G_{X_1}(F)\backslash G(F)} f((\Ad h^{-1})X_1)v_{(\Ad w_1)L,X_1}^{(\Ad w_1)Q}(h)\,dh \\
&=|D^{\g}(X_2)|^{1/2} \int_{G_{X_2}(F)\backslash G(F)} f((\Ad h^{-1})X_2)v_{(\Ad w^{-1}w_1w)L,X_2}^{(\Ad w^{-1}w_1w)Q}(gh)\,dh\\
&=|D^{\g}(X_2)|^{1/2} \int_{G_{X_2}(F)\backslash G(F)} f((\Ad h^{-1})X_2)v_{(\Ad w^{-1}w_1w)L,X_2}^{(\Ad w^{-1}w_1w)Q}(wh)\,dh=J_L^Q(\o,f)_{({}^p\textbf{X})_p,\leq,e_2}. 
\end{align*}

Quarto, des choix différents de la relation d'ordre totale sur $\Irr_F$ donne des éléments standard $X$ qui sont conjugués sous $\uW^{G,0,e}$, on retombe dans la situation précédente mais avec $g=w\in \uW^{G,0}$, ainsi $J_L^Q(\o,f)_{\leq}$ ne dépend pas de cette relation d'ordre. 

En dernier lieu, la dépendance du choix des  ${}^p\textbf{X}$ est plus complexe, mais nous démontrerons à travers des calculs explicits que la définition ne dépend pas de ces choix (cf. théorème \ref{thm:IOPcomparaisonI-II} ou point 2 du lemme \ref{lem:diffadjacentR_P}). 

La définition \ref{def:NIOPl} ne dépend, en récapitulation, ni de $w$, ni de $\MR^X$, ni de $e$, ni de $\leq$ et ni de $({}^p\textbf{X})_p$.
\end{proof}

\begin{remark}\label{rem:whychoixrepstd}~{}
\begin{enumerate}
    \item Fixons $M_0$ un sous-groupe de Levi minimal de $G$. Alors 
    $\{K^e\mid \text{ $e$ tel que $M_0^e=M_0$}\}$ est exactement l'ensemble des fixateurs dans $G$ des sommets spéciaux dans l'appartement associé à $A_{M_0}$ de l'immeuble de Bruhat-Tits de $G$ (\cite[(I.3.7), définition (2.2.2)]{BruTit72}).
    \item Au contraire, notre définition dépend de façon essentielle des coefficients « nilpotents » dans la définition d'un élément standard. \`{A} titre d'exemple soient $G=\GL_{2,D}$, $M=\GL_{1,D}^2$ et $\o$ est l'orbite nulle. Posons pour $t\in D^\times$ 
    \[X_t=\begin{bmatrix}0 & t\\ 0&0\end{bmatrix}\in\Ind_M^G(\o).\]
    Alors $X_1$ est le représentant standard de $\Ind_M^G(\o)$. Il est clair qu'aucun des $\cLS^G(X_t)$ et $\cR^G(X_t)$ ne dépendent de $t$, il est donc possible de définir $J_M^G(\o,f)$ en changeant le représentant standard de $\Ind_M^G(\o)$ en $X_t$, on notera $J_M^G(\o,f)_t$ la définition obtenue. Des calculs directs montrent
    \begin{align*}
    J_M^G(\o,f)_t&=|D^{\g}(X_t)|^{1/2} \int_{G_{X_t}(F)\backslash G(F)} f((\Ad g^{-1})X_t)v_{(\Ad w)L,X_t}^{(\Ad w)Q}(g)\,dg\\
    &=|t|^{-1}|D^{\g}(X_1)|^{1/2} 
    \int_{G_{X_1}(F)\backslash G(F)} f((\Ad h^{-1})X_1)v_{(\Ad w)L,X_1}^{(\Ad w)Q}\left(\begin{bmatrix}t & 0\\ 0&1\end{bmatrix}h\right)\,dh\\
    &=|t|^{-1}J_M^G(\o,f)_1+|t|^{-1}(\log|t|)J_G^G(\Ind_M^G(\o),f).    
    \end{align*}
\end{enumerate}

\end{remark}

\begin{theorem}\label{thm:locIOPtemperee}
Pour toute fonction $f\in \S(\g(F))$, l'intégrale \eqref{def:NIOPl} converge absolument. De surcroît, lorsque $F$ est archimédien, l'espace de Schwartz-Bruhat $\S(\g(F))$ est muni de semi-normes $\|-\|_{a,b}=\sup_{|\alpha|\leq a,|\beta|\leq b} \|x^\alpha \frac{\partial}{\partial x^\beta}-\|_{L^\infty(\g(F))}$, alors il existe des certaines constantes $c_{\o}>0$ et $a_{\o}\in \N$  tels que $|J_L^Q(\o,-)|$ soit majoré par $c_{\o}\|-\|_{a_\o,0}$. Une intégrale orbitale pondérée locale est donc une distribution tempérée. 
\end{theorem}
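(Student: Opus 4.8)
\medskip
\noindent\emph{Esquisse de la stratégie.} Le plan consiste à isoler le cœur analytique — une estimée d'intégrabilité du poids contre une fonction à décroissance polynomiale — et à en déduire les deux assertions. On se ramène d'abord à montrer que l'intégrale
\[
I(a)\eqdef |D^{\g}(X)|_F^{1/2}\int_{G_X(F)\backslash G(F)}\bigl(1+\|(\Ad g^{-1})X\|\bigr)^{-a}\,\bigl|v_{(\Ad w)L,X}^{(\Ad w)Q}(g)\bigr|\,dg
\]
est finie pour un entier $a=a_\o$ assez grand. En effet, lorsque $F$ est archimédien, l'inégalité élémentaire $(1+\|Y\|)^{a}\le\kappa_a\sum_{|\alpha|\le a}|Y^{\alpha}|$ sur l'espace de dimension finie $\g(F)$ fournit $|f(Y)|\le\kappa_a(1+\|Y\|)^{-a}\|f\|_{a,0}$ pour tout $Y\in\g(F)$, d'où $|J_L^Q(\o,f)|\le\kappa_a\,I(a)\,\|f\|_{a,0}$ : on obtient simultanément la convergence absolue de \eqref{eqdef:NIOPl} et la majoration annoncée, avec $c_{\o}=\kappa_{a_\o}I(a_\o)$. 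Lorsque $F$ est non-archimédien, une fonction $f\in C_c^\infty(\g(F))$ de support inclus dans un compact $\Omega$ vérifie $|f(Y)|\le\|f\|_{\infty}\bigl(\sup_{\Omega}(1+\|\cdot\|)^{a}\bigr)(1+\|Y\|)^{-a}$ pour tout $a$, et la même majoration donne la convergence absolue, donc la continuité sur $\S(\g(F))=C_c^\infty(\g(F))$. Dans tous les cas $J_L^Q(\o,-)$ est une distribution tempérée. On notera que $|D^{\g}(X)|_F$ est une constante fixée, la fonction $D^{\g}$ étant même constante le long de l'orbite de $X$ puisque la partie semi-simple de $(\Ad g^{-1})X$ est conjuguée à $X_\ss$.

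Deuxième étape : majorer le poids. Comme $(\Ad w)L\in\L^G(\MR^{X})$ et $(\Ad w)Q\in\F^G((\Ad w)L)$, le scalaire $v_{(\Ad w)L,X}^{(\Ad w)Q}(g)$ est la valeur en $\lambda=0$ de la $(G,\MR^{X})$-famille \eqref{YDLiopeq:defnewGMfamilyinIOP} restreinte selon \eqref{eq:GMfamilletoGL}--\eqref{eq:GMfamilletoMQM}, fonction par ailleurs invariante à droite par $K$ d'après le lemme~\ref{lem:poidsinvnorm}. La formule \eqref{eq:GMcalcul} montre qu'une telle valeur s'exprime comme un polynôme en les coordonnées des vecteurs $R_P(g)$, de degré au plus $\dim(a_{M_0}^{G})$ et à coefficients ne dépendant que de $G$ et du produit scalaire fixé sur $a_{M_0}$. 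Puisque $R_P(g)=H_P(w_Pg)$ et $w_P\in\uW^{G,0}\subseteq K$, les propriétés usuelles de l'application de Harish-Chandra entraînent $\|R_P(g)\|\le c_1+\|H_{M_0}(g)\|$, d'où
\[
\bigl|v_{(\Ad w)L,X}^{(\Ad w)Q}(g)\bigr|\ \le\ c_2\,\bigl(1+\|H_{M_0}(g)\|\bigr)^{d},\qquad d\eqdef\dim(a_{M_0}^{G}).
\]

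Troisième étape : l'estimée $I(a)<\infty$ pour $a$ grand. On choisit $P^X\in\cR^G(X)$ de facteur de Levi semi-standard $M\eqdef\MR^{X}$ ; la définition de $\cR^G(X)'$ et les points 4--5 de la proposition~\ref{chap3prop:indprop} donnent $X\in\Ind_M^G(X_\ss)(F)\cap\bigl((\Ad M(F))X_\ss+\n_{P^X}(F)\bigr)$, le corollaire~\ref{coro:elementisellinRich} que $X_\ss$ est $F$-elliptique dans l'algèbre de Lie de $M$, et le point 6 de la proposition~\ref{chap3prop:indprop} que $G_X=P^X_X\subseteq P^X$. La formule d'intégration de la sous-section~\ref{YDLiopsubsec:normalizationmeasure}, appliquée à ces $M$, $P^X$ et $Y=X_\ss$ (valable pour un intégrande positif, par Tonelli appliqué au changement de variables $(\Ad p^{-1})X=(\Ad m^{-1})X_\ss+U$), transforme $I(a)$ en
\[
\gamma^G(P^X)\,|D^{\m}(X_\ss)|_F^{1/2}\int_{M_{X_\ss}(F)\backslash M(F)}\int_{\n_{P^X}(F)}\int_K\bigl(1+\|(\Ad k^{-1})((\Ad m^{-1})X_\ss+U)\|\bigr)^{-a}\,\bigl|v_{(\Ad w)L,X}^{(\Ad w)Q}(p_{m,U}k)\bigr|\,dk\,dU\,dm,
\]
où $p_{m,U}\in P^X(F)$ vérifie $(\Ad p_{m,U}^{-1})X=(\Ad m^{-1})X_\ss+U$. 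L'intégrale sur le compact $K$ est inoffensive, et comme $(\Ad m^{-1})X_\ss$ est semi-simple tandis que $U\in\n_{P^X}$, on a $\|(\Ad m^{-1})X_\ss+U\|\asymp\|(\Ad m^{-1})X_\ss\|+\|U\|$. Admettant le point délicat ci-dessous — le contrôle logarithmique $\|H_{M_0}(p_{m,U})\|\le c_3\bigl(1+\|H_{M_0}(m)\|+\log(2+\|U\|)\bigr)$ —, la deuxième étape borne le poids par $c_4\bigl(1+\|H_{M_0}(m)\|+\log(2+\|U\|)\bigr)^{d}$ ; l'intégrale en $U$ est alors dominée par $\int_{\n_{P^X}(F)}(1+\|U\|)^{-a/2}\bigl(1+\log(2+\|U\|)\bigr)^{d}\,dU$, convergente dès que $a/2>\dim\n_{P^X}$, et l'intégrale restante en $m$ est dominée par l'intégrale orbitale semi-simple pondérée $\int_{M_{X_\ss}(F)\backslash M(F)}(1+\|(\Ad m^{-1})X_\ss\|)^{-a/2}\bigl(1+\|H_{M_0}(m)\|\bigr)^{d}\,dm$, dont la convergence pour $a$ grand est classique, l'orbite semi-simple $F$-elliptique $(\Ad M(F))X_\ss$ étant fermée, à croissance polynomiale, et l'application $m\mapsto(\Ad m^{-1})X_\ss$ propre ; la séparation des termes croisés est immédiate. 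On aurait ainsi $I(a)<\infty$.

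Le point admis est l'obstacle principal. Il traduit le fait que la $(G,M)$-famille tordue $(-R_P(g))_P$ — construite à l'aide des éléments $w_P$ attachés aux sous-groupes paraboliques de Richardson généralisés — ne croît que de façon logarithmico-polynomiale le long des directions non semi-simples de l'orbite, près des strates de bord de l'adhérence de $\Ind_L^G(\o)$ ; c'est précisément ce qui rend l'intégrale non régularisée convergente là où le poids original d'Arthur exigerait le passage à la limite. J'anticipe qu'il se traitera par un calcul explicite de l'intégrale orbitale pondérée au centralisateur, ramenant $J_L^Q(\o,f)$ à une intégrale de $f$ contre une mesure explicite sur le produit de l'orbite elliptique de $X_\ss$ par un espace unipotent (ce que le lemme~\ref{lem:centralizerOIformula} devrait fournir), l'ingrédient combinatoire annexe étant le comportement des différences $R_P(g)-R_{P'}(g)$, pour $P$ et $P'$ adjacents, en fonction de la description matricielle explicite du représentant standard $X$.
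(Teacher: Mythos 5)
Votre réduction initiale (majorer $|f|$ par $\kappa_a(1+\|\cdot\|)^{-a}\|f\|_{a,0}$ et se ramener à la finitude de $I(a)$) est correcte, et votre dernier paragraphe devine juste la bonne issue ; mais le cœur de votre argument repose sur des estimées fausses, et le « point admis » n'est pas un lemme technique à compléter : il est faux, et c'est précisément là que réside toute la difficulté analytique du théorème. D'abord, la majoration $\|R_P(g)\|\le c_1+\|H_{M_0}(g)\|$ est incorrecte : $R_P(g)=H_P(w_Pg)$ avec $w_P$ agissant à \emph{gauche}, et $H_P(w_Pg)$ n'est pas contrôlé par $H_{M_0}(g)$ (dans $\GL_2$, pour $g=\left(\begin{smallmatrix}1&x\\0&1\end{smallmatrix}\right)$ on a $H_{M_0}(g)=0$ tandis que $H_P(wg)$ croît comme $\log|x|$) ; la bonne majoration est en $c(1+\log\|g\|_G)$. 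Ensuite et surtout, l'estimée admise $\|H_{M_0}(p_{m,U})\|\le c_3(1+\|H_{M_0}(m)\|+\log(2+\|U\|))$ est fausse : l'élément $p_{m,U}$ n'existe que sur l'ouvert dense $((M_{X_\ss}\backslash M)\times\n_{P^X})_{G-\reg}$, et il diverge lorsque $(m,U)$ s'approche du bord de cet ouvert, à $\|U\|$ \emph{bornée}. Exemple : $G=\GL_2$, $X=\left(\begin{smallmatrix}0&1\\0&0\end{smallmatrix}\right)$, $U=\left(\begin{smallmatrix}0&t\\0&0\end{smallmatrix}\right)$ ; alors $p_{1,U}=\mathrm{diag}(t^{-1},1)$ modulo $G_X$, donc $\|H_{M_0}(p_{1,U})\|\sim|\log|t||\to+\infty$ quand $t\to 0$, alors que le membre de droite reste borné (comparer la remarque \ref{rem:whychoixrepstd}, où ce même $\log|t|$ apparaît). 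Le poids a donc des singularités logarithmiques le long des strates de bord de l'orbite induite dans $(\Ad M)X_\ss+\n_{P^X}$, et non une simple croissance logarithmique à l'infini ; votre conclusion « l'intégrale en $U$ converge dès que $a/2>\dim\n_{P^X}$ » ignore entièrement ces singularités.

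Le théorème reste vrai parce que ces singularités sont localement intégrables contre la mesure qui apparaît effectivement, mais l'établir demande exactement ce que fait la preuve du texte (théorème \ref{thm:IOPfullstatement}) : réduction à la situation elliptique via $H=\envL(G_{X_\ss};G)$ et la décomposition d'Iwasawa le long de $P\in\P^G(H)$, majoration du poids par des sommes de $\|R_{P_1}(g)-R_{P_2}(g)\|^m$ sur les paires adjacentes, calcul explicite $R_{P_1}(g)-R_{P_2}(g)=\frac{1}{\deg D}\log|\Nrd(U_{1,3})|\alpha^\vee$ (lemme \ref{lem:diffadjacentR_P}), puis la formule d'intégration du lemme \ref{lem:centralizerOIformula}, qui fait apparaître le facteur compensateur $|\Nrd(y)|^{\underline{n}_p(r_1+r_2)\deg D}$ ; la convergence se ramène alors à celle d'une intégrale zêta en rang un, $\int|f(t)|\,|\log|\Nrd(t)||^m\,|\Nrd(t)|^{k\deg D}\,dt$ avec $k\ge 0$, et, côté semi-simple, au lemme \ref{YDLioplem:OInormCVarchi} (archimédien) ou au lemme de compacité de Harish-Chandra (non archimédien). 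Autrement dit, les deux lemmes que vous citez en conclusion ne sont pas un « ingrédient annexe » : ils constituent la preuve, et le chemin que vous proposez en amont ne peut pas les contourner.
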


\begin{proof}
La preuve est reportée à la section \ref{sec:3cv}.    
\end{proof}

\section{Intégrale orbitale pondérée en tant que distribution tempérée}\label{sec:3cv}
Cette partie est dédiée à la preuve du théorème \ref{thm:locIOPtemperee}.

\subsection{Situation elliptique}\label{subsec:situationelliptique}

Fixons désormais $p\in \Irr_F$ un polynôme irréductible de coefficient dominant 1 de $F[T]$. On va considérer l'élément ${}^pX$ donné par l'équation \eqref{eq:elementpXellstd}. On reprend les notations ci-dessous, en supprimant les $p$ dans l'exposant, avec pour finalité d'alléger l'écriture.

Supposons que $V$ est muni d'une décomposition en somme directe de $D$-modules libres
\[V=\bigoplus_{1\leq i\leq j\leq r}V_j^i\]
telle que 
\begin{enumerate}
    \item le rang de $V_j^i$ ne dépende que de $j$ est et divisible par $\underline{n}_p$, il vaut $\underline{n}_pd_j$;
    \item $r>1$ ; 
    \item pour tous $1\leq i\leq j\leq r$ et $1\leq k\leq d_j$, il existe $E_{k,j}^i\subseteq V_j^i$ un sous-$D$-module libre de rang $\underline{n}_p$, muni d'une base ordonnée  $({}^le_{k,j}^i)_{1\leq l\leq \underline{n}_p}$, et $e'\eqdef\bigsqcup_{i,j,k,l}({}^le_{k,j}^i)=e$. Ici la base $e'$ de $V$ est ordonnée de la façon suivante :  ${}^{l}e_{k,j}^i<{}^{l'}e_{k',j'}^{i'}$ si l'une des conditions suivantes est satisfaite
    \begin{itemize}
    \item[$-$] $i<i'$ ;
    \item[$-$] $i=i'$ et $j>j'$ ;
    \item[$-$] $i=i'$, $j=j'$ et $k<k'$ ; 
    \item[$-$] $i=i'$, $j=j'$, $k=k'$ et $l< l'$. 
    \end{itemize}
\end{enumerate}

Notons $G=\Aut_{D}(V)$ et $\g=\End_{D}(V)$. Nous avons 
\[\g=\bigoplus_{i,j,k,i',j',k'}\text{End}_{D}(E_{k,j}^{i},E_{k',j'}^{i'}).\]
Soit $X\in \g(F)$ défini par
\begin{equation}\label{elementXell}
X|_{\text{Hom}_{D}(E_{k,j}^{i},E_{k',j'}^{i'})}=\begin{cases}
{}^p\textbf{X}& \text{si }i=i',j=j',k=k',\\
\text{Id}_{\underline{n}_p\times \underline{n}_p} & \text{si }i>1,i'=i-1, j=j',k=k'\\
0 & \text{sinon},
\end{cases}   
\end{equation}
pour tous $i,j,k,i',j',k'$, ici le membre de gauche est la représentation matricielle de l'endomorphisme en quesiton dans les bases ordonnées concernées. L'élément $X$ est standard. Nous travaillerons avec cet élément $X$ dans la suite.

\subsubsection{Description de \texorpdfstring{$\cR^G(X)$}{RG(X)} en terme d'algèbre linéaire.}

Soient
\[J=\{j\in \N_{\geq 1}\mid d_j\not =0\}\,\,\,\,\text{et}\,\,\,\,r=\max(J).\]
L'entier $r$ est l'indice de nilpotence de $X_{\nilp}$ et $d_j$ est la multiplicité du bloc $\text{J}(p,j)$ dans la décomposition de Jordan de $X$ (\cite[section A.2]{YDL23b}). Ici
\[\text{J}(p,j)= \begin{pmatrix} 
{}^h\textbf{X} & \textbf{1} &  &  &  &  \\
     & {}^p\textbf{X} & \textbf{1} &  &  &  \\
     &  & \ddots & \ddots &  &  \\
     &  &  & \ddots & \ddots &  \\
     &  &  &  & {}^p\textbf{X} & \textbf{1} \\
     &  &  &  &  & {}^p\textbf{X} \\
\end{pmatrix}\in \Mat_{\underline{n}_pj}(D)\]
avec $\textbf{1}$ la matrice identité de $\Mat_{\underline{n}_p}(D)$.

Soit $\mathcal{E}^G(X)$ l'ensemble (fini) des applications 
\begin{equation}\label{YDLiopeq:defEGX}
 \varepsilon :\begin{tabular}{ccc}
 $ \{k\in \N\mid 1\leq k\leq r\}\times J$ &$\longrightarrow$  &$\{0,1\}$\\ 
 $(k,j)$&$\longmapsto$ &$\varepsilon_{k,j}$
\end{tabular}   
\end{equation}
qui vérifient pour tout $j\in J$ :
\begin{enumerate}
    \item $\sum_{k=1}^r \varepsilon_{k,j}=j$ ;
    \item pour tout $1\leq k \leq r$, l'application $j\in J\longmapsto \varepsilon_{k,j}\in \{0,1\}$ est croissante.
\end{enumerate}

\begin{lemma}\label{YDLioplem:4.1}
Pour tout $\varepsilon\in \mathcal{E}^G(X)$, soit $V_{\bullet}(\varepsilon)$ le drapeau de $E$ défini par $V_{0}(\varepsilon)=(0)$, et pour $1\leq k\leq r$
\[V_{k}(\varepsilon)=\bigoplus_{j\in J}\bigoplus_{i=1}^{\sum_{l=1}^k\varepsilon_{l,j}}V_j^i.\]
L'application 
\[\varepsilon\longmapsto P_{\varepsilon}\eqdef\text{stab}_G(V_{\bullet}(\varepsilon))\]
est une bijection de $\mathcal{E}^G(X)$ sur $\cR^G(X)$.
\end{lemma}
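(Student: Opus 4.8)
The plan is to prove that $\varepsilon \mapsto P_\varepsilon$ is a bijection $\mathcal{E}^G(X) \to \cR^G(X)$ by reducing everything to the nilpotent case already treated by Chaudouard, and then checking that the combinatorial gadget $\mathcal{E}^G(X)$ is exactly the right index set. Since $X_\ss$ is elliptic in $\g$ (the element ${}^pX$ was chosen so that its semisimple part is elliptic, and we are in the situation of subsection \ref{subsec:situationelliptique}), Corollary \ref{YDLiop:RG'(X)=RG(X)ell} gives $\cR^G(X)' = \cR^G(X)$, and the map $Q \in \cR^G(X) \mapsto Q_{X_\ss} \in \cR^{G_{X_\ss}}(X_\nilp)$ is a bijection (this follows from Lemma \ref{lem:Rlevienvlop} together with the ellipticity, as used repeatedly in the text). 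So the statement will follow once I identify $\cR^{G_{X_\ss}}(X_\nilp)$ with $\mathcal{E}^G(X)$ via the flags $V_\bullet(\varepsilon)$.

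First I would unwind the structure of $G_{X_\ss}$: as recorded in subsection \ref{subsec:ssssschoix}, $\g_{X_\ss} = \bigoplus \operatorname{End}_{D^{{}^p\mathbf{X}}}({}^pV_{X_\ss})$, a full matrix algebra over the division algebra $D^{{}^p\mathbf{X}} = \Cent({}^p\mathbf{X}, \operatorname{Mat}_{\underline{n}_p}(D))$, and $X_\nilp$ becomes a nilpotent element whose Jordan type is the partition with $d_j$ parts equal to $j$ (for $j \in J$). Then I would invoke \cite[proposition 3.4.1]{Ch17} (extended to groups of type GL as the text allows): the generalized Richardson parabolics of a standard nilpotent in $\GL$ over a division algebra are exactly the stabilizers of flags built from the ``staircase'' decomposition $V = \bigoplus_{1 \le i \le j \le r} V_j^i$, with the admissible flags parametrized by functions $\varepsilon$ satisfying conditions (1)–(2) — condition (1) recording that the $j$-th Jordan block contributes $j$ steps, condition (2) recording the monotonicity needed for the subspaces to actually be $X_\nilp$-stable and to give a parabolic (not merely a subgroup) whose associated graded recovers the induced-from-zero orbit. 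The key point to verify is that $\text{stab}_{G_{X_\ss}}(V_\bullet(\varepsilon)) = (P_\varepsilon)_{X_\ss}$, i.e. that the flag $V_\bullet(\varepsilon)$ of $D$-modules, being spanned by whole blocks $V_j^i$, is automatically compatible with the $D^{{}^p\mathbf{X}}$-structure; this is immediate because each $V_j^i$ decomposes into rank-$\underline{n}_p$ pieces $E_{k,j}^i$ that are precisely the $D^{{}^p\mathbf{X}}$-lines.

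With that identification in hand, bijectivity transfers: injectivity of $\varepsilon \mapsto P_\varepsilon$ follows from injectivity of $\varepsilon \mapsto V_\bullet(\varepsilon)$ (two distinct $\varepsilon$'s give flags of different ``profile'', since $\dim V_k(\varepsilon) = \sum_{j \in J} \underline{n}_p d_j \sum_{l \le k} \varepsilon_{l,j}$ determines $\sum_{l \le k}\varepsilon_{l,j}$ for each $j$ by linear independence of the $d_j$ over distinct block-sizes — or more directly, $\varepsilon_{k,j} = (\text{number of } i \text{ with } V_j^i \subseteq V_k) - (\text{same for } V_{k-1})$ is recovered from the flag) combined with injectivity of $Q \mapsto Q_{X_\ss}$; surjectivity follows from surjectivity of $Q \mapsto Q_{X_\ss}$ onto $\cR^{G_{X_\ss}}(X_\nilp)$ and surjectivity of $\varepsilon \mapsto \text{stab}(V_\bullet(\varepsilon))$ onto that set from Chaudouard's proposition. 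I expect the main obstacle to be purely bookkeeping: carefully matching the ordering conventions on the basis $e$ (the peculiar ``$i = i'$ and $j > j'$'' clause, with the inequality reversed) against the flag-stabilizer being an honest parabolic, and making sure the passage through the division algebra $D^{{}^p\mathbf{X}}$ does not disturb which flags are $D$-submodule flags — but none of this is conceptually hard, since the block $V_j^i$ is simultaneously a $D$-submodule and a $D^{{}^p\mathbf{X}}$-submodule, so the two pictures coincide on the nose.
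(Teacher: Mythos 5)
Votre démarche est correcte et coïncide avec celle du texte : on utilise l'ellipticité de $X_\ss$ pour obtenir $\cR^G(X)'=\cR^G(X)$ (corollaire \ref{YDLiop:RG'(X)=RG(X)ell}), on réduit via le lemme \ref{lem:LSR'ss} au cas nilpotent dans $G_{X_\ss}$, et on invoque \cite[proposition 3.4.1]{Ch17}. Vous explicitez simplement les vérifications (bijection $Q\mapsto Q_{X_\ss}$, compatibilité des drapeaux de $D$-modules et de $D^{{}^p\mathbf{X}}$-modules, récupération de $\varepsilon$ à partir du drapeau) que la preuve du texte laisse implicites.
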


\begin{proof}
Remarquons que $X_{\ss}$ est elliptique dans $\g(F)$ donc $\cR^{G}(X)'=\cR^{G}(X)$ (corollaire \ref{YDLiop:RG'(X)=RG(X)ell}). De ce fait un sous-groupe parabolique $P$ de $G$ dont l'algèbre de Lie contient $X$ est dans $\cR^{G}(X)$ si et seulement si $P_{X_\ss}$ est dans $\cR^{G_{X_\ss}}(X_\nilp)$ (lemme \ref{lem:LSR'ss}). Ce lemme \ref{YDLioplem:4.1} étant déjà prouvé dans le cas nilpotent (\cite[proposition 3.4.1]{Ch17}), on en déduit le cas général. 
\end{proof}

\subsubsection{Calculs pour des sous-groupes paraboliques adjacents}\label{subsubsec:calculspourdes sous-groupesparaboliquesadjacents}

Fixons $M=\MR^{X}$ le facteur de Levi d'un élément de $\cR^G(X)$ semi-standard. Soient deux sous-groupes paraboliques $P_1$ et $P_2$ de $\P^G(M)$ adjacents, $\widetilde{P_1}$ et $\widetilde{P_2}$ les images respectives sous l'application (\ref{R}), enfin $Q$ le plus petit sous-groupe parabolique qui contient $P_1$ et $P_2$. 

Pour tout $\sigma$ dans le groupe symétrique $\mathfrak{S}_r$ et tout $\varepsilon\in\mathcal{E}^G(X)$, l'application
\[\sigma(\varepsilon) : (k, j) \longmapsto \varepsilon_{\sigma^{-1}(k),j}\]
appartient à $\mathcal{E}^G(X)$. Cela définit une action transitive de $\mathfrak{S}_r$ sur $\mathcal{E}^G(X)$ (\cite[Lemme 3.5.1]{Ch17}).

\begin{lemma}
Il existe $\varepsilon\in \mathcal{E}^G(X)$ et $\tau\in\mathfrak{S}_r$ une transposition ou l'identité tels que $\widetilde{P_1}$ et $\widetilde{P_2}$ soient les stabilisateurs respectifs des drapeaux $V_{\bullet}(\varepsilon)$ et $V_{\bullet}(\tau(\varepsilon))$.
\end{lemma}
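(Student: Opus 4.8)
The plan is to descend to the case where $X$ is nilpotent and then to read off the transposition from the smallest parabolic $Q$ containing $P_1$ and $P_2$. By Lemma~\ref{YDLioplem:4.1} write $\widetilde{P_1}=P_{\varepsilon_1}$ and $\widetilde{P_2}=P_{\varepsilon_2}$ with $\varepsilon_1,\varepsilon_2\in\mathcal{E}^G(X)$; we must produce a transposition $\tau\in\mathfrak{S}_r$ (or $\tau=\mathrm{id}$) with $\varepsilon_2=\tau(\varepsilon_1)$. We are in the elliptic situation, so $X_\ss$ is $F$-elliptic in $\mathfrak{g}(F)$ and, by Corollary~\ref{coro:elementisellinRich}, in $\mathfrak{m}(F)$; hence $A_M=A_{M_{X_\ss}}$ (Lemma~\ref{lem:Rlevienvlop}), and $P\mapsto P_{X_\ss}$ is the bijection $\cR^G(X)\xrightarrow{\sim}\cR^{G_{X_\ss}}(X_\nilp)$ intertwining the maps~\eqref{R}, matching the Jordan data (so that $\mathcal{E}^{G_{X_\ss}}(X_\nilp)=\mathcal{E}^G(X)$) and, through Lemma~\ref{YDLioplem:4.1}, the flags $V_\bullet(\varepsilon)$; in particular $\widetilde{P_i}$ corresponds to $(\widetilde{P_i})_{X_\ss}=P_{\varepsilon_i}$ computed in $G_{X_\ss}$. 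Since $\Sigma((\mathfrak{p}_i)_{X_\ss};A_{M_{X_\ss}})$ consists of the roots of $\Sigma(\mathfrak{p}_i;A_M)$ that lie in $\Sigma(\mathfrak{g}_{X_\ss};A_{M_{X_\ss}})$ and $P_1,P_2$ are adjacent, either $(P_1)_{X_\ss}=(P_2)_{X_\ss}$ — in which case $\widetilde{P_1}=\widetilde{P_2}$ and we take $\tau=\mathrm{id}$ — or $(P_1)_{X_\ss}$ and $(P_2)_{X_\ss}$ are adjacent in $\mathcal{P}^{G_{X_\ss}}(M_{X_\ss})$. Replacing $(G,X,P_1,P_2)$ by $(G_{X_\ss},X_\nilp,(P_1)_{X_\ss},(P_2)_{X_\ss})$ we may assume $X$ is a nilpotent standard element; then $\cR^G(X)=\cR^G(X)'$, and $\varepsilon_{k,r}=1$ for all $k$ (as $r=\max J\in J$), so every $V_\bullet(\varepsilon)$ has exactly $r$ non-zero graded pieces.

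Let $\widetilde{Q}\in\cLS^G(X)$ be the image of $Q$ under the map~\eqref{LS}. Since~\eqref{LS} restricts to~\eqref{R} on $\mathcal{P}^G(M)$ and preserves inclusions, $\widetilde{P_1},\widetilde{P_2}\subseteq\widetilde{Q}$; as $\widetilde{Q}$ is $G(F)$-conjugate to $Q$, its semi-standard Levi factor $M_{\widetilde{Q}}=\prod_k\GL_{n_k,D}$ has exactly $r-1$ blocks, so that $\widetilde{P_i}\cap M_{\widetilde{Q}}$ is a corank-one parabolic of $M_{\widetilde{Q}}$ for $i=1,2$. Put $Y\eqdef\pi_{\widetilde{\mathfrak{q}},\mathfrak{m}_{\widetilde{Q}}}(X)$, a nilpotent element of $\mathfrak{m}_{\widetilde{Q}}(F)$. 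From $X\in\mathfrak{n}_{\widetilde{P_i}}\subseteq\mathfrak{n}_{\widetilde{Q}}\oplus(\mathfrak{n}_{\widetilde{P_i}}\cap\mathfrak{m}_{\widetilde{Q}})$ we get $Y\in\mathfrak{n}_{\widetilde{P_i}\cap M_{\widetilde{Q}}}$, and the transitivity-of-induction argument of the proof of Proposition~\ref{prop:LS->R}, combined with the minimality of $\widetilde{P_i}$ in $\cR^G(X)'$, gives $\widetilde{P_i}\cap M_{\widetilde{Q}}\in\cR^{M_{\widetilde{Q}}}(Y)=\prod_k\cR^{\GL_{n_k,D}}(Y^{(k)})$ for $i=1,2$, where $Y=(Y^{(k)})_k$.

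Writing $\widetilde{P_i}\cap M_{\widetilde{Q}}=\prod_k P^{(k)}_i$, corank one means that exactly one factor is a proper parabolic of the corresponding $\GL_{n_k,D}$, the others being $\GL_{n_k,D}$ itself — which forces $Y^{(k)}=0$ there. As $Y$ does not depend on $i$, the active index $k_0$ is the same for $i=1,2$, and $P^{(k_0)}_i\in\cR^{\GL_{n_{k_0},D}}(Y^{(k_0)})$ is of corank one. By Lemma~\ref{YDLioplem:4.1} applied inside this single factor, the corank-one elements of $\cR^{\GL_{n_{k_0},D}}(Y^{(k_0)})$ number at most two and, when there are two, are exchanged by the transposition of its two blocks. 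Hence $\widetilde{P_1}\cap M_{\widetilde{Q}}$ and $\widetilde{P_2}\cap M_{\widetilde{Q}}$ are either equal — whence $\widetilde{P_1}=\widetilde{P_2}$, $\tau=\mathrm{id}$ — or are these two elements, in which case $V_\bullet(\varepsilon_1)$ and $V_\bullet(\varepsilon_2)$ differ only in the subspace inserted at position $k_0$, with transposed graded dimensions at positions $k_0,k_0+1$. Unwinding $V_k(\varepsilon)=\bigoplus_j\bigoplus_{i\leq\sum_{l\leq k}\varepsilon_{l,j}}V^i_j$ and using that each row of $\varepsilon\in\mathcal{E}^G(X)$ is a threshold function of $j$, one checks that $\varepsilon_1$ and $\varepsilon_2$ then agree outside rows $k_0,k_0+1$ and carry the same pair of thresholds there, i.e.\ $\varepsilon_2=(k_0,k_0+1)(\varepsilon_1)$, which is the assertion with $\tau=(k_0,k_0+1)$.

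The main difficulty is that $\widetilde{P_1}$ and $\widetilde{P_2}$ are conjugate to $P_1$ and $P_2$ by possibly \emph{different} elements of $G(F)$, so their relative position cannot be read off directly and must be routed through the canonically defined $\widetilde{Q}$ and through $\cR^{M_{\widetilde{Q}}}(Y)$; the final identification of the local block-swap move with a genuine transposition in $\mathfrak{S}_r$ is a short but careful computation with the flags $V_\bullet(\varepsilon)$. Once the reduction to the nilpotent case over a group of type GL is in place, one may alternatively quote the corresponding lemma of~\cite{Ch17}.
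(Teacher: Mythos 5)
Your proof is correct and follows essentially the same route as the paper: reduce to the nilpotent case via the Jordan-compatibility of $\cR^G(X)$ and of the map \eqref{R} (lemmes \ref{lem:LSR'ss} et \ref{lem:wP}), then settle the nilpotent case by the flag combinatorics of $\mathcal{E}^G(X)$. The only difference is that you write out in full the nilpotent argument (via $\widetilde{Q}$, the active block of $M_{\widetilde{Q}}$ and the threshold structure of the rows of $\varepsilon$), whereas the paper simply delegates it to \cite[lemme 5.7.1]{Ch17}, which proceeds along the same lines.
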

\begin{proof}
On voit que la preuve est la même qu'en \cite[Lemme 5.7.1]{Ch17} en s'apercevant que les objets rencontrés au fil des calculs sont compatibles à la décomposition de Jordan (lemme \ref{lem:wP}). Il est opportun de remarquer que l'application \eqref{R} dépend de $M$, et dans son article Chaudouard fixe un $M$ spécifique, mais on vérifie que l'abandon de ce choix n'altère pas la véracité du lemme.
\end{proof}

Le sous-groupe parabolique $\widetilde{Q}$ est minimal parmi les sous-groupes paraboliques contenant strictement $\widetilde{P_1}$, par cette minimalité on voit qu'il existe un unique $1\leq k < r$ tel que le sous-groupe parabolique $\widetilde{Q}$ est le stabilisateur du drapeau 
\[(0)=V_0(\varepsilon)\subsetneq V_1(\varepsilon)\subsetneq \cdots \subsetneq V_{k-1}(\varepsilon)\subsetneq V_{k+1}(\varepsilon)\subsetneq \cdots \subsetneq V_r(\varepsilon)=V\]
qui se déduit de $V_{\bullet}(\varepsilon)$ par suppression du sous-espace $V_{k}(\varepsilon)$. Quitte à échanger les rôles de $\varepsilon$ et $\tau(\varepsilon)$, on peut et on va supposer qu'on a $\varepsilon_{k,j}\leq\varepsilon_{k+1,j}$ pour tout $j \in J$ (\cite[page 208]{Ch17}). Soient
\[J_1=\{j\in J\mid \varepsilon_{k,j}=\varepsilon_{k+1,j}=1\}\,\,\,\,\text{et}\,\,\,\,J_2=\{j\in J\mid \varepsilon_{k,j}=0\,\,\text{et}\,\,\varepsilon_{k+1,j}=1\}.\]
On a toujours $r \in J_1$ et l'ensemble $J_2$ est vide si et seulement si $\widetilde{P_1}=\widetilde{P_2}$. On a alors $V_k(\varepsilon) \subseteq V_k(\tau(\varepsilon))$ et les formules suivantes pour les dimensions
\[\underline{n}_pr_1\eqdef\text{rang}(V_k(\varepsilon)/V_{k-1}(\varepsilon))=\text{rang}(V_{k+1}(\varepsilon)/V_{k}(\tau(\varepsilon)))=\underline{n}_p\cdot\left(\sum_{j\in J_1}d_j\right)>0\]
et
\[\underline{n}_pr_2\eqdef\text{rang}(V_{k}(\tau(\varepsilon))/V_{k}(\varepsilon))=\underline{n}_p\cdot\left(\sum_{j\in J_2}d_j\right)\geq 0.\]

Le raffinement du drapeau $V_{\bullet}(\varepsilon)$
\[(0)=V_0(\varepsilon)\subsetneq V_1(\varepsilon)\subsetneq \cdots \subsetneq V_{k}(\varepsilon)\subseteq V_{k}(\tau(\varepsilon))\subsetneq V_{k+1}(\varepsilon)\subsetneq \cdots \subsetneq V_r(\varepsilon)=V\]
est de stabilisateur $P^{-}\eqdef\widetilde{P_1}\cap\widetilde{P_2}$ ($P^{-}$ n'est en général pas un sous-groupe parabolique de Lusztig-Spaltenstein généralisé pour $X$). Par construction des modules $V_k$ comme somme de $V_j^i$, le drapeau
\[V_{k-1}(\varepsilon)\subsetneq V_{k}(\varepsilon)\subseteq V_{k}(\tau(\varepsilon)) \subsetneq V_{k+1}(\varepsilon)\]
vient avec un scindage $V_{k}(\varepsilon)=V_{k-1}(\varepsilon)\oplus W_1$, $V_{k}(\tau(\varepsilon))=V_{k}(\varepsilon)\oplus W_2$ et $V_{k+1}(\varepsilon)=V_{k}(\tau(\varepsilon))\oplus W_3$ où les $W_1, W_2,W_3$ sont eux aussi des sommes de
$V_j^i$. En outre, $X_\nilp$ induit un isomorphisme de $W_3$ sur $W_1$. On a $\text{rang}(W_1) =
\text{rang}(W_3) = \underline{n}_pr_1$ et $\text{rang}(W_2) = \underline{n}_pr_2$. Le sous-groupe parabolique $P^{-}$ contient $X_\ss$ dans son algèbre de Lie, son facteur de Levi semi-standard $M_{P^{-}}$ est muni d'une projection sur
\[\text{Aut}_D(W_1)\times \text{Aut}_D(W_2)\times \text{Aut}_D(W_3)\]
que l'on note par $m\mapsto(m_1,m_2,m_3)$. 

Pour $A$ une algèbre séparable sur $F$, on notera $\Nrd=\Nrd_{A/F} :A \rightarrow F$ la norme réduite. Si $A$ est de plus une algèbre à division, on notera $\deg A=\deg_FA$ son degré.

\begin{lemma}
Soit $g=mnk$ avec $m\in M_{P^{-}}(F)$, $n\in N_{P^{-}}(F)$ et $k\in K$. Avec les
notations ci-dessus,
\[-R_{P_1,X}(g)+R_{P_2,X}(g)=\frac{1}{\deg D}\log|\Nrd(m_1)^{-1}\Nrd(m_3)|\alpha^\vee\]
où $\alpha^\vee$ est l'unique élément de $\Delta_{P_1}^{Q,\vee}\cap (-\Delta_{P_2}^{Q,\vee})$.
\end{lemma}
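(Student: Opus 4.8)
Le plan est de ramener l'énoncé au calcul correspondant pour $X_\nilp$, c'est-à-dire \cite[section 5.7]{Ch17}, puis de tenir compte des deux seules différences avec cette référence : d'une part le passage d'un $X$ quelconque de partie semi-simple elliptique à sa partie nilpotente, d'autre part le remplacement du corps de base par une algèbre à division.

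D'abord, j'utiliserais la formule $R_{P_i,X}(g) = w_{P_i}\cdot H_{\widetilde{P_i}}(g)$ rappelée avant l'énoncé, où $\widetilde{P_i} = (\Ad w_{P_i}^{-1})P_i \in \cR^G(X)$, où $M_{\widetilde{P_i}}\eqdef (\Ad w_{P_i}^{-1})M$, et où l'action de $w_{P_i}$ identifie $a_{M_{\widetilde{P_i}}}$ à $a_M$. Jointe à l'orthogonalité (déjà démontrée) de la famille $(-R_P(g))_{P\in\P^G(M)}$ et au fait que $P_1$ et $P_2$ sont adjacents, $Q$ étant le sous-groupe parabolique qu'ils engendrent, elle fournit déjà que $-R_{P_1,X}(g)+R_{P_2,X}(g)$ est un multiple scalaire de $\alpha^\vee$, de sorte qu'il ne reste qu'à identifier ce scalaire. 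De plus, tous les objets en jeu ($w_{P_i}$, les applications \eqref{R} et \eqref{LS}, la famille $(R_P)$) sont compatibles à la décomposition de Jordan (cf. lemme \ref{lem:wP}) ; comme ici $X_\ss$ est elliptique, les bijections $\cR^G(X)\simeq\cR^{G_{X_\ss}}(X_\nilp)$ et $\mathcal{E}^G(X)\simeq\cR^G(X)$ du lemme \ref{YDLioplem:4.1} ramènent toute la question à l'élément nilpotent $X_\nilp$ dans $G_{X_\ss}$, qui est à nouveau du type GL sur l'algèbre à division $D^{{}^p\textbf{X}}$.

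Ensuite, je mènerais le calcul d'Iwasawa. Comme $P^- = \widetilde{P_1}\cap\widetilde{P_2}\subseteq\widetilde{P_i}$, en écrivant $g=mnk$ avec $m\in M_{P^-}(F)$, $n\in N_{P^-}(F)$, $k\in K$, puis en décomposant $n = n''n'$ avec $n''\in (N_{P^-}\cap M_{\widetilde{P_i}})(F)$ et $n'\in N_{\widetilde{P_i}}(F)$, on obtient $H_{\widetilde{P_i}}(g) = H_{M_{\widetilde{P_i}}}(mn'')$. Or pour un sous-groupe de Levi $M'$ du type GL, l'application $H_{M'}$ envoie un élément sur le vecteur des logarithmes des normes réduites de ses composantes bloc par bloc ; comme $n''$ est unipotent sa norme réduite vaut $1$, et la norme réduite d'une matrice triangulaire par blocs est le produit des normes réduites de ses blocs diagonaux. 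En utilisant les descriptions $\widetilde{P_1} = \text{stab}_G(V_{\bullet}(\varepsilon))$ et $\widetilde{P_2} = \text{stab}_G(V_{\bullet}(\tau(\varepsilon)))$, ainsi que le scindage $V_{k+1}(\varepsilon)/V_{k-1}(\varepsilon) = W_1\oplus W_2\oplus W_3$, on voit que les drapeaux définissant $\widetilde{P_1}$ et $\widetilde{P_2}$ ne diffèrent qu'en ce que $\widetilde{P_1}$ porte les blocs $W_1$ et $W_2\oplus W_3$ là où $\widetilde{P_2}$ porte $W_1\oplus W_2$ et $W_3$, tous les autres blocs coïncidant. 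En transportant par $w_{P_1}$ et $w_{P_2}$ vers $a_M$, et en utilisant $w_{P_1}w_{P_2}^{-1}\in\uW^{G,0,M_Q}$ (conséquence du lemme \ref{lem:2.10.2}, comme dans la preuve de l'orthogonalité), de sorte que les deux transports coïncident sur tous les blocs communs, la différence $-R_{P_1,X}(g)+R_{P_2,X}(g)$ se réduit aux contributions de $W_1$, $W_2$, $W_3$ ; celles de $W_2$ s'annulent et il reste $\log|\Nrd(m_1)^{-1}\Nrd(m_3)|$ fois la projection orthogonale sur $a_M$ de la coracine simple au mur concerné.

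Le dernier point est le recollement des normalisations, et c'est là — avec la traduction « déterminant » devient « norme réduite » — que réside l'unique différence de fond avec \cite[section 5.7]{Ch17}. Pour $G$ du type GL sur une algèbre à division $D$, la coracine simple $\beta_0^\vee$ de $(G,M_0)$ — le cocaractère $s\mapsto\mathrm{diag}(\ldots,s,\ldots,s^{-1},\ldots)$ de $A_{M_0}$, dont l'accouplement avec les caractères $\Nrd_{D/F}$ des facteurs de $M_0$ vaut $\pm\deg D$ — est égale à $\deg D$ fois le vecteur « naïf » $e_s - e_{s+1}$ dans les coordonnées de $a_{M_0}=\Hom(X^\ast(M_0),\R)$ duales aux $\Nrd_{D/F}$ ; c'est exactement ce facteur qui produit le $1/\deg D$ de l'énoncé. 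En le combinant avec l'étape précédente on obtient la formule voulue, $\alpha^\vee$ étant l'unique élément de $\Delta_{P_1}^{Q,\vee}\cap(-\Delta_{P_2}^{Q,\vee})$ (l'orientation étant fixée par la normalisation $\varepsilon_{k,j}\leq\varepsilon_{k+1,j}$). Je m'attends à ce que la principale difficulté soit non pas conceptuelle mais la tenue soigneuse des conjugaisons par les $w_{P_i}$ et des identifications de blocs : le contenu proprement dit est déjà disponible (orthogonalité, compatibilité à Jordan, normalisation des coracines), et l'argument est pour l'essentiel celui de Chaudouard transcrit au cadre des formes intérieures.
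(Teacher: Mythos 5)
Votre proposition est correcte et suit pour l'essentiel la même route que le texte (lui-même calqué sur le lemme 5.7.2 de \cite{Ch17}) : réduction à $m\in M_{P^-}(F)$ par les invariances de $R_{P_i}$, écriture $R_{P_i}=w_i\cdot H_{\widetilde{P_i}}$, identification de $w_1^{-1}w_2$ modulo $\uW^{M_{\widetilde{P_2}},0}$ comme l'échange de $W_1$ et $W_3$ lu sur les deux drapeaux, puis extraction du scalaire via l'accouplement $\langle \Nrd\circ(\text{projection sur }\Aut_D(W_1)),\beta^\vee\rangle=\deg D$ — la seule différence étant que vous calculez toutes les coordonnées là où le texte teste contre ce seul caractère, ce qui suffit grâce à l'orthogonalité déjà démontrée. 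Une réserve : votre réduction liminaire à $X_\nilp$ dans $G_{X_\ss}$ sur $D^{{}^p\textbf{X}}$ n'est légitime que pour la combinatoire de $\cR^G(X)$ ; les objets analytiques ($H_P$, $K$, la décomposition d'Iwasawa de $g\in G(F)$, les normes réduites $\Nrd(m_i)$ et le facteur $1/\deg D$ de l'énoncé) vivent dans $G$ sur $D$, et votre calcul ultérieur y reste d'ailleurs à juste titre, de sorte que cette étape doit être supprimée ou restreinte à l'identification combinatoire des $\widetilde{P_i}$.
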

\begin{proof}
La preuve est la même qu'en \cite[Lemme 5.7.2]{Ch17} : on sait que l'égalité est vraie à un scalaire près, le point est d'évaluer ce scalaire. La fonction $g\mapsto R_{P_i} (g)$ est invariante à gauche par les éléments $h \in \widetilde{P_i}(F)$ tels que $|\chi(h)| = 1$ pour tout $\chi \in X^\ast(\widetilde{P_i})$. Elle est donc invariante à gauche par $N_{P^{-}}$. Elle est par ailleurs invariante à droite par $K$. On est ramené à prouver l'égalité pour $n$ et $k$ triviaux. On a également $R_{P_i} = w_i \cdot H_{\widetilde{P_i}}$ avec $w_i=w_{P_i,X}$. On est donc ramener à prouver pour tout $m \in M_{P^{-}} (F)$ l'égalité
\begin{equation}\label{eq:(5.7.7)}
-H_{\widetilde{P_1}}(m)+(w_1^{-1}w_2)\cdot H_{\widetilde{P_2}}(m)=\frac{1}{\deg D}\log|\Nrd(m_1)^{-1}\Nrd(m_3)|\beta^\vee 
\end{equation}
où $\beta^\vee$ est l'unique élément de $\Delta_{\widetilde{P_1}}^{\widetilde{Q},\vee}$. Le groupe $M_{\widetilde{P_1}}$ est muni naturellement d'une projection sur $\Aut_D(W_1)$ et on va composer cette projection avec la norme réduite. Cela donne un caractère $\chi$ de $X^\ast(\widetilde{P_1})$ pour lequel $\langle \chi, \beta^\vee\rangle = \deg D$ et $\langle \chi,H_{\widetilde{P_1}}(m)\rangle = \log|\Nrd(m_1)|$. Comme (\ref{eq:(5.7.7)}) est au moins vraie à une constante près, il suffit de la vérifier sur le caractère $\chi$.

Pour continuer, il nous faut comprendre l'action de $w_1^{-1}w_2$. On sait que (cf. \cite[preuve du lemme 5.7.1]{Ch17}) $w_1^{-1}P_2w_1$
est le stabilisateur du drapeau
\[(0)=V_0(\varepsilon)\subsetneq V_1(\varepsilon)\subsetneq \cdots\subsetneq V_{k-1}(\varepsilon)\subsetneq V_{k}(\varepsilon)\oplus W_2\oplus W_3\subsetneq V_{k+1}(\varepsilon)\subsetneq \cdots \subsetneq V_r(\varepsilon)=V.\]
Le groupe $\widetilde{P_2}$ est le stabilisateur du drapeau 
\[(0)=V_0(\varepsilon)\subsetneq V_1(\varepsilon)\subsetneq \cdots\subsetneq V_{k-1}(\varepsilon)\subsetneq V_{k}(\varepsilon)\oplus W_1\oplus W_2\subsetneq V_{k+1}(\varepsilon)\subsetneq \cdots \subsetneq V_r(\varepsilon)=V.\]
L'élément $w \in \uW^{M_{\widetilde{Q}},0}$ d'ordre 2 qui échange $W_1$ et $W_3$ (en induisant $X_{\nilp}$ sur $W_3$) conjugue $\widetilde{P_2}$ en $w_1^{-1}P_2w_1$ tout comme d'ailleurs $w_1^{-1}w_2$. Ces éléments sont donc égaux à un élément de $\uW^{M_{\widetilde{P_2}},0}$ près qui, de toute façon, agit trivialement sur $a_{\widetilde{P_2}}$. Le caractère $\chi\circ (w_1^{-1}w_2)$ est alors celui obtenu par composition de la norme réduite avec la projection $M_{\widetilde{P_1}} \rightarrow \Aut_D(W_3)$. Le lemme est prouvé.
\end{proof}

La décomposition de $V$ en somme des $V_j^i$  induit une projection $\g\rightarrow \Hom_D(W_3,W_1)$ dont on note $U\mapsto U_{1,3}$ la restriction à $\mathfrak{p}^{-}=\m_{P^{-}}\oplus\n_{P^{-}}$ . Soit $\Nrd(U_{1,3})=\Nrd_{\Hom_D(W_3,W_1)}(U_{1,3})$ la norme réduite de la matrice de $U_{1,3}$ dans les bases ordonnées de $W_1$ et $W_3$ extraites de la base ordonnée $e$ de $V$. Observons qu'on a
\[\n_{\widetilde{P_1}}\cap \n_{\widetilde{P_2}}=\Hom_D(W_3,W_1)\oplus\n_{\widetilde{Q}}.\]
\begin{lemma}\label{lem:diffadjacentR_P}
Soit $Y \in \g(F)$ dans l'orbite de $X$ sous $G(F)$.
\begin{enumerate}
    \item Il existe $k \in K$ tel que
    \[U\eqdef kYk^{-1}\in (\Ad M_{P^-})X_\ss \oplus (\n_{\widetilde{P_1}}\cap \n_{\widetilde{P_2}}).\]
    Ici on note $\oplus$ au lieu de $+$ par abus de notation, comme $(\Ad M_{P^-})X_\ss$ vit dans $\m_{P^{-}}$, un sous-espace vectoriel en somme directe avec $\n_{\widetilde{P_1}}\cap \n_{\widetilde{P_2}}$. Le nombre $|\Nrd(U_{1,3})| \in \R$ ne dépend pas du choix de $k$.
    \item Pour tout $g \in G(F)$ tel que $Y = g^{-1}Xg$, on a
    \[-R_{P_1,X}(g)+R_{P_2,X}(g)=\frac{1}{\deg D}\log|\Nrd(U_{1,3})|\alpha^\vee\]
    où $\alpha^\vee$ est l'unique élément de $\Delta_{P_1}^{Q,\vee}\cap (-\Delta_{P_2}^{Q,\vee})$.
\end{enumerate}
\end{lemma}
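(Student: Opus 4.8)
Le plan est de voir cet énoncé comme l'analogue, compatible à la décomposition de Jordan, de l'énoncé correspondant de \cite[section 5.7]{Ch17} : dans la situation elliptique, le polynôme caractéristique réduit de $X_\ss$ est $p$-primaire, donc $\ad(X_\ss)$ est nilpotent sur $\g$ et $X_\ss$ joue le même rôle que $0$ dans l'argument nilpotent de Chaudouard, les déterminants de blocs étant simplement remplacés par des normes réduites. Tous les calculs auront lieu dans le sous-groupe parabolique $P^{-}=\widetilde{P_1}\cap\widetilde{P_2}$, au moyen de la factorisation d'Iwahori $K\cap P^{-}(F)=(K\cap M_{P^{-}}(F))(K\cap N_{P^{-}}(F))$, de la décomposition $\n_{\widetilde{P_1}}\cap\n_{\widetilde{P_2}}=\Hom_D(W_3,W_1)\oplus\n_{\widetilde{Q}}$ et des deux faits élémentaires suivants : $\Ad(N_{P^{-}})$ agit trivialement sur $(\n_{\widetilde{P_1}}\cap\n_{\widetilde{P_2}})/\n_{\widetilde{Q}}\simeq\Hom_D(W_3,W_1)$ (tout crochet $[\n_{P^{-}},\Hom_D(W_3,W_1)]$ tombe dans $\n_{\widetilde{Q}}$, par inspection des positions de blocs), et $\Ad(m)$ agit sur $\Hom_D(W_3,W_1)$ par $\phi\mapsto m_1\phi m_3^{-1}$ lorsque $m\in M_{P^{-}}(F)$ a pour projection $(m_1,m_2,m_3)$.

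Pour le point 1, existence de $k$ : comme $X=X_\ss+X_\nilp$ appartient à $S\eqdef(\Ad M_{P^{-}})X_\ss\oplus(\n_{\widetilde{P_1}}\cap\n_{\widetilde{P_2}})$ et que $Y$ est $G(F)$-conjugué à $X$, j'utiliserais la décomposition d'Iwasawa le long de $\widetilde{P_1}$ : si $g=p\kappa$ avec $p\in\widetilde{P_1}(F)$ et $\kappa\in K$, alors $(\Ad\kappa)Y=(\Ad p^{-1})X$ appartient, d'après le point 5 de la proposition \ref{chap3prop:indprop}, à $(\Ad\widetilde{P_1}(F))X=\Ind_{\widetilde{P_1}}^G(X_\ss)\cap((\Ad M_{\widetilde{P_1}})X_\ss+\n_{\widetilde{P_1}})$ ; une conjugaison supplémentaire par un élément convenable de $\widetilde{P_1}(F)\cap K$, compatible à la décomposition par blocs, l'amène dans $S$. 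Lorsque $X$ est nilpotent c'est l'énoncé correspondant de \cite[section 5.7]{Ch17}, et le cas général s'en déduit par la compatibilité de toute la situation à la décomposition de Jordan (point 2 de la proposition \ref{chap3prop:indprop}, lemme \ref{lem:wP}). Indépendance de $|\Nrd(U_{1,3})|$ : deux choix $U,U'$ vérifient $U'=(\Ad\kappa_0)U$ pour un $\kappa_0\in K$ ; mais $U,U'\in S$ étant dans la même $G(F)$-orbite, et $G_X$ étant contenu dans tout élément de $\cR^G(X)'$ (point 6 de la proposition \ref{chap3prop:indprop}) et connexe pour un groupe du type GL, $U$ et $U'$ sont en réalité $P^{-}(F)$-conjugués ; par intersection $\kappa_0\in K\cap P^{-}(F)=(K\cap M_{P^{-}}(F))(K\cap N_{P^{-}}(F))$, et les deux faits élémentaires ci-dessus montrent que le passage de $U$ à $U'$ ne multiplie $\Nrd(U_{1,3})$ que par des normes réduites d'unités de $M_{P^{-}}(F)$, donc de valeur absolue $1$.

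Pour le point 2, étant donné $g$ avec $Y=(\Ad g^{-1})X$, je décomposerais $g=mn\kappa$ suivant Iwasawa le long de $P^{-}$ (avec $m\in M_{P^{-}}(F)$, $n\in N_{P^{-}}(F)$, $\kappa\in K$). Alors $g$ est exactement de la forme requise par le lemme précédent, d'où
\[-R_{P_1,X}(g)+R_{P_2,X}(g)=\frac{1}{\deg D}\log|\Nrd(m_1)^{-1}\Nrd(m_3)|\,\alpha^\vee.\]
Il reste à identifier $|\Nrd(m_1)^{-1}\Nrd(m_3)|$ avec $|\Nrd(U_{1,3})|$. D'une part $(\Ad m^{-1})X\in S$ a pour composante dans $\Hom_D(W_3,W_1)$ l'élément $m_1^{-1}(X_\nilp)_{1,3}m_3$, de norme réduite de valeur absolue $|\Nrd(m_1)^{-1}\Nrd(m_3)|$, puisque $(X_\nilp)_{1,3}$ est la matrice, dans les sous-bases de $e$, de l'isomorphisme de décalage $W_3\xrightarrow{\sim}W_1$, donc de norme réduite de valeur absolue $1$. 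D'autre part $(\Ad m^{-1})X$ et $U$ sont deux éléments de $S$ dans la $G(F)$-orbite de $X$, donc conjugués par un élément de $P^{-}(F)$ ; compte tenu de $U=(\Ad k)Y$, de $(\Ad m^{-1})X=(\Ad n\kappa)Y$ et de $G_X\subseteq P^{-}$, la composante de Levi de ce conjugateur appartient, via la factorisation d'Iwahori, à $K\cap M_{P^{-}}(F)$, de sorte que $|\Nrd(U_{1,3})|=|\Nrd(m_1)^{-1}\Nrd(m_3)|$, ce qui donne la formule.

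Le point délicat n'est pas conceptuel mais tient à la gestion des blocs : s'assurer que les diverses conjugaisons peuvent être confinées à $P^{-}(F)$, voire à $K\cap P^{-}(F)$ — ce qui repose sur l'inclusion $G_X\subseteq\bigcap_{P\in\cR^G(X)'}P$ et la connexité des centralisateurs en type GL — puis suivre la composante dans $\Hom_D(W_3,W_1)$ à travers les actions du facteur de Levi et du radical unipotent ; tout cela est déjà effectué dans \cite[section 5.7]{Ch17} pour $X$ nilpotent, et la nilpotence de $\ad(X_\ss)$ garantit que les mêmes calculs s'appliquent sans changement. On notera enfin que ce lemme, joint à l'invariance à gauche par $G_X(F)$ et à droite par $K$ du poids (lemme \ref{lem:poidsinvnorm}), est ce qui permettra d'établir l'indépendance de $J_L^Q(\o,-)$ vis-à-vis des matrices auxiliaires ${}^p\textbf{X}$.
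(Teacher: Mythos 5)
Votre démonstration est correcte et suit pour l'essentiel la même route que celle du texte : décomposition d'Iwasawa relative à $P^{-}$, formule par blocs $(p^{-1}Up)_{1,3}=m_1^{-1}U_{1,3}m_3$ pour $p\in P^{-}$ de projection $(m_1,m_2,m_3)$, annulation de $\log|\Nrd|$ sur $K\cap M_{P^{-}}(F)$, puis réduction du point 2 au lemme précédent via $|\Nrd(X_{1,3})|=1$. Le seul endroit où votre rédaction est plus lâche que nécessaire est l'existence de $k$ au point 1 : vous faites Iwasawa le long de $\widetilde{P_1}$ puis invoquez « une conjugaison supplémentaire par un élément convenable de $\widetilde{P_1}(F)\cap K$ », dont l'appartenance à $K$ n'est pas justifiée telle quelle. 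Il est plus direct (et c'est ce que fait le texte) de décomposer $g=pk$ le long de $P^{-}$ lui-même : alors $U=kYk^{-1}=p^{-1}Xp$ appartient à la $P^{-}(F)$-orbite de $X$, laquelle, puisque $G_X\subseteq P^{-}=\widetilde{P_1}\cap\widetilde{P_2}$, est un ouvert dense de $(\Ad M_{P^{-}})X_\ss\oplus(\n_{\widetilde{P_1}}\cap\n_{\widetilde{P_2}})$ ; l'existence et l'unicité de $k$ modulo $K\cap P^{-}(F)$ en découlent d'un coup, et votre argument d'indépendance de $|\Nrd(U_{1,3})|$ s'applique alors sans changement.
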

\begin{proof}
On a $Y = g^{-1}Xg$ pour un certain $g \in G(F)$. Par décomposition d'Iwasawa, on a $g = pk$ avec $p \in P^{-}(F)$. Il s'ensuit que $U = kY k^{-1}$ appartient à la $P^{-}(F)$-orbite de $X$. Comme $G_X \subseteq P^{-}=\widetilde{P}_1\cap \widetilde{P}_2$, la $P^{-}$-orbite de $X$ est l'intersection des orbites de $X$ sous $\widetilde{P}_1$ et $\widetilde{P}_2$, c'est aussi un ouvert dense de $(\Ad M_{P^{-}})X_\ss \oplus (\n_{\widetilde{P_1}}\cap \n_{\widetilde{P_2}})$. D'autre part, si $k\in K$ vérifie $kYk^{-1}\in (\Ad M_{P^{-}})X_\ss \oplus (\n_{\widetilde{P_1}}\cap \n_{\widetilde{P_2}})$ alors il existe $p\in P^{-}(F)$ tel que $kg^{-1}Xgk^{-1}=p^{-1}Xp$, donc $g\in G_X(F)pk\subseteq P^{-}(F)k$. L'élément $k$ est nécessairement la composante sur $K$ de la décomposition d'Iwasawa de $g$ relativement au sous-groupe parabolique $P^{-}$. Il s'ensuit que $k$ est bien défini à une translation à gauche près par un élément de $K \cap P^{-}(F)$. Pour tout élément $p \in P^{-}$ de projection $(m_1,m_3)$ sur $\text{Aut}_D (W_1) \times \text{Aut}_D (W_3)$ et tout $U \in (\Ad M_{P^{-}})X_\ss \oplus (\n_{\widetilde{P_1}}\cap \n_{\widetilde{P_2}})$, on a $(p^{-1}Up)_{1,3}=m_1^{-1}U_{1,3}m_3$, et donc \begin{equation}\label{eq:(5.7.9)}
 \Nrd ((p^{-1}Up)_{1,3})=\Nrd(m_1^{-1})\Nrd(U_{1,3})\Nrd(m_3).   
\end{equation} 
Si, de plus, $p \in K \cap P^{-}(F)$, on aura $|\Nrd(m_1)| = |\Nrd(m_3)| = 1$, car 
\[K=\begin{cases*}
 \GL_n(\O_D)  & \text{si $F$ est non-archimédien} \\
 \{g\in \GL_n(D):g\cdot{}^t\overline{g}=\text{Id}\}& \text{si $F$ est archimédien,} 
\end{cases*}\]
d'où l'indépendance vis-à-vis du choix de $k$ dans l'assertion 1. Prouvons l'assertion 2. Soit $g = pk$ la décomposition d'Iwasawa de $g$ selon
$G(F) = P^{-}(F)K$. Dans ce cas, on a $U = p^{-1}Xp$. Il vient d'après (\ref{eq:(5.7.9)}) et l'égalité $\Nrd(X_{1,3}) = 1$ que $\Nrd(U_{1,3})=\Nrd(m_1^{-1})\Nrd(X_{1,3})\Nrd(m_3)=\Nrd(m_1^{-1})\Nrd(m_3)$, on tombe sur le lemme précédent.
\end{proof}

Soit $I_{1,3}\eqdef \text{Iso}_D(W_3,W_1)\subsetneq \Hom_D(W_3,W_1)$ la sous-variété ouverte sur $F$ des
$D$-isomorphismes de $W_3$ sur $W_1$. Le groupe des $F$-points $I_{1,3}$ est muni de la mesure de Haar normalisée selon l'équation \eqref{eq:HaarmeasureonGXss}.

\begin{lemma}\label{lem:centralizerOIformula}
Il existe une constante $c_{P_1,P_2}(X) > 0$ telle que pour toute fonction $f \in L^1((\Ad G(F))X)$ on ait    
\begin{align*}
\int_{G_X(F)\backslash G(F)}&f(g^{-1}Xg)\,dg\\
&=c_{P_1,P_2}(X)\cdot\int_{(M_{P^{-}})_{X_\ss}(F)\backslash M_{P^{-}}(F)}\int_{I_{1,3}(F)}\int_{\n_{\widetilde{Q}}(F)}\int_K \\
&\hspace{3cm}f(k^{-1}(m^{-1}X_\ss m+y+V)k)|\Nrd(y)|^{\underline{n}_p(r_1+r_2)\deg D}\,dk\,dV\,dy\,dm   
\end{align*}
\end{lemma}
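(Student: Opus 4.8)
The proof proceeds by a parabolic descent followed by an explicit Jacobian computation. I record first two facts that will be used. Since $\widetilde{P_1},\widetilde{P_2}\in\cR^G(X)\subseteq\cR^G(X)'\subseteq\cLS^G(X)$, Proposition~\ref{chap3prop:indprop}(6) together with the connectedness of $G_X$ (a group of type GL) gives $G_X\subseteq\widetilde{P_1}\cap\widetilde{P_2}=P^-$, hence $\mathrm{Cent}(X,P^-)=G_X$. Moreover $X\in\Ind_{M_{\widetilde{P_1}}}^G(X_\ss)$ (the semi-standard Levi $M_{\widetilde{P_1}}$ contains $X_\ss$ since $\widetilde{P_1}\supseteq\uM^X$ by Lemma~\ref{lem:LSsstand}, so $\pi_{\widetilde{\mathfrak{p}}_1,\mathfrak{m}_{\widetilde{P_1}}}(X_\ss)=X_\ss$), and by Proposition~\ref{chap3prop:indprop}(4) the point $X$ lies in the regular, hence dense open, locus of $(\Ad M_{\widetilde{P_1}})X_\ss+\mathfrak{n}_{\widetilde{P_1}}$ (and likewise for $\widetilde{P_2}$, and for $P^-$ as recalled in the proof of Lemma~\ref{lem:diffadjacentR_P}). \emph{Step 1.} I would apply the integration formula of \S\ref{YDLiopsubsec:normalizationmeasure} with $P=\widetilde{P_1}$, $M=M_{\widetilde{P_1}}$ and (since $X_\nilp\in\mathfrak{n}_{\widetilde{P_1}}$) the ``$Y$'' there equal to $X_\ss$. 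This expresses $\int_{G_X(F)\backslash G(F)}f(g^{-1}Xg)\,dg$ as the positive constant $\gamma^G(\widetilde{P_1})\,|D^{\mathfrak{g}}(X)|^{-1/2}|D^{\mathfrak{m}_{\widetilde{P_1}}}(X_\ss)|^{1/2}$ times
\[
\int_{\mathrm{Cent}(X_\ss,M_{\widetilde{P_1}})(F)\backslash M_{\widetilde{P_1}}(F)}\ \int_{\mathfrak{n}_{\widetilde{P_1}}(F)}\ \int_K f\bigl(k^{-1}(m^{-1}X_\ss m+U)k\bigr)\,dk\,dU\,dm .
\]

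\emph{Step 2.} Next I would use the direct sum decomposition $\mathfrak{n}_{\widetilde{P_1}}(F)=\Hom_D(W_2,W_1)(F)\oplus\Hom_D(W_3,W_1)(F)\oplus\mathfrak{n}_{\widetilde{Q}}(F)$ established in the analysis of adjacent parabolics preceding Lemma~\ref{lem:diffadjacentR_P}, and write $U=u+y+V$ accordingly. Since $f$ is carried by the single orbit $(\Ad G(F))X$ and $X$ is regular in $(\Ad M_{\widetilde{P_1}})X_\ss+\mathfrak{n}_{\widetilde{P_1}}$, the element $m^{-1}X_\ss m+u+y+V$ meets $(\Ad G(F))X$ only when $y$ is invertible, i.e. $y\in I_{1,3}(F)$; the complement being a null set, the $\Hom_D(W_3,W_1)$-integral can be replaced by $\int_{I_{1,3}(F)}$.

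\emph{Step 3.} The core of the argument is then a change of variables absorbing the $\Hom_D(W_2,W_1)(F)$-variable $u$, together with the re-organisation of the $M_{\widetilde{P_1}}$-integral, into an integral over $\mathrm{Cent}(X_\ss,M_{P^-})(F)\backslash M_{P^-}(F)$, at the price of the weight $|\Nrd(y)|^{\underline{n}_p(r_1+r_2)\deg D}$. Concretely, for fixed $(y,V)$ one conjugates $m^{-1}X_\ss m+u+y+V$ by a suitable element of $N_{P^-}(F)$; this is licit because $G_X\subseteq P^-$, so the $P^-$-orbit of $X$ is $(\widetilde{P_1}\text{-orbit})\cap(\widetilde{P_2}\text{-orbit})$ and Lemma~\ref{lem:diffadjacentR_P}(1) brings every orbit point, uniquely up to $K\cap P^-(F)$, into $(\Ad M_{P^-})X_\ss\oplus(\mathfrak{n}_{\widetilde{P_1}}\cap\mathfrak{n}_{\widetilde{P_2}})$ with invertible $\Hom_D(W_3,W_1)$-component. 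Carrying this out in block-matrix form — organised, exactly as in \cite[\S 5.7]{Ch17}, along the eliminated unipotent directions $\Hom_D(W_2,W_1)\oplus\Hom_D(W_3,W_2)$, on which $\mathrm{ad}$ and pre-/post-composition with the isomorphism $y$ act — produces a Jacobian equal to a positive constant times a power of $|\Nrd(y)|$. That the power is $\underline{n}_p(r_1+r_2)\deg D$ can be confirmed by a homogeneity check: replacing $X$ by $tX$ with $t\in F^\times$ multiplies the left-hand side by $|t|^{\dim_F(\mathfrak{g}/\mathfrak{g}_{X_\ss})/2}$ (from $D^{\mathfrak{g}}(tX)=t^{\dim_F(\mathfrak{g}/\mathfrak{g}_{X_\ss})}D^{\mathfrak{g}}(X)$ and the scaling $\Ad(g^{-1})(tX)=t\,\Ad(g^{-1})X$), while on the right the substitution $y\mapsto ty$, $V\mapsto tV$, $m^{-1}X_\ss m\mapsto t\,m^{-1}X_\ss m$ shows the same, using $\Nrd(ty)=t^{\underline{n}_pr_1\deg D}\Nrd(y)$. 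Gathering the (positive) measure- and Jacobian-constants into $c_{P_1,P_2}(X)$ yields the stated identity.

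The hard part is Step 3, the explicit block-matrix Jacobian computation. It generalises the computation of \cite[\S 5.7]{Ch17}, carried out there for $X$ nilpotent and $G=\GL_{n,F}$, in two directions. To a general standard $X$: this is handled by the compatibility with the Jordan decomposition of all the relevant data — the set $\cR^G(X)$, the elements $w_P$, the functions $R_P$, and the normalisations of measures (cf. Lemmas~\ref{lem:wP}, \ref{lem:actcent} and Proposition~\ref{YDLiopprop:compatibilityHaarmeasures}) — which reduces the problem first to the elliptic situation inside $G_{X_\ss}$ and then to a nilpotent orbit. To an inner form of $\GL_n$: this is handled by systematically replacing the determinant by the reduced norm $\Nrd$, precisely the substitution already made in the proof of Lemma~\ref{lem:actcent}, and the source of the factor $\deg D$ in the exponent.
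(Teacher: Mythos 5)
Your Step 1 is correct and your Step 2 is salvageable (the null-set argument suffices; note however that your first justification there is false: the $\widetilde{P_1}$-orbit of $X$ is \emph{not} contained in the locus where the $\Hom_D(W_3,W_1)$-component is invertible — already for $\GL_3$ and $X$ of Jordan type $(2,1)$, the element with $u\neq 0$ and $y=0$ is a regular point of $\n_{\widetilde{P_1}}$ and hence lies in the orbit of $X$). The genuine gap is in Step 3, which you yourself identify as the core of the argument but do not carry out, and whose sketch is flawed in two concrete ways. First, conjugation by $N_{P^{-}}(F)$ does not eliminate the variable $u\in\Hom_D(W_2,W_1)(F)$: on the component $\Hom_D(W_2\oplus W_3,W_1)$ it acts by $(u,y)\mapsto (u,\,y+u c)$, so it can only modify $y$. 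The element that kills $u$ (namely $(u,y)\mapsto (u,y)\mu$ with $\mu=\mathrm{Id}-y^{-1}u$) lives in the $\Hom_D(W_2,W_3)$-direction of $M_{\widetilde{P_1}}$, which lies in the \emph{opposite} nilradical $\overline{\n_{P^{-}}}$; since such an element is neither in $K$ nor in the group over which you are integrating, pushing it through $f(k^{-1}(\cdot)k)$ forces a new Iwasawa decomposition and reshuffles the $m$- and $k$-variables, which is precisely the bookkeeping your sketch omits. Second, the homogeneity check cannot pin down the exponent of $|\Nrd(y)|$: the statement only asserts the existence of \emph{some} constant $c_{P_1,P_2}(X)>0$, and comparing the identity for $X$ with the identity for $tX$ leaves the ratio $c_{P_1,P_2}(tX)/c_{P_1,P_2}(X)$ as a free power of $|t|$ that can absorb any discrepancy; the scaling argument is therefore circular unless you independently control that ratio.

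The paper's proof avoids all of this by a soft argument that also explains where the exponents come from. It works with $P^{-}$ directly: the $P^{-}(F)$-orbit $H$ of $X$ is open dense in $(\Ad M_{P^{-}})X_\ss\oplus(\n_{\widetilde{P_1}}\cap\n_{\widetilde{P_2}})$, and since $G_X\subseteq P^{-}$ one compares two measures on the single homogeneous space $G_X(F)\backslash P^{-}(F)\simeq H(F)$: the quotient measure, and $|\Nrd(U_{1,3})|^{\underline{n}_p r_2\deg D}\,dU$. Both are relatively invariant under $P^{-}(F)$ with the same modulus character $\delta_{P^{-}}$ — the exponent $\underline{n}_p r_2\deg D$ is exactly the one forced by equation \eqref{eq:(5.7.9)} and the transformation of $dU$ — hence they are proportional; a second application of the same principle converts $dU$ into $dm\,dy\,dV$ with the additional factor $|\Nrd(y)|^{\underline{n}_p r_1\deg D}$, and the Iwasawa decomposition $G(F)=P^{-}(F)K$ finishes. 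If you want to keep your route through the $\widetilde{P_1}$-descent, you should replace your Step 3 by this relative-invariance comparison (or, equivalently, verify directly that the measure $\phi\mapsto\int\!\int\!\int \phi(m^{-1}X_\ss m+y+V)|\Nrd(y)|^{\underline{n}_p(r_1+r_2)\deg D}\,dV\,dy\,dm$ is a $\delta_{P^{-}}$-relatively invariant measure on $H(F)$ and invoke uniqueness), rather than attempting an explicit change of variables whose Jacobian you have not computed.
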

\begin{proof}
Soit $H$ la $P^{-}(F)$-orbite de $X$ : c'est un ouvert dense de $(\Ad M_{P^{-}})X_\ss \oplus (\n_{\widetilde{P_1}}\cap \n_{\widetilde{P_2}})$. On sait qu’il existe une constante $c_1> 0$ telle que pour toute fonction complexe intégrable $\phi$ sur $H$, on a  \begin{align*}  
\int_{G_X(F)\backslash P^{-}(F)}\phi(g^{-1}Xg)\,dg=c_{1}\cdot\int_{H}
\phi(U)|\Nrd(U_{1,3})|^{\underline{n}_pr_2\deg D}\,dU  
\end{align*}
car les deux mesures en question sont définies sur le même espace homogène (de $P^{-}$) relativement invariantes pour la même fonction module $\delta_{P^-}$. De même il existe une constante $c_2> 0$
\begin{align*}  
\int_{H}&\phi(U)|\Nrd(U_{1,3})|^{\underline{n}_pr_2\deg D}\,dU  \\
&=c_2\cdot\int_{(M_{P^{-}})_{X_\ss}(F)\backslash M_{P^{-}}(F)}\int_{I_{1,3}(F)}\int_{\n_{\widetilde{Q}}(F)}\phi(m^{-1}X_\ss m+y+V)|\Nrd(y)|^{\underline{n}_p(r_1+r_2)\deg D}\,dk\,dV\,dy\,dm.
\end{align*}
Cela implique l'existence de $c_{P_1,P_2}(X)$.
\end{proof}

On s'acquiert ainsi une description raisonnable du quotient $G_X(F)\backslash G(F)$ dans la situation elliptique. Pour $Y\in \g(F)$ un élément quelconque, représentant standard de sa $G(F)$-orbite, on peut prendre $H=\text{envL}(G_{Y_\ss};G)$ et $P\in\P^G(H)$, alors $G_Y(F)\backslash G(F)=H_Y(F)\backslash H(F)N_P(F)K$, ce qui nous gratifie d'une description raisonnable du quotient $G_Y(F)\backslash G(F)$ dans la situation générale.

\subsection{Norme}\label{subsec:norme}

On définit une norme abstraite en se fondant sur l'approche de \cite[section 18]{Kottbook}. Par une norme abstraite sur un ensemble $A$ on entend une fonction $\|\cdot\|:A\rightarrow \R_{\geq 1}$. Si $\|\cdot\|_1$ et $\|\cdot\|_2$ sont deux normes abstraites, on dit que $\|\cdot\|_2$ domine $\|\cdot\|_1$ et note $\|\cdot\|_1\prec\|\cdot\|_2$ s'il existe $c>0$ et $e>0$ tels que $\|a\|_1< c\|a\|_2^e$ pour tout $a\in A$. Deux normes abstraites sont dites équivalentes si l'une domine l'autre et vice versa.

Les propriétés mentionnées ci-après se trouvent dans les notes citées de Kottwitz. Soit $A$ un schéma affine de type fini sur $F$, et $\O_A$ l'anneau des fonctions régulières. Fixons $\{f_1,\dots,f_m\}$ un ensemble de générateurs de $F$-algèbre $\O_A$. On pose une norme abstraite sur $A(F)$ par $\|a\|=\max\{1,|f_1(a)|,\dots,|f_m(a)|\}$ pour $a\in A(F)$. Sa classe d'équivalence ne dépend pas de l'ensemble de générateurs choisi, et on appellera une norme sur $A(F)$ toute norme abstraite obtenue ainsi. Pour tous morphisme $\phi:A\rightarrow B$ de schémas affines de type fini, normes $\|\cdot\|_A$ sur $A(F)$ et $\|\cdot\|_B$ sur $B(F)$, le tiré en arrière $\phi^\ast\|\cdot\|_B$ est dominé par $\|\cdot\|_A$. Il s'agit d'une équivalence de normes si le morphisme est fini.

Fixons pour la suite $\|\cdot\|_G$ une norme sur $G(F)$. On a une liste de propriétés formelles :
\begin{enumerate}
    \item $\|(\cdot)^{-1}\|_G$ est une norme sur $G(F)$, équivalente à $\|\cdot\|_G$ ;
    \item soit $m:G\times G\rightarrow G$ la multiplication, alors le tiré en arrière $m^\ast\|\cdot\|_G$ est dominé par la norme sur $G(F)\times G(F)$ ;
    \item $\|\cdot\|_G$ est bornée sur tout sous-groupe compact de $G(F)$ ;
    \item soit $P\in \F^G(M_0)$, on écrit $x=m_P(x)n_P(x)k(x)$ avec $m_P(x)\in M_P(F)$, $n_P(x)\in N_P(F)$, $k(x)\in K(F)$, une décomposition d'Iwasawa d'un élément général $x\in G(F)$, alors $\|m_P(\cdot)\|_G+\|n_P(\cdot)\|_G$ est une norme sur sur $G(F)$, équivalente à $\|\cdot\|_G$ ;
\end{enumerate}
partant des deux dernières propriétés il n'est pas difficile de prouver la majoration suivante : pour tous $M\in \L^G(M_0)$ et $Q\in \F^G(M)$, $\|x\|_{M\backslash G}=\min\{\|mx\|_G:m\in M(F)\}$ est une norme sur $(M\backslash G)(F)$ et il existe $c>0$ et $e>0$ tels que 
\[|v_M^Q(x)|< c(1+\log\|x\|_{M\backslash G})^e,\,\,\,\,\forall x\in (M\backslash G)(F)=M(F)\backslash G(F).\]

Munissons également $\g(F)$ d'une norme $\|\cdot\|_{\g}$ pour la suite, au sens des normes vectorielles et non des normes abstraites. Cela procure à tout sous-espace vectoriel de $\g(F)$ une norme vectorielle. On voit facilement, par définition, que 
\begin{enumerate}
    \item[(5)] pour tout $A>1$ la norme abstraite $A+\|\cdot\|_\g$ est dominée par $\|\cdot\|_G$ sur $G(F)$ ;
    \item[(6)] pour tout intervalle compact $I$ de $\R_{>0}$ et tout réel $A>1$ la norme abstraite $A+\|\cdot\|_\g$ est  équivalente à $\|\cdot\|_G$ sur $\{x\in G(F):|\nu(x)|\in I\}$.
\end{enumerate} 
Ces propriétés sont spécifiques aux groupes du type GL.

\subsection{Intégrale orbitale pondérée}

\begin{lemma}\label{YDLioplem:OInormCVarchi}
Soit $F$ un corps local archimédien. Pour tout $Y\in \g_{\ss}(F)$ il existe un entier $r_Y>0$ tel que
\[\int_{G_Y(F)\backslash G(F)}(1+\|g^{-1}Yg\|_{\g}^2)^{-r_Y}\,dg<+\infty.\]
\end{lemma}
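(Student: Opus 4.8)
The statement is a convergence estimate for semisimple $Y$, so the plan is to reduce to the (connected reductive) group $G_Y = \Cent(Y,G)$ and use the structure of $G/G_Y$ via an Iwasawa-type decomposition together with the norm formalism of \S\ref{subsec:norme}. First I would observe that since $Y$ is semisimple, $G_Y$ is a connected reductive subgroup of $G$ containing $A_G$, and the orbit map $g \mapsto g^{-1}Yg$ factors through $G_Y(F)\backslash G(F)$; moreover the map $\phi : G_Y\backslash G \to \g$, $\bar g \mapsto g^{-1}Yg$, is a closed immersion onto the orbit of $Y$ (this uses that for groups of type GL the centralizer is exactly the stabilizer, cf. the proposition on type GL groups, and that the orbit is closed because $Y$ is semisimple). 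Consequently $\|g^{-1}Yg\|_{\g}$ is, as an abstract norm on $(G_Y\backslash G)(F)$, equivalent to a norm on $(G_Y\backslash G)(F)$ coming from the scheme structure, hence $\prec \|\cdot\|_{G_Y\backslash G}$ in the terminology of \S\ref{subsec:norme}; and by property (5) there, $1 + \|g^{-1}Yg\|_{\g} \prec \|\cdot\|_{G_Y\backslash G}$.

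Next I would pick $H \eqdef \envL(G_Y; G)$, a semistandard Levi of $G$ whose Lie algebra contains $Y$ as an elliptic element (lemma \ref{lem:envLell}), and a parabolic $P \in \P^G(H)$. Using the Iwasawa decomposition $G(F) = H(F)N_P(F)K$ together with the normalization of measures fixed in \S\ref{YDLiopsubsec:normalizationmeasure}, the integral over $G_Y(F)\backslash G(F)$ unfolds (up to the constant $\gamma^G(P)$ and the modulus function) as an integral over $H_Y(F)\backslash H(F)$, over $N_P(F)$, and over the compact $K$. On $K$ the integrand is bounded, so that factor contributes a finite constant. The norm $\|g^{-1}Yg\|_{\g}$ restricted to this decomposition controls, via property (4) of $\|\cdot\|_G$ and the fact that conjugation by $n \in N_P(F)$ moves $Y$ by a unipotent amount, both $\|n\|_G$ and $\|h\|_{H_Y\backslash H}$; more precisely $Y \in \mathfrak h$ elliptic implies $H_Y = G_Y$ so the $H$-integral is the elliptic one and $N_P$ contributes genuine decay. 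I would then invoke the standard elliptic convergence estimate: for $Y$ elliptic in $\mathfrak h(F)$ (so $A_H = A_{H_Y}$), the orbital integral $\int_{H_Y(F)\backslash H(F)}(1+\|h^{-1}Yh\|^2)^{-s}\,dh$ converges for $s$ large — this is the classical fact that semisimple (elliptic) orbits on reductive groups are "rapidly decreasing" enough, provable by reducing to a maximal torus $T \subseteq H_Y$ via the Weyl integration formula and bounding $|D(Y)|$-type factors, or directly by Harish-Chandra's compactness lemma (already cited as \cite[lemme 14.1]{Kottbook} in \S\ref{YDLiopsubsec:normalizationmeasure}). Combining: choose $r_Y$ larger than the sum of the exponents needed to (a) dominate $1+\|g^{-1}Yg\|_{\g}$ by a power of $\|\cdot\|_{G_Y\backslash G}$, (b) make the $N_P$-integral $\int_{N_P(F)}(1+\|n\|_G)^{-2r_Y + c}\,dn$ converge, and (c) make the elliptic $H$-integral converge.

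The main obstacle, and the step requiring the most care, is the comparison of the extrinsic norm $\|g^{-1}Yg\|_{\g}$ with the homogeneous-space norm $\|\cdot\|_{G_Y\backslash G}$ — i.e. showing that the orbit map is a \emph{finite} (or at least norm-dominating in the right direction) morphism, so that a power of $\|g^{-1}Yg\|_{\g}$ bounds $\|g\|_{G_Y\backslash G}$ from below and hence its reciprocal from above. For $Y$ semisimple the orbit $(\Ad G)Y$ is closed and affine, and the categorical quotient map realizes $G_Y\backslash G$ as a closed subvariety of $\g$; finiteness then follows because a closed immersion induces an equivalence of norms onto its image (the general fact about finite morphisms in \S\ref{subsec:norme} applies to closed immersions a fortiori). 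Once this is in hand, the rest is routine: substitute, unfold the Iwasawa decomposition, separate the three integrals, and choose $r_Y$ large enough. I would also note that the archimedean hypothesis is used only to guarantee that a single polynomial weight $(1+\|\cdot\|^2)^{-r_Y}$ suffices (in the $p$-adic case the support is already controlled differently); over $\mathbb R$ or $\mathbb C$ this is exactly the kind of estimate needed to feed into the Schwartz-seminorm bound of theorem \ref{thm:locIOPtemperee}.
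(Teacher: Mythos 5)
Your route is genuinely different from the paper's. The paper disposes of this lemma in three lines of citations: by Rao's theorem the orbital integral of a semisimple element is a $G(F)$-invariant distribution with support compact modulo conjugation, by Harish-Chandra (Varadarajan, th. I.9.11) any such distribution is tempered, and Varadarajan's lemme I.3.8 then converts temperedness of a positive measure into exactly the bound $\int(1+\|x\|^2)^{-r}\,d\mu<+\infty$ for some $r$. Your plan instead tries to prove the estimate by hand: identify $G_Y\backslash G$ with the closed orbit of $Y$ to compare $1+\|g^{-1}Yg\|_{\g}$ with $\|g\|_{G_Y\backslash G}$, then unfold along $G(F)=H(F)N_P(F)K$ with $H=\envL(G_Y;G)$. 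The reduction steps are sound: the orbit of a semisimple element is Zariski closed, the orbit map is an isomorphism onto it (and $H^1(F,G_Y)=1$ for type GL), so the two abstract norms are indeed equivalent; and since $H_Y=G_Y$, the change of variables $n\mapsto (\Ad n^{-1})(h^{-1}Yh)-h^{-1}Yh$ is an isomorphism $N_P\to\n_P$ with constant Jacobian, so the $N_P$-factor converges for $r$ large. (Minor point: in your first paragraph you state the domination in the trivial direction, $1+\|g^{-1}Yg\|_{\g}\prec\|\cdot\|_{G_Y\backslash G}$; the direction you actually need, and correctly identify later, is the reverse one, which is where closedness of the orbit is used.)

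The gap is at the terminal step. After the Iwasawa reduction you are left with $\int_{H_Y(F)\backslash H(F)}(1+\|h^{-1}Yh\|^2)^{-s}\,dh$ for $Y$ elliptic in $\mathfrak{h}$, and this is not easier than the original statement: $H_Y(F)\backslash H(F)$ is still non-compact (e.g. $\C^\times\backslash\GL_2(\R)$), so ellipticity buys nothing here. The two justifications you offer do not close it. Harish-Chandra's compactness lemma only controls the support of the integrand for a compactly supported test function; for the weight $(1+\|X\|^2)^{-s}$ one needs the quantitative statement that the volume of $\{h: \|h^{-1}Yh\|\le T\}$ grows at most polynomially in $T$ — equivalently, that $\|h\|_{H_Y\backslash H}^{-N}$ is integrable for $N$ large, a property of norms on homogeneous spaces that is not among those set up in the paper's norm section and that you would have to establish or cite. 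And the Weyl-integration route is essentially Harish-Chandra's original proof that semisimple orbital integrals are tempered, i.e. precisely the Rao–Varadarajan input the paper cites. So as written your argument either circles back to the classical temperedness theorem (making the Iwasawa unfolding redundant) or rests on an unproved volume-growth estimate; to make the proof self-contained you must supply one of these, most cleanly the bound $\mathrm{vol}\{g\in G_Y(F)\backslash G(F): \|g\|_{G_Y\backslash G}\le T\}\ll T^N$, after which your norm equivalence finishes the proof directly without any Iwasawa decomposition.
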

\begin{proof}
Les normes sur un espace vectoriel topologique localement compact étant équivalentes, la convergence est indépendante de la norme choisie, et bien sûr indépendante des normalisations des mesures. Un résultat de Deligne-Rao affirme que toute intégrale orbitale sur $\g(F)$ est une distribution $G(F)$-invariante à support compact modulo la $G(F)$-conjugaison (\cite[théorème 2]{Rao72}). Puis un résultat de Harish-Chandra implique que toute telle distribution est tempérée (\cite[théorème I.9.11]{Vara77}). On conclut alors par \cite[lemme I.3.8]{Vara77}.
\end{proof}

\begin{theorem}\label{thm:IOPfullstatement}
Pour toute fonction $f\in \S(\g(F))$, l'intégrale \eqref{def:NIOPl} converge absolument. De surcroît, lorsque $F$ est archimédien, l'espace de Schwartz-Bruhat $\S(\g(F))$ est muni de semi-normes $\|-\|_{a,b}=\sup_{|\alpha|\leq a,|\beta|\leq b} \|x^\alpha \frac{\partial}{\partial x^\beta}-\|_{L^\infty(\g(F))}$, alors il existe des certaines constantes $c_{\o}>0$ et $a_{\o}\in \N$  tels que $|J_L^Q(\o,-)|$ soit majoré par $c_{\o}\|-\|_{a_\o,0}$. Une intégrale orbitale pondérée locale est donc une distribution tempérée. 
\end{theorem}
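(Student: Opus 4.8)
The plan is to bound $\int_{G_X(F)\backslash G(F)}|f((\Ad g^{-1})X)|\,|v_{(\Ad w)L,X}^{(\Ad w)Q}(g)|\,dg$ directly, as this simultaneously gives the temperedness estimate. First I would estimate the weight. By the $(G,M)$-family formalism of §\ref{YDLiopsubsec:GMfamily}, the number $v_{(\Ad w)L,X}^{(\Ad w)Q}(g)$ is a polynomial of bounded degree in the vectors $R_P(g)$, $P\in\P^G(\MR^{X})$, and since it is produced from the exponential family $(e^{\langle\lambda,-R_P(g)\rangle})_P$ by the combinatorial operations \eqref{eq:GMfamilletoGL} and \eqref{eq:GMfamilletoMQM} followed by evaluation at $0$, it is invariant under a global translation of the orthogonal family $(-R_P(g))_P$; hence it depends only on the differences $R_{P_1}(g)-R_{P_2}(g)$. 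By Lemma~\ref{lem:diffadjacentR_P} these differences, for adjacent $P_1,P_2$, are explicit multiples of coroots with coefficient $\frac{1}{\deg D}\log|\Nrd(U_{1,3})|$, where $U$ is a $K$-conjugate of $(\Ad g^{-1})X$ and $|\Nrd(U_{1,3})|$ is independent of that conjugate. Chaining adjacent parabolics, I would obtain constants $C>0$, $N\in\N$ and finitely many functions $\delta_1,\dots,\delta_s$ on the orbit $(\Ad G(F))X$, each of the form $\delta_i(Y)=|\Nrd(\text{a block of a }K\text{-conjugate of }Y)|$, such that $|v_{(\Ad w)L,X}^{(\Ad w)Q}(g)|\le C\big(1+\sum_i|\log\delta_i((\Ad g^{-1})X)|\big)^{N}$.

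Next I would reduce to the elliptic situation of §\ref{subsec:situationelliptique}. Put $H=\envL(G_{X_\ss};G)$: it is a semi-standard Levi of $G$, hence of type GL, in whose Lie algebra $X_\ss$ is $F$-elliptic (Lemma~\ref{lem:envLell}), with $G_X=H_X$ and $X$ standard for $H$. Choosing $P\in\P^G(H)$, Iwasawa identifies $G_X(F)\backslash G(F)$ with $(H_X(F)\backslash H(F))\times N_P(F)\times K$; the weight is right-$K$-invariant, and along $N_P(F)$ the map $n\mapsto(\Ad n^{-1})(\Ad h^{-1})X-(\Ad h^{-1})X$ parametrizes $\n_P(F)$ bijectively (Proposition~\ref{YDLiopprop:whenXinIndMGX}, applicable because $G_{(\Ad h^{-1})X}=(\Ad h^{-1})G_X\subseteq H$). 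Combining this with the parametrization of $G_X(F)\backslash G(F)$ in the elliptic case (Lemma~\ref{lem:centralizerOIformula}, iterated over adjacent pairs as in the discussion closing §\ref{subsec:situationelliptique}) and the measure decompositions of §\ref{YDLiopsubsec:normalizationmeasure} and Proposition~\ref{YDLiopprop:compatibilityHaarmeasures}, the integral becomes a finite sum of iterated integrals whose variables are $m\in(\MR^{X})_{X_\ss}(F)\backslash\MR^{X}(F)$, several "isomorphism-block" variables $y_i$ whose $|\Nrd(y_i)|$ equal, up to bounded factors, the functions $\delta_i$, further nilpotent-type unipotent variables (those of the elliptic situation together with the $N_P$ above), and $k\in K$, and whose measure carries $\prod_i|\Nrd(y_i)|^{c_i}$ with all $c_i>0$.

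The convergence would then follow as in the nilpotent case treated by Chaudouard (\cite{Ch17}): since $\cR^G(X)\to\cR^{G_{X_\ss}}(X_\nilp)$, $P\mapsto P_{X_\ss}$, is a bijection and $\MR^{X}$, the maps \eqref{R}, \eqref{LS}, the elements $w_P$ and the differences $R_{P_1}-R_{P_2}$ are all compatible with the Jordan decomposition (Lemmas~\ref{lem:Rlevienvlop}, \ref{lem:wP}, §\ref{YDLiopsubsec:elementwP}), the combinatorial core of his argument applies; the integration over the semisimple-elliptic factor $(\MR^{X})_{X_\ss}(F)\backslash\MR^{X}(F)$ contributes only a convergent factor by Harish-Chandra's compactness (\cite[lemme 14.1]{Kottbook}, already used in the proof of Proposition~\ref{YDLiopprop:compatibilityHaarmeasures}); near $y_i=0$ the factor $|\Nrd(y_i)|^{c_i}$ dominates any power of $|\log|\Nrd(y_i)||$, so the integrand is locally integrable; and when any variable tends to infinity the rapid decay of $f$ dominates both the polynomial growth of the measure and the logarithmic growth of the weight. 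The main obstacle is precisely this last point in the elliptic case: controlling the logarithmic singularities of the weight along the boundary of the orbit by the strictly positive powers of block-norms occurring in the Weyl-type integration formula of Lemma~\ref{lem:centralizerOIformula} — this is exactly what §\ref{subsec:situationelliptique} and the matrix-algebra lemmas there are set up to make possible.

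Finally, for the archimedean seminorm estimate I would use that $\|f\|_{a,0}=\sup_{|\alpha|\le a}\|x^{\alpha}f\|_{L^\infty}$ dominates $|f(Y)|(1+\|Y\|_\g)^{a}$ up to a constant depending only on $a$ and the chosen vector norm; substituting $|f(Y)|\le c\,\|f\|_{a,0}(1+\|Y\|_\g)^{-a}$ into the bounds above and taking $a=a_\o$ large enough to make the resulting integral converge yields $|J_L^Q(\o,f)|\le c_\o\,\|f\|_{a_\o,0}$. When $F$ is non-archimedean, $\S(\g(F))$ carries the topology recalled before Definition~\ref{def:NIOPl}, for which every linear functional is continuous, so temperedness is automatic once absolute convergence is established. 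In both cases $J_L^Q(\o,-)$ is a tempered distribution.
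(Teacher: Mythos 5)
Your overall strategy is the paper's: bound the weight through the differences $R_{P_1}-R_{P_2}$ for adjacent parabolics, reduce to the elliptic situation via $H=\envL(G_{X_\ss};G)$ and an Iwasawa decomposition along some $P\in\P^G(H)$, feed the resulting expression into the explicit decomposition of Lemma \ref{lem:centralizerOIformula}, integrate the logarithmic singularity of the weight against the positive power of the reduced norm, and get the archimedean seminorm estimate by substituting polynomial decay of $f$. Two points, however, need attention.

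First, a genuine gap in the archimedean case: you claim that the integration over the semisimple--elliptic factor $(\MR^{X})_{X_\ss}(F)\backslash\MR^{X}(F)$ ``contributes only a convergent factor by Harish-Chandra's compactness''. That compactness statement lets you restrict the domain of integration only when the test function has compact support, which is the non-archimedean case; for archimedean $F$ a Schwartz--Bruhat function is not compactly supported, so the compactness lemma gives nothing, and your final step (``taking $a=a_\o$ large enough to make the resulting integral converge'') presupposes precisely the non-obvious fact that $\int_{G_{Y}(F)\backslash G(F)}(1+\|g^{-1}Yg\|_{\g}^{2})^{-r}\,dg<+\infty$ for $Y$ semisimple and $r$ large. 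This is the content of Lemma \ref{YDLioplem:OInormCVarchi}, which the paper proves via Rao's theorem on orbital integrals and Harish-Chandra/Varadarajan temperedness; without this ingredient (or an equivalent polynomial bound on the volume of balls in the closed semisimple orbit) the archimedean convergence and the estimate $|J_L^Q(\o,f)|\le c_\o\|f\|_{a_\o,0}$ are not established. The paper's proof is organized exactly around this dichotomy: Lemma \ref{YDLioplem:OInormCVarchi} in the archimedean case, the compactness lemma of \cite[lemme 14.1]{Kottbook} only in the non-archimedean case.

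Second, a smaller overreach: Lemma \ref{lem:diffadjacentR_P} is stated and proved only in the elliptic situation of section \ref{subsec:situationelliptique}, so your first-paragraph bound of $v_{(\Ad w)L,X}^{(\Ad w)Q}(g)$ at an \emph{arbitrary} $g\in G(F)$ by logarithms of block reduced norms of $(\Ad g^{-1})X$ is not covered by it when $X_\ss$ is not elliptic in $\g$. The paper avoids this by first splitting the weight with the product formula \eqref{eq:GMfamilleproductformula} into a twisted factor $v_{{}^wL,X}^{{}^wQ_1}(h)$ evaluated only on $H(F)$, where $R_P^G(h)=R_{P\cap H}^H(h)$ and the elliptic lemma applies, and an untwisted factor controlled by the norm estimates of section \ref{subsec:norme} in terms of $\|h^{-1}Xh\|_\g$ and $\|U\|_\g$. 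Your subsequent reduction to $H$ makes the same repair available, but as written the weight estimate precedes, rather than follows, that reduction, and should be reorganized accordingly (or the general-$g$ formula for the adjacent differences proved separately).
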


\begin{proof}
Soit $X$ le représentant standard de $\Ind_L^G(\o)(F)$. Soit $H=\text{envL}(G_{X_\ss};G)$, on a $X\in\mathfrak{h}$. Prenons $P\in \P^G(H)$ quelconque. Nous avons
\begin{align*}
\int_{G_{X}(F)\backslash G(F)} &\left|f((\Ad g^{-1})X)v_{{}^wL,X}^{{}^wQ}(g)\right|\,dg\\
&=\gamma^G(P)\int_{G_X(F)\backslash H(F)}\int_{N_P(F)}\int_K \left|f(k^{-1}n^{-1}h^{-1}Xhnk)v_{{}^wL,X}^{{}^wQ}(hn)\right|\,dk\,dn\,dh  \\
&=\gamma^G(P)\cdot\text{Jac}\cdot\int_{G_X(F)\backslash H(F)}\int_{\n_P(F)}\int_K \left|f\left(k^{-1}(h^{-1}Xh+U)k\right)v_{{}^wL,X}^{{}^wQ}(hn(h,U))\right|\,dk\,dn\,dh\\
&\leq \gamma^G(P)\cdot\text{Jac}\cdot\sum_{(L_1,L_2)\in\L^{L_Q}(L)^2}d_L^{L_Q}(L_1,L_2)\int_{G_X(F)\backslash H(F)}\int_{\n_P(F)}\int_K \\
&\hspace{2.5cm}\left|f\left(k^{-1}(h^{-1}Xh+U)k\right)v_{{}^wL,X}^{{}^wQ_1}(h)v_{{}^wL}^{{}^wQ_2}(k(w_{{}^wQ_2,X}h)n(h,U))\right|\,dk\,dn\,dh,
\end{align*}
où $\text{Jac}=|\det(\ad(h^{-1}Xh);\n_P(F))|^{-1}$, $n(h,U)\in N_P(F)$ est défini sur un ouvert dense de $G_X(F)\backslash H(F)\times \n_P(F)$ par $(\Ad n(h,U)^{-1})(h^{-1}Xh)=(h^{-1}Xh)+U$, et $k(w_{{}^wQ_2,X}h)$ est un élément de $K$ tel que  $(w_{{}^wQ_2,X}h)k(w_{{}^wQ_2,X}h)^{-1}\in P(F)$. La dernière inégalité vient de la formule \eqref{eq:GMfamilleproductformula}. Constatons que l'élément $V(h,U)=n(h,U)-\Id\in \n_P(F)$  dépend polynômialement de $h^{-1}Xh$ et $U$ (ici $\det(\ad(h^{-1}Xh);\n_P(F))$ est une constante). On en déduit donc l'existence de $c>0$, $e\in \N_{>0}$, $B$ un polynôme à une variable et à coefficients positifs, tels que $B(0)>3$ et $\left|v_{{}^wL}^{{}^wQ_2}(k(w_{{}^wQ_2,X}h)n(h,U))\right|<c\left(\log B(\|h^{-1}Xh\|_{\g}) +\log B(\|U\|_{\g})\right)^e$. Par une réduction immédiate, on est conduit au problème de convergence de 
\begin{align*}
\int_{H_X(F)\backslash H(F)}\left|f\left(h^{-1}Xh\right)v_{{}^wL,X}^{{}^wQ_1}(h)\right|(\log B(\|h^{-1}Xh\|_{\mathfrak{h}}))^e\,dh    
\end{align*}
Les calculs dans $G$ sont réduits presque à ceux dans $H$. Il ne reste qu'à dépaqueter le poids $v_{{}^wL,X}^{{}^wQ_1}(h)$, qui est avant tout défini sur $G$. Abrégeons $M=\MR^{X}$ le facteur de Levi d'un élément de $\cR^G(X)$ semi-standard. Prenons un élément de $\P^G(M)$, noté $P_3$. Soit $g \in G(F)$. La $(G,M)$-famille définie pour $P \in\P^G(M)$ par $v_{P,\text{mod}}(\lambda,g)\eqdef\exp(\langle\lambda,R_{P_3}(g)-R_P(g)\rangle)$ donne les mêmes poids que la $(G,M)$-famille $(v_{P,X}(\lambda,g))_P$. On a alors
\[v_{{}^wL,X}^{{}^wQ_1}(g)=\frac{1}{m!}\sum_{P\in \P^{{}^wQ_1}({}^wL)}(\langle\lambda,R_{P_3}(g)-R_P(g)\rangle)^m\theta_P^Q(\lambda)^{-1},\]
où $\lambda\in a_L^\ast$ est assez général et $m = \dim(a_L^{Q_1})$. On en déduit la majoration suivante : il existe $c> 0$ tel pour tout $g \in G(F)$ on a 
\[|v_{{}^wL,X}^{{}^wQ_1}(g)|\leq c\cdot \sum_{(P_1,P_2)\in \P^G(M)^{\text{adj}}}\|R_{P_1}(g)-R_{P_2}(g)\|^m\]
où $\P^G(M)^{\text{adj}} \subseteq \P^G(M)^2$ est l'ensemble formé de couples de paraboliques $(P_1, P_2)$ qui sont adjacents. La norme $\|-\|$ est la norme euclidienne sur $a_M$. L'élément $X_\ss$ étant standard pour $H$, on voit que $M$ est un sous-groupe de Levi semi-standard de $H$, et l'application 
\begin{align*}
P\in \cR^G(X)\longmapsto P\cap H \in\cR^H(X)  
\end{align*}
est surjective. On en déduit de ce fait $R_{P}^G(h)=R_{P\cap H}^H(h)$ pour tout $h\in H(F)$, ici les
exposants ont pour objectif de préciser les groupes dans lesquels chaque application est réalisée. On est confronté au problème de convergence de 
\begin{equation}\label{eq:CVintermediateWOPell}
\int_{H_X(F)\backslash H(F)}\left|f\left(h^{-1}Xh\right)\right| \cdot \|R_{P_1}^H(h)-R_{P_2}^H(h)\|^m\cdot (\log B(\|h^{-1}Xh\|_{\mathfrak{h}}))^e\,dh    
\end{equation}
pour $P_1$ et $P_2$ deux sous-groupes paraboliques adjacents dans $H$. Quitte à œuvrer dans chaque composante irréductible de $H$ et changer le pôlynome $B$, on peut suppose que $H$ est le groupe des automorphismes d'un $D$-module à droite libre avec $D$ une algèbre à division sur $F$. Afin de garder la cohérence avec les sous-sections précédentes, on va ultérieurement réécrire $H$ en $G$ et $h\in H(F)$ en $g\in G(F)$, et supposer que $X$ est l'élément défini par \eqref{elementXell}.

On reprend l'équation \eqref{eq:CVintermediateWOPell}. Soit $(P_1, P_2)\in \P^G(M)^{\text{adj}}$. On reprend sans plus de commentaire les notations des sous-sections précédentes. On peut clairement supposer que $r>1$, sinon la convergence de \eqref{def:NIOPl} vient directement du lemme précédent. Pour tout $k\in K$ et $U \in (\Ad M_{P^{-}})X_\ss \oplus (\n_{\widetilde{P_1}}\cap \n_{\widetilde{P_2}})$ tel que $kUk^{-1}=g^{-1}Xg$, on a
\[\|R_{P_1}(g)-R_{P_2}(g)\|=\frac{1}{\deg D}|\log|\Nrd(U_{1,3})||\|\alpha^\vee\|.\]
D'après le lemme \ref{lem:centralizerOIformula}, on a, pour une certaine constante $c > 0$ qu'il est inutile d'expliciter ici,
\begin{equation}\label{YDLiopeq:IOPCVproofdecompquotcent}
\begin{split}
&\int_{G_X(F)\backslash G(F)}|f(g^{-1}Xg)|\cdot\|R_{P_1}(g)-R_{P_2}(g)\|^m\cdot(\log B(\|g^{-1}Xg\|_{\g}))^e\,dg\\
&\hspace{0.5cm}=c\cdot\int_{(M_{P^{-}})_{X_\ss}(F)\backslash M_{P^{-}}(F)}\int_{I_{1,3}(F)}\int_{\n_{\widetilde{Q}}(F)}\int_K |f(k^{-1}(m_{P^{-}}^{-1}X_\ss m_{P^{-}}+y+V)k)|\\
&\hspace{1cm}\cdot|\log|\Nrd(y)||^m|\Nrd(y)|^{\underline{n}_p(r_1+r_2)\deg D}\cdot(\log B(\|k^{-1}(m_{P^{-}}^{-1}X_\ss m_{P^{-}}+y+V)k\|_{\g}))^e\,dk\,dV\,dy\,dm_{P^{-}}    
\end{split}
\end{equation}
\`{A} peine de changer la norme vectorielle $\|\cdot\|_\g$ on peux supposer que $K$ agit par isométrie à gauche et à droite, puis $\m_{P^{-}}(F)\oplus^{\perp} (\Hom_D(W_3,W_1)(F)\oplus \n_{\widetilde{Q}}(F))$ est orthogonale. 

Supposons pour le moment que $F$ est archimédien. Par le lemme \ref{YDLioplem:OInormCVarchi} et une réduction immédiate, il suffit d'obtenir la convergence de
\[\int_{\GL_{\underline{n}_pr_1,D}(F)}|f(y)||\log|\Nrd(y)||^m|\Nrd(y)|^{\underline{n}_p(r_1+r_2)\deg D}\log(B(\|y\|_{\gl_{\underline{n}_pr_1,D}}))^e\,dy\]
pour tout $f\in \S(\gl_{\underline{n}_pr_1,D}(F))$. 

En utilisant la décomposition d'Iwasawa, et le fait que $N_P=\Id\oplus \n_P$ dans l'algèbre de Lie de $P
$, on voit qu'il suffit de démontrer que l'intégrale suivante converge 
\begin{align*}
&\int_{T_{\underline{n}_pr_1,D}(F)}|f(t)||\log|\Nrd(t)||^m|\Nrd(t)|^{\underline{n}_p(r_1+r_2)\deg D}\\
&\hspace{0.5cm}|\Nrd(t_1)|^{(-\underline{n}_pr_1+1)\deg D}|\Nrd(t_2)|^{(-\underline{n}_pr_1+2)\deg D}\cdots |\Nrd(t_{\underline{n}_pr_1-1})|^{\deg D}\log(B(\|t\|_{\mathfrak{t}_{\underline{n}_pr_1,D}}))^e\,dt    
\end{align*}
où $T_{\underline{n}_pr_1,D}$ est le sous-groupe diagonal de $\GL_{\underline{n}_pr_1,D}$ et $f \in \S(\mathfrak{t}_{\underline{n}_pr_1,D}(F))$. Quitte à majorer $f$ par un produit de fonctions des coordonnées $t_i$, on est ramené à étudier l'intégrale suivante en dimension 1 (sur $D$) :
\begin{equation}\label{eq:dim1intetude}
 \int_{D\setminus\{0\}\subset D}|f(t)||\log|\Nrd(t)||^m|\Nrd(t)|^{k\deg D}\log(B(\|t\|_{\gl_{1,D}}))^e\,dt
 \end{equation}
avec $m\geq 0$ et $\underline{n}_p(r_1+r_2)-1\geq k\geq \underline{n}_pr_2\geq 0$, on a égalment plongé au passage $D^\times$ dans son algèbre de Lie. Identifions $D$ en un espace vectoriel sur $F$ via une base. Remarquons que la norme réduite est une application polynomiale homogène dans cette base. On découpe l'intégrale \eqref{eq:dim1intetude} en deux parties : une boule centrée en 0 de rayon (pour la norme vectorielle) assez petit et son complémentaire dans $D$. L'intégrale sur cette deuxième partie converge comme $f$ est de classe Schwartz-Bruhat. Enfin la convergence de l'intégrale sur la première partie résulte facilement de la convergence absolue de la dérivée d'ordre $m$ sous l'intégrale de la fonction zêta associée à la fonction $f$ évaluée en $(k+1)\deg D$ pour l'algèbre à division $D$, notons que l'on a implicitement majoré $\log(B(\|t\|_{\gl_{1,D}}))^e$ par une constante. Sinon une autre façon plus élaborée de faire est d'invoquer la résolution des singularités de Hironaka : on majore $|f(t)|\log(B(\|t\|_{\gl_{1,D}}))^e$ toujours par une constante puis remplace $\{0\}$ par un diviseur à croisements normaux, $\Nrd$ par une fonction monomiale, avec un facteur Jacobien monomial.

On voit en outre que $J_L^Q(\o,-)$ est une distribution tempérée par les majorations obtenues au fil des calculs, satisfaisant l'estimation voulue.

Considérons ensuite le cas où $F$ est non-archimédien. Dans le raisonnement ci-dessus, nous remplaçons l'endroit où nous utilisons le lemme \ref{YDLioplem:OInormCVarchi} par un lemme de compacité de Harish-Chandra, qui affirme que nous pouvons restreindre le domaine d'intégration de $(M_{P^-})_{X_\ss}(F)\backslash M_{P^-}(F)$ dans l'équation \eqref{YDLiopeq:IOPCVproofdecompquotcent} à une partie compacte de $M_{P^-}(F)$ (cf. \cite[lemme 14.1]{Kottbook}). Le reste de la démonstration fonctionne de la même manière.
\end{proof}

\section{Comparaison des différentes définitions d'une intégrale orbitale pondérée}\label{sec:4comparedef}

L'objectif principal de cette section est de présenter d'autres définitions d'une intégrale orbitale pondérée (théorèmes \ref{YDLiopthm:ArtdefdirectIOP}, \ref{YDLiopthm:ArtdefIOPCVCinftyc}), qui s’inscrivent dans le courant d’idées d'Arthur, et les comparer (théorèmes \ref{thm:IOPcomparaisonII-III}, \ref{thm:IOPcomparaisonI-II}).

\subsection{Remarque sur la définition d'Arthur}

Avant d'aller plus loin, nous aimerions adresser une remarque à l'attention des initiés du domaine. On définira comme Arthur, pour tous $M\in \L^G(M_0)$, $\alpha\in \Sigma(\g;A_M)$ et $\o$ une $M$-orbite définie sur $F$ dans $\m$, un nombre $\rho(\alpha, \o)$. Puis on définira l'intégrale orbitale pondérée grâce à des $(G,M)$-familles associées aux nombres $\rho(\alpha, \o)$ pour $\alpha\in \Sigma(\g;A_M)$. Toutefois, comme souligné  par Moeglin-Waldspurger (\cite[section II.1.4]{MW16}), la définition obtenue, pour $G$ un groupe réductif général, ne satisfait pas la formule de descente de l'induction (point 2 de la proposition \ref{prop:propertiesIOP}), les nombres d'Arthur $\rho(\alpha, \o)$ doivent être modifiés à cause de la présence des racines divisibles. Cela dit, quand $G$ est un groupe du type GL, toute racine est non-divisible, nous pouvons donc adopter sans risque la définition d'Arthur.

\subsection{Rappel de la géométrie algébrique}

Soit $S$ un schéma. On note $\O_S$ le faisceau des fonctions régulières sur $S$. Soit $x\in S$. On note $\O_{S,x}$ la fibre de $\O_S$ en $x$. Un $S$-schéma intègre de type fini et séparé sur $S$ est dite une $S$-variété. 
Soit $U$ un sous-ensemble de $S$. On note $\overline{U}$ sa clôture de Zariski.

Soit $X$ et $Y$ des $S$-schémas. On note $X\times_S Y$ le  produit fibré évident. Si $S=\text{Spec}(A)$ et $Y=\text{Spec}(B)$ pour $A$ et $B$ deux anneaux, on note aussi $X_B\eqdef X\times_S Y$. Soit $f:X\to Y$ un morphisme de schémas. Soit $T$ un sous-ensemble de $Y$. On note $f^{-1}(T)$ le produit fibré ensembliste $X\times_Y T$. Si $T$ est un schéma, on munit $f^{-1}(T)$ de sa structure naturelle de schéma. S'il existe un ouvert dense $U$ (resp. $V$) de $X$ (resp. $Y$) tel qu'il existe un isomorphisme de schémas $g:U\to V$, on écrit $g:X\dashrightarrow Y$. 


\begin{lemma}[{{\cite[corollaire (2.3.12)]{EGAIV1}}}]\label{YDLioplem:Zarclosurecommutefieldbc} 
Soit $X$ et $Y$ deux schémas. Soit  $f:X\to Y$ un morphisme quasi-compact, surjectif et plat. Alors un sous-ensemble $T\subseteq Y$ est ouvert (resp. fermé) si et seulement si $f^{-1}(T)$ est ouvert (resp. fermé). 
\end{lemma}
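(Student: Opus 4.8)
The implication in one direction is trivial: $f$ is continuous, so if $T$ is open (resp. closed) then $f^{-1}(T)$ is open (resp. closed). For the converse, since $f^{-1}(Y\setminus T)=X\setminus f^{-1}(T)$ it suffices to treat the case of an open set $T$. The plan is to first reduce to the situation where $X$ and $Y$ are affine: openness of $T$ is local on $Y$, so one may assume $Y=\operatorname{Spec}A$; then $X$ is quasi-compact (preimage of a quasi-compact by a quasi-compact morphism), hence a finite union of affine opens, and replacing $X$ by the disjoint union of those opens — which is again affine, faithfully flat and quasi-compact over $Y$ — reduces us to $f=\operatorname{Spec}(\varphi)$ for a faithfully flat ring homomorphism $\varphi\colon A\to B$. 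Observe that $f^{-1}(T)$ is $f$-saturated and, $f$ being surjective, $f(f^{-1}(T))=T$ and $f(X\setminus f^{-1}(T))=Y\setminus T$.

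I would then characterise the open subsets of the spectral space $\operatorname{Spec}A$ as exactly those subsets which are ind-constructible and stable under generization, and verify both properties for $T$. Stability under generization uses flatness: if $y'$ is a generization of some $y\in T$, choose $x\in f^{-1}(y)\subseteq f^{-1}(T)$; by going-down for the flat morphism $f$ there is a generization $x'$ of $x$ with $f(x')=y'$, and since $f^{-1}(T)$ is open (hence stable under generization) it contains $x'$, whence $y'=f(x')\in T$. Ind-constructibility of $T$ is where quasi-compactness enters: $X\setminus f^{-1}(T)$ is closed, hence pro-constructible, hence quasi-compact for the constructible topology of $X$; the morphism $f$, being a morphism of affine schemes, is spectral, i.e.\ continuous for the constructible topologies; therefore its image $Y\setminus T$ is quasi-compact, hence closed, for the constructible topology of $Y$, i.e.\ pro-constructible, so $T$ is ind-constructible. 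Combining the two properties yields that $T$ is open.

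I expect the ind-constructibility step to be the main obstacle, and it is precisely there that the hypothesis ``$f$ quasi-compact'' is used, through spectrality of $f$ and quasi-compactness of the constructible topology; without it the statement fails. It is also worth recording the structural reason one treats the open case first: flatness provides the lifting of generizations (going-down) but not of specializations (going-up), so it is the open sets — characterised by stability under generization — that descend directly, the closed case being obtained formally by passage to complements.

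Alternatively, one can avoid point-set topology entirely and invoke faithfully flat descent of open subschemes: the open immersion $f^{-1}(T)\hookrightarrow X$ carries its canonical descent datum relative to $f$, since the two pullbacks of $f^{-1}(T)$ to $X\times_Y X$ both equal the preimage of $T$; it therefore descends to an open subscheme $U\hookrightarrow Y$ with $f^{-1}(U)=f^{-1}(T)$, and surjectivity of $f$ forces $U=T$. In that approach the obstacle is shifted to justifying that faithfully flat quasi-compact — not merely finitely presented — morphisms admit descent for open immersions, which is part of the fpqc descent formalism for quasi-coherent sheaves.
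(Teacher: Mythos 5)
Your main argument is correct. The paper itself gives no proof of this lemma: it is quoted directly from EGA IV, corollaire (2.3.12), so there is nothing internal to compare with; what you have written is essentially a faithful reconstruction of the EGA/Stacks argument behind that reference. The trivial direction, the reduction to the closed/open complement, the reduction to a faithfully flat map of affine schemes via a finite affine cover and a disjoint union, and the two key verifications are all sound: stability of $T$ under generization follows from going-down for the flat map together with the openness of $f^{-1}(T)$, and ind-constructibility of $T$ follows from the Chevalley-type fact that a quasi-compact (spectral) morphism of affine schemes is continuous for the constructible topologies, so the image of the pro-constructible set $X\setminus f^{-1}(T)$ is pro-constructible; combined with the characterisation of open subsets of a spectral space as the ind-constructible subsets stable under generization, this gives the statement, and you correctly identify quasi-compactness as the hypothesis that cannot be dropped.

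One caveat concerns your alternative sketch via fpqc descent of open immersions: as usually developed (SGA 1 VIII, or the Stacks Project), the effectivity of descent for open subschemes along a faithfully flat quasi-compact morphism is \emph{proved} using precisely the submersiveness statement you are trying to establish (the descended subset is $f(f^{-1}(T))$, and one needs the present lemma to know it is open), so that route is circular unless one supplies an independent proof of that descent statement; it is also not a consequence of descent for quasi-coherent sheaves, since an open immersion is not cut out by a quasi-coherent ideal. This does not affect your proof, since the first argument is complete and self-contained.
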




\begin{lemma}[{{\cite[lemme (8.10.12.1)]{EGAIV3}}}]\label{YDLioplem:extendbirmap}
Soient $X$ un schéma intègre, $Y$ un schéma intègre et normal. Alors un morphisme $f:X\to Y$ fini et birationnel est un isomorphisme.
\end{lemma}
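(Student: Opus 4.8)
The plan is to reduce the statement at once to commutative algebra. Being an isomorphism can be checked affine-locally on the target, and a finite morphism is affine with the preimage of an affine open again affine, so I would first cover $Y$ by affine opens and thereby assume $Y=\operatorname{Spec}(A)$ with $A$ a normal integral domain. Finiteness of $f$ then gives $X=\operatorname{Spec}(B)$ with $B$ an $A$-algebra that is finite as an $A$-module (this is the scheme-theoretic meaning of finiteness, so no Noetherian hypothesis enters), and integrality of $X$ makes $B$ an integral domain.

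Next I would use the two properties of $f$ in turn. Birationality forces $f$ to be dominant, so the ideal $\ker(A\to B)$ has vanishing locus containing a dense subset of $\operatorname{Spec}(A)$, hence is contained in the nilradical, hence is zero since $A$ is reduced; thus $A\hookrightarrow B$. Birationality further identifies the function fields compatibly with this inclusion: localizing $A\to B$ at the generic point gives an isomorphism $\operatorname{Frac}(A)\xrightarrow{\ \sim\ }\operatorname{Frac}(B)$. Hence, inside this common fraction field, $A\subseteq B\subseteq\operatorname{Frac}(A)$. Since $B$ is finite over $A$, every element of $B$ is integral over $A$; since $A$ is normal, i.e.\ integrally closed in $\operatorname{Frac}(A)$, this gives $B\subseteq A$. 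Therefore $A=B$, the comorphism is an isomorphism on each affine open, and $f$ is an isomorphism.

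There is no genuine obstacle here: the only points needing care are (i) checking that ``isomorphism'' together with the hypotheses ``finite'' and ``birational'' descend correctly to the affine-local picture, so that finiteness supplies module-finiteness of $B$ over $A$ with no extra assumption; and (ii) being precise about what birationality provides, namely a compatible isomorphism of function fields, which is exactly what lets one view $B$ as a subring of $\operatorname{Frac}(A)$ and feed it to the normality hypothesis. Since the statement is in any case \cite[(8.10.12.1)]{EGAIV3}, one could simply invoke the reference; I would nonetheless record the short ring-theoretic argument above for self-containedness, as it is exactly the content used in Lemma~\ref{YDLioplem:extendbirmap}.
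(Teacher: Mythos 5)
Your argument is correct and is the standard proof of this fact; the paper itself gives no proof, relying solely on the citation to \cite[lemme (8.10.12.1)]{EGAIV3}, and your affine-local reduction followed by the chain $A\subseteq B\subseteq\operatorname{Frac}(A)$ with $B$ integral over the integrally closed ring $A$ is exactly the expected content. The only point worth making explicit is that birationality survives the restriction to $f^{-1}(V)\to V$ for an affine open $V\subseteq Y$, which is immediate with the function-field formulation you adopt (both schemes being irreducible, every nonempty open is dense and the generic points are preserved).
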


\subsection{Interprétation des poids non-tordus en termes de représentations}
Fixons $M\in \L^G(M_0)$. On a un réseau $X^\ast(A_M)$ dans $a_M^\ast$. \'{E}crivons $\Wt(a_M^\ast)$ l'ensemble des poids extrémaux des représentations $F$-rationnelles (de dimension finie) de $G$. Puisque $\Wt(a_M^\ast)$ est d'indice fini dans $X^\ast(A_M)$, c'est aussi un réseau de $a_M^\ast$. Pour tout $\omega\in \Wt(a_M^\ast)$ on fixe $(\Lambda_\omega,V_\omega,\phi_\omega,\|\cdot\|)$ avec $\Lambda_\omega:G\rightarrow \GL(V_\omega)$ une representation irréductible, $\phi_\omega\in V_\omega(F)$ un vecteur extrémal de poids $\omega$, $\|\cdot\|=\|\cdot\|_{V_\omega}$ une norme de $V_\omega(F)$ au sens de la sous-section \ref{subsec:norme}, invariante par $K$ et pour laquelle $\phi_\omega$ est de longueur 1. Soit $P\in \P^G(M)$ tel que $\omega$ est $P$-dominant. Alors $v_P(\omega,x)=\|\Lambda_\omega(x^{-1})\phi_\omega\|$ pour tout $x\in G(F)$. 

\subsection{Définition directe}\label{subsec:defdirect}

Dans \cite{Art88}, Arthur définit une intégrale orbitale pondérée à l'aide des certaines $(G,M)$-familles $w_P$ et $r_P$. Il commence par définir ces objets pour les orbites nilpotentes, puis il les généralise au cas général en utilisant une descente au centralisateur semi-simple. Cependant, il semble que la définition d'une intégrale orbitale pondérée via la descente au centralisateur semi-simple ne soit pas favorable pour étudier la convergence lorsque les fonctions tests sont de classe Schwartz-Bruhat (voir théorème \ref{YDLiopthm:ArtdefIOPCVCinftyc}). Dans ce numéro, notre objectif est donc de généraliser l'approche d'Arthur aux orbites quelconques,  sans recourir à la descente au centralisateur semi-simple.

Fixons pour la suite $P_\square\in \P^G(M)$ un « point-base ». Soit $\o$ une $M$-orbite dans $\m$ contenant un $F$-point, elle admet une unique décomposition en $\o=A_{\o}+\o^-$ avec $A_{\o}\in \a_M(F)$ et  $\o^-$ une $M$-orbite définie sur $F$ dans $\m/\a_M$, ici par abus de notation $\m/\a_M$ désigne le supplémentaire orthogonal de $\a_M$ dans $\m$, pour la forme bilinéaire canonique. 

Pour rappel on a défini $\a_{M,\o,G-\reg}\eqdef \{A\in\a_M\mid A+Y\in \Ind_M^G(A+Y),\forall Y\in \o\}$ (équation \eqref{YDLiopeq:defaMoG-reg}). C'est une sous-variété ouverte dense définie sur $F$ de $\a_M$.

Pour tout $A \in \a_{M,\o,G-\reg}$, tout $Y \in \o$ et tout $V \in \n_\square$ ($N_\square$ est le radical unipotent de $P_\square$), il existe un unique élément
\[n=n_\square(A,Y,V)\in N_\square\]
défini par la condition
\[n^{-1}(A+Y)n=A+Y+V\]
(l'équivalence (1) $\Leftrightarrow$ (7) de la proposition \ref{YDLiopprop:whenXinIndMGX}).

Soit $\alpha\in \Sigma(\g;A_M)$. Soit $M_\alpha\in \L^G(M)$ tel que $\Sigma(\m_\alpha;A_M)=\{\alpha,-\alpha\}$. On suppose que $P_\square \cap M_\alpha$ est le sous-groupe parabolique de $M_\alpha$ tel que $\Sigma(\mathfrak{p}_\square\cap \m_\alpha;A_M)=\{-\alpha\}$. Soit $P\in \P^G(M)$ tel que $P\cap M_\alpha$ soit opposé à $P_\square \cap M_\alpha$. Pour tout $V \in \n_{\square}(F) \cap \m_\alpha(F)$, on a
$n_\square(A,Y,V )\in N_\square(F)\cap M_\alpha(F)$ pour tous $A\in \a_{M,\o,G-\reg}(F)$ et $Y \in \o(F)$. En particulier, $H_P(n_\square(A,Y,V))$ dépend uniquement de $P\cap M_\alpha$ et non du choix de $P$ dans $\P^G(M)$. D'ailleurs, si $\pi_{n_\square,n_\square\cap \m_\alpha}: n_\square\to n_\square\cap \m_\alpha$ est la projection évidente alors 
$H_P(n_\square(A,Y,V))=H_P(n_\square(A,Y,\pi_{n_\square,n_\square\cap \m_\alpha}(V)))$ pour tout $V\in \n_{\square}(F)$.

Par un développement en série de Laurent on voit qu'il existe un unique nombre réel positif $\rho(\alpha,\o)$ tel que si l'on pose 
\[r_\alpha(\lambda,A,\o)=|\alpha(A)|^{\rho(\alpha,\o)\langle \lambda,\alpha^\vee\rangle},\,\,\,\,\lambda\in ia_M^\ast,\]
alors la limite
\begin{equation}\label{eq:defr_alphalimit}
\lim_{A\in \a_{M,\o,G-\reg}(F)\to 0}r_\alpha(\lambda,A,\o)\exp(-\langle\lambda,H_P(n_\square(A,Y,V))\rangle)    
\end{equation}
existe et définit une fonction non identiquement nulle des variables $Y\in\o(F)$ et $V\in \n_{\square}(F)$. Cette propriété caractérise $\rho(\alpha,\o)$. On a aussi $\rho(\alpha,\o)=\rho(-\alpha,\o)$ et $
r_\alpha(\lambda,A,\o)=r_{-\alpha}(-\lambda,A,\o)$. 


On introduit la famille suivante, qui est une $(G,M)$-famille
\begin{align*}
w_P(\lambda,A,Y,V)= \left(\prod_{\alpha\in \Sigma(\mathfrak{p};A_M)\cap (-\Sigma(\mathfrak{p}_\square;A_M))}r_\alpha(\lambda,A,\o)\right) \exp(-\langle\lambda,H_P(n_\square(A,Y,V))\rangle) 
\end{align*}
pour $P\in \P^G(M)$, $A\in \a_{M,\o,G-\reg}(F)$, $Y\in\o(F)$ et $V\in \n_{\square}(F)$. 
Cette famille dépend du sous-groupe parabolique $P_\square$ mais, pour ne pas alourdir encore les notations, on ne le fait pas figurer dans la notation. Lorsqu'il est indispensable d'indiquer la dépendance à l'égard du point-base $P_\square$, on va noter $w_P(\lambda,A,Y,V)=w_{P|P_\square}(\lambda,A,Y,V)$.

Soit $P\in \P^G(M)$. Posons $\widetilde{\g}_P$ la $F$-variété $\{(X,Pg)\in \g\times (P\backslash G)\mid X\in (\Ad g^{-1})\p\}$. C'est la résolution partielle de Grothendieck-Springer associée à $P$. Posons $\widetilde{\g}_P(\o)\eqdef\{(X,Pg)\in \widetilde{\g}_P\mid  (\Ad g)X\in \a_M+\overline{\o}\oplus\n_P=\a_M\oplus \overline{\o^-}\oplus\n_P\}$, pour rappel $\overline{\o}$ (resp. $\overline{\o^-}$) est la clôture de Zariski de $\o$ (resp. $\o^-$) dans $\m$. 

\begin{lemma}\label{lem:geomnormalschemeGL}
$\widetilde{\g}_P(\o)$ est un fermé de $\widetilde{\g}_P$. On le munit de la structure de schéma en tant qu'un sous-schéma  fermé de $\widetilde{\g}_P$. Alors, $\widetilde{\g}_P(\o)$ est normal.  
\end{lemma}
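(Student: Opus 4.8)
La stratégie est de réaliser $\widetilde{\g}_P(\o)$ comme un fibré, localement trivial pour la topologie de Zariski, au-dessus de la variété lisse $P\backslash G$, de fibre le fermé $Z\eqdef\a_M\oplus\overline{\o^-}\oplus\n_P$ de $\p$, puis de ramener les deux assertions à des énoncés sur $Z$. D'abord, je voudrais identifier $\widetilde{\g}_P$ au fibré associé $G\times^P\p$, quotient de $G\times\p$ par l'action $p\cdot(g,Y)=(pg,(\Ad p)Y)$, via $(g,Y)\mapsto((\Ad g^{-1})Y,Pg)$. Il faut ensuite vérifier que $Z$ est stable sous l'action adjointe de $P$ : le Levi $M$ fixe $\a_M$ (central dans $\m$), préserve $\overline{\o^-}$ vu comme fermé $\Ad(M)$-stable de $\m/\a_M$, et préserve $\n_P$ ; quant à $N_P$, pour $n\in N_P$ on a $(\Ad n)Y\in Y+\n_P$ dès que $Y\in\p$. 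On en déduit d'une part que la condition « $(\Ad g)X\in Z$ » ne dépend pas du représentant de la classe $Pg$, d'autre part que $\widetilde{\g}_P(\o)=G\times^P Z$ sous l'identification précédente.

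Comme $G\to P\backslash G$ est localement trivial pour la topologie de Zariski — ce qui vaut pour tout sous-groupe parabolique, via la grosse cellule $N_{\overline{P}}$ et la décomposition de Bruhat — on peut, sur un ouvert affine $U\subseteq P\backslash G$, choisir une section $s\colon U\to G$ ; elle fournit un isomorphisme $\widetilde{\g}_P|_U\xrightarrow{\sim}U\times\p$, $(X,Pg)\mapsto(Pg,(\Ad s(Pg))X)$, sous lequel $\widetilde{\g}_P(\o)|_U$ correspond à $U\times Z$. La fermeture de $\widetilde{\g}_P(\o)$ dans $\widetilde{\g}_P$ en résulte aussitôt : localement c'est $U\times Z$, fermé dans $U\times\p$ puisque $\overline{\o^-}$ est fermé dans $\m/\a_M$ ; on recolle. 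Pour la normalité, comme celle-ci est une propriété locale et que, localement, $\widetilde{\g}_P(\o)\cong U\times Z\cong U\times\a_M\times\n_P\times\overline{\o^-}$ avec $U$, $\a_M$ et $\n_P$ lisses sur $F$ (donc géométriquement réguliers), on est ramené à prouver que $\overline{\o^-}$ est normale.

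Pour ce dernier point, je passerais à la clôture algébrique : la formation de l'adhérence de Zariski commute à l'extension des scalaires (lemme \ref{YDLioplem:Zarclosurecommutefieldbc} appliqué à $\operatorname{Spec}\overline{F}\to\operatorname{Spec}F$), et la normalité géométrique entraîne la normalité, de sorte qu'il suffit de montrer que $(\overline{\o^-})_{\overline{F}}$ est normale. Sur $\overline{F}$, $M$ devient un produit de groupes généraux linéaires, $\o^-$ une unique orbite adjointe (la $G(F)$-conjugaison coïncidant avec la $G(\overline{F})$-conjugaison pour les groupes du type GL), et l'adhérence d'une orbite adjointe dans un $\gl_N$ se décrit comme un fibré, localement trivial, au-dessus d'une variété de drapeaux lisse, de fibre un produit d'adhérences d'orbites nilpotentes, une pour chaque valeur propre généralisée de la partie semi-simple (via la description de la fibre de l'application caractéristique $\chi\colon\gl_N\to\mathbb{A}^N$). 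La normalité résulte alors de celle, bien connue, des adhérences d'orbites nilpotentes dans $\gl_N$ (Kraft--Procesi), un produit de variétés normales sur un corps algébriquement clos étant normal. L'obstacle principal est précisément ce dernier point : la réduction à la normalité des cônes nilpotents et l'identification du fibré au-dessus de la variété de drapeaux ; tout le reste est formel ou déjà disponible plus haut. (Alternativement, on peut invoquer directement le théorème de Kraft--Procesi affirmant que toute adhérence de classe de conjugaison de matrices est normale.)
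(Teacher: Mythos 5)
Votre démonstration est correcte et suit essentiellement la même démarche que celle du texte : trivialisation locale de $\widetilde{\g}_P$ au-dessus de $P\backslash G$ (le texte utilise les cartes affines explicites $g_w$ indexées par $\uW^{G,0}$, ce qui revient à votre description par le fibré associé $G\times^P Z$), réduction des deux assertions au caractère fermé et à la normalité de $\overline{\o^-}$, puis descente de la normalité via le lemme \ref{YDLioplem:Zarclosurecommutefieldbc} et le théorème de Kraft--Procesi. Notez seulement que votre détour par l'application caractéristique et la réduction au cas nilpotent est superflu : le théorème cité de Kraft--Procesi porte déjà sur les adhérences de classes de conjugaison arbitraires de matrices, ce qui correspond exactement à votre variante « alternative » et à ce que fait le texte.
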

\begin{proof}
Notons $N_P^-$ le radical unipotent du sous-groupe parabolique opposé à $P$ par rapport à $M$ (ce groupe a été noté $\overline{N_P}$, mais ici on veut réserver la notation $\overline{\cdots}$ pour la clôture de Zariski). On connaît un système de coordonnées affines pour $\widetilde{\g}_P$ : soit $w\in \uW^{G,0}$, le morphisme
\begin{equation}\label{YDLiopeq:affinecoordgw}
g_w:(X,n)\in\p\times N_P^-\mapsto ((\Ad (nw)^{-1})X,Pnw)\in \widetilde{\g}_P    
\end{equation}
est une immersion ouverte, et $\cup_{w\in\uW^{G,0}} \text{Im}(g_w)=\widetilde{\g}_P$.

On a $g_w^{-1}(\widetilde{\g}_P(\o))=(\a_M\oplus \overline{\o^-}\oplus\n_P)\times N_P^-$. C'est donc une sous-variété fermé de $\p\times N_P^-$. On en déduit que  $\widetilde{\g}_P(\o)$ est une sous-variété fermée de $\widetilde{\g}_P$.

Pour prouver que $\widetilde{\g}_P(\o)$ est un schéma normal, il suffit de prouver que $\overline{\o^-}$ est un schéma normal. Soit $\overline{F}$ une clôture algébrique de $F$. Grâce aux travaux de Kraft-Procesi (\cite[théorème en page 227]{KraPro79}) 
pour les formes des groupes généraux linéaires, on sait que $\overline{\o^-_{\overline{F}}}$ est normal. Or $\overline{\o^-_{\overline{F}}}=(\overline{\o^-})_{\overline{F}}$ selon le lemme \ref{YDLioplem:Zarclosurecommutefieldbc}. Le schéma $\overline{\o^-}$ est alors géométriquement normal, donc normal.
\end{proof}

Il y a un morphisme de schémas 
\begin{align*}
\phi_P:\widetilde{\g
}_P(\o)&\rightarrow \g\times \a_M    \\
(X,Pg)&\mapsto (X,A),
\end{align*}
avec $A$ la soustraction de la projection sur $\a_M$ de $(\Ad g)X\in\a_M\oplus \overline{\o^-}\oplus\n_P$ par $A_\o$.  
\begin{lemma}
Pour tout $(X,A)\in \g\times \a_{M,\o,G-\reg}$, la fibre $\phi_P^{-1}(X,A)$ est ou bien vide si $X$ n'est pas dans $(\Ad G)(A\oplus\overline{\o})$, ou bien un schéma ayant un seul point sinon.    
\end{lemma}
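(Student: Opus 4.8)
Le plan est d'expliciter la fibre, puis de tout ramener aux propriétés de l'induite $\Ind_M^G$ et à l'hypothèse de généricité $A\in\a_{M,\o,G-\reg}$. Écrivons $\p=\m\oplus\n_P$ comme somme directe de $A_M$-modules. Comme la composante sur $\a_M$ de $(\Ad g)X$ ne dépend que de la classe $Pg$ (car $A_M$ est central dans $M$) et comme $A_\o+\overline{\o^-}=\overline{\o}$, un point $(X,Pg)$ de $\widetilde{\g}_P(\o)$ est dans $\phi_P^{-1}(X,A)$ si et seulement si $(\Ad g)X\in(A+\overline{\o})+\n_P$. Le point clé que j'établirais en premier est le suivant : fixons $Y\in\o$ ; puisque $M$ est connexe, $\overline{\o}$ est irréductible, et puisque $G$ est du type GL le polynôme caractéristique (réduit, composante par composante) est constant sur $\overline{\o}$ et détermine la classe de $M$-conjugaison d'un élément semi-simple ; donc pour tout $z\in\overline{\o}$ on a $z_\ss\sim_M Y_\ss$, et comme $A\in\a_{M,\o,G-\reg}$ entraîne $A+Y_\ss\in\Ind_M^G(A+Y_\ss)$, la proposition \ref{YDLiopprop:whenXinIndMGX} donne $G_{A+z_\ss}=M_{A+z_\ss}\subseteq M$. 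En particulier $(A+z)_\ss=A+z_\ss$ vérifie $M_{(A+z)_\ss}=G_{(A+z)_\ss}$, si bien que $A+z\in\Ind_M^G(A+z)$ et, par l'équivalence (1)$\Leftrightarrow$(2) de la proposition \ref{YDLiopprop:whenXinIndMGX}, $(A+z)+\n_P\subseteq\Ind_M^G(A+z)=(\Ad G)(A+z)$.

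Ceci règle déjà la première alternative : d'une part $(A+\overline{\o})+\n_P=\bigcup_{z\in\overline{\o}}\bigl((A+z)+\n_P\bigr)\subseteq(\Ad G)(A+\overline{\o})$, d'autre part trivialement $(\Ad G)(A+\overline{\o})\subseteq(\Ad G)\bigl((A+\overline{\o})+\n_P\bigr)$, donc $\phi_P^{-1}(X,A)$ est non vide si et seulement si $X\in(\Ad G)(A+\overline{\o})=(\Ad G)(A\oplus\overline{\o})$. Pour la seconde alternative, je prendrais $Pg_1,Pg_2\in\phi_P^{-1}(X,A)$ et poserais $w_i=(\Ad g_i)X=A+z_i+n_i$ avec $z_i\in\overline{\o}$, $n_i\in\n_P$. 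D'après le lemme \ref{YDLioplem:Y+VtoJordan} appliqué à $A+z_i\in\m$ et $n_i$, il existe $\delta_i\in N_P$ avec $(\Ad\delta_i)w_i=A+z_i+v_i$, $v_i\in\n_{P_{(A+z_i)_\ss}}$ ; mais $G_{(A+z_i)_\ss}=M_{(A+z_i)_\ss}\subseteq M$ d'après ce qui précède, donc $P_{(A+z_i)_\ss}$ est un sous-groupe de Levi et $\n_{P_{(A+z_i)_\ss}}=0$. Ainsi $w_i$ est $N_P$-conjugué à $A+z_i$, et en particulier $A+z_1\sim_G A+z_2$. Cette $G$-conjugaison se réalise en fait dans $M$ : si $(\Ad g)(A+z_1)=A+z_2$ et si $m\in M$ vérifie $(\Ad m)z_{1,\ss}=z_{2,\ss}$, alors $(\Ad m^{-1}g)(A+z_{1,\ss})=A+z_{1,\ss}$, donc $m^{-1}g\in G_{A+z_{1,\ss}}\subseteq M$ et $g\in M$. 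En recombinant avec les réductions $(\Ad\delta_i)w_i=A+z_i$, on obtient $p\eqdef\delta_2^{-1}g\delta_1\in N_PMN_P=P$ avec $(\Ad p)w_1=w_2$, soit $h\eqdef g_2^{-1}pg_1\in G_X$ ; enfin $g_1h^{-1}g_1^{-1}\in g_1G_Xg_1^{-1}=G_{w_1}$, et comme $w_1\in\Ind_M^G(A+z_1)$ avec $w_1$ dans $(A+z_1)+\n_P$, la proposition \ref{chap3prop:indprop} (point 6) jointe à la connexité des centralisateurs en type GL donne $G_{w_1}\subseteq P$, d'où $Pg_2=Pg_1h^{-1}=Pg_1$. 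La fibre a donc au plus un point.

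Il restera à observer qu'un schéma de type fini sur un corps algébriquement clos ayant au plus un point fermé est vide ou concentré en un seul point, ce qui donne l'énoncé (après l'extension inoffensive à $\overline{F}$, la conjugaison étant insensible au corps de base en type GL). La principale subtilité est d'utiliser la régularité \emph{relative à $\o$}, c'est-à-dire $A\in\a_{M,\o,G-\reg}$ et non la régularité plus forte $A\in\a_{M,G-\reg}$ : tout repose sur l'identité $G_{A+z_\ss}\subseteq M$ valable pour tout $z\in\overline{\o}$, qui annule simultanément le radical unipotent dans la réduction de Jordan et force la $G$-conjugaison entre éléments de $A+\overline{\o}$ à se réaliser dans $M$ ; le reste n'est que de la comptabilité avec la décomposition de Jordan et les propriétés de l'induite.
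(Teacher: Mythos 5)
Your argument is correct and follows essentially the same route as the paper: non-emptiness of the fibre is reduced to $X\in(\Ad G)(A+\overline{\o})$ by showing that $A$ stays regular for every $M$-orbit contained in $A+\overline{\o}$, so that $(A+z)+\n_P\subseteq(\Ad G)(A+z)$ by proposition \ref{YDLiopprop:whenXinIndMGX}, while uniqueness of the point of the fibre rests on the inclusion $G_{Y}\subseteq P$ from point 6 of proposition \ref{chap3prop:indprop}. The only difference is one of detail: where the paper tersely invokes point 5 of proposition \ref{chap3prop:indprop} and ``la définition de $\a_{M,\o,G-\reg}$'', you make explicit why the regularity extends to all of $\overline{\o}$ (constancy of the semisimple part up to $M$-conjugacy on the closure, in type GL) and you re-derive the required $P$-conjugacy by hand via lemme \ref{YDLioplem:Y+VtoJordan} together with the fact that any conjugation between $A+z_1$ and $A+z_2$ takes place in $M$ — a slightly more self-contained rendering of the same argument.
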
 
\begin{proof}
Par définition, $\phi_P^{-1}(X,A)$ est non-vide si et seulement si \begin{align*}
X&\in (\Ad G)(A+\overline{\o}\oplus \n_P)=(\Ad G)\left(\bigcup_{\substack{\text{$\o'$ : $M$-orbite dans $\m$}\\ \o'\subseteq \overline{A+\o}}} \o'\oplus\n_P\right) \\
&=\bigcup_{\substack{\text{$\o'$ : $M$-orbite dans $\m$}\\ \o'\subseteq A+\overline{\o}}}\overline{\Ind_M^G(\o')}=\bigcup_{\o'} \overline{(\Ad G)\o'}=(\Ad G)(A+\overline{\o}).
\end{align*} 
L'avant dernière égalité vient de la définition de $\a_{M,\o,G-\reg}$ (équation \eqref{YDLiopeq:defaMoG-reg}). Supposons maintenant que $X\in (\Ad G)(A+\overline{\o})$. Soient $g_1,g_2\in G$ tels que $\phi_P(X,Pg_1)=\phi_P(X,Pg_2)=(X,A)$. Pour $i=1,2$, on a $(\Ad g_i)X\in A+\overline{\o}\oplus\n_P$. Au vu de la définition de $\a_{M,\o,G-\reg}$ on en déduit $\Ind_{M}^G((\Ad g_i)X)=(\Ad G)(\Ad g_i)X=(\Ad G)X$. Donc $(\Ad g_i)X\in (\Ad G)X\cap \p=\Ind_{M}^G((\Ad g_i)X)\cap \p$, qui est une $P$-orbite dans $\p$ selon le point 5 de la proposition \ref{chap3prop:indprop}. De ce fait il existe $p\in P$ tel que $(\Ad g_1)X=(\Ad pg_2)X$. Autrement dit $pg_2g_1^{-1}\in G_{(\Ad g_1)X}\subseteq P$, selon le point 6 de la proposition \ref{chap3prop:indprop}. Il vient $Pg_1=Pg_2$, ce qu'il fallait.
\end{proof}

Posons $\widetilde{\g}_P(\o)_{G-\reg}$ le produit fibré $(\g\times\a_{M,\o,G-\reg})\times_{\g\times\a_M}\widetilde{\g}_P(\o)$, considéré comme un sous-schéma ouvert de $\widetilde{\g}_P(\o)$.
Pour tout $P,Q\in \P^G(M)$ il existe donc un unique isomorphisme 
\begin{equation}\label{eq:G-Sresolutionindclass}
\begin{split}
 f_{Q,P}:\widetilde{\g}_P(\o)_{G-\reg}&\longrightarrow \widetilde{\g}_Q(\o)_{G-\reg}    \\
(X,Pg_1)&\longmapsto (X,Qg_2)    
\end{split}    
\end{equation}
tel que $\phi_Q|_{\widetilde{\g}_Q(\o)_{G-\reg}}\circ f_{Q,P}=\phi_P|_{\widetilde{\g}_P(\o)_{G-\reg}}$, obtenu en composant la flèche de haut avec l'inverse de  la flèche de gauche du produit fibré 
\[\begin{tikzcd}	{\widetilde{\g}_P(\o)_{G-\reg}\times_{\g\times\a_{M,\o,G-\reg}}\widetilde{\g}_Q(\o)_{G-\reg}} & {\widetilde{\g}_Q(\o)_{G-\reg}} \\
	{\widetilde{\g}_P(\o)_{G-\reg}} & {\g\times\a_{M,\o,G-\reg}}
	\arrow[from=1-1, to=1-2]
	\arrow[from=1-2, to=2-2]
	\arrow[from=2-1, to=2-2]
	\arrow[from=1-1, to=2-1]
\end{tikzcd}\]
L'élément $g_2\in Q\backslash G$ est défini par la relation $(\Ad g_2^{-1})(A\oplus\overline{\o}\oplus \n_Q)=(\Ad g_1^{-1})(A\oplus\overline{\o}\oplus \n_P)$.

Posons $\widetilde{\g}_{P,\Ind}(\o)$ la sous-partie $\{(X,Pg)\in \widetilde{\g}_P\mid  (\Ad g)X\in \Ind_{M}^G(\o)\cap(\o\oplus\n_P)=(\o\oplus\n_P)_{G-\reg}\}$ de $\widetilde{\g}_P(\o)$. Posons $\widetilde{\g}_{P,\Ind}(\o)$ la sous-partie $\{(X,Pg)\in \widetilde{\g}_P\mid  (\Ad g)X\in \Ind_{M}^G(\o)\cap(\o\oplus\n_P)=(\o\oplus\n_P)_{G-\reg}\}$ de $\widetilde{\g}_P(\o)$. Posons $\widetilde{\g}_P(\o)^{\sm}\eqdef\widetilde{\g}_{P,\Ind}(\o)\cup \widetilde{\g}_{P}(\o)_{G-\reg}$.

\begin{lemma}
$\widetilde{\g}_P(\o)^{\sm}$ est un ouvert de $\widetilde{\g}_P(\o)$.    
\end{lemma}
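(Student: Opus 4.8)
The plan is to verify openness locally, using the affine charts $g_w$ of $\widetilde{\g}_P$ from \eqref{YDLiopeq:affinecoordgw}. Recall that the open subsets $\mathrm{Im}(g_w)$ cover $\widetilde{\g}_P$ as $w$ ranges over $\uW^{G,0}$, and that — exactly as computed in the proof of Lemma \ref{lem:geomnormalschemeGL} — one has $g_w^{-1}(\widetilde{\g}_P(\o)) = (\a_M\oplus\overline{\o^-}\oplus\n_P)\times N_P^-$, an affine-space bundle over the irreducible affine variety $\a_M\oplus\overline{\o^-}\oplus\n_P$. Since $\widetilde{\g}_P(\o)$ is closed in $\widetilde{\g}_P$ (Lemma \ref{lem:geomnormalschemeGL}), each $g_w$ restricts to an isomorphism of $g_w^{-1}(\widetilde{\g}_P(\o))$ onto the open subscheme $\widetilde{\g}_P(\o)\cap\mathrm{Im}(g_w)$ of $\widetilde{\g}_P(\o)$; hence $\widetilde{\g}_P(\o)^{\sm}=\bigcup_{w}(\widetilde{\g}_P(\o)^{\sm}\cap\mathrm{Im}(g_w))$ is open as soon as $g_w^{-1}(\widetilde{\g}_P(\o)^{\sm})$ is open in $(\a_M\oplus\overline{\o^-}\oplus\n_P)\times N_P^-$ for every $w$.

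Next I would compute the two pullbacks. For $\widetilde{\g}_{P,\Ind}(\o)$: writing $g_w(Z,n)=((\Ad (nw)^{-1})Z,Pnw)$, any representative of the class $Pnw$ has the form $pnw$ with $p\in P$, and it acts on the first coordinate through $\Ad p$; combining this with the $P$-stability of the induced-orbit intersection (Proposition \ref{chap3prop:indprop}, points 4--6) one finds that $(Z,n)$ lies in the pullback precisely when $Z$ lies in the $G$-regular locus of $\a_M\oplus\overline{\o^-}\oplus\n_P$ in the sense of \eqref{eq:deflieuG-reg}. For $\widetilde{\g}_P(\o)_{G-\reg}$: unwinding the formula for $\phi_P$ — and using that $\n_P$ and $\o^-\subseteq\m/\a_M$ are orthogonal to $\a_M$ for the canonical bilinear form, so that the $\a_M$-projection of $Z$ is unaffected by those components — one finds that $(Z,n)$ lies in the pullback precisely when $\pi_{\a_M}(Z)-A_\o\in\a_{M,\o,G-\reg}$. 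Both conditions are $N_P^-$-saturated, so $g_w^{-1}(\widetilde{\g}_P(\o)^{\sm})$ is the product with $N_P^-$ of the union, inside $\a_M\oplus\overline{\o^-}\oplus\n_P$, of these two subsets.

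It then remains to observe that each of the two subsets is Zariski-open. The second is the preimage of the open subvariety $\a_{M,\o,G-\reg}$ (Proposition \ref{YDLiopprop:whatisaMoG-reg}(1)) under the linear map $Z\mapsto\pi_{\a_M}(Z)-A_\o$, hence open. The first is open because the $G$-regular locus of any irreducible subvariety of $\g$ is Zariski-open: the function $Y\mapsto\dim(\Ad G)Y$ is lower semicontinuous, so its restriction to the irreducible variety $\a_M\oplus\overline{\o^-}\oplus\n_P$ attains its maximal (generic) value on an open set. Taking the union, multiplying by $N_P^-$, and gluing over the cover $\{\mathrm{Im}(g_w)\}_{w\in\uW^{G,0}}$ gives that $\widetilde{\g}_P(\o)^{\sm}$ is open in $\widetilde{\g}_P(\o)$.

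The step requiring the most care is the bookkeeping in the second paragraph: one must check that the two set-theoretic conditions really do transport through the non-linear charts $g_w$ as stated, which is where the $P$-equivariance of the induced orbit and of $\phi_P$ (and the compatibility of the $\a_M$-projection with the direct-sum decomposition of $\g$) are used. Once that identification is in place, openness is formal — it reduces to the two classical facts just quoted, that preimages of opens are open and that the $G$-regular locus of an irreducible variety is open.
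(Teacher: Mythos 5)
Your overall strategy --- reduce everything to the affine charts $g_w$ of \eqref{YDLiopeq:affinecoordgw} and check that each pullback is Zariski-open --- is exactly what the paper's one-sentence proof gestures at, and your treatment of $\widetilde{\g}_{P}(\o)_{G-\reg}$ is correct (that piece is in any case open by construction, being $\phi_P^{-1}(\g\times\a_{M,\o,G-\reg})$ with $\a_{M,\o,G-\reg}$ open). The gap is in your identification of $g_w^{-1}(\widetilde{\g}_{P,\Ind}(\o))$. In the chart, taking the representative $g=nw$ of $Pnw$ gives $(\Ad g)X=Z$, so the defining condition $(\Ad g)X\in(\o\oplus\n_P)_{G-\reg}$ reads $Z\in(\o\oplus\n_P)_{G-\reg}$: this is the regular locus, in the sense of \eqref{eq:deflieuG-reg}, of the subset $\o\oplus\n_P$ itself, \emph{not} of the ambient variety $\a_M\oplus\overline{\o^-}\oplus\n_P$. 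The two are very different: every element of $\o\oplus\n_P$ has $\a_M$-component equal to $A_\o$ and $\overline{\o^-}$-component lying in the single orbit $\o^-$, so $(\o\oplus\n_P)_{G-\reg}$ sits inside the proper closed subvariety where the $\a_M$-component is $A_\o$ (as soon as $M\neq G$), whereas $(\a_M\oplus\overline{\o^-}\oplus\n_P)_{G-\reg}$ is open and dense in the chart. The set whose openness you prove is therefore not the pullback of $\widetilde{\g}_{P,\Ind}(\o)$, and that pullback is certainly not open on its own.

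Nor can the argument be repaired just by substituting the correct pullback, because the union of the two correct pullbacks, namely $\left\{Z:\pi_{\a_M}(Z)-A_\o\in\a_{M,\o,G-\reg}\right\}\cup(\o\oplus\n_P)_{G-\reg}$, is not Zariski-open in general. Take $G=\GL_2$, $M$ the diagonal torus, $\o=\{0\}$ and $P$ the upper-triangular Borel: the chart is $(\mathfrak{t}\oplus\n_P)\times N_P^-$ with coordinates $(a,b,v)$ on the first factor, $\a_{M,\o,G-\reg}=\{a\neq b\}$ and $(\o\oplus\n_P)_{G-\reg}=\{a=b=0,\,v\neq0\}$; the complement of the union is $\{a=b\}\setminus\{a=b=0,\,v\neq0\}$, whose Zariski closure is the whole hyperplane $\{a=b\}$, so the complement is not closed. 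So either the definition of $\widetilde{\g}_{P,\Ind}(\o)$ intended here is larger than the one written (e.g.\ a fibrewise regular locus over all of $\a_M$ rather than only over $A=0$), in which case your pullback computation must be redone against that definition, or the statement itself needs to be revisited with the author; in either case the proof as written establishes openness of a set that is not $\widetilde{\g}_P(\o)^{\sm}$.
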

\begin{proof}
Cela se justifie facilement avec le système de coordonnées affines $(g_w)_{w\in \uW^{G,0}}$ (équation \eqref{YDLiopeq:affinecoordgw}).   
\end{proof}

On munit $\widetilde{\g}_P(\o)^{\sm}$ de la structure de schéma en tant qu'un sous-schéma ouvert de $\widetilde{\g}_P(\o)$.    

L'isomorphisme $f_{Q,P}$ définit un morphisme birationnel $f_{Q,P}:\widetilde{\g}_P(\o)^\sm\dashrightarrow\widetilde{\g
}_Q(\o)^\sm$.

\begin{lemma}\label{YDLioplem:prolongerbir}Le morphisme birationnel $f_{Q,P}:\widetilde{\g}_P(\o)^\sm\dashrightarrow\widetilde{\g}_Q(\o)^\sm$ est en fait un isomorphisme $\widetilde{\g}_P(\o)^\sm\to\widetilde{\g}_Q(\o)^\sm$, dont la restriction donne un isomorphisme $\widetilde{\g}_{P,\Ind}(\o)\to \widetilde{\g}_{Q,\Ind}(\o)$.
\end{lemma}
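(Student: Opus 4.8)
The plan is to use the normality of $\widetilde{\g}_P(\o)$ (Lemma \ref{lem:geomnormalschemeGL}) together with the birational-to-isomorphism criterion of Lemma \ref{YDLioplem:extendbirmap}. The strategy has two stages: first upgrade the birational map $f_{Q,P}$ to an actual isomorphism of the smooth loci $\widetilde{\g}_P(\o)^\sm\to\widetilde{\g}_Q(\o)^\sm$, then check that this isomorphism carries $\widetilde{\g}_{P,\Ind}(\o)$ onto $\widetilde{\g}_{Q,\Ind}(\o)$. Since both conclusions are symmetric in $P$ and $Q$, it suffices to construct a morphism $\widetilde{\g}_P(\o)^\sm\to\widetilde{\g}_Q(\o)^\sm$ extending $f_{Q,P}$ on the dense open $\widetilde{\g}_P(\o)_{G-\reg}$; applying the same construction to $f_{P,Q}$ and using uniqueness of extensions (both sides are separated, the overlap is dense) shows the two extensions are mutually inverse.

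First I would reduce to a graph-closure argument. Let $\Gamma\subseteq \widetilde{\g}_P(\o)^\sm\times_{\g\times\a_M}\widetilde{\g}_Q(\o)^\sm$ be the Zariski closure of the graph of the isomorphism \eqref{eq:G-Sresolutionindclass} over $\g\times\a_{M,\o,G-\reg}$, with its reduced structure; it is integral. The two projections $\pr_P:\Gamma\to\widetilde{\g}_P(\o)^\sm$ and $\pr_Q:\Gamma\to\widetilde{\g}_Q(\o)^\sm$ are birational and proper — properness because $\widetilde{\g}_P\to\g$ and $\widetilde{\g}_Q\to\g$ are proper (the fibres $P\backslash G$ and $Q\backslash G$ are projective) and $\Gamma$ is a closed subscheme of the fibre product over $\g$, hence $\pr_P$ and $\pr_Q$ are projective; together with quasi-finiteness (shown below) this gives finiteness. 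The target $\widetilde{\g}_P(\o)^\sm$ is normal: it is an open subscheme of $\widetilde{\g}_P(\o)$, which is normal by Lemma \ref{lem:geomnormalschemeGL}. Once $\pr_P$ is known to be finite and birational onto the normal integral scheme $\widetilde{\g}_P(\o)^\sm$, Lemma \ref{YDLioplem:extendbirmap} forces $\pr_P$ to be an isomorphism; composing $\pr_Q$ with $\pr_P^{-1}$ then gives the desired morphism $f_{Q,P}:\widetilde{\g}_P(\o)^\sm\to\widetilde{\g}_Q(\o)^\sm$ extending the old $f_{Q,P}$. Symmetry in $P,Q$ gives the inverse, so $f_{Q,P}$ is an isomorphism.

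The main obstacle — and the only step requiring genuine work with the geometry of induced orbits — is proving that $\pr_P$ (equivalently $\pr_Q$) is quasi-finite over the part of $\widetilde{\g}_P(\o)^\sm$ that lies over $\widetilde{\g}_{P,\Ind}(\o)$, i.e. that for a point $(X,Pg)$ with $(\Ad g)X\in(\o\oplus\n_P)_{G-\reg}$ there is at most one (and in fact exactly one) coset $Qg'$ with $(\Ad g')X\in(\o\oplus\n_Q)_{G-\reg}$ and the two lying in a single $\Gamma$-point. Here I would argue exactly as in the proof of the fibre lemma above: if $(\Ad g_i)X\in(\o\oplus\n_Q)_{G-\reg}$ for $i=1,2$ then both lie in $\Ind_M^G(\o)\cap(\o\oplus\n_Q)$, which by point 5 of Proposition \ref{chap3prop:indprop} is a single $Q$-orbit, so $(\Ad g_1)X=(\Ad pg_2)X$ for some $p\in Q$; then $pg_2g_1^{-1}\in G_{(\Ad g_1)X}^\circ\subseteq Q$ by point 6 of Proposition \ref{chap3prop:indprop} (using that for groups of type GL centralizers are connected), whence $Qg_1=Qg_2$. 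Existence of such a $g'$, i.e. surjectivity of $\pr_Q$ onto $\widetilde{\g}_{Q,\Ind}(\o)$ over $\widetilde{\g}_{P,\Ind}(\o)$, follows because $\Ind_M^G(\o)\cap(\o\oplus\n_Q)$ is nonempty and $(\Ad G)$-transitively related to $\Ind_M^G(\o)\cap(\o\oplus\n_P)$. This shows $\pr_P$ is quasi-finite (bijective on points, combining the $G$-regular locus where it is already an isomorphism with the induced locus just treated), hence — being also proper — finite, completing the argument. Finally, the identification $f_{Q,P}(\widetilde{\g}_{P,\Ind}(\o))=\widetilde{\g}_{Q,\Ind}(\o)$ is immediate from this description of the fibres, since $f_{Q,P}$ sends $(X,Pg)$ to $(X,Qg')$ with $(\Ad g')X$ in the same $G$-orbit intersected with $\o\oplus\n_Q$, and membership in $\widetilde{\g}_{\bullet,\Ind}(\o)$ depends only on $X$ lying in $\Ind_M^G(\o)$.
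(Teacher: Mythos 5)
Your proposal follows essentially the same route as the paper's proof: Zariski closure of the graph over the $G$-regular locus, properness via projectivity of the Grothendieck--Springer maps, quasi-finiteness via the fibre analysis resting on points 5 and 6 of Proposition \ref{chap3prop:indprop}, and the finite-birational-onto-normal criterion of Lemma \ref{YDLioplem:extendbirmap}, together with the same observation that membership in $\widetilde{\g}_{\bullet,\Ind}(\o)$ is preserved. The one point to adjust is that the closure should be taken inside the full fibre product $\widetilde{\g}_P(\o)\times_{\g\times\a_M}\widetilde{\g}_Q(\o)$, as the paper does, so that properness of the two projections is immediate before restricting to the loci $\widetilde{\g}_{\bullet}(\o)^{\sm}$; taken inside the product of the $\sm$-loci only, the projections are not a priori proper.
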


\begin{proof}
On a un diagramme commutatif
\setlength{\perspective}{2pt}
\[\begin{tikzcd}[row sep={40,between origins}, column sep={80,between origins}]
      &[-\perspective] {\widetilde{\g}_P(\o)_{G-\reg}\times_{\g\times\a_{M,\o,G-\reg}}\widetilde{\g}_Q(\o)_{G-\reg}} \arrow{rr}\arrow{dd} \arrow{dl} &[\perspective] &[-\perspective] {\widetilde{\g}_Q(\o)_{G-\reg}} \arrow[dd,]\arrow[dl,"\phi_Q|_{\widetilde{\g}_Q(\o)_{G-\reg}}"] \\[-\perspective]\widetilde{\g}_P(\o)_{G-\reg}
    \arrow[crossing over,"\phi_P|_{\widetilde{\g}_P(\o)_{G-\reg}}" near end]{rr} \arrow[dd] \arrow[urrr,crossing over,"f_{Q,P}"] & & \g\times \a_{M,\o,G-\reg} \\[\perspective]
      & \widetilde{\g}_P(\o)\times_{\g\times \a_{M}}
\widetilde{\g}_Q(\o) \arrow{rr} \arrow[dl] & &  \widetilde{\g}_Q(\o) \arrow[dl,"\phi_Q"] \\[-\perspective]
    \widetilde{\g}_P(\o) \arrow[rr,"\phi_P"]\arrow[urrr,crossing over,dotted]   && \g\times \a_{M} \arrow[from=uu,crossing over]
\end{tikzcd}\]
dont les flèches sont celles évidentes. Les six faces du cube sont cartésiennes.

Le morphisme $\widetilde{\g}_P\to \g$ est projectif, et $\g\times \a_M$ est un $F$-schéma séparé, on en tire que $\widetilde{\g}_P\to\g\times \a_M$ est projectif. Comme $\widetilde{\g}_P(\o)\to \widetilde{\g}_P$ est une immersion fermée. Il vient que $\phi_P$ est projectif. De même $\phi_Q$ est projectif. 

Soit $Z\eqdef\overline{\widetilde{\g}_P(\o)_{G-\reg}\times_{\g\times\a_{M,\o,G-\reg}}\widetilde{\g}_Q(\o)_{G-\reg}}$ la clôture de Zariski de $\widetilde{\g}_P(\o)_{G-\reg}\times_{\g\times\a_{M,\o,G-\reg}}\widetilde{\g}_Q(\o)_{G-\reg}$ dans $\widetilde{\g}_P(\o)_{\times_{\g\times\a_{M}}}\widetilde{\g}_Q(\o)$. C'est une sous-$F$-variété fermée. Le morphisme naturel 
\[h_P:Z\to \widetilde{\g}_P(\o)\]
est la composée $Z\to \widetilde{\g}_P(\o)_{\times_{\g\times\a_{M}}}\widetilde{\g}_Q(\o)\to \widetilde{\g}_P(\o)$. La première flèche étant une immersion fermée, elle est projective. La deuxième flèche étant un changement de base de $\phi_Q$, elle est également projective. Ainsi $h_P$ est projectif. Son image est donc un fermé de $\widetilde{\g}_P(\o)$. Or son image contient $\widetilde{\g}_P(\o)_{G-\reg}$ qui est un ouvert dans $\widetilde{\g}_P(\o)$. Cette image vaut donc $\widetilde{\g}_P(\o)$.

Soit $C_P\eqdef h_P\left(h_P^{-1}(\widetilde{\g}_P(\o)^\sm)\cap h_Q^{-1}
(\widetilde{\g}_Q(\o)^\sm)\right)\subseteq \widetilde{\g}_P(\o)^\sm$. Soit $h_P'$ le changement de base de $h_P$ à $C_P$ :
\[h_P':h_P^{-1}(\widetilde{\g}_P(\o)^\sm)\cap h_Q^{-1}(\widetilde{\g}_Q(\o)^\sm)\to C_P.\]
Soit $(X,Qg)\in C_Q\subseteq \widetilde{\g}_{P,\Ind}(\o)\cup \widetilde{\g}_{P}(\o)_{G-\reg}$. 

Supposons d'abord que $(X,Qg)\in \widetilde{\g}_Q(\o)_{G-\reg}$. Autrement dit $(\Ad g)X\in A_{M,\o,G-\reg}\oplus\overline{\o^-}\oplus \n_Q$.
Soit $(Y,Pg_1)\in \widetilde{\g}_P(\o)$. On sait que $(Y,Pg_1)\in h_P'(h_Q'^{-1}(((X,Qg)))$ si et seulement si $Y=X$ et $(\Ad g_1)Y\in A_{M,\o,G-\reg}\oplus\overline{\o^-}\oplus \n_P$. D'après la définition de $a_{M,\o,G-\reg}$ et l'équivalence (1) $\Leftrightarrow$ (3) de la proposition \ref{YDLiopprop:whenXinIndMGX} on sait que de tel élément $(Y,Pg_1)$ existe, i.e. $h_P'(h_Q'^{-1}(((X,Qg)))\not= \emptyset$. Soit maintenant $(X,Pg_1),(X,Pg_2)\in h_P'(h_Q'^{-1}(((X,Qg)))$. Toujours d'après la définition de $a_{M,\o,G-\reg}$ et l'équivalence (1) $\Leftrightarrow$ (3) de la proposition \ref{YDLiopprop:whenXinIndMGX} on sait qu'il existe $n\in N_P$ tel que $(\Ad hg_1)X=(\Ad g_2)X\in A_{M,\o,G-\reg}\oplus\overline{\o^-}\oplus \n_P$. On a $ng_1g_2^{-1}\in G_{(\Ad g_2)X}$. Puis selon le point 6 de la proposition  \ref{chap3prop:indprop} on obtient $ng_1g_2^{-1}\in P$. D'où $Pg_1=Pg_2$. En conclusion $h_P'(h_Q'^{-1}(((X,Qg)))$ est un singleton.

Supposons ensuite que $(X,Qg)\in \widetilde{\g}_{Q,\Ind}(\o)$. Autrement dit $(\Ad g)X\in (\o\oplus \n_Q)_{G-\reg}$. Soit $(Y,Pg_1)\in \widetilde{\g}_P(\o)$. On sait que $(Y,Pg_1)\in h_P'(h_Q'^{-1}(((X,Qg)))$ si et seulement si $Y=X$ et $(\Ad g_1)Y\in (\o\oplus \n_P)_{G-\reg}$. Comme $(\Ad G)(\o\oplus \n_Q)_{G-\reg}=\Ind_M^G(\o)=(\Ad G)(\o\oplus \n_P)_{G-\reg}$ on sait que de tel élément $(Y,Pg_1)$ existe, i.e. $h_P'(h_Q'^{-1}(((X,Qg)))\not= \emptyset$. Soit maintenant $(X,Pg_1),(X,Pg_2)\in h_P'(h_Q'^{-1}(((X,Qg)))
$. D'après le point 6 de la proposition \ref{chap3prop:indprop} on sait qu'il existe $p\in P$ tel que $(\Ad pg_1)X=(\Ad g_2)X\in (\o\oplus \n_P)_{G-\reg}$. On a $pg_1g_2^{-1}\in G_{(\Ad g_2)X}$. Puis toujours selon le point 6 de la proposition  \ref{chap3prop:indprop} on obtient $pg_1g_2^{-1}\in P$. D'où $Pg_1=Pg_2$. En conclusion $h_P'(h_Q'^{-1}(((X,Qg)))$ est de nouveau un singleton.

Des deux paragraphes précédents, on tire que $C_P=\widetilde{\g}_P(\o)^\sm$, et que le morphisme $h_P'$ est quasi-fini. Puisque $h_P'
$ est aussi projectif (c'est un changement de base d'un morphisme projectif), donc propre, il est fini. 
\`{A} l'aide du lemme \ref{YDLioplem:extendbirmap}, on conclut que $h_{P}'$ est un isomorphisme.

Enfin l'isomorphisme $h_{Q}'\circ h_{P}'^{-1}:\widetilde{\g}_P(\o)^\sm\to\widetilde{\g}_Q(\o)^\sm$ prolonge le morphisme birationnel $f_{Q,P}:\widetilde{\g}_P(\o)^\sm\dashrightarrow\widetilde{\g}_Q(\o)^\sm$. Par restriction $h_{Q}'\circ h_{P}'^{-1}$ induit un isomorphisme $\widetilde{\g}_{P,\Ind}(\o)\to \widetilde{\g}_{Q,\Ind}(\o)$.
\end{proof}



\begin{lemma}\label{lem:weightwadjpara}
Soient $P_1,P_2,P_3\in\P^G(M)$. Soient $A\in\a_{M,\o,G-\reg}(F)$, $Y_1,Y_2\in \o(F)$, $V_1\in \n_{P_1}(F), V_2\in \n_{P_2}(F)$ et $k_1,k_2\in K$ tels que $k_1^{-1}(A+Y_1+V_1)k_1=k_2^{-1}(A+Y_2+V_2)k_2$. Alors
\[w_{P_3|P_1}(\lambda,A,Y_1,V_1)=w_{P_3|P_2}(\lambda,A,Y_2,V_2)w_{P_2|P_1}(\lambda,A,Y_1,V_1),\,\,\,\,\forall \lambda\in ia_M^\ast.\]   
\end{lemma}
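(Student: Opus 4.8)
La démarche consiste à séparer dans $w_{P|P_\square}(\lambda,A,Y,V)$ le facteur correctif en posant
\[c_{P,P_\square}(\lambda,A)\eqdef \prod_{\alpha\in \Sigma(\mathfrak{p};A_M)\cap (-\Sigma(\mathfrak{p}_\square;A_M))}r_\alpha(\lambda,A,\o),\]
de sorte que $w_{P|P_\square}(\lambda,A,Y,V)=c_{P,P_\square}(\lambda,A)\exp(-\langle\lambda,H_P(n_{P_\square}(A,Y,V))\rangle)$. En notant $n_i\eqdef n_{P_i}(A,Y_i,V_i)\in N_{P_i}(F)$, l'énoncé se ramène alors à deux identités : $c_{P_3,P_1}(\lambda,A)=c_{P_3,P_2}(\lambda,A)\,c_{P_2,P_1}(\lambda,A)$ d'une part, et $H_{P_3}(n_1)=H_{P_3}(n_2)+H_{P_2}(n_1)$ dans $a_M$ d'autre part (pour $P_i\in\P^G(M)$ on a $M_{P_i}=M$, donc $a_{P_i}$ s'identifie à $a_M$).

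La première identité est purement combinatoire. Pour $\alpha\in\Sigma(\g;A_M)$, posons $\epsilon_P(\alpha)=1$ si $\alpha\in\Sigma(\mathfrak{p};A_M)$ et $\epsilon_P(\alpha)=-1$ sinon ; comme $P\in\P^G(M)$, exactement l'un des deux cas se produit. De $\rho(\alpha,\o)=\rho(-\alpha,\o)$ et $(-\alpha)^\vee=-\alpha^\vee$ on tire $r_{-\alpha}(\lambda,A,\o)=r_\alpha(\lambda,A,\o)^{-1}$. En regroupant le produit définissant $c_{P,P_\square}$ selon les paires $\{\alpha,-\alpha\}$, on vérifie que l'exposant de $r_\alpha$ y est $\tfrac12\bigl(\epsilon_P(\alpha)-\epsilon_{P_\square}(\alpha)\bigr)$ ; l'additivité résulte alors de $\tfrac12(\epsilon_{P_3}-\epsilon_{P_1})=\tfrac12(\epsilon_{P_3}-\epsilon_{P_2})+\tfrac12(\epsilon_{P_2}-\epsilon_{P_1})$.

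Pour la seconde identité, posons $g_i\eqdef n_ik_i\in G(F)$ et $X\eqdef k_1^{-1}(A+Y_1+V_1)k_1=k_2^{-1}(A+Y_2+V_2)k_2$. À partir de $n_i^{-1}(A+Y_i)n_i=A+Y_i+V_i$ un calcul direct donne $(\Ad g_i)X=A+Y_i$ ; de plus $H_P(n_i)=H_P(g_i)$ pour tout $P$ (invariance à droite de $H_P$ par $K$), et $H_{P_i}(g_i)=0$ puisque $g_i\in N_{P_i}(F)K$ avec $M_{P_i}=M$. L'observation centrale est que $\gamma\eqdef g_2g_1^{-1}$ appartient à $M(F)$ : en effet $(\Ad\gamma)(A+Y_1)=A+Y_2$ ; comme $M$ est un groupe du type GL, les $M(F)$- et $M(\overline F)$-conjugaisons coïncident dans $\m(F)$, donc $Y_1,Y_2\in\o(F)$ sont $M(F)$-conjugués, disons $(\Ad m_0)Y_1=Y_2$ avec $m_0\in M(F)$ ; comme $(\Ad m_0)A=A$ car $A\in\a_M$, on obtient $m_0^{-1}\gamma\in G_{A+Y_1}(F)$, et $G_{A+Y_1}=M_{A+Y_1}\subseteq M$ d'après la proposition \ref{YDLiopprop:whenXinIndMGX} puisque $A\in\a_{M,\o,G-\reg}(F)$ entraîne $A+Y_1\in\Ind_M^G(A+Y_1)$ ; d'où $\gamma\in M(F)$. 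Pour tout $P\in\P^G(M)$ on a alors, par la propriété standard de l'application de Harish-Chandra, $H_P(\gamma g_1)=H_M(\gamma)+H_P(g_1)$ ; en $P=P_2$ ceci donne $0=H_{P_2}(g_2)=H_M(\gamma)+H_{P_2}(g_1)$, puis en $P=P_3$ : $H_{P_3}(g_2)=H_M(\gamma)+H_{P_3}(g_1)=-H_{P_2}(g_1)+H_{P_3}(g_1)$. En réécrivant $H_P(g_i)=H_P(n_i)$ on obtient l'identité cherchée, et la combinaison avec la première partie achève la preuve.

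L'obstacle principal est cette seconde identité, et plus précisément l'appartenance $\gamma=g_2g_1^{-1}\in M(F)$ : elle traduit le fait que les relevés $(X,P_ig_i)$ du point $(X,A)$ dans les diverses résolutions $\widetilde{\g}_{P_i}(\o)$ se correspondent (cf. lemme \ref{YDLioplem:prolongerbir}), et elle utilise de manière essentielle la régularité $A\in\a_{M,\o,G-\reg}(F)$ ainsi que le fait qu'une orbite géométrique d'un groupe du type GL est une unique orbite rationnelle.
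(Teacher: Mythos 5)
Votre preuve est correcte, et le mécanisme clé est le même que celui du texte : l'appartenance de $\gamma=g_2g_1^{-1}$ (le texte écrit $n_1k_1k_2^{-1}n_2^{-1}\in G_{A+Y}(F)\subseteq M(F)$) à $M(F)$, conséquence de la régularité $A\in\a_{M,\o,G-\reg}(F)$ via $G_{A+Y_1}=M_{A+Y_1}\subseteq M$. La différence est d'ordre structurel : le texte procède par récurrence sur la distance entre $P_2$ et $P_3$ et se ramène au cas de deux paraboliques adjacents, où le quotient des facteurs correctifs se réduit à un seul $r_\alpha$ ; vous évitez cette récurrence en scindant le poids en $c_{P,P_\square}(\lambda,A)\exp(-\langle\lambda,H_P(n_{P_\square})\rangle)$ et en prouvant séparément, pour un triplet quelconque, l'identité de cocycle des $c$ (par la comptabilité des signes $\epsilon_P(\pm\alpha)$ et $r_{-\alpha}=r_\alpha^{-1}$) et l'additivité des termes de Harish-Chandra via $H_P(\gamma g_1)=H_M(\gamma)+H_P(g_1)$. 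Vous explicitez en outre l'appartenance $\gamma\in M(F)$, que le texte se contente d'affirmer ; votre argument (conjugaison rationnelle dans un groupe du type GL pour relier $Y_1$ et $Y_2$, puis $G_{A+Y_1}\subseteq M$) est exactement ce qu'il faut — on peut même remarquer que la conjugaison géométrique sous $M$ suffirait, puisque $\gamma\in G(F)$ et que l'appartenance à $M$ est une condition géométrique, de sorte que le recours à la coïncidence des conjugaisons rationnelle et géométrique n'est pas indispensable. Les deux approches donnent le même résultat ; la vôtre est un peu plus uniforme, celle du texte isole le calcul élémentaire sur une paire adjacente.
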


\begin{proof}
Pour $P,Q\in\P^G(M)$ notons $\text{dist}(P, Q)$, la distance de $P$ à $Q$, c'est-à-dire la longueur minimale d'une chaîne de sous-groupes paraboliques adjacents de $\P^G(M)$ qui relie $P$ à $Q$. Si $P_2=P_3$ il n'y a rien à faire. Sinon une récurrence simple réduit le cas général au cas où $\text{dist}(P_2,P_3)=1$.

Soit $\Sigma(\mathfrak{p}_3;A_M)\cap (-\Sigma(\mathfrak{p}_2;A_M))=\{\alpha\}$. Alors $w_{P_3|P_1}(\lambda,A,Y_1,V_1)w_{P_2|P_1}(\lambda,A,Y_1,V_1)^{-1}$ vaut
\begin{align*}
v_{P_3}(\lambda,n_1(A,Y_1,V_1))v_{P_2}&(\lambda,n_1(A,Y_1,V_1))^{-1}r_\alpha(\lambda,A,\o)\\
&=v_{P_3}(\lambda,n_2(A,Y_2,V_2))r_\alpha(\lambda,A,\o)=w_{P_3|P_2}(\lambda,A,Y_2,V_2),
\end{align*}
la première égalité vient du fait que $n_{1}(A,Y_1,V_1)k_1k_2^{-1}n_{2}(A,Y_2,V_2)^{-1}\in G_{A+Y}(F)\subseteq M(F)$.
\end{proof}

Pour $P\in \P^G(M_0)$ on a $(P \backslash G)(F)=P(F)\backslash G(F)\simeq K\cap P(F)\backslash K$. Ainsi, dans la situation du lemme précédent, on remarque que l'application $((\Ad k_2^{-1})(A+Y_2+V_2),P_2k_2)\mapsto ((\Ad k_1^{-1})(A+Y_1+V_1),P_1k_1)$ n'est rien d'autre que la restriction de l'isomorphisme de schémas $f_{P_1,P_2}$ du lemme \ref{YDLioplem:prolongerbir}. On en déduit sur-le-champ, pour tous $Y\in\o(F)$ et $V\in \n_{\square}(F)$ avec $Y+V\in \Ind_M^G(\o)(F)$, l'existence de la limite et sa non-annulation
\[w_P(\lambda,Y,V)\eqdef \lim_{A\in \a_{M,\o,G-\reg}(F)\to 0}w_P(\lambda,A,Y,V)\not=0,\]
par une récurrence sur $\text{dist}(P, P_\square)$, le cas adjacent étant une conséquence de la définition. Cela donne une autre $(G,M)$-famille $(w_P(\lambda,Y,V))_{P\in \P^G(M)}$. 

Soit maintenant $\omega\in \Wt(a_M^\ast)$ qui est $P$-dominant. Eu égard à ce qui précède, on a $w_P(\omega,A,Y,V)=\|W_{P,\omega}(A,Y,V)\|$ pour $W_{P,\omega}(A,Y,V)$ un $F$-morphisme défini sur $\a_{M,\o}\times \o\times\n_{\square}$, s'étendant en un $F$-morphisme sur un ouvert de $\a_{M}\times \o\times\n_{\square}$ contenant $\{0\}\times (\o\times\n_{\square})_{G-\reg}$. Ici par abus de notation $(\o\times\n_{\square})_{G-\reg}\eqdef\{(Y,V)\in \o\times\n_{\square} \mid Y+V\in (\o\oplus\n_{\square})_{G-\reg}\}$
En général $w_P(\omega,A,Y,V)$ est l'exponentielle de la valeur de $\lambda$ en un point dans $a_M$, on voit en conséquent que si $\{\omega_1,\dots,\omega_n\}$ est composée d'éléments $P$-dominant de $\Wt(a_M^\ast)$, et $\lambda=\sum_{i=1}^n\lambda_i\omega_i$ avec $\lambda_i\in\C$,
alors $w_P(\lambda,A,Y,V)=\prod_{i=1}^n\|W_{P,\omega_i}(A,Y,V)\|^{\lambda_i}$. \`{A} la faveur de l'équation \eqref{eq:GMcalcul}, le poids $w_M(A,Y,V)$, dans le domaine $(Y,V)\in (\o\times \n_{\square})_{G-\reg}(F)$ et $A$ assez proche de $0$, est une somme finie 
\[\sum_{\omega\text{ fini}}c_\omega\left(\prod_{(P,\omega)\in\Omega_\omega}\log\|W_{P,\omega}(A,Y,V)\|\right)\]
où $c_\omega\in \C$ et chaque $\Omega_\omega$ est un multiensemble fini composé d'éléments de la forme $(P,\omega)$ avec $P\in \P^G(M)$ et $\omega\in \Wt(a_M^\ast)$.

On prendra dans la suite $A\in \a_{M,\o,G-\reg}(F)$. Alors $M_{A+Y}=L_{A+Y}$ pour tout $L\in\L^G(M)$. On note $r_\alpha(\lambda,A,Y)\eqdef r_\alpha(\lambda,A,\o)$ pour tout $\alpha\in \Sigma(\g;A_M)$. Posons pour $P\in \P^G(M)$
\[r_P(\lambda,A,Y)\eqdef\prod_{\alpha \in \Sigma(\mathfrak{p};A_{M})}r_\alpha\left(\frac{\lambda}{2},A,Y\right).\]
Pour tout $L\in \L^G(M)$ on a une $(L,M)$-famille $(r_R^L(\lambda,A,Y))_{R\in \P^L(M)}$, où $L$ prend le rôle de $G$ ci-dessus. De l'autre côté pour tout $Q\in \P^G(L)$ on a une $(L,M)$-famille $(r_R^Q(\lambda,A,Y))_{R\in\P^L(M)}=(r_{RN_Q}(\lambda,A,Y))_{R\in\P^L(M)}$. On sait que $r_M^L(A,Y)=r_M^Q(A,Y)$ pour tout $Q\in \P^G(L)$ (cf. \cite[section 7]{Art82II}). 

\begin{lemma}\label{lem:Art88lem5.3} Pour tous $(A,Y,V)\in \a_{M,\o,G-\reg}(F)\times \o(F)\times \n_{\square}(F)$,
\[\sum_{L\in \L^{M_Q}(M)}r_M^L(A,Y)v_L^G(n_\square(A,Y,V))=w_M^{G}(A,Y,V).\]
\end{lemma}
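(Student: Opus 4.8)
The plan is to recognise the identity as an instance of the product formula for $(G,M)$-familles recalled above, in the form $(cd)_M=\sum_L c_M^Ld_L$ (valid as soon as $c_M^Q$ depends only on $M_Q$). Write $v_P(\lambda,g)=\exp(-\langle\lambda,H_P(g)\rangle)$ for the usual weight $(G,M)$-famille attached to $g\in G(F)$. By the very definition of $w_P$ one has, for $A\in\a_{M,\o,G-\reg}(F)$, $Y\in\o(F)$, $V\in\n_\square(F)$,
\[
w_P(\lambda,A,Y,V)=\tilde r_P(\lambda)\,v_P\bigl(\lambda,n_\square(A,Y,V)\bigr),\qquad
\tilde r_P(\lambda)\eqdef\prod_{\alpha\in\Sigma(\mathfrak{p};A_M)\cap(-\Sigma(\mathfrak{p}_\square;A_M))}r_\alpha(\lambda,A,\o),
\]
a product of two $(G,M)$-familles; thus the task is to compare $(\tilde r_P)_P$ with the $(G,M)$-famille $(r_P)_P$ occurring in the statement.

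First I would show that $(\tilde r_P)_P$ differs from $(r_P)_P$ only by a factor independent of $P$ and equal to $1$ at $\lambda=0$. Since $r_\alpha(\mu,A,\o)=|\alpha(A)|^{\rho(\alpha,\o)\langle\mu,\alpha^\vee\rangle}$ depends exponentially on $\mu$, one has $r_\alpha(\lambda/2,A,\o)=r_\alpha(\lambda,A,\o)^{1/2}$, and $\rho(\alpha,\o)=\rho(-\alpha,\o)$ forces $r_{-\alpha}(\lambda,A,\o)=r_\alpha(\lambda,A,\o)^{-1}$. Combining these with the decompositions $\Sigma(\mathfrak{p};A_M)=\bigl(\Sigma(\mathfrak{p};A_M)\cap\Sigma(\mathfrak{p}_\square;A_M)\bigr)\sqcup\bigl(\Sigma(\mathfrak{p};A_M)\cap(-\Sigma(\mathfrak{p}_\square;A_M))\bigr)$ and, symmetrically, $\Sigma(\mathfrak{p}_\square;A_M)=\bigl(\Sigma(\mathfrak{p};A_M)\cap\Sigma(\mathfrak{p}_\square;A_M)\bigr)\sqcup\bigl((-\Sigma(\mathfrak{p};A_M))\cap\Sigma(\mathfrak{p}_\square;A_M)\bigr)$, together with the equality $(-\Sigma(\mathfrak{p};A_M))\cap\Sigma(\mathfrak{p}_\square;A_M)=-\bigl(\Sigma(\mathfrak{p};A_M)\cap(-\Sigma(\mathfrak{p}_\square;A_M))\bigr)$, a short computation gives
\[
\tilde r_P(\lambda)=r_P(\lambda)\,r_{P_\square}(\lambda)^{-1},\qquad
r_{P_\square}(\lambda)^{-1}=\exp\Bigl(-\frac{1}{2}\sum_{\alpha\in\Sigma(\mathfrak{p}_\square;A_M)}\rho(\alpha,\o)\log|\alpha(A)|\,\langle\lambda,\alpha^\vee\rangle\Bigr),
\]
a factor of the form $\exp(\langle\lambda,Z\rangle)$ with $Z=Z(A)\in a_M$ independent of $P$, of $Y$ and of $V$. (Every $r_\alpha(\lambda,A,\o)$ that appears is a well-defined nonzero real number, because $A\in\a_{M,\o,G-\reg}(F)$, by the characterising property of $\rho(\alpha,\o)$.) Consequently $w_M^G(A,Y,V)$, the value at $\lambda=0$ of $\sum_{P\in\P^G(M)}w_P(\lambda,A,Y,V)\theta_P(\lambda)^{-1}$, equals the value at $\lambda=0$ of $e^{\langle\lambda,Z\rangle}\sum_{P\in\P^G(M)}r_P(\lambda,A,Y)\,v_P(\lambda,n_\square(A,Y,V))\theta_P(\lambda)^{-1}$; since $e^{\langle 0,Z\rangle}=1$ and the second sum extends holomorphically near $0$, this value is exactly $(rv)_M$, the constant term of the product of the $(G,M)$-familles $(r_P(\lambda,A,Y))_P$ and $(v_P(\lambda,n_\square(A,Y,V)))_P$.

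It remains to expand $(rv)_M$ by the product formula. Taking $c_P=r_P(\lambda,A,Y)$ and $d_P=v_P(\lambda,n_\square(A,Y,V))$, the hypothesis needed in order to write $(cd)_M=\sum_L c_M^L d_L$ — namely that $c_M^Q$ depends only on $M_Q$ — holds for $(r_P)_P$, since $r_M^Q(A,Y)=r_M^L(A,Y)$ for every $Q\in\P^G(L)$ (recalled above). One thus obtains
\[
w_M^G(A,Y,V)=(rv)_M=\sum_{L\in\L^{M_Q}(M)} r_M^L(A,Y)\,v_L^G(n_\square(A,Y,V)),
\]
which is the asserted identity.

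The one genuinely delicate ingredient is the combinatorial handling of the root sets: one must check that the discrepancy between $(\tilde r_P)_P$ and $(r_P)_P$ is truly independent of $P$ — it is the point-base term $r_{P_\square}^{-1}$ — and that the $\lambda/2$ normalisation in the definition of $r_P$ is precisely what makes the ratio of two adjacent members of $(r_P)_P$, and likewise of $(\tilde r_P)_P$, equal to $|\beta(A)|^{\rho(\beta,\o)\langle\lambda,\beta^\vee\rangle}$, hence equal to $1$ on the wall $\langle\lambda,\beta^\vee\rangle=0$, so that both are genuine $(G,M)$-familles. Everything else is the formal calculus of $(G,M)$-familles already recalled, and the argument parallels Arthur's proof of the corresponding statement in \cite{Art88}.
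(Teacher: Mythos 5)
Your proposal is correct and follows essentially the same route as the paper: the same combinatorial splitting of $\Sigma(\mathfrak{p};A_M)$ relative to $\Sigma(\mathfrak{p}_\square;A_M)$ (using $r_{-\alpha}(\lambda)=r_\alpha(\lambda)^{-1}$ and the $\lambda/2$ normalisation) to get $r_P(\lambda,A,Y)\,v_P(\lambda,n_\square(A,Y,V))=w_P(\lambda,A,Y,V)\cdot r_{P_\square}(\lambda)$ with the last factor independent of $P$ and equal to $1$ at $\lambda=0$, followed by the product formula for $(G,M)$-familles via $r_M^Q=r_M^L$. The only differences are presentational (you package the $P$-independent factor as $r_{P_\square}^{-1}$ and spell out the root-set bookkeeping the paper calls "un argument combinatoire").
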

\begin{proof}
Soit $P\in \P^G(M)$. Un argument combinatoire donne
\begin{align*}
\prod_{\alpha\in \Sigma(\mathfrak{p};A_{M})}&r_\alpha\left(\frac{\lambda}{2},A,Y\right)\\
&=\left(\prod_{\alpha\in \Sigma(\mathfrak{p};A_{M})\cap (-\Sigma(\mathfrak{p}_{\square};A_{M}))}r_\alpha\left(\lambda,A,Y\right)\right)\left(\prod_{\alpha\in \Sigma(\mathfrak{p}_{\square};A_{M})}r_\alpha\left(\frac{\lambda}{2},A,Y\right)\right).    
\end{align*}
Le produit $r_P(\lambda,A,Y)v_P(\lambda,n_R(A,Y,V))$ est alors $w_P(\lambda,A,Y,V)$ fois $\prod_{\alpha\in \Sigma(\mathfrak{p}_{\square};A_{M})}r_\alpha\left(\frac{\lambda}{2},A,Y\right)$ qui est indépendant de $P$. On conclut en utilisant la formule de produit.
\end{proof}

Soient $f\in\S(\g(F))$, $M\in \L^G(M_0)$, $Q\in \F^G(M)$, et $Y\in \m(F)$. On suppose que $M_Y=G_Y$. Posons 
\begin{equation}\label{YDLiopeq:defIOPArtdirect}
\widetilde{J}_{M}^Q(Y,f)=|D^{\g}(Y)|^{1/2}\int_{M_Y(F)\backslash G(F)} f\left((\Ad g^{-1})Y\right)v_M^Q(g)\,dg.    
\end{equation}
La proposition \ref{prop:LX=GX} nous garantit que $v_{M,X}^Q=v_M^Q$ avec $X$ le représentant standard de $\Ind_M^G(Y)$. Autrement dit $\widetilde{J}_{L}^Q(Y,f)=J_{L}^Q(Y,f)$ dans cette situation. Ce qui donne à part la convergence de l'intégrale \eqref{YDLiopeq:defIOPArtdirect}. 

Pour rappel on a posé $\a_{M,Y,G-\reg}\eqdef \a_{M,(\Ad M)Y,G-\reg}$ (équation \eqref{YDLiopeq:defaMYG-reg}).

Pour $X$ un espace topologique, on note $C^0(X)$ l'espace des fonctions continues sur $X$ à valeurs complexes.

\begin{theorem}\label{YDLiopthm:ArtdefdirectIOP}
Soient $f\in\S(\g(F))$, $M\in \L^G(M_0)$, $Q\in \F^G(M)$, et $Y\in \m(F)$. Alors
\begin{equation}\label{eq:NewArthurWOI}
 \widetilde{J}_M^Q(Y,f)\eqdef\lim_{ \substack{A\to 0\\A\in \a_{M,Y,G-\reg}(F)}}\sum_{L\in \L^{M_Q}(M)}r_M^L(A,Y)\widetilde{J}_{L}^Q(A+Y,f)   
\end{equation}
existe. Il existe $c>0$ et $a\in \N$ tels que $|\widetilde{J}_L^Q(Y,-)|$ soit majoré par $c\|-\|_{a,0}$. C'est donc une distribution tempérée.
\end{theorem}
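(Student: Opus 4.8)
First, a word on the strategy: the plan is to carry out Arthur's local descent \cite{Art88} directly in the geometric setting of subsection~\ref{subsec:defdirect}, i.e. without passing through the semisimple centralizer. Fix the base point $P_{\square}\in\P^G(M)$ and set $\o=(\Ad M(F))Y$. For $A\in\a_{M,Y,G-\reg}(F)$ the element $A$ is central in $M$, so $M_{A+Y_{\ss}}=M_{Y_{\ss}}$, while $A\in\a_{M,Y,G-\reg}(F)$ forces $G_{A+Y_{\ss}}=M_{A+Y_{\ss}}$ (equivalence $(1)\Leftrightarrow(4)$ of Proposition~\ref{YDLiopprop:whenXinIndMGX}); hence $L_{A+Y}=G_{A+Y}=M_{Y_{\ss}}\cap G_{Y_{\nilp}}=M_Y$ for every $L\in\L^{M_Q}(M)$, independently of $A$, and in particular $G_{A+Y}\subseteq M\subseteq M_Q$. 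Consequently each $\widetilde{J}_L^Q(A+Y,f)$ is given by \eqref{YDLiopeq:defIOPArtdirect}, converges absolutely by Theorem~\ref{thm:IOPfullstatement}, and equals
\[
\widetilde{J}_L^Q(A+Y,f)=|D^{\g}(A+Y)|_F^{1/2}\int_{M_Y(F)\backslash G(F)}f((\Ad g^{-1})(A+Y))\,v_L^Q(g)\,dg,
\]
the weight being the ordinary untwisted one, since $v_{L,A+Y}^Q=v_L^Q$ by Proposition~\ref{prop:LX=GX} (cf. the discussion after Lemma~\ref{lem:poidsinvnorm}).

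Next I would apply the integration formula of subsection~\ref{YDLiopsubsec:normalizationmeasure} with the weight carried along. Writing $g=mnk$ in the Iwasawa decomposition relative to $P_{\square}$, the change of variables $(m,n)\leftrightarrow(m,U)$ defined by $n^{-1}\big(A+(\Ad m^{-1})Y\big)n=A+(\Ad m^{-1})Y+U$, i.e. $n=n_\square(A,(\Ad m^{-1})Y,U)$, together with the left $G_{A+Y}(F)$-invariance (Lemma~\ref{lem:poidsinvnorm}), the left $L(F)$-invariance (constant translations of the defining orthogonal family are harmless) and the right $K$-invariance of $v_L^Q$, gives $v_L^Q(g)=v_L^Q\big(n_\square(A,(\Ad m^{-1})Y,U)\big)$. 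After the factors $|D^{\g}(A+Y)|_F$ cancel,
\[
\widetilde{J}_L^Q(A+Y,f)=\gamma^G(P_{\square})\,|D^{\m}(A+Y)|_F^{1/2}\!\int_{M_Y(F)\backslash M(F)}\!\int_{\n_\square(F)}\!\int_K f\big((\Ad k^{-1})(A+(\Ad m^{-1})Y+U)\big)\,v_L^Q\big(n_\square(A,(\Ad m^{-1})Y,U)\big)\,dk\,dU\,dm.
\]
Summing over $L\in\L^{M_Q}(M)$ with weights $r_M^L(A,Y)=r_M^L(A,(\Ad m^{-1})Y)$ (which depend only on the orbit) and invoking Lemma~\ref{lem:Art88lem5.3}, the inner sum collapses to $w_M^Q(A,(\Ad m^{-1})Y,U)$, so that $\sum_{L}r_M^L(A,Y)\widetilde{J}_L^Q(A+Y,f)$ equals $\gamma^G(P_{\square})|D^{\m}(A+Y)|_F^{1/2}$ times the integral of $f\big((\Ad k^{-1})(A+(\Ad m^{-1})Y+U)\big)\,w_M^Q(A,(\Ad m^{-1})Y,U)$ over $(M_Y(F)\backslash M(F))\times\n_\square(F)\times K$.

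It then remains to let $A\to0$ inside this integral over a now fixed domain. On the set where $(\Ad m^{-1})Y+U\in\Ind_M^G(\o)(F)$ one has $w_M^Q(A,(\Ad m^{-1})Y,U)\to w_M^Q((\Ad m^{-1})Y,U)$ and $|D^{\m}(A+Y)|_F\to|D^{\m}(Y)|_F$, so the integrand converges pointwise off a null set. The step I expect to be the main obstacle is the production of a majorant for $\big|f(\cdots)\big|\cdot\big|w_M^Q(A,(\Ad m^{-1})Y,U)\big|$ that is integrable over $(M_Y(F)\backslash M(F))\times\n_\square(F)\times K$ and \emph{uniform} for $A$ in a neighbourhood of $0$ in $\a_{M,Y,G-\reg}(F)$. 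Here one uses the representation-theoretic description of subsection~\ref{subsec:defdirect}: $w_M^Q(A,Y',U)$ is a finite sum of terms $c_{\omega}\prod_{(P,\omega)\in\Omega_{\omega}}\log\|W_{P,\omega}(A,Y',U)\|$, each $W_{P,\omega}$ being an $F$-morphism defined on an open set containing $\{0\}\times(\o\times\n_\square)_{G-\reg}$ — this is precisely where the normality of $\widetilde{\g}_P(\o)$ (Lemma~\ref{lem:geomnormalschemeGL}) and the extension result (Lemma~\ref{YDLioplem:prolongerbir}) enter. Estimating $\|W_{P,\omega}(A,Y',U)\|$ above and below in terms of $\|W_{P,\omega}(0,Y',U)\|$ and of $1+\|U\|_{\g}$ (a morphism is locally polynomial, and $K$ acts by isometries), the logarithmic factors are dominated by a product of factors $(1+|\log\|W_{P,\omega}(0,Y',U)\||)$ and powers of $\log(1+\|U\|_{\g})$; combined with the Schwartz--Bruhat decay of $f$ — and, when $F$ is non-archimedean, the Harish-Chandra compactness lemma \cite[lemme 14.1]{Kottbook} confining $m$ to a compact subset of $M(F)$ — this yields the desired majorant, by estimates entirely parallel to those in the proof of Theorem~\ref{thm:IOPfullstatement}.

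Dominated convergence then shows that the limit \eqref{eq:NewArthurWOI} exists (equal to $\gamma^G(P_{\square})|D^{\m}(Y)|_F^{1/2}$ times the integral of $f\big((\Ad k^{-1})((\Ad m^{-1})Y+U)\big)\,w_M^Q((\Ad m^{-1})Y,U)$ over $(M_Y(F)\backslash M(F))\times\n_\square(F)\times K$), so $\widetilde{J}_M^Q(Y,-)$ is a well-defined linear functional on $\S(\g(F))$. Finally, inserting $|f(\cdots)|\leq\|f\|_{a,0}(1+\|\cdots\|_{\g})^{-a}$ into the majorant above and taking $a$ large (respectively, using that $f$ is compactly supported and that the relevant region has finite volume, when $F$ is non-archimedean) produces $c>0$ and $a\in\N$ with $|\widetilde{J}_M^Q(Y,f)|\leq c\|f\|_{a,0}$; hence $\widetilde{J}_M^Q(Y,-)$ is a tempered distribution.
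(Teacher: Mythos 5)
Your argument is correct and follows essentially the same route as the paper's proof: reduction to the case $G_{A+Y}=M_Y$, Iwasawa decomposition relative to the base point $P_\square$, the change of variables via $n_\square$, Lemma~\ref{lem:Art88lem5.3} to collapse the sum over $L$ into $w_M^Q$, a logarithmic majorant for the weight uniform in $A$ near $0$ together with the auxiliary function $\max_{A\in C}|f(A+\cdot)|$, and dominated convergence resting on the estimates from the proof of Theorem~\ref{thm:IOPfullstatement}. The only cosmetic difference is that you spell out the pointwise convergence of $w_M^Q(A,\cdot,\cdot)$ and its provenance from Lemmas~\ref{lem:geomnormalschemeGL} and~\ref{YDLioplem:prolongerbir}, which the paper leaves implicit.
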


\begin{remark}~{}
\begin{enumerate}
    \item Dans le cas où $M_Y=G_Y$, l'équivalence (5) $\Leftrightarrow$ (7) de la proposition \ref{YDLiopprop:whenXinIndMGX} nous affirme que $n_\square(A,Y,V)$ admet une limite en $A=0$, d'où
    \[r_M^L(A,Y)=\begin{cases}
        1, & \text{si }L=M\\ 0,& \text{si } L\not=M,
    \end{cases}\]
    ce qui justifie l'écriture.
    \item $\widetilde{J}_M^Q(Y,f)$ ne dépend que de la classe de $M(F)$-conjugaison de $Y$. On peut donc noter $\widetilde{J}_M^Q(\o,f)$ à la place de $\widetilde{J}_M^Q(Y,f)$, où $\o=(\Ad M(F))Y$. 
\end{enumerate}

\end{remark}
\begin{proof}
Sans perte de généralité on peut supposer que $Q=G$. 
Fixons un point-base $P_\square\in \P^G(M)$. La somme $|D^{\g}(A+Y)|^{-1/2}\sum_{L\in \L^{G}(M)}r_M^L(A,Y)\widetilde{J}_{L}^G(A+Y,f)$ vaut
\begin{align*}
&\int_{M_{Y}(F)\backslash G(F)} f((\Ad g^{-1})(A+Y))\sum_{L\in \L^{G}(M)}r_M^L(A,Y)v_L^G(g)\,dg\\
&\hspace{1cm}=\gamma^{G}(P_\square)\int_{M_Y(F)\backslash M(F)}\int_{N_\square(F)}\int_{K} f(k^{-1}n^{-1}m^{-1}(A+Y)mnk)\sum_{L\in \L^{G}(M)}r_M^L(A,Y)v_L^G(n)\,dk\,dn\,dm  \\
&\hspace{1cm}=\gamma^{G}(P_\square)\cdot \text{Jac}\cdot\int_{M_Y(F)\backslash M(F)}\int_{\n_\square(F)}\int_{K} f(k^{-1}(A+m^{-1}Ym+V)k)\\
&\hspace{5cm}\times\sum_{L\in \L^{G}(M)}r_M^L(A,Y)v_L^G(n_\square(A,m^{-1}Ym,V))\,dk\,dV\,dm\\
&\hspace{1cm}=\gamma^{G}(P_\square)\cdot \text{Jac}\cdot\int_{M_Y(F)\backslash M(F)}\int_{\n_\square(F)}\int_{K} f(k^{-1}(A+m^{-1}Ym+V)k) w_M^{G}(A,m^{-1}Ym,V)\,dk\,dV\,dm
\end{align*}
où $\text{Jac}=|\det(\ad(A+Y);\n_P(F))|^{-1}$. Fixons une sous-partie $C$ compacte de $\a_M(F)$ contenant $0$. Des discussions précédentes, on déduit l'existence de $c>0$, $e\in \N_{>0}$, $B$ un polynôme à une variable et à coefficients positifs, tels que $B(0)>3$ et $|w_M^{G}(A,m^{-1}Ym,V)|<c\left(\log B(\|m^{-1}Ym\|_{\g}) +\log B(\|V\|_{\g})\right)^e$ pour tous $A\in C$ et $(m^{-1}Ym,V)\in (((\Ad M)Y)\times \n_\square)_{G-\reg}(F)$. 

Posons $f'(-)\eqdef \max_{A\in C}|f(A+-)|$, on obtient une fonction
\[f'\in \begin{cases}
\S(\g(F)) & \text{si $F$ est non-archimédien} \\
C^0(\g(F)) & \text{si $F$ est archimédien}
\end{cases}\]
vérifiant que $\|f'\|_{a,0}<+\infty$ pour tout $a\in \N$. Par une réduction immédiate, on est conduit au problème de convergence de 
\begin{align*}
\int_{M_Y(F)\backslash M(F)}f'\left(m^{-1}Ym\right)(\log B(\|m^{-1}Ym\|_{\m}))^e\,dm.   
\end{align*}
On a déjà rencontré ce type d'intégrale dans la preuve du théorème \ref{thm:IOPfullstatement}. Le théorème de convergence dominée implique l'existence de \eqref{eq:NewArthurWOI} en $A=0$.
\end{proof}

\subsection{Définition via descente semi-simple}\label{subsec:defviadescentss}

On offre encore une définition d'une intégrale orbitale pondérée. Celle-ci s'accorde avec sa version pour les groupes réductifs proposée par Arthur.

Soient $M\in\L^G(M_0)$ et $\o\subseteq \m$ une orbite sous $M$-conjugaison contenant un $F$-point. Soit $Y\in \o(F)$. Soit $A\in \a_{M,Y,G-\reg}(F)$.

Fixons $P_\square\in \P^{G}(M)$, il nous fournit le point-base $P_{\square,Y_\ss}\in \P^{G_{Y_\ss}}(M_{Y_\ss})$. Les points 4 et 5 de la proposition \ref{YDLiopprop:whatisaMoG-reg} disent aussi que 
\[\a_{M,Y,G-\reg}\subseteq \a_{M_{Y_\ss},G_{Y_\ss}-\reg}=\a_{M_{Y_\ss},Y_\nilp,G_{Y_\ss}-\reg}.\]
On a alors les fonctions $r_\alpha(\lambda,A,Y_{\nilp})$ relativement au groupe $G_{Y_\ss}$. Posons pour $P\in \P^G(M)$
\[r_P[\lambda,A,Y]\eqdef\prod_{\alpha \in \Sigma(\mathfrak{p}_{Y_{\ss}};A_{M_{Y_{\ss}}})}r_\alpha(\lambda,A,Y_{\nilp})\]
Notons que la restriction de toute racine de $\Sigma(\mathfrak{p}_{Y_{\ss}};A_{M_{Y_{\ss}}})$ à $a_M\subseteq a_{M_{Y_{\ss}}}$ appartient à $\Sigma(\mathfrak{p};A_{M})$. Cela définit une $(G,M)$-famille. Pour tout $L\in \L^G(M)$ on a une $(L,M)$-famille $(r_R^L[\lambda,A,Y])_{R\in \P^L(M)}$ où $L$ prend le rôle de $G$ ci-dessus. De l'autre côté pour tout $Q\in \P^G(L)$ on a une $(L,M)$-famille $(r_R^Q[\lambda,A,Y])_{R\in\P^L(M)}=(r_{RN_Q}[\lambda,A,Y])_{R\in\P^L(M)}$. On sait que $r_M^L[A,Y]=r_M^Q[A,Y]$ pour tout $Q\in \P^G(L)$. 

De même, on peut poser pour $P\in \P^G(M)$ 
\[w_P[\lambda,A,Y_{\ss}+U,V]\eqdef w_{P_{Y_\ss}}(\lambda,A,U,V)\]
où $\lambda\in ia_M^\ast$, $U\in(\Ad M_{Y_\ss}(F))(Y_\nilp)$ et $V\in \n_{\square,Y_\ss
}(F)$. Encore une fois la fonction de droite $w_{P_{Y_\ss}}$ est définie relativement au groupe $G_{Y_\ss}$. La famille $(w_P[-,A,Y_{\ss}+U,V])_{P\in\P^G(M)}$ définit une $(G,M)$-famille.

Soient $f\in\S(\g(F))$, $M\in \L^G(M_0)$, $Q\in \F^G(M)$, et $Y\in \m(F)$. On suppose d'abord que $M_Y=G_Y$. Posons 
\[\widetilde{J}_{M}^Q[Y,f]=|D^{\g}(Y)|^{1/2}\int_{M_Y(F)\backslash G(F)} f\left((\Ad g^{-1})Y\right)v_M^Q(g)\,dg.\]
Idem $\widetilde{J}_{M}^Q[Y,f]=J_
{M}^Q(Y,f)$ dans cette situation, d'où la convergence.

Pour $F$ non-archimédien on note $C_c^\infty(\g(F))=\S(\g(F))$  l'espace des fonctions localement constantes à support compact et à valeurs complexes sur $\g(F)$ ; pour $F$ archimédien on note $C_c^\infty(\g(F))$ l'espace des fonctions lisses (au sens réel) à support compact sur $\g(F)$. 

\begin{theorem}\label{YDLiopthm:ArtdefIOPCVCinftyc}
Soient $f\in C_c^\infty(\g(F))$, $M\in \L^G(M_0)$, $Q\in \F^G(M)$, et $Y\in \m(F)$. Alors
\[\widetilde{J}_M^Q[Y,f]\eqdef\lim_{\substack{A\to 0\\ A\in \a_{M,Y,G-\reg}(F)}}\sum_{L\in \L^{M_Q}(M)}r_M^L[A,Y]\tilde{J}_{L}^Q[A+Y,f]\]
existe. 
\end{theorem}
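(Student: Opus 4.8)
The plan is to reduce the statement to the convergence result already established in Theorem~\ref{YDLiopthm:ArtdefdirectIOP}, by showing that the descent-via-semisimple definition $\widetilde{J}_M^Q[Y,f]$ is essentially the direct definition $\widetilde{J}_M^Q$ applied inside the centralizer $G_{Y_\ss}$. First I would unwind the left-hand side: using the Iwasawa decomposition $G(F)=M(F)N_\square(F)K$ relative to the point-base $P_\square$, together with the same change of variables $n\mapsto V=n_\square(A,Y,V)-\Id$ used in the proof of Theorem~\ref{YDLiopthm:ArtdefdirectIOP}, the inner sum $\sum_{L\in\L^{M_Q}(M)}r_M^L[A,Y]\,\widetilde{J}_L^Q[A+Y,f]$ becomes (up to the constants $\gamma^G(P_\square)$ and the Jacobian $|\det(\ad(A+Y);\n_\square(F))|^{-1}$) an integral over $M_Y(F)\backslash M(F)\times \n_\square(F)\times K$ of $f$ against $\sum_L r_M^L[A,Y]\,v_L^G(n_\square(A,m^{-1}Ym,V))$. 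The key point is then an analogue of Lemma~\ref{lem:Art88lem5.3}, applied to the reductive group $G_{Y_\ss}$ rather than $G$: since $r_P[\lambda,A,Y]$ and $w_P[\lambda,A,Y_\ss+U,V]$ are by definition the functions $r_{P_{Y_\ss}}$ and $w_{P_{Y_\ss}}$ for $G_{Y_\ss}$, and since $n_\square(A,Y,V)\in N_{\square,Y_\ss}(F)$ lies in the centralizer (because $A+Y_\ss$ is fixed and only the nilpotent part moves), Lemma~\ref{lem:Art88lem5.3} gives $\sum_{L\in\L^{M_Q}(M)}r_M^L[A,Y]\,v_L^G(n_\square(A,Y,V))=w_M^{G_{Y_\ss}}[A,Y,V]$ on the locus $A\in\a_{M,Y,G-\reg}(F)$.

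Having made this identification, the integral in question takes the same shape as the one treated at the end of the proof of Theorem~\ref{YDLiopthm:ArtdefdirectIOP}, with the weight $w_M^G(A,m^{-1}Ym,V)$ replaced by $w_M^{G_{Y_\ss}}[A,(m^{-1}Ym)_\nilp,V]$. Next I would invoke the polynomial-growth estimate for these weights: exactly as in Section~\ref{subsec:defdirect}, the weight $w_M^{G_{Y_\ss}}[A,\cdot,\cdot]$ is, on the $G_{Y_\ss}$-regular locus and for $A$ in a fixed compact neighbourhood $C$ of $0$ in $\a_M(F)$, a finite sum of products of logarithms of norms of the morphisms $W_{P,\omega}$, hence bounded by $c(\log B(\|m^{-1}Ym\|_\g)+\log B(\|V\|_\g))^e$ for suitable $c>0$, $e\in\N_{>0}$ and a positive-coefficient polynomial $B$ with $B(0)>3$. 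Setting $f'(-)\eqdef\max_{A\in C}|f(A+-)|$, which lies in $C_c^\infty(\g(F))$ (here compact support is crucial and is exactly why one only gets a distribution, not a tempered distribution), one reduces to the convergence of
\[
\int_{M_Y(F)\backslash M(F)}\int_{\n_\square(F)}\int_K \big|f'(k^{-1}((m^{-1}Ym)+V)k)\big|\big(\log B(\|m^{-1}Ym\|_\m)+\log B(\|V\|_\g)\big)^e\,dk\,dV\,dm,
\]
and then, after carrying out the $\n_\square$ and $K$ integrations against the compactly supported $f'$, to the convergence of $\int_{M_Y(F)\backslash M(F)}f''(m^{-1}Ym)(\log B(\|m^{-1}Ym\|_\m))^e\,dm$ for a compactly supported $f''$ --- an integral of the type already handled in the proofs of Theorems~\ref{thm:IOPfullstatement} and~\ref{YDLiopthm:ArtdefdirectIOP} (via Deligne--Rao and Harish-Chandra, or the compactness lemma in the non-archimedean case). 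The dominated convergence theorem then yields the existence of the limit at $A=0$.

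The main obstacle I anticipate is the descent identity $\sum_{L\in\L^{M_Q}(M)}r_M^L[A,Y]\,v_L^G(n_\square(A,Y,V))=w_M^{G_{Y_\ss}}[A,Y,V]$: one must check carefully that restricting the $(G,M)$-families $(r_P[\lambda,A,Y])_P$ and $(w_P[\lambda,A,Y_\ss+U,V])_P$ to a Levi $L$ and then contracting agrees with first descending to $G_{Y_\ss}$ and applying Lemma~\ref{lem:Art88lem5.3} there. The subtlety is that $\Sigma(\g;A_M)$ and $\Sigma(\g_{Y_\ss};A_{M_{Y_\ss}})$ are different root systems --- several roots of $\Sigma(\p_{Y_\ss};A_{M_{Y_\ss}})$ can restrict to the same root of $\Sigma(\p;A_M)$ --- so one needs the remark (already noted in Section~\ref{subsec:defviadescentss}) that restriction maps $\Sigma(\p_{Y_\ss};A_{M_{Y_\ss}})$ into $\Sigma(\p;A_M)$ and that, because $G$ is of type GL and hence has no divisible roots, the multiplicities $\rho(\alpha,\o)$ behave correctly under this restriction; this is precisely the point flagged in the remark on Arthur's definition at the start of Section~\ref{sec:4comparedef}. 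Once that bookkeeping is in place, the rest is a routine adaptation of the proof of Theorem~\ref{YDLiopthm:ArtdefdirectIOP}, the only genuine change being that $f'$ now need only be continuous with compact support rather than Schwartz--Bruhat, which forces the weaker conclusion (a distribution, valid on $C_c^\infty(\g(F))$, rather than a tempered distribution).
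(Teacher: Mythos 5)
Your reduction breaks down at the central identity $\sum_{L\in\L^{M_Q}(M)}r_M^L[A,Y]\,v_L^G(n_\square(A,Y,V))=w_M^{G_{Y_\ss}}[A,Y,V]$. After your Iwasawa decomposition in $G$ and the change of variables $n\mapsto V$, the variable $V$ runs over all of $\n_\square(F)$, whereas the right-hand side is only defined for $V\in\n_{\square,Y_\ss}(F)$: by definition $w_P[\lambda,A,Y_\ss+U,V]=w_{P_{Y_\ss}}(\lambda,A,U,V)$ is a quantity computed inside $G_{Y_\ss}$. Worse, your justification --- that $n_\square(A,Y,V)$ lies in $N_{\square,Y_\ss}(F)$ because ``only the nilpotent part moves'' --- is false for general $V\in\n_\square(F)$: take $G=\GL_3$, $M$ the diagonal torus, $Y=Y_\ss=\mathrm{diag}(1,1,0)$, $P_\square$ the Borel of upper triangular matrices and $V=E_{13}$; then $n_\square(A,Y,V)=\Id+(a_1+1-a_3)^{-1}E_{13}$ does not centralize $Y_\ss$ (and in this direction $n_\square$ does not even blow up as $A\to 0$). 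What your argument actually requires is the equality $r_M^L[A,Y]=r_M^L(A,Y)$; combined with Lemma \ref{lem:Art88lem5.3} this identifies your sum with $w_M^G(A,Y,V)$ and reduces the whole statement to Theorem \ref{YDLiopthm:ArtdefdirectIOP} (indeed every $\widetilde{J}_L^Q[A+Y,f]$ equals $\widetilde{J}_L^Q(A+Y,f)$ for $A\in\a_{M,Y,G-\reg}(F)$). But that equality is a genuine assertion: one must descend to the case where $Y_\ss$ is elliptic in $\m$, check that restriction injects $\Sigma(\p_{Y_\ss};A_{M_{Y_\ss}})$ into $\Sigma(\p;A_M)$ with $r_\alpha=r_{\mathrm{res}(\alpha)}$, and show $\rho(\alpha,\o)=0$ for the roots not in the image because $n_\square$ stays bounded in those directions (exactly what the example above exhibits). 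The paper proves this only later, inside Theorem \ref{thm:IOPcomparaisonII-III}; your appeal to the absence of divisible roots for groups of type GL addresses the Moeglin--Waldspurger correction of Arthur's $\rho(\alpha,\o)$, which is a different issue, so the gap is not closed by that remark.

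Note also that the paper's own proof of Theorem \ref{YDLiopthm:ArtdefIOPCVCinftyc} never compares the two families of $r$-factors: writing $\sigma=Y_\ss$, it first factors the integral through $G_\sigma(F)\backslash G(F)$, splits the weight by the descent formula $v_L^Q(xy)=\sum_{B}v_L^B(\sigma,x)\,v_B^Q{}'(K_{\sigma}(x)y)$, performs the Iwasawa decomposition and the changes of variables \emph{inside} $G_\sigma$ (so the nilpotent variable genuinely lies in $\n_R$ with $R\subseteq G_\sigma$, and the $G_\sigma$-analogue of Lemma \ref{lem:Art88lem5.3} applies legitimately), and then controls the outer $y$-integral by Harish-Chandra's compactness lemma applied to the compactly supported smooth functions $\Phi_{B,y}^\sigma$ --- this is where the hypothesis $f\in C_c^\infty(\g(F))$ is used. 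If you wish to salvage your route, prove $r_M^L(A,Y)=r_M^L[A,Y]$ first and then quote Theorem \ref{YDLiopthm:ArtdefdirectIOP}; observe that this would in fact yield the stronger tempered statement, since your construction $f'(-)=\max_{A\in C}|f(A+-)|$ works just as well for Schwartz--Bruhat functions, which is a sign that your argument as written is not the one the weaker hypothesis is designed for.
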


\begin{remark}~{}
\begin{enumerate}
    \item Dans le cas où $M_Y=G_Y$, on a $
M_{Y_\ss}=G_{Y_\ss}$, d'où
    \[r_M^L[A,Y]=\begin{cases}
        1, & \text{si }L=M\\ 0,& \text{si } L\not=M,
    \end{cases}\]
    ce qui justifie l'écriture.
    \item Si l'on suit l'approche d'Arthur (\cite[équation (3*) en page 224]{Art88}), il est nécessaire de remplacer le domaine de la limite « $\a_{M,Y,G-\reg}(F)$ » dans la définition de $\widetilde{J}_M^Q[Y,f]$ par « $\a_{M,G-\reg}(F)$ ». Cependant, le point 2 de la proposition \ref{YDLiopprop:whatisaMoG-reg} nous indique que l'existence de la limite $\lim_{A\in \a_{M,Y,G-\reg}(F)\to 0}$ constitue une assertion plus forte.
    \item $\widetilde{J}_M^Q[Y,f]$ ne dépend que de la classe de $M(F)$-conjugaison de $Y$. On peut donc noter $\widetilde{J}_M^Q[\o,f]$ à la place de $\widetilde{J}_M^Q[Y,f]$, où $\o=(\Ad M(F))Y$. 
\end{enumerate}
\end{remark}
\begin{proof}

Notons $\sigma\eqdef Y_\ss$. 
Pour tout $B\in \F^{M_Q}(M)$ fixons $R=R^B\in \P^{M_{B_\sigma}}(M_\sigma)$. La somme $\sum_{L\in \L^{M_Q}(M)}r_M^L[Y,A]\widetilde{J}_{L}^Q[A+Y,f]$ vaut  
\begingroup
\allowdisplaybreaks
\begin{align*}
&|D^{\g}(A+Y)|^{1/2}\int_{G_{A+Y}(F)\backslash G(F)} f\left((\Ad g^{-1})(A+Y)\right)\left(\sum_{L\in \L^{M_Q}(M)}r_M^L[Y,A] v_L^Q(g)\right)\, dg \\
&=|D^{\g}(A+Y)|^{1/2}\int_{y}\int_{x}\int_{m} f\left((\Ad (xy)^{-1})\left[A+\sigma+(\Ad m^{-1})Y_\nilp\right]\right)\\
&\hspace{8cm}\times\left(\sum_{L\in \L^{M_Q}(M)}r_M^L[Y,A] v_L^Q(mxy)\right)\, dm\, dx\,dy
\end{align*}
\endgroup
avec $(m,x,y)\in (M_{Y}(F)\backslash M_\sigma(F))\times (M_\sigma(F)\backslash G_\sigma(F))\times (G_\sigma(F)\backslash G(F))$. En appliquant la formule de produit on voit que
\[v_L^Q(mxy)=v_L^Q(xy)=\sum_{B\in \F^{M_Q}(L)}v_L^B(\sigma,x)v_B^Q{}'(K_{\sigma S}(x)y),\]
avec $v_L^B(\sigma,x)$ le poids associé au $(L_{B_\sigma},L_{\sigma})$-famille $(v_{P_\sigma}^{B_\sigma}(\lambda,x))_P$. Inversons la somme sur $L\in \L^{M_Q}(M)$ et celle sur $B\in \F^{M_Q}(L)$, on tombe sur la somme sur $B\in \F^{M_Q}(M)$ et $L\in \L^{M_B}(M)$. Effectuons ensuite le changement de variables $x=n_Rn_Bk$, $(n_R,n_B,k)\in N_R(F)\times N_{B_{\sigma}}(F)\times K_{\sigma }$ puis 
\begin{equation}\label{Art88eq:lemma5.3}
(\Ad n_R^{-1})\left[A+\sigma+(\Ad m^{-1})Y_\nilp\right]=A+\sigma+(\Ad m^{-1})Y_\nilp+V_R,\,\,\,\,V_R\in \n_{R}(F)   
\end{equation}
(dont la réciproque est notée $V\mapsto n_{R}(A+\sigma,(\Ad m^{-1})Y_{\nilp},V)$) et
\begin{equation*}
(\Ad n_B^{-1})\left[A+(\Ad m^{-1})Y_\nilp+V_R\right]=A+(\Ad m^{-1})Y_\nilp+V_R+V_B,\,\,\,\,V_B\in \n_{B_{\sigma}}(F)   
\end{equation*}
(dont la réciproque est notée $V\mapsto n_{B_{\sigma}}(A+\sigma,(\Ad m^{-1})Y_{\nilp},V)$), qui introduisent successivement des facteurs Jacobiens $|D^{\m_{B_\sigma}}((\Ad m^{-1})(A+Y_\nilp))|^{-1/2}|D^{\m_\sigma}((\Ad m^{-1})(A+Y_\nilp))|^{1/2}=|D^{\m_{B_\sigma}}(A)|^{-1/2}$ (cela vient de la valeur absolue du déterminant de $\ad ((\Ad m^{-1})(A+Y_\nilp))$ agissant sur $\n_R(F)$) puis $|D^{\g_{\sigma}}(A)|^{-1/2}|D^{\m_{
B_\sigma}}(A)|^{1/2}$. 
Notons ultérieurement 
\begin{lemma}
Pour $n_R,V_R$ assujettis à l'équation \eqref{Art88eq:lemma5.3} on a 
\[\sum_{L\in \L^{M_B}(M)}r_M^L[Y,A]v_L^B(\sigma,n_R)=w_M^{B}[A,(\Ad m^{-1})Y,V_R].\]
\end{lemma}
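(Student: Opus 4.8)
The plan is to deduce this identity directly from Lemma \ref{lem:Art88lem5.3} (Arthur's lemme 5.3) applied \emph{not} to $G$ but to the reductive group $G_{\sigma}=G_{Y_{\ss}}$, after observing that both sides of the asserted equality are, by the very definitions of the bracketed families, quantities attached to $G_{\sigma}$ dressed in $G$-level notation. First I would unravel the notation: by construction $w_{P}[\lambda,A,Y_{\ss}+U,V]=w_{P_{Y_{\ss}}}(\lambda,A,U,V)$ and $r_{P}[\lambda,A,Y]=\prod_{\alpha\in\Sigma(\mathfrak{p}_{Y_{\ss}};A_{M_{Y_{\ss}}})}r_{\alpha}(\lambda,A,Y_{\nilp})$, the right-hand sides being the objects of subsection \ref{subsec:defdirect} relative to the group $G_{\sigma}$, the Levi $M_{\sigma}=M_{Y_{\ss}}$, the orbit $(\Ad M_{\sigma}(F))Y_{\nilp}$ and the point-base $P_{\square,Y_{\ss}}$. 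Since $m\in M_{\sigma}$ fixes $\sigma$, we have $(\Ad m^{-1})Y=\sigma+(\Ad m^{-1})Y_{\nilp}$, so the right-hand side of the lemma is exactly the $w$-weight $w_{M_{\sigma}}^{B_{\sigma}}(A,(\Ad m^{-1})Y_{\nilp},V_{R})$ computed inside $G_{\sigma}$ for the pair $(M_{B_{\sigma}},M_{\sigma})$.

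The second step is to match the left-hand side term by term under the dictionary $L\mapsto L_{\sigma}$. I would invoke the bijections of subsection \ref{subsec:pseudo-levi} to identify $\L^{M_{B}}(M)$ with $\L^{M_{B_{\sigma}}}(M_{\sigma})$ compatibly with the inclusion $a_{M}\hookrightarrow a_{M_{\sigma}}$ and the restriction of roots noted after the definition of $r_{P}[\cdot]$; then, using that $r_{\alpha}(\lambda,A,\cdot)$ depends only on the rank-one datum $M_{\alpha}$ and on the orbit (so that $(\Ad m^{-1})Y_{\nilp}$ and $Y_{\nilp}$ give the same $r_{\alpha}$), one gets $r_{M}^{L}[Y,A]=r_{M_{\sigma}}^{L_{\sigma}}(A,(\Ad m^{-1})Y_{\nilp})$. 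Finally, the element $n_{R}$ fixed by \eqref{Art88eq:lemma5.3} commutes with $A+\sigma$, hence with $\sigma$, so it genuinely lies in $N_{R}(F)\subseteq G_{\sigma}(F)$ (recall $R=R^{B}\in\P^{M_{B_{\sigma}}}(M_{\sigma})$), and it is precisely the "$n_{R}$"-type element for $G_{\sigma}$; since $v_{L}^{B}(\sigma,\cdot)$ is defined as the $v$-weight of the family $(v_{P_{\sigma}}^{B_{\sigma}}(\lambda,\cdot))_{P}$, it is the corresponding $v$-weight in $G_{\sigma}$ evaluated at $n_{R}$. Putting these three identifications together and applying Lemma \ref{lem:Art88lem5.3} verbatim inside $G_{\sigma}$ — with ambient parabolic $B_{\sigma}$, Levi $M_{\sigma}$, nilpotent orbit of $(\Ad m^{-1})Y_{\nilp}$, and point-base $R$ (the statement being insensitive to this choice, as in the unbracketed case) — yields
\[\sum_{L'\in\L^{M_{B_{\sigma}}}(M_{\sigma})}r_{M_{\sigma}}^{L'}(A,(\Ad m^{-1})Y_{\nilp})\,v_{L'}^{B_{\sigma}}(n_{R})=w_{M_{\sigma}}^{B_{\sigma}}(A,(\Ad m^{-1})Y_{\nilp},V_{R}),\]
which is the claim after re-indexing by $L'=L_{\sigma}$. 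Alternatively, one could reprove it in situ by mimicking the proof of Lemma \ref{lem:Art88lem5.3}: factor $r_{P}[\lambda,A,Y]\,v_{P}(\lambda,n_{R})$ as $w_{P}[\lambda,A,Y_{\ss}+(\Ad m^{-1})Y_{\nilp},V_{R}]$ times a factor independent of $P$, then apply the product formula for $(G,M)$-families; this is essentially the same computation carried out one level down.

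The main obstacle I expect is the bookkeeping in the second step, specifically the passage from the a priori $G(F)$-level conjugating element $n_{R}$ of \eqref{Art88eq:lemma5.3} to an honest element of $G_{\sigma}(F)$ agreeing with the element used to build the $w$- and $v$-families for $G_{\sigma}$, together with keeping straight the two vector spaces $a_{M}\subsetneq a_{M_{\sigma}}$ and the two root systems $\Sigma(\g;A_{M})$ and $\Sigma(\g_{\sigma};A_{M_{\sigma}})$. The crucial input here is the genericity $A\in\a_{M,Y,G-\reg}(F)\subseteq\a_{M_{\sigma},G_{\sigma}-\reg}(F)$ furnished by points 4–5 of Proposition \ref{YDLiopprop:whatisaMoG-reg}, which forces the relevant equalities of centralizers and the uniqueness in the equivalence (1)$\Leftrightarrow$(7) of Proposition \ref{YDLiopprop:whenXinIndMGX}; once that is nailed down, the rest is a routine translation between $G$-level and $G_{\sigma}$-level combinatorics.
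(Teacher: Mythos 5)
Your ``alternative'' argument is the paper's actual proof: the author disposes of this lemma with the single line \og c'est la même preuve que le lemme \ref{lem:Art88lem5.3} \fg{}, i.e.\ one factors the product $r_P[\lambda,A,Y]\cdot v_{P_\sigma}^{B_\sigma}(\lambda,n_R)$ as $w_P[\lambda,A,(\Ad m^{-1})Y,V_R]$ times a factor independent of $P$ --- the combinatorial splitting of $\prod_{\alpha\in\Sigma(\mathfrak{p}_\sigma;A_{M_\sigma})}r_\alpha$ into the part indexed by $\Sigma(\mathfrak{p}_\sigma;A_{M_\sigma})\cap(-\Sigma(\mathfrak{p}_{\square,\sigma};A_{M_\sigma}))$ and a $P$-independent remainder, exactly as in the proof of the lemme \ref{lem:Art88lem5.3} --- and then applies the product formula for $(M_B,M)$-families. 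Since every member of the three families involved ($r[\cdot]$, $v(\sigma,\cdot)$, $w[\cdot]$) is already built from $\sigma$-level data but extracted as a family on $ia_M^\ast$, this computation goes through verbatim and requires no comparison between $G$-level and $G_\sigma$-level weights.

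Your primary route, by contrast, has a genuine gap at the ``term-by-term matching under $L\mapsto L_\sigma$'' step, and I would not lead with it. The sum runs over $L\in\L^{M_B}(M)$, Levi subgroups of $G$, whereas Lemma \ref{lem:Art88lem5.3} applied inside $G_\sigma$ produces a sum over $\L^{M_{B_\sigma}}(M_\sigma)$: the map $L\mapsto L_\sigma$ is surjective but not injective unless one restricts to those $L$ with $a_L=a_{L_\sigma}$. More seriously, the quantities $r_M^L[A,Y]$ and $w_M^B[\cdots]$ are defined as values of $(M_B,M)$-families (limits over $P\in\P^{M_B}(M)$, $\lambda\in ia_M^\ast$, with the $\theta$-functions attached to $a_M^{M_B}$), not of $(M_{B_\sigma},M_\sigma)$-families; when $a_M\subsetneq a_{M_\sigma}$ these two extractions do not coincide termwise. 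This is precisely the phenomenon governed by the descent coefficients $d_M^G(L,L')$ of \eqref{eq:GMfamilledescentformula} and by the dichotomy $a_{{}^wQ}=a_{({}^wQ)_\sigma}$ in the lemma borrowed from \cite[lemme 8.3]{Art88} inside the proof of the point 3 of la proposition \ref{prop:propertiesIOP}; note also that the paper only establishes $r_M^L(A,Y)=r_M^L[A,Y]$ (théorème \ref{thm:IOPcomparaisonII-III}) after first reducing, via the descente of $(G,M)$-families, to the case where $Y_\ss$ is $F$-elliptic in $\m$, so that $A_M=A_{M_{Y_\ss}}$. Calling this a ``routine translation'' conceals the one delicate point of the whole comparison; the in-situ factorization you mention as a fallback is the argument that actually works.
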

\begin{proof}
C'est la même preuve que lemme \ref{lem:Art88lem5.3}.
\end{proof}

On va pouvoir en conclure que la somme $\sum_{L\in \L^{M_Q}(M)}r_M^L[Y,A]\widetilde{J}_{L}^Q[A+Y,f]$ vaut  
\begin{equation}\label{eq:compareArt}
\begin{split}
&|D^{\g}(A+Y)|^{1/2}|D^{\g_\sigma}(A)|^{-1/2}\int_{y\in G_\sigma(F)\backslash G(F)}\sum_{B\in\F^{M_Q}(M)}\int_{m\in M_{Y}(F)\backslash M_\sigma(F)}\int_{V_R\in \n_R(F)}\\
&\hspace{4cm}\Phi_{B,y}^{\sigma}\left(A+(\Ad m^{-1})Y_\nilp+V_R\right)w_M^B[A,(\Ad m^{-1})Y,V_R,\sigma]\,dV_R\,dm\,dy  
\end{split}    
\end{equation}
avec
\begin{align*}
\Phi_{B,y}^{\sigma}(M)\eqdef   \gamma^{(M_Q)_\sigma}(B_\sigma) \int_{K_{\sigma S}}\int_{\n_{B_\sigma}(F)}f\left((\Ad (ky)^{-1})\left[\sigma+M+V_B\right]\right)v_B^Q{}'(ky)\,dV_B\,dk,\,\,\,\, M\in \m_{B_{\sigma}}(F).
\end{align*}
Il est clair que $\Phi_{B,y}^{\sigma}\in C_c^\infty(\m_{B_{\sigma}}(F))$ dépend de façon lisse en $y$. On peut réduire le domaine d'intégration de $y\in G_\sigma(F)\backslash G(F)$ à une partie compacte de $G(F)$ indépendante de $A$ selon un lemme de compacité de Harish-Chandra déjà mentionné (cf. \cite[lemme 14.1]{Kottbook}). Il est à présent clair que l'intégrale (\ref{eq:compareArt}) converge abosolument. \qedhere
\end{proof}

\subsection{Premières propriétés des intégrales orbitales pondérées}

Avant de confronter ces définitions d'une intégrale orbitale pondérée, on présente des propriétés des intégrales orbitales pondérées.

\begin{proposition}\label{prop:propertiesIOP}Soient  $L\in\L^G(M_0),Q\in\F^G(L),Y\in \mathfrak{l}(F)$ et $f\in\S(\g(F))$. On utilise ici la définition \ref{def:NIOPl} pour l'intégrale orbitale pondérée $J_L^Q(Y,f)$.
\begin{enumerate}
    \item Pour tous $l\in L(F)$, $w\in \uW^{G,0}$ et $k\in K$,
    \[J_{(\Ad w)L}^{(\Ad w)Q}((\Ad wl)Y,(\Ad  k)f)=J_L^Q(Y,f).\]
    \item Formule de descente de l'induction : soient $M\in\L^L(M_0)$ avec $Z\in\m(F)$. On a 
    \[J_L^Q(\Ind_M^L(Z),f)=\sum_{M_1\in\L^{L_Q}(M)}d_M^{L_Q}(L,M_1)J_M^{Q_{M_1}}(Z,f).\]
    Ici $M_1\mapsto Q_{M_1}$ est la section dans la formule de descente pour les $(G,M)$-familles.
    \item Formule de descente semi-simple : supposons que $Y$ est le représentant standard de sa $L(F)$-orbite, et $Y_{\ss}$ est $F$-elliptique dans $L$. Alors $J_L^Q(Y,f)$ égale
    \[|D^\g(Y)|^{1/2}\int_{G_{Y_\ss}(F)\backslash G(F)}\left(\sum_{R\in\F^{L_{Q,Y_\ss}}(L_{Y_\ss})}J_{L_{Y_\ss}}^{L_{Y_\ss,R}}(Y_\nilp,\Phi_{R,y})\right)\,dy,\]
    où $\Phi_{R,y}\in\S(\m_{R}(F))$ est la fonction 
    \[\Phi_{R,y}(M)=\gamma^{(L_Q)_{Y_\ss}}(R)\int_{w^{-1}K_{X_\ss} w}\int_{\mathfrak{n}_R(F)}f\left((\Ad (ky)^{-1})(\sigma+M+U)\right)v_R'(ky)\,dU\,dk,\,\,\,\,\forall M\in\m_R(F),\]
    avec $X$ le représentant standard de $\Ind_L^G(Y)$, $w$ un élément de $\uW^{G,0}$ tel que $ {}^{w^{-1}}\MR^{X}\subseteq L$, $(\Ad L)Y=\Ind_{{}^{w^{-1}}\MR^{X}}^L((\Ad w^{-1})X_\ss)$, et $(\Ad w^{-1})X_\ss=Y_\ss$ (proposition \ref{prop:paradesccomp}). Enfin $K_{X_\ss}=K_{X_\ss}^{e_{X_\ss}}$ le sous-groupe compact de $G_{X_\ss}(F)$ en bonne position relativement à $e_{X_\ss}$. Les sous-groupes de $G_{Y_\ss}(F)$ en question sont munis des mesures de Haar vérifiant les consignes de la sous-section \ref{YDLiopsubsec:normalizationmeasure}. 
\end{enumerate}
\end{proposition}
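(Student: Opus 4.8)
Les trois assertions sont de natures distinctes : la première est essentiellement formelle, tandis que les deux autres sont, respectivement, les formules de descente d'Arthur pour l'induction et pour le centralisateur semi-simple, adaptées à notre définition. Dans les trois cas la stratégie consiste à déplier les deux membres en une intégrale sur $G_X(F)\backslash G(F)$ contre un poids issu de la $(G,\MR^{X})$-famille $v_{P,X}$ de \eqref{YDLiopeq:defnewGMfamilyinIOP}, puis à ramener l'identité à un énoncé sur les $(G,M)$-familles ; la convergence absolue de toute intégrale écrite ci-dessous est déjà assurée par le théorème \ref{thm:locIOPtemperee}. Pour l'assertion 1, on remarque d'abord qu'en vertu de la définition \ref{def:NIOPl}(2) la quantité $J_L^Q(Y,f)$ ne dépend que de la $L(F)$-orbite de $Y$, de sorte que l'élément $l$ ne joue aucun rôle et il suffit de prouver $J_{(\Ad w)L}^{(\Ad w)Q}((\Ad w)\o,(\Ad k)f)=J_L^Q(\o,f)$ avec $\o=(\Ad L(F))Y$. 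L'orbite induite étant insensible à la $G$-conjugaison de ses données (unicité dans la proposition \ref{chap3prop:indprop}(1)), on a $\Ind_{(\Ad w)L}^G((\Ad w)\o)=\Ind_L^G(\o)$, de même représentant standard $X$, et l'on conserve le même $\MR^{X}$. Si $w_0\in\uW^{G,0}$ réalise $(\Ad w_0^{-1})\MR^{X}\subseteq L$ ainsi que la condition sur l'orbite du corollaire \ref{coro:choixtow}, alors $w_0w^{-1}$ réalise les conditions analogues pour le couple $((\Ad w)L,(\Ad w)\o)$, et le poids correspondant est $v_{(\Ad w_0w^{-1})(\Ad w)L,X}^{(\Ad w_0w^{-1})(\Ad w)Q}=v_{(\Ad w_0)L,X}^{(\Ad w_0)Q}$, c'est-à-dire exactement le poids intervenant dans $J_L^Q(\o,f)$ — l'indépendance vis-à-vis du choix de $w_0$ ayant déjà été établie. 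Enfin, le changement de variable $g\mapsto gk^{-1}$ (la mesure sur $G_X(F)\backslash G(F)$ étant invariante à droite par $G(F)$) joint à l'invariance à droite du poids par $K$ (lemme \ref{lem:poidsinvnorm}) traite le facteur $(\Ad k)f$.

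Pour l'assertion 2, on utilise l'assertion 1, la transitivité $\Ind_L^G(\Ind_M^L(Z))=\Ind_M^G(Z)$ de la proposition \ref{chap3prop:indprop}(7) et deux applications de la proposition \ref{prop:paradesccomp} (à $M\subseteq L$ dans $L$, puis à $L\subseteq G$) pour se ramener au cas où $X$ est le représentant standard commun de $\Ind_M^G(Z)=\Ind_L^G(\Ind_M^L(Z))$ et $\MR^{X}\subseteq M\subseteq L\subseteq L_Q$, les éléments auxiliaires de $\uW^{G,0}$ dans la définition \ref{def:NIOPl}(1) pouvant alors tous être pris triviaux. Les deux membres valent alors $|D^{\g}(X)|_F^{1/2}\int_{G_X(F)\backslash G(F)}f((\Ad g^{-1})X)(\,\cdot\,)(g)\,dg$, et l'assertion se ramène à l'identité de poids
\[v_{L,X}^Q(g)=\sum_{M_1\in\L^{L_Q}(M)}d_M^{L_Q}(L,M_1)\,v_{M,X}^{Q_{M_1}}(g),\qquad g\in G(F).\]
Soit $\widetilde v$ la $(G,M)$-famille déduite de $(v_{P,X})_{P\in\P^G(\MR^{X})}$ par \eqref{eq:GMfamilletoGL} et $\widetilde v^{\,Q}$ sa restriction en $(L_Q,M)$-famille via \eqref{eq:GMfamilletoMQM} ; la transitivité de ces deux opérations donne $v_{L,X}^Q(g)=(\widetilde v^{\,Q})_L$ et $v_{M,X}^{Q_{M_1}}(g)=(\widetilde v^{\,Q})_M^{Q_{M_1}}$ (où l'on identifie $Q_{M_1}$ au sous-groupe parabolique $Q_{M_1}N_Q$ de $G$), de sorte que l'identité voulue n'est autre que la formule de descente \eqref{eq:GMfamilledescentformula} appliquée à la $(L_Q,M)$-famille $\widetilde v^{\,Q}$.

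Pour l'assertion 3, on raisonne comme dans la preuve du théorème \ref{YDLiopthm:ArtdefIOPCVCinftyc}. Posons $\sigma=Y_\ss$ ; la proposition \ref{prop:paradesccomp} fournit $w\in\uW^{G,0}$ avec $(\Ad w^{-1})X_\ss=\sigma$ et $(\Ad w^{-1})\MR^{X}\subseteq L$, et, après application de l'assertion 1, on peut supposer $w=1$, donc $\MR^{X}\subseteq L$ et $X_\ss=\sigma$, ce dernier étant $F$-elliptique dans $L$ ; ainsi $\MR^{X}\subseteq L\subseteq H\eqdef\text{envL}(G_\sigma;G)$ et $G_X=H_X$. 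On déplie $\int_{G_X(F)\backslash G(F)}$ par la formule d'intégration de type Iwasawa de la sous-section \ref{YDLiopsubsec:normalizationmeasure} relative à un élément de $\P^G(H)$, ce qui la factorise à travers $\int_{G_\sigma(F)\backslash G(F)}$ et une intégrale intérieure sur $(G_\sigma)_{X_\nilp}(F)\backslash G_\sigma(F)$. Comme $v_{P,X}$ est compatible à la décomposition de Jordan ($w_{P,X}=w_{P_{X_\ss},X_\nilp}$, lemme \ref{lem:wP}), elle se restreint en la famille tordue analogue pour $G_\sigma$ ; par le lemme \ref{lem:actcent} et la formule de produit \eqref{eq:GMfamilleproductformula}, le poids $v_{L,X}^Q(g)$ se décompose en une somme, indexée par $R$, du poids de $G_\sigma$ fois un facteur transverse. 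En regroupant les termes, l'intégrale intérieure devient la somme des $J_{L_{Y_\ss}}^{L_{Y_\ss,R}}(Y_\nilp,\Phi_{R,y})$ de l'énoncé, les données transverses étant absorbées dans $\Phi_{R,y}$, tandis que les facteurs $|D^{\g}|_F$, $|D^{\mathfrak{h}}|_F$ et $\gamma^G(\cdot)$ se recombinent grâce aux compatibilités de mesures de la proposition \ref{YDLiopprop:compatibilityHaarmeasures} (et $X_\nilp\in\Ind_{L_\sigma}^{G_\sigma}(Y_\nilp)$ par \eqref{chap3eq:indJordan}).

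La principale difficulté réside dans l'assertion 3 : il faut apparier la $(G,\MR^{X})$-famille tordue $v_{P,X}$ avec une $(G_\sigma,(\MR^{X})_\sigma)$-famille tordue avant de pouvoir invoquer la formule de produit, tout en suivant soigneusement tous les éléments auxiliaires $w_P\in\uW^{G,0}$ ainsi que la surjection $P\in\cR^G(X)\mapsto P\cap H\in\cR^{G_\sigma}(X_\nilp)$ ; c'est là que la compatibilité à la décomposition de Jordan du lemme \ref{lem:wP} et le paramétrage de $\cR^G(X)$ font l'essentiel du travail. Le suivi des éléments de $\uW^{G,0}$ dans la réduction de l'assertion 2 demande lui aussi un peu de soin, mais ne présente pas de difficulté conceptuelle.
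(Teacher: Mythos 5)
Vos points 1 et 2 suivent exactement la démarche du texte : le point 1 est formel, et le point 2 se ramène, après un choix de $w$ commun aux deux membres (corollaire \ref{coro:choixtow} joint à la transitivité de l'induction) et l'équivariance des applications $d_M^{L_Q}$ et $s$ sous $\uW^{G,0}$, à la formule de descente \eqref{eq:GMfamilledescentformula} appliquée à la famille déduite de $(v_{P,X})_{P\in\P^G(\MR^{X})}$ ; rien à redire sur ces deux points.

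En revanche, pour le point 3, votre argument présente une lacune à l'endroit précis que vous signalez vous-même comme la difficulté principale. Le lemme \ref{lem:actcent} et la formule \eqref{eq:GMfamilleproductformula} ne suffisent pas à décomposer le poids tordu : le lemme \ref{lem:actcent} ne concerne que l'invariance à gauche par $G_X(F)$, et \eqref{eq:GMfamilleproductformula} porte sur le produit de deux $(G,M)$-familles sur le même groupe, alors qu'ici l'un des facteurs est naturellement indexé par les sous-groupes paraboliques de $G_\sigma$ et non de $G$. Ce qu'il faut réellement, et ce que fait la preuve du texte, c'est d'une part la factorisation, pour $x\in G_\sigma(F)$ et $y\in G(F)$,
\[
\exp(\langle\lambda,-H_P(w_{P,X}xy)\rangle)=\exp(\langle\lambda,-H_{P_\sigma}(w_{P_\sigma,X_\nilp}x)\rangle)\exp(\langle\lambda,-H_P(K_{P_\sigma}(w_{P_\sigma,X_\nilp}x)y)\rangle),
\]
qui repose sur la compatibilité $w_{P,X}=w_{P_{X_\ss},X_\nilp}$ (lemme \ref{lem:wP}) et sur la décomposition d'Iwasawa $G_\sigma(F)=P_\sigma(F)K_\sigma$ (possible car $((\Ad w)L)_\sigma\supseteq \uM_0^{X}$) ; d'autre part l'analogue du lemme 8.3 d'Arthur : pour $x\in G_\sigma(F)$, $v_{(\Ad w)L,X}^{(\Ad w)Q}(x)=v_{((\Ad w)L)_\sigma,X_\nilp}^{((\Ad w)Q)_\sigma}(x)$ si $a_{(\Ad w)Q}=a_{((\Ad w)Q)_\sigma}$, et $0$ sinon. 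C'est cette annulation qui justifie le regroupement des paraboliques $Q'$ vérifiant $Q'_\sigma=R$ et $a_{Q'}=a_R$ dans la définition des fonctions $v_R'$, et donc la forme de la somme sur $R\in\F^{(L_Q)_{Y_\ss}}(L_{Y_\ss})$ de l'énoncé ; sans elle, votre « facteur transverse » n'est pas identifié et la somme indexée par $R$ ne se dégage pas. Notez enfin deux points secondaires : la formule de produit pertinente est le développement $(cd)_M=\sum_{Q}c_M^Qd_Q'$ et non la version \eqref{eq:GMfamilleproductformula} ; et votre réduction « on peut supposer $w=1$ » via le point 1 demande de vérifier que le membre de droite de la formule se transporte correctement par cette conjugaison (le représentant $(\Ad w)Y$ n'est plus standard dans $(\Ad w)L$), ce que la preuve du texte évite en gardant $w$ explicite et en concluant par les changements de variables $w_{R,X_\nilp}x=z$ puis la conjugaison au représentant standard $\mu_{R}$, de Jacobien $1$.
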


\begin{proof}
Le point 1 est facile. Le point 2 provient de l'équation \eqref{eq:GMfamilledescentformula}. On prouve le point 3. Notons $X$ le représentant standard de $\Ind_L^G(Y)(F)$. Posons $\sigma\eqdef X_\ss$ et  $\mu\eqdef X_\nilp$. \'{E}crivons ${}^wA\eqdef (\Ad w)A$ pour $w\in \uW^{G,0}$ et $A\in \L^G(M)$. Soit $w \in \uW^{G,0}$ tel que $ {}^{w^{-1}}\MR^{X}\subseteq L$, $(\Ad L)Y=\Ind_{{}^{w^{-1}}\MR^{X}}^L((\Ad w^{-1})X_\ss)$, et $(\Ad w^{-1})X_\ss=Y_\ss$. L'élément $\sigma$ est $F$-elliptique dans ${}^wL$.  Le sous-groupe groupe de Levi $({}^wL)_\sigma$ contient $\uM_0^\sigma$, on obtient alors la décomposition d'Iwasawa $G_{\sigma}(F)=R(F)K_{\sigma}$ pour tout $R\in \F^{G_\sigma}(({}^wL)_\sigma)$. 

L'intégrale $J_L^Q(Y,f)$ vaut
\begin{align*}
|D^\g(X)|^{1/2}\int_{y\in G_\sigma(F)\backslash G(F)}\int_{x\in (G_\sigma)_\mu(F)\backslash G_\sigma(F)}f&\left((\Ad(xy)^{-1})X\right)v_{{}^wL,X}^{{}^wQ}(xy)\,dx\,dy.  
\end{align*}

Pour $P\in \P^G({}^wL)$ et $x\in G_\sigma(F)$ on note $K_{P_\sigma}(x)$ un élément de $K_{\sigma }$ tel que $xK_{P_\sigma}(x)^{-1}$ appartient à $P_\sigma(F)$. Il est uniquement déterminé modulo $K_{\sigma }\cap P_\sigma(F)$ à gauche. Nous avons
\begin{align*}
v_{P,X}^{{}^wQ}(\lambda,xy)&=\exp(\langle\lambda,  -H_P(w_{P,X}xy)\rangle) \\
&=\exp(\langle\lambda,-H_P(w_{P_\sigma,\mu}x)\rangle)\exp(\langle\lambda,-H_P(K_{P_\sigma}(w_{P_\sigma,\mu}x)y)\rangle)\\
&=\exp(\langle\lambda,-H_{P_\sigma}(w_{P_\sigma,\mu}x)\rangle)\exp(\langle\lambda,-H_P(K_{P_\sigma}(w_{P_\sigma,\mu}x)y)\rangle),
\end{align*}
le lemme \ref{lem:wP} est invoqué au passage. En conséquence

\begin{lemma}Avec les notations précédentes,
\[v_{{}^wL,X}^{{}^wQ}(x)=\begin{cases*} v_{({}^wL)_\sigma,\mu}^{({}^wQ)_\sigma}(x), & si $a_{{}^wQ}=a_{({}^wQ)_\sigma}$\\ 0,& sinon.
\end{cases*}\]
\end{lemma}
\begin{proof}
La preuve est la même qu'en \cite[lemme 8.3]{Art88}.
\end{proof}

De ce fait en notant pour tout $R\in\F^{({}^wL_Q)_\sigma}(({}^wL)_\sigma)$
\[v_R'(z,T)=\sum_{\{Q\in\F^{{}^wL_Q}({}^wL):Q_\sigma=R,a_Q=a_R\}}v_Q'(z,T),\,\,\,\,z\in G(F)\]
on aura
\begin{align*}
v_{{}^wL,X}^{{}^wQ}(xy)=\sum_{R\in\F^{({}^wL_Q)_\sigma}(({}^wL)_\sigma)}v_{({}^wL)_\sigma,\mu}^R(x)v_R'(K_{R}(w_{R,\mu}x)y).
\end{align*}
\`{A} ce stade nous avons
\begin{equation}\label{eq:IOPssR}
\begin{aligned}
J_L^Q(Y,f)=|D^\g(X)|^{1/2}\int_{y\in G_\sigma(F)\backslash G(F)}\sum_{R\in\F^{({}^wL_Q)_\sigma}(({}^wL)_\sigma)}&\int_{x\in (G_\sigma)_\mu(F)\backslash G_\sigma(F)}f\left((\Ad(xy)^{-1})X\right)\\
\times &v_{({}^wL)_\sigma,\mu}^R(x)v_R'(K_{R}(w_{R,\mu}x)y,T)\,dx\,dy.
\end{aligned}
\end{equation}

Fixons $R'\in\F^{({}^wL_Q)_\sigma}(({}^wL)_\sigma)$. Posons, pour $y\in G_\sigma(F)\backslash G(F)$, $\Phi_{R',y}\in\S(\m_{R'}(F))$ la fonction
\[\Phi_{R',y}(M)=\gamma^{({}^wL_Q)_\sigma}(R')\int_{K_\sigma}\int_{\mathfrak{n}_{R'}(F)}f\left((\Ad (ky)^{-1})(\sigma+M+U)\right)v_{R'}'(ky)\,dU\,dk,\,\,\,\,\forall M\in\m_{R'}(F).\]
Il est clair que $\Phi_{R',y}$ dépend de manière lisse de $y$. On note ensuite $\mu_{R'}$ le représantant standard de $\Ind_{({}^wL)_\sigma}^{({}^wL)_{\sigma,{R'}}}((\Ad w)Y_\nilp)(F)$, pour rappel $({}^wL)_{\sigma,R'}$ est le facteur de Levi de $R'$ contenant $({}^wL)_{\sigma}$. Bien sûr $\mu_{G_\sigma}=\mu$. Comme $(\Ad w_{R,\mu}^{-1})R'\in \cLS^{G_\sigma}(\mu)$, on en déduit $(\Ad w_{R,\mu})\mu \in\mathfrak{r}'(F)$. Dans la dernière intégrale portée sur $(G_\sigma)_\mu(F)\backslash G_\sigma(F)$ de l'équation \eqref{eq:IOPssR}, nous effectuons d'abord le changement de variable
\[w_{R',\mu}x=z,\,\,\,\,x\in (G_\sigma)_\mu(F)\backslash G_\sigma(F), z\in (G_\sigma)_{(\Ad w_{R',\mu})\mu}(F)\backslash G_\sigma(F),\]
puis
\[z^{-1}(w_{R',\mu}\mu w_{R',\mu}^{-1})z=k_\sigma^{-1}(m_{R'}^{-1}\mu_{R'}m_{R'}+U_{R'})k_\sigma,\]
avec $(m_{R'},U_{R'})\in (({}^wL)_{\sigma
,{R'},\mu_{R'}}(F)\backslash ({}^wL)_{\sigma,{R'}}(F)\times \n_{R'}(F))_{G_\sigma-\reg}$, un ouvert dense de $({}^wL)_{\sigma,{R'},\mu_{R'}}(F)\backslash({}^wL)_{\sigma,{R'}}(F)\times \n_{R'}(F)$, et $k_\sigma\in K_{\sigma}$. En observons que le Jacobien introduit vaut 1, puis $K_{R'}(w_{{R'},\mu}x)=K_{R'}(z)=k_\sigma$, nous obtenons 
\begin{align*}
J_L^Q(Y,f)&=|D^\g(X_\ss)|^{1/2}\int_{G_\sigma(F)\backslash G(F)}\left(\sum_{{R'}\in\F^{({}^wL_Q)_\sigma}(({}^wL)_\sigma)}J_{({}^wL)_{\sigma}}^{({}^wL)_{\sigma,{R'}}}((\Ad w)Y_\nilp,\Phi_{{R'},y})\right)\,dy \\   
&=|D^\g(Y_\ss)|^{1/2}\int_{G_{Y_\ss}(F)\backslash G(F)}\left(\sum_{R\in\F^{(L_Q)_{Y_\ss}}(L_{Y_\ss})}J_{L_{Y_\ss}}^{L_{Y_\ss,R}}(Y_\nilp,\Phi_{R,y})\right)\,dy.
\end{align*}
La preuve s'achève.
\end{proof}

Ces trois propriétés sont satisfaites par la définition $\widetilde{J}_L^Q[Y,-]$ pour les fonctions lisse à support compact. Pour les points 1 et 2 cela est facile à voir, pour le point 3 on se réfère à \cite[corollaire 8.7]{Art88}.

\subsection{Lemmes supplémentaires}

Soit dans ce numéro $D=\R$, $\C$, ou $\mathbb{H}$, l'algèbre des quaternions de Hamilton. On note $X\mapsto \overline{X}$ l'involution principale sur $D$.

\begin{definition}
Soit $V$ un $D$-module à droite libre de rang fini. Un produit scalaire sur $V$ est une application $\langle-,-\rangle:V\times V\to D$ vérifiant
\begin{enumerate}
    \item $\langle v,v\rangle\geq 0$ pour tout $v\in V$ et $\langle v,v\rangle=0$ si et seulement si $v=0$ ;
    \item $\langle v_1d_1+v_2d_2,v\rangle=\langle v_1,v\rangle d_1+\langle v_2,v\rangle d_2$ pour tous $v_1,v_2,v\in V$ et $d_1,d_2\in D$ ;
    \item $\langle v_1,v_2\rangle=\overline{\langle v_2,v_1\rangle}$ pour tous $v_1,v_2\in V$.
\end{enumerate}
On dit que $v_1,\dots,v_m\in V$ est une base orthonormée si $\langle v_i,v_j\rangle=\delta_{ij}$.
\end{definition}

\begin{lemma}~{}
\begin{enumerate}
    \item Soit $V$ un $D$-module à droite libre de rang fini muni d'un produit scalaire, alors il existe une  base orthonormée.
    \item Soit $E$ un sous-$D$-module à droite. Posons $E^\perp\eqdef\{v\in V : \langle e,v\rangle=\langle v,e\rangle=0 \text{ pour tout $
e\in E$}\}$. Alors $V=E\oplus E^\perp$.
\end{enumerate}
\end{lemma}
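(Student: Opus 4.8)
The statement is the quaternionic (and complex) analogue of Gram--Schmidt orthonormalisation, so the plan is to run Gram--Schmidt carefully over the noncommutative division algebra $D$, tracking on which side scalars act and where the conjugation lands. The only structural inputs are: right $D$-linearity of $\langle-,-\rangle$ in the first variable together with axiom (3), which make $\langle-,-\rangle$ additive and conjugate-linear in the second variable with the conjugate on the left, i.e. $\langle v, wd\rangle = \overline{d}\,\langle v,w\rangle$ (here one uses that the principal involution is an anti-automorphism); and axiom (1), which guarantees that $\langle v,v\rangle$ is a nonnegative real number, hence lies in the centre $\mathbb{R}\subseteq D$ and admits a positive real square root whenever $v\neq 0$.

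For part (1), I would argue by induction on $k$ that, starting from any $D$-basis $v_1,\dots,v_m$ of $V'$, one can produce vectors $e_1,\dots,e_k$ with $\langle e_i,e_j\rangle=\delta_{ij}$ and $\sum_{i\le k} e_iD = \sum_{i\le k} v_iD$. The base case is $e_1 = v_1\langle v_1,v_1\rangle^{-1/2}$, using that $\langle v_1,v_1\rangle^{-1/2}$ is real and central so that $\langle e_1,e_1\rangle = \langle v_1,v_1\rangle^{-1/2}\langle v_1,v_1\rangle\langle v_1,v_1\rangle^{-1/2}=1$. For the inductive step, set
\[
w_{k+1} \;=\; v_{k+1} \;-\; \sum_{i=1}^{k} e_i\,\langle v_{k+1}, e_i\rangle ,
\]
and check, using $\langle e_i, e_j\rangle = \delta_{ij}$, additivity, and $\langle e_j, e_i\langle v_{k+1},e_i\rangle\rangle = \overline{\langle v_{k+1},e_i\rangle}\,\delta_{ji} = \langle e_i,v_{k+1}\rangle\,\delta_{ji}$, that $\langle e_j, w_{k+1}\rangle = 0$ for all $j\le k$ (whence also $\langle w_{k+1}, e_j\rangle = 0$ by axiom (3)). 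Since $v_{k+1}\notin \sum_{i\le k}v_iD = \sum_{i\le k}e_iD$ we get $w_{k+1}\neq 0$, so $\langle w_{k+1},w_{k+1}\rangle$ is a positive real and we may put $e_{k+1} = w_{k+1}\langle w_{k+1},w_{k+1}\rangle^{-1/2}$; the span condition persists because $w_{k+1}$ differs from $v_{k+1}$ by an element of $\sum_{i\le k}e_iD$ and $e_{k+1}$ is a nonzero central rescaling of $w_{k+1}$. After $m$ steps $(e_1,\dots,e_m)$ is the desired orthonormal basis.

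For part (2), first note that $E$, being a submodule of the free module $V$ over the division ring $D$, is itself free (submodules of free modules over a division ring are free, and a basis of a submodule extends to a basis of the ambient module in finite rank). Applying part (1) to $E$ with the restricted scalar product yields an orthonormal basis $e_1,\dots,e_k$ of $E$, which I complete to a $D$-basis $e_1,\dots,e_k,v_{k+1},\dots,v_m$ of $V$ and feed into the procedure of part (1) — the first $k$ vectors are left untouched — obtaining an orthonormal basis $e_1,\dots,e_m$ of $V$ with $E = \sum_{i\le k}e_iD$. The complement $W := \sum_{i>k}e_iD$ satisfies $W\subseteq E^{\perp}$, since each $e_i$ with $i>k$ is orthogonal, from both sides, to every $e_j$ with $j\le k$. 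Conversely, writing a general $v=\sum_{i=1}^m e_i\alpha_i$ one computes $\langle e_j,v\rangle = \overline{\alpha_j}$ for $j\le k$, so $v\in E^{\perp}$ forces $\alpha_1=\dots=\alpha_k=0$, i.e. $v\in W$; hence $E^{\perp}=W$ and $V = E\oplus E^{\perp}$ (and $E\cap E^{\perp}=0$ is anyway immediate from axiom (1)).

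The argument is essentially routine; the points that demand care are the noncommutative bookkeeping — the correct Gram--Schmidt formula carries the coefficient $\langle v_{k+1},e_i\rangle$ on the right of $e_i$, the conjugation in the second slot is forced by axiom (3), and it is essential that the normalising scalars $\langle w,w\rangle^{-1/2}$ are real, hence central — together with the standard structural fact that submodules of free $D$-modules are free, which legitimates the completion-to-a-basis step in part (2).
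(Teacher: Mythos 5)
Your proof is correct: the noncommutative Gram--Schmidt with the coefficient $\langle v_{k+1},e_i\rangle$ placed on the right of $e_i$, the conjugate-linearity $\langle v,wd\rangle=\overline{d}\langle v,w\rangle$ deduced from axioms (2)--(3), and the fact that the normalising scalars $\langle w,w\rangle^{-1/2}$ are real hence central, is exactly the standard argument, and your part (2) (extend an orthonormal basis of $E$, then identify $E^{\perp}$ with the span of the remaining vectors) is the expected one. The paper states this lemma without proof, treating it as classical for $D=\R,\C,\mathbb{H}$, so your write-up simply supplies the routine verification the author omits.
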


\begin{lemma}[Lemme de Hensel]
Soient $R$ un anneau hensélien, $\mathfrak{m}_R$ son idéal maximal, et $S$ un $R$-schéma lisse. Alors l'application naturelle $S(R)\to S(R/\mathfrak{m}_R)$ est surjective. 
\end{lemma}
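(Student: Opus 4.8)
La stratégie consiste à se ramener, grâce au théorème de structure locale des morphismes lisses, au cas où $S$ est étale sur un espace affine, puis à invoquer la caractérisation « géométrique » de l'hensélianité de $R$ par le relèvement des points d'extension résiduelle triviale le long des morphismes étales.

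Soit $\bar{x}\in S(R/\mathfrak{m}_R)$, que l'on voit comme un morphisme $\operatorname{Spec}(R/\mathfrak{m}_R)\to S$ d'image un point $s$ de $S$ situé au-dessus du point fermé de $\operatorname{Spec} R$ ; l'extension résiduelle $R/\mathfrak{m}_R\hookrightarrow\kappa(s)$ est alors triviale. Comme $S$ est lisse sur $R$, il existe un ouvert affine $U\subseteq S$ contenant $s$, un entier $n\geq 0$ et un $R$-morphisme étale $\pi:U\to\mathbb{A}^n_R$ (structure locale des morphismes lisses, cf. EGA IV, 17.11, ou le Stacks Project, Tag 054L). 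Le $(R/\mathfrak{m}_R)$-point $\bar{y}\eqdef\pi\circ\bar{x}$ de $\mathbb{A}^n_R$ est donné par $n$ éléments de $R/\mathfrak{m}_R$ ; puisque $R\to R/\mathfrak{m}_R$ est surjective, on relève ceux-ci en des éléments de $R$, ce qui produit un $R$-point $y\in\mathbb{A}^n_R(R)$ de réduction $\bar{y}$ modulo $\mathfrak{m}_R$.

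On pose ensuite $X\eqdef U\times_{\mathbb{A}^n_R,\,y}\operatorname{Spec} R$, produit fibré le long de $y:\operatorname{Spec} R\to\mathbb{A}^n_R$ : c'est un $R$-schéma affine et étale, comme changement de base de $\pi$. Comme $\pi\circ\bar{x}=\bar{y}$ est la réduction de $y$ modulo $\mathfrak{m}_R$, le morphisme $\bar{x}$ et la flèche canonique $\operatorname{Spec}(R/\mathfrak{m}_R)\to\operatorname{Spec} R$ définissent un $(R/\mathfrak{m}_R)$-point $\bar{z}$ de $X$, situé au-dessus du point fermé de $\operatorname{Spec} R$ et, par l'argument précédent, d'extension résiduelle triviale. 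Or l'hensélianité de $R$ équivaut (cf. EGA IV, 18.5, ou le Stacks Project, Tag 04GG) à ce que tout point d'un $R$-schéma étale au-dessus du point fermé et d'extension résiduelle triviale soit atteint par une section $\sigma:\operatorname{Spec} R\to X$. La composée $\operatorname{Spec} R\xrightarrow{\sigma}X\to U\hookrightarrow S$ est alors un $R$-point de $S$ dont la réduction modulo $\mathfrak{m}_R$ est $\bar{x}$, ce qui établit la surjectivité.

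La difficulté principale ne réside pas dans l'articulation ci-dessus, qui est purement formelle, mais dans les deux ingrédients invoqués : le théorème de structure locale des morphismes lisses (tout morphisme lisse se factorise localement à la source en un morphisme étale suivi d'une projection $\mathbb{A}^n_R\to\operatorname{Spec} R$) et la reformulation de l'hensélianité en termes de relèvement de points le long des revêtements étales. Aucun de ces deux points n'utilise d'hypothèse noethérienne sur $R$.
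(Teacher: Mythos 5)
The paper does not prove this lemma at all: it is stated as a standard fact (used only to lift a matrix over $\O_D$ in the proof of the comparison theorem \ref{thm:IOPcomparaisonI-II}), so there is no proof of record to compare yours with. Your argument is the standard one and it is correct: factor the smooth $R$-scheme, locally around the image of $\bar{x}$, as an étale morphism to $\mathbb{A}^n_R$ (EGA IV, 17.11.4, or Stacks Project 054L), lift the resulting $\mathbb{A}^n_R$-valued point of $R/\mathfrak{m}_R$ coordinatewise, base-change to get an affine étale $R$-scheme $X$ with a point $\bar{z}$ over the closed point, and invoke the characterization of henselian local rings by lifting of points with trivial residue extension along étale morphisms (EGA IV, 18.5, or Stacks Project 04GG); no noetherian hypothesis is needed, as you say. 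Two small points are left implicit and deserve a sentence each: first, the triviality of the residue extension should be checked at the point $z\in X$ (not at $s\in S$, which is not what the lifting criterion is applied to) — the argument is the same, namely $\kappa(z)$ is an extension of $R/\mathfrak{m}_R$ equipped with an $(R/\mathfrak{m}_R)$-algebra map to $R/\mathfrak{m}_R$, hence equals $R/\mathfrak{m}_R$; second, you need the section $\sigma$ not only to pass through $z$ but to reduce to $\bar{z}$ modulo $\mathfrak{m}_R$, so that the composite $\operatorname{Spec} R\to S$ actually lifts $\bar{x}$ — this is automatic, since the reduction of $\sigma$ is a point of $X$ supported at $z$ whose residue map $\kappa(z)\to R/\mathfrak{m}_R$ is an $(R/\mathfrak{m}_R)$-algebra endomorphism of $R/\mathfrak{m}_R$, hence the identity, hence equal to $\bar{z}$. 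With these two remarks spelled out, the proof is complete.
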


\subsection{Comparaison}\label{subsec:WOIcomparaisonLiealg}

On compare d'abord $\widetilde{J}_M^Q(Y,f)$ et $\widetilde{J}_M^Q[Y,f]$. Il y a une égalité entre les deux comme ce que montre le théorème suivant. 

\begin{theorem}\label{thm:IOPcomparaisonII-III}
Soient $f\in C_c^\infty(\g(F))$, $M\in \L^G(M_0)$, $Q\in \F^G(M)$, et $Y\in \m(F)$. Alors $\widetilde{J}_M^Q(Y,f)=\widetilde{J}_M^Q[Y,f]$.
\end{theorem}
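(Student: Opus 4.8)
The two definitions $\widetilde{J}_M^Q(Y,f)$ and $\widetilde{J}_M^Q[Y,f]$ differ only in the $(G,M)$-families used to weight the limit: the former uses the families $(r_P(\lambda,A,Y))_P$ and the un-tordu weights $v_L^Q$ relative to $G$, while the latter uses $(r_P[\lambda,A,Y])_P$ and the descended weights $w_P[\lambda,A,Y_\ss+U,V]$, both defined relative to $G_{Y_\ss}$. The heart of the matter is therefore to compare these two families on the nose. The plan is to reduce both sides, by the same Iwasawa-decomposition and change-of-variables manipulations already performed in the proofs of Theorems~\ref{YDLiopthm:ArtdefdirectIOP} and \ref{YDLiopthm:ArtdefIOPCVCinftyc}, to an integral over $\a_{M,Y,G-\reg}(F)$ (before passing to the limit $A\to 0$) of $f$ composed with conjugation, against $w_M^G(A,m^{-1}Ym,V)$ in the first case and against $w_M^G[A,Y_\ss+(\Ad m^{-1})Y_\nilp,V]$ in the second. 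Then it suffices to prove the pointwise identity of $(G,M)$-families
\[
w_P(\lambda,A,Y,V)=w_P[\lambda,A,Y_\ss+Y_\nilp',V]\quad\text{(up to a $P$-independent factor)}
\]
for $A\in\a_{M,Y,G-\reg}(F)$, which yields $w_M^G(A,Y,V)=w_M^G[A,Y,V]$ and hence equality of the integrands before the limit, and a fortiori after.

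\textbf{Key steps.} First I would unwind the definition of $n_\square(A,Y,V)\in N_\square(F)$: since $A\in\a_{M,Y,G-\reg}(F)$ we have $G_{A+Y}=M_{A+Y}=M_{A+Y_\ss}$, and the element $n_\square(A,Y,V)$ solving $n^{-1}(A+Y)n=A+Y+V$ depends only on the projection $\pi_{\n_\square,\n_{\square,Y_\ss}}(V)$ (this is exactly the remark made just before the series-expansion definition of $\rho(\alpha,\o)$, applied inside $G_{Y_\ss}$, together with the fact that $A+Y_\ss$ is central-semisimple in $\m_{Y_\ss}$). Thus $n_\square(A,Y,V)=n_{\square,Y_\ss}(A,Y_\nilp,\pi(V))$, the element computed inside $G_{Y_\ss}$; consequently $v_P(\lambda,n_\square(A,Y,V))$ relative to $G$ agrees with $v_{P_{Y_\ss}}(\lambda,n_{\square,Y_\ss}(A,Y_\nilp,\pi(V)))$ relative to $G_{Y_\ss}$, because $H_P$ restricted to $N_{\square,Y_\ss}(F)\subseteq P_{Y_\ss}(F)$ is computed by the Harish-Chandra map of $G_{Y_\ss}$ (the part of $V$ orthogonal to $\n_{\square,Y_\ss}$ lies in a unipotent radical and contributes nothing). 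Second, I would compare the $r$-factors: the real number $\rho(\alpha,\o)$ attached to a root $\alpha\in\Sigma(\g;A_M)$ and the orbit $\o$ via the Laurent expansion of \eqref{eq:defr_alphalimit} is, by its very characterisation and by the reduction in the previous sentence, computed inside $M_\alpha$, and the same expansion run inside $G_{Y_\ss}$ attaches to each $\beta\in\Sigma(\mathfrak{p}_{Y_\ss};A_{M_{Y_\ss}})$ restricting to $\alpha$ a number that sums, over all such $\beta$, to $\rho(\alpha,\o)$ — this is precisely the bookkeeping that makes $r_P(\lambda,A,Y)$ and $r_P[\lambda,A,Y]$ agree. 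Since every root of a type-GL group is non-divisible (the point flagged in the remark on Arthur's definition), there is no correction term, and the multiset of $\beta$'s restricting to $\alpha$ behaves additively in both $\dim(\mathfrak{m}_\alpha\cap\n)_\beta$ and in $\rho$. Putting the two comparisons together gives $w_P(\lambda,A,Y,V)=w_P[\lambda,A,Y,V]$ for all $P\in\P^G(M)$, whence equality of $w_M^G$ and of the limits.

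\textbf{Main obstacle.} The delicate point is the identification of $\rho(\alpha,\o)$ with the sum over $\beta$ of the corresponding exponents computed in $G_{Y_\ss}$; this requires being careful that the Laurent-expansion characterisation of $\rho$, a priori a statement about the one-parameter degeneration $A\to 0$ inside $M_\alpha$, descends correctly to $(M_\alpha)_{Y_\ss}$, i.e. that the limit \eqref{eq:defr_alphalimit} and the orbit-membership condition defining $\a_{M,Y,G-\reg}$ are genuinely governed by $G_{Y_\ss}$ rather than by $G$. This is where points~4 and 5 of Proposition~\ref{YDLiopprop:whatisaMoG-reg} (which already tell us $\a_{M,Y,G-\reg}=\a_{M,Y_\nilp,G_{Y_\ss}-\reg}\cap\a_{M_{Y_\ss},G_{Y_\ss}-\reg}$ and that the determinant $\det([-,A+Y];\n_P)$ factors through the $A_M$-root spaces) do the work, reducing the semisimple part to a pure shift that contributes only an invertible scalar independent of $P$. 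Once this is granted, the rest is the routine repackaging already carried out twice in the preceding proofs, and the compactness lemma of Harish-Chandra (\cite[lemme 14.1]{Kottbook}) justifies every interchange of limit and integral, so no new convergence issue arises.
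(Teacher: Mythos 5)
Your overall instinct is right: everything reduces to comparing the coefficient families, and the comparison is ultimately the statement that the degeneration $A\to 0$ is governed by $G_{Y_\ss}$. But the route you propose is both longer than necessary and contains two genuine gaps. First, you do not need to compare the $w$-families at all: for $A\in\a_{M,Y,G-\reg}(F)$ one has $L_{A+Y}=G_{A+Y}$ for every $L\in\L^G(M)$, so $\widetilde{J}_L^Q(A+Y,f)$ and $\widetilde{J}_L^Q[A+Y,f]$ are given by the \emph{same} formula (the ordinary weighted orbital integral with weight $v_L^Q$); the paper's proof therefore reduces in one line to the single identity $r_M^L(A,Y)=r_M^L[A,Y]$. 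Your detour through $w_M^G(A,m^{-1}Ym,V)$ versus its descended analogue reintroduces the $v$-parts and the elements $n_\square$, and this is where your first gap appears: the claim that $n_\square(A,Y,V)$ ``depends only on the projection $\pi_{\n_\square,\n_{\square,Y_\ss}}(V)$'' is false --- for $A\in\a_{M,Y,G-\reg}(F)$ the map $V\mapsto n_\square(A,Y,V)$ is a bijection of $\n_\square(F)$ onto $N_\square(F)$ (equivalence (1)$\Leftrightarrow$(7) of the proposition \ref{YDLiopprop:whenXinIndMGX}), so $n_\square$ determines $V$ and cannot factor through a proper projection. What is actually used is only a statement about $H_P(n_\square(A,Y,V))$ for $V$ in the $\m_\alpha$-part, and about which of these quantities blow up as $A\to 0$.

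The second gap concerns the exponents $\rho$. You assert that the exponents attached inside $G_{Y_\ss}$ to the various $\beta\in\Sigma(\mathfrak{p}_{Y_\ss};A_{M_{Y_\ss}})$ restricting to a given $\alpha\in\Sigma(\mathfrak{p};A_M)$ ``sum to $\rho(\alpha,\o)$''; as stated this is not even the right shape of statement (each factor $r_\beta$ involves $\langle\lambda,\beta^\vee\rangle$, so one would need a relation among the coroots $\beta^\vee$ projected to $a_M$, not merely a sum of the $\rho$'s), and you give no argument for it. The paper sidesteps this entirely: by the descente formula \eqref{eq:GMfamilledescentformula} for $(G,M)$-families one first reduces to the case where $Y_\ss$ is $F$-elliptic in $\m(F)$, whence $A_{M_{Y_\ss}}=A_M$ and the restriction $\Sigma(\mathfrak{p}_{Y_\ss};A_{M_{Y_\ss}})\to\Sigma(\mathfrak{p};A_M)$ is \emph{injective}. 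Each $\alpha$ then has at most one preimage $\beta$, and only two facts remain: $r_\beta=r_{\text{res}(\beta)}$ for $\beta$ in the source, and $\rho(\alpha,\o)=0$ (equivalently, $n_\square(A,Y,V)$ has a limit at $A=0$ for $V\in\n_\square(F)\cap\m_\alpha(F)$) for $\alpha$ outside the image. Without this elliptic reduction, or a proof of the additivity you invoke, your argument does not close.
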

\begin{proof}
On a vu, dans le cas où $G_Y=M_Y$, que $\widetilde{J}_M^Q(Y,f)=J_M^Q(Y,f)= \widetilde{J}_M^Q[Y,f]$. Il suffit donc d'établir :
\begin{lemma}
Soient $M\in \L^G(M_0)$, $L\in \L^G(M)$, $Y\in \m(F)$, et $A\in \a_{M,Y,G-\reg}(F)$. Alors 
\[r_M^L(A,Y)=r_M^L[A,Y].\]
\end{lemma}
\begin{proof}
Par la formule de descente d'une $(G,M)$-famille on peut supposer que $Y_\ss$ est $F$-elliptique dans $\m(F)$. Il s'agit d'établir l'égalité entre $\prod_{\alpha\in \Sigma(\mathfrak{p}_{Y_\ss};A_{M_{Y_\ss}})}r_{\alpha}(\lambda,A,Y)$ et $\prod_{\alpha\in \Sigma(\mathfrak{p};A_{M})}r_{\alpha}(\lambda,A,Y)$. Comme 
$A_{Y_\ss}=A_M$, la restriction naturelle $\text{res}:\Sigma(\mathfrak{p}_{Y_\ss};A_{M_{Y_\ss}})\rightarrow \Sigma(\mathfrak{p};A_M)$ est injective. On voit bien $r_{\alpha}(\lambda,A,Y)=r_{\text{res}(\alpha)}(\lambda,A,Y)$. Soit $\alpha\in \Sigma(\mathfrak{p};A_M)$ qui n'est pas dans l'image de $\text{res}$, par des calculs directs on voit que $n_{\square}(A,Y,V)$ admet une limite en $A=0$ pour $V\in \n_{\square}(F)\cap\m_{\alpha}(F)$, en d'autres mots $r_{\alpha}(\lambda,A,Y)=1$. En définitive $\prod_{\alpha\in \Sigma(\mathfrak{p}_{Y_\ss};A_{M_{Y_\ss}})}r_{\alpha}(\lambda,A,Y)=\prod_{\alpha\in \Sigma(\mathfrak{p};A_{M})}r_{\alpha}(\lambda,A,Y)$.
\end{proof}

La preuve s'achève.    
\end{proof}

Au même titre on aimerait comparer $\widetilde{J}_M^Q(Y,f)$ et $J_M^Q(Y,f)$.

\begin{theorem}\label{thm:IOPcomparaisonI-II}
Soient $f\in \S(\g(F))$, $M\in \L^G(M_0)$, $Q\in \F^G(M)$, et $Y\in \m(F)$. Alors $\widetilde{J}_M^Q(Y,f)=J_M^Q(Y,f)$.
\end{theorem}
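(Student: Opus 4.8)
The plan is, first, to reduce to compactly supported test functions; second, to strip off the semisimple part of $Y$ by the two descent formulas; and third, to compare the two weights in the remaining nilpotent case by matching their values on pairs of adjacent parabolic subgroups. By Theorem~\ref{thm:locIOPtemperee} and Theorem~\ref{YDLiopthm:ArtdefdirectIOP}, both $J_M^Q(\o,-)$ and $\widetilde J_M^Q(Y,-)$ are tempered distributions; since $C_c^\infty(\g(F))$ is dense in $\S(\g(F))$ (with equality when $F$ is non-archimedean), it suffices to prove the identity for $f\in C_c^\infty(\g(F))$.

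On $C_c^\infty$ both functionals satisfy the three properties of Proposition~\ref{prop:propertiesIOP}: for $J_M^Q$ this is Proposition~\ref{prop:propertiesIOP}, and for $\widetilde J_M^Q$ it follows from $\widetilde J_M^Q(Y,-)=\widetilde J_M^Q[Y,-]$ on $C_c^\infty$ (Theorem~\ref{thm:IOPcomparaisonII-III}) together with the properties of Arthur's definition recalled after that theorem (\cite[corollaire 8.7]{Art88}). Conjugating, I may assume $Y$ standard for $M$. If $Y_\ss$ is not $F$-elliptic in $M$, set $M_1=\envL(M_{Y_\ss};M)$; then $M_{1,Y}=M_Y$, so $(\Ad M(F))Y=\Ind_{M_1}^M((\Ad M_1(F))Y)$, and applying the induction-descent formula (Proposition~\ref{prop:propertiesIOP}(2)) to both sides reduces us to $M_1$, in which $Y_\ss$ is elliptic. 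Next I apply the semisimple-descent formula (Proposition~\ref{prop:propertiesIOP}(3)) to each side: it expresses $J_M^Q(Y,f)$ and $\widetilde J_M^Q(Y,f)$ as the integral over $G_{Y_\ss}(F)\backslash G(F)$ of, respectively, $\sum_R J_{M_{Y_\ss}}^{\,\cdot\,}(Y_\nilp,\Phi_{R,y})$ and $\sum_R \widetilde J_{M_{Y_\ss}}^{\,\cdot\,}(Y_\nilp,\Phi_{R,y})$ with the same functions $\Phi_{R,y}\in C_c^\infty(\m_R(F))$, depending smoothly on $y$, and $G_{Y_\ss}$ is again of type GL. This reduces the theorem to the case of a nilpotent orbit $\o'$ in a Levi $M'$ of an arbitrary group $G'$ of type GL (equivalently, after passing to the irreducible components of $\envL(G_{X_\ss};G)$ as in the proof of Theorem~\ref{thm:IOPfullstatement}, to the ``elliptic situation'' of subsection~\ref{subsec:situationelliptique}).

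In the nilpotent case, write $X$ for the standard representative of $\Ind_{M'}^{G'}(\o')$ and fix $P_\MR\in\cR^{G'}(X)$ with semistandard Levi $\MR=\MR^{X}$, so that $X\in\Ind_\MR^{G'}(0)\cap\n_{P_\MR}=(\n_{P_\MR})_{G'-\reg}$ by Proposition~\ref{chap3prop:indprop}. The integration formula of subsection~\ref{YDLiopsubsec:normalizationmeasure}, the Iwasawa decomposition and the product formula \eqref{eq:GMfamilleproductformula} for $(G',\MR)$-families rewrite $J_{M'}^{G'}(\o',h)$ as an explicit multiple integral of $h$ over $\n_{P_\MR}(F)\times K$ (after the usual change of variables) against the weight attached to the orthogonal family $(-R_P(g))_{P\in\P^{G'}(\MR)}$. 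On the other hand, the computation in the proof of Theorem~\ref{YDLiopthm:ArtdefdirectIOP} (for a point-base $P_\square\in\P^{G'}(M')$), passed to the limit $A\to 0$, rewrites $\widetilde J_{M'}^{G'}(\o',h)$ as the multiple integral of $h$ over $M'_Y(F)\backslash M'(F)\times\n_\square(F)\times K$ against the weight $w_{M'}^{G'}$ attached to Arthur's $(G',M')$-family. The two unweighted integrals coincide: each equals $\int_{(\Ad G'(F))X}h\,d\mu_X$ for the canonical invariant orbital measure, by the compatibility of orbital measures (Proposition~\ref{YDLiopprop:compatibilityHaarmeasures}) together with the density of $(\n_{P_\MR})_{G'-\reg}$ in $\n_{P_\MR}$ and of $(\o'\oplus\n_\square)_{G'-\reg}$ in $\o'\oplus\n_\square$.

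It therefore remains to identify the two weights. After aligning the minimal Levi subgroups by a suitable element of $\uW^{G',0}$ and using the invariance Lemma~\ref{lem:poidsinvnorm} and formula \eqref{eq:5.5.2}, both weights are the value at $\lambda=0$ of a $(G',M')$-family built from an orthogonal family of vectors of $a_{M'}$; I claim that, under the identification of the two changes of variables $g\leftrightarrow(k,A,Y,V)$, these two orthogonal families differ only by a vector independent of the parabolic subgroup. Indeed, Lemma~\ref{lem:diffadjacentR_P}(2) gives, for adjacent $P_1,P_2$, the difference $-R_{P_1}(g)+R_{P_2}(g)=\tfrac{1}{\deg D}\log|\Nrd(U_{1,3})|\,\alpha^\vee$, while Lemma~\ref{lem:weightwadjpara}, combined with the Grothendieck--Springer identification $f_{P_1,P_2}$ of Lemma~\ref{YDLioplem:prolongerbir}, expresses the corresponding difference for the $w$-family through the reduced norm of the very same $(1,3)$-block; since the adjacency graph of $\P^{G'}(M')$ is connected, the coincidence of all adjacent differences forces the two orthogonal families to differ by one $P$-independent vector, which, as $\langle 0,-\rangle=0$, does not affect the value at $\lambda=0$. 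Hence the new weight equals $w_{M'}^{G'}$, substituting back gives $J_{M'}^{G'}(\o',h)=\widetilde J_{M'}^{G'}(\o',h)$, and the case of general $Q$ follows by the $(M_Q,M)$-restriction of $(G,M)$-families used throughout. The main obstacle is exactly this last step: establishing that the $(1,3)$-block occurring in Chaudouard's $R_P$-calculus (Lemma~\ref{lem:diffadjacentR_P}) coincides with the block occurring in Arthur's point-base construction (Lemma~\ref{lem:weightwadjpara}) — that is, pinning down the dictionary between the two combinatorial descriptions of the generalized Richardson parabolic subgroups.
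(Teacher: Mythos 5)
Your overall architecture coincides with the paper's: reduce to compactly supported test functions (the paper instead concludes by dominated convergence directly on $\S(\g(F))$, but your density argument is equally valid since both functionals are tempered), strip off the semisimple part by the induction-descent and semisimple-descent formulas, and then compare the two weights in the remaining nilpotent case through their differences on pairs of adjacent parabolic subgroups. Up to that point the proposal is sound and matches the paper step for step.

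The decisive step, however, is exactly the one you flag as ``the main obstacle'' and do not carry out, and it is a genuine gap rather than a routine verification. Lemma \ref{lem:weightwadjpara} is only a cocycle relation between base points, $w_{P_3|P_1}=w_{P_3|P_2}\,w_{P_2|P_1}$, and Lemma \ref{YDLioplem:prolongerbir} only guarantees that the limit $w_P(\lambda,Y,V)=\lim_{A\to 0}w_P(\lambda,A,Y,V)$ exists and is nonzero; neither computes its value, so neither ``expresses the adjacent difference through the reduced norm of the $(1,3)$-block.'' To match that difference with $\tfrac{1}{\deg D}\log|\Nrd(U_{1,3})|\,\alpha^\vee$ from Lemma \ref{lem:diffadjacentR_P} one must (i) identify the normalizing exponent $\rho(\alpha,0)$, (ii) compute $H_P(n_\square(A,0,V))$ for the explicit unipotent element $n_\square(A,0,V)=\Id-\alpha(A)^{-1}V$ in the three-block situation $W_1\oplus W_2\oplus W_3$, which first requires producing $k\in K$ putting $V$ into the $(1,3)$-block normal form (Gram--Schmidt over $D$ in the archimedean case, Hensel's lemma over $\O_D$ in the non-archimedean case), and (iii) evaluate the leading term of $\exp(-\langle\omega,H(n_\square)\rangle)=\|\bigwedge^{r_1+r_2}(n_\square^{-1})\phi_\omega\|$ as $A\to 0$, showing it equals $|\alpha(A)|^{-r_1\deg D}|\Nrd(U_{1,3})|$ up to lower-order terms. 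This is precisely the content of the identity \eqref{eq:poidscomparaison} and of the limit \eqref{eq:(6.4.10)} in the paper, which moreover notes that the corresponding statement in Chaudouard's article was published with an erroneous proof; the dictionary between the two $(1,3)$-blocks therefore cannot be treated as formal bookkeeping, and without this computation the theorem is not proved.
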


\begin{proof}
Notons $X$ le représentant standard de $\Ind_M^G(Y)(F)$. En conjuguant éventuellement $M$ par $\uW^{G,0}$ et ayant recours à la formule de descente de l’induction on peut supposer que $Y=X_\ss$ et $M$ le facteur de Levi semi-standard d'un élément de $\cR^G(X)$. Puis en manipulant la formule de descente semi-simple pour $\widetilde{J}_M^Q(Y,f)=\widetilde{J}_M^Q[Y,f]$ et pour $J_M^Q(Y,f)$ on peut supposer que $Y=X_\ss=0$. Soit $P_\square \in\P^G(M)$. On est conduit à confronter
\begin{equation}\label{eq:compareArt'}
\begin{split}
\widetilde{J}_M^Q(0,f)=\lim_{A\to 0}\gamma^G(P_\square)|D^{\g}(A)|^{1/2}|\det(\ad(A);\n_\square(F))|^{-1}&\int_{\n_\square(F)}\int_{K}\\
&f\left(k^{-1}(A+V)k\right)w_{M}^{Q}\left(A,0,V\right)\,dk\,dV  
\end{split}
\end{equation}
et 
\begin{equation}\label{eq:compareMe}
\begin{split}
J_M^Q(0,f)=\gamma^G(P_\square)\int_{\n_\square(F)}\int_{K}f\left(k^{-1}Vk\right)v_{M,X}^{Q}\left(p_\square(V)k\right)\,dk\,dV    
\end{split}
\end{equation}
avec $p_\square(V)\in
 G_X(F)\backslash P_\square(F)$ défini par $V=p_\square(V)^{-1}Xp_\square(V)$ sur un ouvert dense de $\n_{\square}(F)$. 

Il est évident que 
\[|D^{\g}(A)|^{1/2}|\det(\ad(A);\n_\square(F))|^{-1}=1,\]
et, au moins pour la convergence simple, 
\[f\left(A+V\right)\underset{A\to 0}{\longrightarrow}f\left(V\right).\]
Le vrai problème est donc de comparer les poids :

\begin{lemma}
Soit $X\in \g(F)$ le représentant standard de sa $G(F)$-orbite nilpotente. Soient $M$ le facteur de Levi semi-standard d'un élément de $\cR^G(X)$,  et $P_\square\in \P^G(M)$. Pour tous $g\in G(F)$, $m\in M(F)$, $V\in \n_{\square}(F)$ et $k\in K$ tels que  
\begin{equation}\label{eq:poidscomparaisondefineg}
k^{-1}Vk=g^{-1}Xg,    
\end{equation}
on a pour tous $P\in \P^G(M)$ et $\lambda\in ia_{M}^\ast$
\begin{equation}\label{eq:poidscomparaison}
w_{P|P_{\square}}(\lambda,A=0,0,V)=\exp(\langle\lambda,R_{P_\square,X}(g)-R_{P,X}(g)\rangle).     
\end{equation}
\end{lemma}

\begin{remark}~{}
Ce lemme a été énoncé par Chaudouard \cite[proposition 6.4.1]{Ch17}. Cependant, une erreur a été identifiée en page 218 où il est mentionné qu'il est possible de supposer « $g\in \widetilde{P}$, ..., et $k=w_{P_1}$ ». Il convient de noter qu'une telle simplification est généralement irréalisable. Voici un contre-exemple, dans les notations de son article : soit $V=W_1\oplus W_2\oplus W_3$ avec $W_i$ un espace vectorielle sur $F$ de dimension 1 munit d'un vecteur non-nul. On a ainsi une base ordonnée de $V$ qui est tel que le vecteur de $W_i$ plus petit que celui de $W_j$ si et seulement si $i$ plus petit que $j
$. Soit $\g=\text{End}(V)$, un élément est représenté par une matrice dans la base ordonnée. Soit
    \begin{align*}
    X=\begin{bmatrix}
     0 &  0 &  1  \\
    0  &  0 &0\\
     0 &  0&0
    \end{bmatrix}       \,\,\,\,\text{et}\,\,\,\,Y=\begin{bmatrix}
     0 & 1  &  0    \\
     0 & 0  & 0  \\
    0  & 0 & 0  
     \end{bmatrix}  .
    \end{align*}
    Aussi $P_1=\widetilde{P_1}=\text{stab}((0)\subseteq W_1\subseteq V)$, $P_2=\text{stab}((0)\subseteq W_2\oplus W_3\subseteq V)$, et $\widetilde{P_2}=\text{stab}((0)\subseteq W_1\oplus W_2\subseteq V)$. Posons $\widetilde{P}\eqdef \widetilde{P_1}\cap \widetilde{P_2}$. Alors on n'a pas $Y\in (\Ad {\widetilde{P}})X$.

L'auteur tient à remercier Chaudouard pour lui avoir proposé un correctif lors d'une communication. Par la suite, une alternative à sa méthode originelle est explorée.
\end{remark}

\begin{proof}
On reformule d'abord lemme \ref{lem:actcent} comme suit : si $g_1,g_2\in G(F)$ sont tels que $g_1^{-1}Xg_1=g_2^{-1}Xg_2$ alors $R_{P_\square,X}(g_1)-R_{P,X}(g_1)=R_{P_
\square,X}(g_2)-R_{P,X}(g_2)$, on est donc libre de changer l'élément $g$ dans l'équation \eqref{eq:poidscomparaisondefineg}. Entamons la preuve : le lemme est trivial pour $P=P_\square$, puis selon le lemme \ref{lem:weightwadjpara} et le lemme \ref{lem:actcent} il suffit de traiter le cas où $P$ et $P_\square$ sont adjacents. Empruntons donc les notations de la sous-section \ref{subsec:situationelliptique}, sans commentaire. Déjà $r>1$ sinon $P=P_\square$, supposons que $P_\square=P_1$ et $P=P_2$, notons $Q$ le plus petit sous-groupe parabolique contenant $P_1$ et $P_2$ puis $P^{-}=\widetilde{P_1}\cap \widetilde{P_2}$ qui est pour rappel un sous-groupe parabolique. Par décomposition d'Iwasawa, on est ramené au cas où $g \in \widetilde{P_1}(F)$, on a alors $(\Ad g^{-1})X\in \widetilde{\mathfrak{p}_1}(F)$. Or $X\in \n_{\widetilde{P_1}}(F)$, on a $(\Ad w_{P_1}g^{-1})X\in \n_{\widetilde{P_1}}\cap \Ind_M^G(0)$. Il existe alors $p_1\in P_1(F)$ tel que $(\Ad p_1w_{P_1}g^{-1})X=V=(\Ad kg^{-1})X$. Il vient $g(w_{P_1}^{-1}p_1^{-1}w_{P_1})(w_{P_1}^{-1}k)g^{-1}=gw_{P_1}^{-1}p_1^{-1}kg^{-1}\in G_X(F)\subseteq \widetilde{P_1}(F)$, puis $w_{P_1}^{-1}k\in \widetilde{P_1}(F)\cap K(F)$. Quitte à translater $g$ à droite par $k^{-1}w_{P_1}$ on peut désormais supposer que $g\in \widetilde{P_1}(F)$ et $k=w_{P_1}$.

En tout conjuguant par $w_{P_1}^{-1}$ on voit que les deux côtés de l'égalité voulue \eqref{eq:poidscomparaison} ne dépend des quantités de l'équation \eqref{eq:poidscomparaisondefineg} que via leur projection sur $M_{\widetilde{Q}}$ et $\m_{\widetilde{Q}}$ (on entend la composante sur $M_{\widetilde{Q}}$ de $g$ par la décomposition $g\in \widetilde{P_\square}(F)\subseteq \widetilde{Q}(F)=M_{\widetilde{Q}}(F)N_{\widetilde{Q}}(F)$). On se ramène à prouver l'énoncé suivant : soient $\widetilde{m} \in \widetilde{P_1}(F) \cap M_{\widetilde{Q}} (F)$ et $\widetilde{\alpha}$ l'unique élément de $\Delta_{w_{P_1}^{-1}P_2w_{P_1}}^{\widetilde{Q}}$, $\widetilde{\lambda}\in ia_{\widetilde{M}}^\ast$ avec $\widetilde{M}=M_{\widetilde{P_1}}$, alors 
\begin{equation}\label{eq:(6.4.10)}
\lim_{\a_{\widetilde{M},\reg}(F)\ni \widetilde{A}\to 0}|\widetilde{\alpha}(\widetilde{A})|^{\rho(\widetilde{\alpha},0)\langle\widetilde{\lambda},\widetilde{\alpha}^{\vee}\rangle}\exp(-\langle\widetilde{\lambda},H_{w_{P_1}^{-1}P_2w_{P_1}}(n_{\widetilde{{P_1}}})\rangle)    =|\Nrd(U_{1,3})|\exp(\frac{1}{\deg D}\langle\lambda,\widetilde{\alpha}^\vee\rangle),
\end{equation}
où $\widetilde{V}\in \n_{\widetilde{P_1}}(F)$ est un point de l'orbite $(\Ad G)X$, et $n_{\widetilde{{P_1}}}\in M_{\widetilde{Q}} (F)\cap N_{\widetilde{P_1}}(F)$ est défini par 
\begin{equation}
n_{\widetilde{{P_1}}}^{-1}\widetilde{A}n_{\widetilde{{P_1}}}=\widetilde{A}+\widetilde{V},    
\end{equation}
enfin $U$ est l'élément qui correspond à $Y\eqdef\widetilde{V}$ dans le lemme \ref{lem:diffadjacentR_P}.

Il est clair que la limite \eqref{eq:(6.4.10)} ne dépend de $\widetilde{A}$, $\widetilde{m}$, $X$ etc. que via leur projection sur la composante $\text{Aut}_D (W_1\oplus W_2\oplus W_3)$ dans la décomposition en facteurs irréductibles de $M_{\widetilde{Q}}$ (si $k_{W}\in K\cap \text{Aut}_D (W_1\oplus W_2\oplus W_3)$ est tel que $U_W=k_{W}Y_{W}k_W^{-1}\in  \Hom_D(W_1,W_3)$ dans le lemme \ref{lem:diffadjacentR_P}, avec $U_W$ et $Y_W$ leur porjection sur $\text{Aut}_D (W_1\oplus W_2\oplus W_3)$, alors en considérant $k_W$ comme un élément de $K$ par l'extension par la matrice identité en dehors de $W_1\oplus W_2\oplus W_3$ on a $U=k_{W}Yk_W^{-1}\in n_{\widetilde{P_1}}\cap n_{\widetilde{P_2}}$). On a va donc supposer $M_{\widetilde{Q}}=\text{Aut}_D (W_1\oplus W_2\oplus W_3)$. Dans la suite on projecte les objets sur $M_{\widetilde{Q}}$, et par abus de notation on continue à les noter par les mêmes symboles. Nous avons
\begin{align*}
\widetilde{P_1}=\left\{\begin{bmatrix}
\ast & \ast  &  \ast    \\
& \ast  &\ast \\
& \ast  & \ast
\end{bmatrix}   \right\}     ,\,\,\,\,\widetilde{P_2}=\left\{\begin{bmatrix}
\ast  &  \ast & \ast    \\
\ast& \ast  & \ast \\
&  &  \ast
\end{bmatrix}\right\} 
 ,\,\,\,\,P^-=\left\{\begin{bmatrix}
\ast  &  \ast & \ast    \\
& \ast  & \ast \\
&  &  \ast
\end{bmatrix}\right\},
\end{align*}

\begin{align*}
X=\begin{bmatrix}
0  & 0  &  \text{Id}_{r_1\times r_1}    \\
& 0 &0\\
&  &  0
\end{bmatrix}       \,\,\,\,\text{et}\,\,\,\,\widetilde{m}=\begin{bmatrix}
\ast  &  \ast & \ast    \\
& \ast  & \ast \\
&  \ast &  \ast
\end{bmatrix}  .
\end{align*}
Or un élément $\widetilde{A}\in\a_{\widetilde{M},\reg}(F)$ s'écrit
\begin{align*}
\widetilde{A}=\begin{bmatrix}
 a \text{Id}_{r_1\times r_1} &   &     \\
&  b \text{Id}_{r_2\times r_2} &\\
&  &  b \text{Id}_{r_1\times r_1}
\end{bmatrix}    
\end{align*}
avec $a\not= b$ deux éléments de $F$. On a $\widetilde{\alpha}(\widetilde{A})=b-a$. On calcule facilement $n_{\widetilde{{P_1}}}$ et on trouve $n_{\widetilde{{P_1}}}=\text{Id} -\widetilde{\alpha}(\widetilde{A})^{-1}\widetilde{V}$.

On prétend à ce stade qu'il existe $k\in K\cap M_{\widetilde{P_1}}\subseteq K$ de la forme
\[
k=\begin{bmatrix} 
\text{Id}_{r_1\times r_1} & \mspace{-10mu} 
  \begin{matrix}  \,&  \, \end{matrix} \\
\begin{matrix} \, \\ \,  \end{matrix} & \mspace{-10mu}
  \begin{bmatrix}
    \vphantom{\begin{matrix} \, \\ \,  \end{matrix}}
    \vcenter{\hbox{\hspace*{7pt}$ k'$\hspace*{7pt}}} 
  \end{bmatrix}
\end{bmatrix},
\]
avec $k'\in \begin{cases*}\GL_{r_2+r_1}(\O_D)  & \text{si $F$ est non-archimédien} \\
 \{g\in \GL_{r_2+r_1}(D):g\cdot{}^t\overline{g}=\text{Id}\}& \text{si $F$ est archimédien} 
\end{cases*}$, tel que $k\widetilde{V}k^{-1}\in  \n_{\widetilde{P_1}}\cap \n_{\widetilde{P_2}}$ comme dans le lemme \ref{lem:diffadjacentR_P}. Par des calculs directs on voit que l'assertion est équivalente à : étant donnée une matrice $\widetilde{V}\in \Mat_{r_1\times (r_2+r_1)}(D)$, il existe $k'\in \begin{cases*}\GL_{r_2+r_1}(\O_D)  & \text{si $F$ est non-archimédien} \\
 \{g\in \GL_{r_2+r_1}(D):g\cdot{}^t\overline{g}=\text{Id}\}& \text{si $F$ est archimédien} 
\end{cases*}$ tel que les premières $r_2$ colonnes de $\widetilde{V}k'$ soient 0. C'est un exercice élémentaire en algèbre linéaire. Pour $F$ archimédien, on prend sur $D^{r_2+r_1}$ le produit scalaire tel que la base canonique soit une base orthonormée, on procède par récurrence pour définir la $i$-ème colonne ($1 \leq i \leq r_2$) de $k'$ comme un vecteur de longueur 1 dans $D^{r_2+r_1}$, orthogonal aux lignes de $\widetilde{V}$ et aux premières $i-1$ colonnes de $k'$, ensuite on complète cela en une base orthonormée de $D^{r_2+r_1}$. Pour $F$ non-archimédien, quitte à multiplier les colonnes de $\widetilde{V}$ par des scalaires on peut supposer que $\widetilde{V}$ est une matrice à coefficient dans $\O_D$ et chaque ligne non-nulle contient au moins un coefficient dans $\O_D^\times$. Soit $S$ le $\O_D$-schéma définie par $S(E)=\{k''\in \GL_{r_2+r_1}(E) : \text{les premières $r_2$ colonnes de $\widetilde{V}k''$ sont 0}\}$ pour toute $\O_D$-algèbre $E$. C'est un schéma lisse. Il est connu que $\O_D$ est un anneau hensélien (\cite[p.324]{Pi82}). Soit $\mathfrak{m}_D$ l'idéal maximal de $\O_D$. On procède par récurrence pour définir la $i$-ème colonne ($1 \leq i \leq r_2$) d'un élément de $S(\O_D/\mathfrak{m}_D)$ comme un vecteur solution non-nulle des équations associées aux lignes de $\widetilde{V} \text{ mod }\mathfrak{m}_D$ et linéairement indépendant aux premières $i-1$ colonnes de cet élément, ensuite on complète cela en une matrice de $\GL_{r_2+r_1}(\O_D /\mathfrak{m}_D)^{r_2+r_1}$, et on relève cela en un élément de $S(\O_D)$ par le lemme de Hensel.

Enfin, nous sommes en mesure d'étudier l'égalité \eqref{eq:(6.4.10)}. La matrice $U_{1,3}$ en question est les dernières $r_1$ colonnes de $\widetilde{V}k'$, et donc 
\[n_{\widetilde{{P_1}}}= \begin{bmatrix} 
\text{Id}_{r_1\times r_1} &   
  -\widetilde{\alpha}(\widetilde{A})^{-1}[0 \,\, U_{1,3}] k'^{-1} \\
\begin{matrix} \, \\ \,  \end{matrix} & 
  { \text{Id}}_{(r_2+r_1)\times (r_2+r_1)}
\end{bmatrix}.\]

Clairement, il suffit d'obtenir l'égalité \eqref{eq:(6.4.10)} pour l'élément particulier $\widetilde{\lambda}=\omega\in X^\ast(\widetilde{M})$ est le caractère $m\mapsto \Nrd(m_{2,3})$ avec $m_{2,3}$ la composante sur $\Aut_D(W_2\oplus W_3)$ de $\widetilde{M}=\Aut_D(W_1)\times \Aut_D(W_2\oplus W_3)$. On retiendra qu'on a $\langle \omega,\widetilde{\alpha}^\vee\rangle =\deg D$. Prenons
la puissance extérieure
$\bigwedge_D^{r_1+r_2}(W_1\oplus W_2\oplus W_3)$, qu'on munit de la base déduite de la base de $W_1\oplus W_2\oplus W_3$ extraite de $e$. Soit $\phi_\omega$ l'unique
élément  de la base qui appartient à $\bigwedge^{r_1+r_2}(W_2\oplus W_3)$, c'est un vecteur extrémal de poids $\omega$. Soit $\|\cdot\|$ une norme au sens des normes vectorielles (ici l'espace en question est vu comme espace vectoriel sur $F$ et non sur $D$) telle que $\phi_\omega$ soit de longueur 1. On a 
\begin{align*}
\exp(-\omega,H_{w_{P_1}^{-1}P_2w_{P_1}}(n_{\widetilde{{P_1}}})\rangle)&=\|\bigwedge^{r_1+r_2}(n_{\widetilde{{P_1}}}^{-1})\phi_\omega\|\\
&=|\widetilde{\alpha}(\widetilde{A})|^{-r_1\deg D}|\Nrd(U_{1,3})||\Nrd(k'^{-1})|+o_{\widetilde{A}=0}(|\widetilde{\alpha}(\widetilde{A})|^{-r_1\deg D})
\end{align*}
De ce fait, quand $\widetilde{A}$ tend vers $0$, l'expression $|\widetilde{\alpha}(\widetilde{A})|^{r_1\deg D}\exp(-\langle\widetilde{\lambda},H_{w_{P_1}^{-1}P_2w_{P_1}}(n_{\widetilde{{P_1}}})\rangle)$ admet une limite qui vaut $|\Nrd(U_{1,3})|$. Ce qu'il fallait.
\end{proof}

Revenons sur la comparaison des intégrales orbitales pondérées. Le lemme précédent avèrent la convergence simple
\[w_{M}^{Q}\left(A,0,V\right)\underset{A\to 0}{\longrightarrow}v_{M,X}^{Q}\left(p_\square(V)k\right).\]

Prenons $C$ une sous-partie compacte de $\a_{M,G-\reg}(F)$ contenant $0$. Posons $f'(-)\eqdef \max_{A\in C}|f(A+-)|$, on obtient une fonction
\[f'\in \begin{cases}
\S(\g(F)) & \text{si $F$ est non-archimédien} \\
C^0(\g(F)) & \text{si $F$ est archimédien}
\end{cases}\]
vérifiant que $\|f'\|_{a,0}<+\infty$ pour tout $a\in \N$. Or il existe une constante $c$ tel que pour tout $A\in C$, on ait 
\[\left|w_{M}^{Q}\left(A,0,V\right)\right|\leq c\left|v_{M,X}^{Q}\left(p_\square(V)k\right)\right|+1.\] 
Au moyen du théorème de convergence dominée et la preuve du théorème \ref{thm:IOPfullstatement}, on met un point final à la comparaison.
\end{proof}

\subsection{Intégrale orbitale pondérée semi-locale}
Soit exceptionnellement dans cette partie $F$ un corps global. Soit $S$ un sous-ensemble fini non-vide de places de $F$. On note $F_v$ le complété local de $F$ en $v\in S$, et $F_S\eqdef \prod_{v\in S}F_v$. Soit $G$ un groupe du type GL sur $F$, $M_0$ un sous-groupe minimal de $G$, $K_S=\prod_{v\in S}K_v$ tel que $K_v$ est un sous-groupe compact de $G(F_v)$ en bonne position par rapport à $M_0$, et $M\in \L^G(M_0)$.

On a la théorie des $(G,M)$-familles sur $F$ (cf. \cite[section 7]{Art88I}). Notons $M_S=\prod_{v\in S}M_v$ et le voit comme sous-groupe de Levi de $G_S=\prod_{v\in S}G_v$ défini sur $F_S$. On note de plus $\L^{G_S}(M_S)$ (resp. $\P^{G_S}(M_S)$ ; $\F^{G_S}(M_S)$) l'ensemble des produits $\prod_{v\in S}L_v$ (resp. $\prod_{v\in S}P_v$ ; $\prod_{v\in S}Q_v$) où $L_v$ est un sous-groupe de Levi défini sur $F_v$ contenant $M_v$ (resp. $P_v$ est un sous-groupe parabolique défini sur $F_v$ et de facteur de Levi $M_v$ ; $Q_v$ est un sous-groupe parabolique défini sur $F_v$ contenant $M_v$), par une récurrence on montre l'existence des applications, toujours en moyennant certains choix (cf. \cite[section 9]{Art88I})
\begin{align*}
    d_M^G :\L^{G_S}(M_S)&\rightarrow [0,+\infty[ \\
    s:\L^{G_S}(M_S)&\rightarrow \F^{G_S}(M_S)
\end{align*}
de sorte que, pour tout $(L_v)_{v\in S}\in \L^{G_S}(M_S)$, on ait
\begin{enumerate}
    \item si $s\left((L_v)_{v\in S}\right)=\left((Q_v)_{v\in S}\right)$ alors $(Q_v)_{v\in S}\in \P^{G_S}(L_S)$ ;
    \item $d_M^G\left((L_v)_{v\in S}\right)\not =0$  si et seulement si l'une des flèches naturelles
    \[\bigoplus_{v\in S}a_{M_v}^{L_v}\longrightarrow a_M^G\]
    et
    \[\bigoplus_{v\in S}a_{L_v}^{G_v}\longrightarrow a_M^G\]
    est un isomorphisme, auquel cas les deux sont isomorphismes et $d_M^G\left((L_v)_{v\in S}\right)$ est le volume dans $a_M^G$ du parallélotope formé par les bases orthonormées des $a_{M_v}^{L_v}$ ;
    \item Formule de scindage : supposons que nous disposons, pour tout $v\in S$, de $(c_{P_v})_{P_v\in\P^{G_v}(M_v)}$ une $(G_v,M_v)$-famille sur $F_v$. Définissons $c_P=\prod_{v\in S}c_{P_v}$. Alors $(c_P)_{P\in \P^G(M)}$ est une $(G,M)$-famille, et
    \begin{equation}\label{eq:GMfamillescindageformula}
    c_{M}=\sum_{(L_v)_{v\in S}\in \L^{G_S}(M_S)}d_M^G\left((L_v)_{v\in S}\right)\prod_{v\in S} c_{M_v}^{Q_v}.    
    \end{equation}
\end{enumerate}

Soit $\o\subseteq \m$ une classe de $M$-conjugaison contenant un $F$-point. Soit $Y\in \o(F_S)$. On prendra dans la suite $A\in \a_{M,\o,G-\reg}(F_S)$. Alors $M_{A+Y}=G_{A+Y}$. Fixons $P_\square\in \P^{G}(M)$, il nous fournit le point-base $P_{\square,v}\in \P^{G_{v}}(M_{v})$ pour tout $v\in S$. Pour tout $v$ il y a $A_v\in \a_{M_{v},\o_v}(F_v)$ et $Y_v\in \m_v(F_v)$, on a alors les fonctions $r_P(\lambda_v,A_v,Y_{v})$ et $r_P[\lambda_v,A_v,Y_{v}]$ introduites dans les sous-sections \ref{subsec:defdirect} et \ref{subsec:defviadescentss}. On pose alors
\[r_P(\lambda,A,Y)=\prod_{v\in S}r_{P_v}(\lambda_v,A_v,Y_v)\,\,\,\,\text{et}\,\,\,\,r_P[\lambda,A,Y]=\prod_{v\in S}r_{P_v}[\lambda_v,A_v,Y_v]\,\,\,\,,\lambda\in ia_{M,\C}.\]

Soit $\S(\g(F_S))\eqdef\widehat{\bigotimes}_{v\in S}\S(\g(F_v))$, avec $\bigotimes$ le produit tensoriel d'espaces vectoriels topologiques, muni de sa topologie usuelle, cf. \cite[définition 43.1 ou 43.2]{BookTreve} ($\S(\g(F_v))$ étant nucléaire, d'après \cite[remarque, p.53]{Bruhat1961}, la $\epsilon$-topologie et la $\pi$-topologie sur $\bigotimes_{v\in S}\S(\g(F_v))$ sont donc la même), puis $\widehat{\bigotimes}$ le complété pour cette la topologie.  

Soient $f\in\S(\g(F_S))$ et $Q\in \F^G(M)$. On suppose que $M_Y=G_Y$. Posons 
\[\widetilde{J}_{M}^Q(Y,f)=|D^{\g}(Y)|_S^{1/2}\int_{M_Y(F_S)\backslash G(F_S)} f\left((\Ad g^{-1})Y\right)v_M^Q(g)\,dg.\]
Abondonnons l'hypothèse $M_Y=G_Y$, posons
\begin{equation}
 \widetilde{J}_M^Q(Y,f)\eqdef\lim_{\substack{A\to 0\\ A\in \a_{M,Y,G-\reg}(F_S)}}\sum_{L\in \L^{M_Q}(M)}r_M^L(A,Y)\widetilde{J}_{L}^Q(A+Y,f),   
\end{equation}
et
\begin{equation}
 \widetilde{J}_M^Q[Y,f]\eqdef\lim_{\substack{A\to 0\\ A\in \a_{M,Y,G-\reg}(F_S)}}\sum_{L\in \L^{M_Q}(M)}r_M^L[A,Y]\widetilde{J}_{L}^Q(A+Y,f).   
\end{equation}

\begin{theorem}
$\widetilde{J}_M^Q(Y,-)$ et $\widetilde{J}_M^Q[Y,-]$ sont bien définis, $\widetilde{J}_M^Q(Y,-)=\widetilde{J}_M^Q[Y,-]$, et c'est une distribution tempérée, i.e. une fonctionnelle continue sur $\S(\g(F_S))$.
\end{theorem}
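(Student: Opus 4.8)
The plan is to reduce the whole statement, by the splitting formula \eqref{eq:GMfamillescindageformula}, to the local results already in hand (théorèmes \ref{YDLiopthm:ArtdefdirectIOP}, \ref{YDLiopthm:ArtdefIOPCVCinftyc} et \ref{thm:IOPcomparaisonII-III}). As in the local proofs one may first take $Q=G$ (replacing $G$ by $M_Q$ throughout), and one works on the dense subspace $\bigotimes_{v\in S}\S(\g(F_v))$ of $\S(\g(F_S))$ spanned by the decomposable functions $f=\bigotimes_{v\in S}f_v$. The key observation is that the Harish-Chandra map on $G(F_S)$ satisfies $H_P((g_v)_v)=\sum_{v\in S}H_{P,v}(g_v)$, so that the $(G,M)$-families $(v_P(\lambda,g))_P$, $(w_P(\lambda,A,Y,V))_P$ and $(r_P(\lambda,A,Y))_P$ are, by their very construction and by the semi-local definitions, tensor products over $v\in S$ of the corresponding local $(G_v,M_v)$-families.

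For $A\in\a_{M,Y,G-\reg}(F_S)$ one has $A_v\in\a_{M,Y_v,G-\reg}(F_v)$ for every $v$ (the defining inequalities being products of one-variable polynomials, cf. la proposition suivant \eqref{YDLiopeq:defaMYG-reg}), hence $G_{A+Y}=M_{A+Y}$ over each $F_v$, and $\widetilde{J}_L^G(A+Y,f)$ is then the honest integral, whose domain factors as $\prod_{v}G_{A_v+Y_v}(F_v)\backslash G(F_v)$, with $|D^\g(A+Y)|_S=\prod_v|D^\g(A_v+Y_v)|_{F_v}$. Applying \eqref{eq:GMfamillescindageformula} to the product $(G,L)$-family $(v_P)_{P\in\P^G(L)}$ gives
\[
\widetilde{J}_L^G\Big(A+Y,\textstyle\bigotimes_v f_v\Big)=\sum_{(L_v^{+})_v\in\L^{G_S}(L_S)}d_L^G\big((L_v^{+})_v\big)\prod_{v\in S}\widetilde{J}_{L_v}^{L_v^{+}}(A_v+Y_v,f_v),
\]
where $L_v=L\times_F F_v$. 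Multiplying by $r_M^L(A,Y)=\prod_v r_{M_v}^{L_v}(A_v,Y_v)$, summing over $L\in\L^G(M)$, and reorganising the resulting double sum by the standard combinatorial identity for the splitting coefficients (a descent nested inside a splitting collapses to a single splitting — this is exactly Arthur's combinatorial input for his splitting formula of weighted orbital integrals), one obtains
\[
\sum_{L\in\L^G(M)}r_M^L(A,Y)\,\widetilde{J}_L^G\Big(A+Y,\textstyle\bigotimes_v f_v\Big)=\sum_{(L_v)_v\in\L^{G_S}(M_S)}d_M^G\big((L_v)_v\big)\prod_{v\in S}\Big(\sum_{L_v'\in\L^{L_v}(M_v)}r_{M_v}^{L_v'}(A_v,Y_v)\,\widetilde{J}_{L_v'}^{L_v}(A_v+Y_v,f_v)\Big).
\]
The outer sum is finite and, by théorème \ref{YDLiopthm:ArtdefdirectIOP}, each inner factor converges as $A_v\to0$ to $\widetilde{J}_{M_v}^{L_v}(Y_v,f_v)$; hence the limit defining $\widetilde{J}_M^G(Y,\bigotimes_v f_v)$ exists and equals $\sum_{(L_v)_v}d_M^G((L_v)_v)\prod_v\widetilde{J}_{M_v}^{L_v}(Y_v,f_v)$. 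Running the same computation with $r_M^L[A,Y]=\prod_v r_{M_v}^{L_v}[A_v,Y_v]$ and invoking the local identity $r_{M_v}^{L_v}(A_v,Y_v)=r_{M_v}^{L_v}[A_v,Y_v]$ (proof of théorème \ref{thm:IOPcomparaisonII-III}) gives $\widetilde{J}_M^G(Y,-)=\widetilde{J}_M^G[Y,-]$ on decomposable functions.

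It remains to extend continuously to $\S(\g(F_S))$. From the product formula and the local estimates of théorème \ref{YDLiopthm:ArtdefdirectIOP}: at archimedean $v$ there are $c_v>0$ and $a_v\in\N$ with $|\widetilde{J}_{M_v}^{L_v}(Y_v,f_v)|\le c_v\|f_v\|_{a_v,0}$, while at non-archimedean $v$ the functional $\widetilde{J}_{M_v}^{L_v}(Y_v,-)$ is continuous on $\S(\g(F_v))$ for its inductive-limit topology, hence bounded by a continuous seminorm $p_v$. With $a=\max_v a_v$ one gets $|\widetilde{J}_M^G(Y,\bigotimes_v f_v)|\le\big(\sum_{(L_v)_v}d_M^G((L_v)_v)\big)\prod_{v\in S}q_v(f_v)$, where $q_v=c_v\|\cdot\|_{a,0}$ (archimedean) or $q_v=c_v p_v$ (non-archimedean). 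The right-hand side is dominated by the natural continuous seminorm attached to $(q_v)_v$ on $\bigotimes_v\S(\g(F_v))$ with the projective topology $\pi$, so $\widetilde{J}_M^G(Y,-)$ is $\pi$-continuous on the algebraic tensor product; since each $\S(\g(F_v))$ is nuclear, $\pi$ and $\epsilon$ coincide there and $\widetilde{J}_M^G(Y,-)$ extends uniquely to a continuous linear functional on $\S(\g(F_S))=\widehat{\bigotimes}_v\S(\g(F_v))$. This extension is the asserted tempered distribution; uniqueness of the continuous extension yields at once that $\widetilde{J}_M^Q(Y,-)$ and $\widetilde{J}_M^Q[Y,-]$ are well defined on all of $\S(\g(F_S))$ and agree there.

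The main obstacle is the combinatorial identity used to pass from $\sum_L r_M^L(A,Y)\widetilde{J}_L^G(A+Y,f)$ to the product over $v\in S$: one must reconcile the global splitting coefficients $d_M^G((L_v)_v)$, the global section $s$, and the global descent functions $r_M^L$ with their local avatars. This is the semi-local analogue of Arthur's splitting formula for weighted orbital integrals; carrying it out requires checking both that a descent inside a splitting reduces to a single splitting and that the generalized Lusztig–Spaltenstein induction together with the modified weights $w_P$, $r_P$ are compatible with base change to each $F_v$ — points for which the hypothesis that $G$ is of type GL (so that there are no divisible roots and the $r_\alpha$ are genuine local objects) is precisely what lets Arthur's bookkeeping go through unchanged. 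Everything else — the factorisation of $H_P$, the tensor structure of the $(G,M)$-families, the passage to the limit $A\to0$, and the nuclearity argument extending from decomposable to arbitrary test functions — is routine once the local theorems are available.
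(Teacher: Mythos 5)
Your proposal is correct and follows essentially the same route as the paper: reduce to decomposable test functions, apply the splitting formula \eqref{eq:GMfamillescindageformula} to rewrite $\sum_{L}r_M^L(A,Y)\widetilde{J}_L^Q(A+Y,f)$ as a finite sum over $\L^{M_{Q,S}}(M_S)$ of products of the local expressions, take the limit $A\to 0$ factorwise via théorème \ref{YDLiopthm:ArtdefdirectIOP}, identify the two versions through $r_{M_v}^{L_v}=r_{M_v}^{L_v}[\cdot]$ (théorème \ref{thm:IOPcomparaisonII-III}), and conclude temperedness by the local estimates together with nuclearity of the $\S(\g(F_v))$; you even make explicit the continuity/extension step that the paper leaves implicit. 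Only a cosmetic caveat: in your product formulas the superscripts should be the parabolic subgroups $Q_v$ furnished by the section $s$ (as in the paper's $\widetilde{J}_{L_v}^{Q_{L_v}}$), not their Levi factors, since $c_{M_v}^{Q_v}$ depends on $Q_v$ and not only on $M_{Q_v}$.
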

\begin{proof}
C'est une conséquence de la formule de scindage et l'énoncé local puisque, pour $f=\bigotimes_{v\in S} f_v\in\S(\g(F_S))$ décomposable,   
\begin{align*}
\sum_{L\in \L^{M_Q}(M)}&r_M^L(A,Y)\widetilde{J}_{L}^Q(A+Y,f)\\
&=\sum_{(T_v)_{v\in S}\in \L^{M_{Q,S}}(M_S)}d_M^{M_Q}\left((T_v)_{v\in S}\right)\prod_{v\in S}\sum_{L_v\in \L^{T_v}(M_v)}r_{M_v}^{L_v}(A_v,Y_v)\widetilde{J}_{L_v}^{Q_{L_v}}(A_v+Y_v,f_v),    
\end{align*}
et similairement pour $\widetilde{J}_M^Q[Y,f]$.
\end{proof}

\newcommand{\noop}[1]{}

\end{document}